\documentclass{article}
\usepackage{setspace}
\usepackage{titlesec}
\usepackage[utf8]{inputenc}
\usepackage{tabularx}	
\usepackage{etoolbox}
\usepackage{mathtools}
\usepackage{amsmath}
\usepackage{amssymb} 
\usepackage{amsfonts}
\usepackage{mathrsfs} 
\usepackage{amsthm,pifont}
\usepackage{graphicx}
\usepackage{bbm}
\usepackage[colorlinks=true, pdfstartview=FitV, linkcolor=blue,
citecolor=blue, urlcolor=blue, pagebackref=false]{hyperref}
\usepackage{cleveref}
\usepackage[numbers,sort&compress]{natbib}
\usepackage{xcolor}
\usepackage{tikz,pgfplots}
\usepackage{setspace}
\usepackage{braket}
\usepackage{leftidx,tensor}
\usepackage{dsfont}
\usepackage{calc}
\usepackage{caption}
\usepackage{subcaption}
\usepackage{color}
\usepackage{comment}
\usepackage{thm-restate}
\usetikzlibrary{decorations.markings}
\usetikzlibrary{shapes.geometric}
\usetikzlibrary{patterns}
\tikzstyle{vertex}=[circle, draw, inner sep=0pt, minimum size=6pt]
\definecolor{blobteal}{RGB}{66, 200, 184}
\definecolor{blobred}{RGB}{200, 75, 75}
\definecolor{blobpink}{RGB}{235, 164, 164}
\newcommand{\vertex}{\node[vertex]}
\newcommand{\sss}[1]{{\scriptscriptstyle #1}}
\newcommand{\ssup}[1] {{\scriptscriptstyle{({#1}})}}

\usepackage[utf8]{inputenc}
\usepackage[english]{babel}
\usepackage{enumitem}

\let\OLDthebibliography\thebibliography
\renewcommand\thebibliography[1]{
  \OLDthebibliography{#1}
  \setlength{\parskip}{2.5pt}
  \setlength{\itemsep}{2.5pt plus 0.3ex}
}

\allowdisplaybreaks

\newtheorem{theorem}{Theorem}[section]
\makeatletter
\patchcmd{\ttlh@hang}{\parindent\z@}{\parindent\z@\leavevmode}{}{}
\patchcmd{\ttlh@hang}{\noindent}{}{}{}
\makeatother
\titleformat*{\section}{\large\bfseries}
\titleformat*{\subsection}{\small\bfseries}
\titleformat*{\subsubsection}{\small\bfseries}
\titleformat*{\paragraph}{\small\bfseries}
\titleformat*{\subparagraph}{\small\bfseries}

\newcommand{\N}{\mathbb{N}}
\newcommand{\R}{\mathbb{R}}

\newcommand{\E}{\mathbb{E}}

\newcommand{\p}{\mathbb{P}}
\newcommand{\md}{\ensuremath{\mathrm{d}}}
\newcommand{\eps}{\varepsilon}

\newcommand{\cA}{\mathcal{A}}
\newcommand{\cB}{\mathcal{B}}

\newcommand{\cE}{\mathcal{E}}
\newcommand{\cF}{\mathcal{F}}
\newcommand{\cG}{\mathcal{G}}

\newcommand{\cU}{\mathcal{U}}

\newcommand{\pa}{\mathrm{pa}}

\newcommand{\hs}{\widehat{\sigma}}
\newcommand{\rt}{\mathrm{t}}
\newcommand{\rd}{\mathrm{d}}

\newcommand{\Ind}[1]{\mathbbm{1}{\{#1\}}}

\DeclareMathOperator*{\argmin}{arg\,min}

\newtheorem{lemma}[theorem]{Lemma}
\newtheorem{proposition}[theorem]{Proposition}
\newtheorem{definition}[theorem]{Definition}
\newtheorem{corollary}[theorem]{Corollary}
\newtheorem{claim}[theorem]{Claim}

\newtheorem{problem}[theorem]{Open problem}

\theoremstyle{definition}
\newtheorem{remark}[theorem]{Remark}
\numberwithin{equation}{section}

\usepackage{geometry}
\title{History estimation in random recursive trees:\\Pointwise approach via iterated Jordan centralities}
\author{Johannes Bäumler\footnote{Mathematical Institute,
University of Koblenz,
Germany.} \and Simon Briend\footnote{Department of Mathematics and Computer Science,
UniDistance,
Brig, Switzerland.} \and Joost Jorritsma\footnote{Department of Statistics,
University of Oxford,
United Kingdom.\\
\indent\hspace{15pt} \emph{Email: }\href{mailto:jbaeumler@uni-koblenz.de}{jbaeumler@uni-koblenz.de}, 
\href{mailto:simon.briend@unidistance.ch}{simon.briend@unidistance.ch}, 
\href{mailto:joost.jorritsma@stats.ox.ac.uk}{joost.jorritsma@stats.ox.ac.uk}.}}

\begin{document}

\maketitle

\begin{abstract}
We study the problem of estimating the arrival times of vertices in a uniform random recursive tree from its unlabeled structure. We adopt a pointwise perspective and analyze the distribution of the relative estimation error, and derive tail bounds that are uniform in both the vertex and the tree size. For the ranking induced by Jordan centrality, the probability that the estimate exceeds the true arrival time by a factor $S$ decays on the order of $1/S$, while the probability of underestimating the arrival time by a factor $1/S$ decays exponentially in $S$. We introduce a refined centrality measure whose overestimation tail decays on the order of $(\log S)/S^{2}$, at the cost of a heavier lower tail of order $1/S^{2}$.
These results reveal a tradeoff between upper- and lower-tail performance in arrival-time estimation that is invisible to the previously studied risk functional. Nevertheless, the refined centrality measure attains the optimal order of the risk for all its parameter values.
\end{abstract}

 \medskip
\noindent 
{\small
{\bf Keywords:}  Network archaeology, centrality measures, history estimation, uniform attachment trees, random recursive tree.

\medskip\noindent
{\bf 2020 Mathematics Subject Classification:} 60C05, 05C80, 62M05  
}
\hypersetup{linkcolor=black}
\hypersetup{linkcolor=blue}


\section{Introduction and main results}
Network archaeology asks what can be inferred about the growth history of a network from its final structure. A central instance is root-finding: given a graph generated by a growth process but observed without arrival-time labels, identify the vertex that arrived first. This problem models, for example, the detection of patient zero in infection networks \cite{hens2012robust, keeling2005networks}, and has been studied extensively for random tree models, including uniform random recursive trees \cite{BuDeLu17,addarioberry2024leafstrippinguniformattachment,addarioberry2024optimalrootrecoveryuniform,BRRS26,JoLo17a}, preferential attachment trees \cite{contat2024eve,dereich2009random, BuDeLu17}, nearest neighbor trees \cite{brandenberger2024finding}, and Bienaymé–Galton–Watson trees \cite{BrDeGo21}. Related questions have also been explored for random graphs beyond trees, see \cite{briend2023archaeology, banerjee2023degree, CrXu21a}.
Root-finding algorithms commonly proceed either via Bayesian approaches \cite{CrXu21, young2019phase} or by ranking vertices according to a centrality statistic. The latter builds upon the idea that the root is among the most `central' nodes in the graph, and a variety of centrality measures have been considered, including degree, rumor centrality, and Jordan centrality.

\smallskip 
Motivated by these results, recent work has asked whether centrality-based rankings can be used to recover more detailed temporal information. Rather than identifying only the earliest vertex, the goal is to estimate the arrival time of an arbitrary vertex $v$, given only the unlabeled tree. This problem was first investigated for preferential attachment trees in \cite{magner2018times, young2019phase}, and more recently for uniform random recursive trees in \cite{briend2024estimating}. In these works, the arrival time of $v$ is estimated by $\hs(v)$, defined as the rank of $v$ induced by a given centrality measure. In \cite{briend2024estimating}, the performance of $\hs$ is evaluated through the risk functional
 \begin{equation}\label{eq:RiskDef2}
    R_{\alpha}(\hs):=\E\Bigg[\sum_{v\in V_n} \frac{|\hs(v)-v|}{v^{\alpha}}\bigg], \qquad \alpha>0.
 \end{equation}

\smallskip 
\noindent\textbf{Our contribution.\ }
In this paper, we take an alternative, pointwise perspective. For a vertex $v\in[n]$ in a uniform random recursive tree, we study the distribution of the relative error $\hs(v)/v$ and derive tail bounds that are uniform in both $v$ and~$n$. 
Proposition~\ref{prop:lower_bound} shows that every label-invariant estimator incurs errors of order $v$ with constant probability, making relative error the natural scale for arrival-time estimation.
Moreover,  this perspective distinguishes between overestimation and underestimation. While this distinction is absent in root finding and only weakly reflected by the risk functional~\eqref{eq:RiskDef2}, it reveals qualitative differences between  estimators that are invisible from the risk alone. At the same time, pointwise bounds still determine the asymptotic order of the global risk.
We summarize our main findings informally.

\begin{itemize}[itemsep=0pt, leftmargin=*]
    \item[-] \emph{Jordan-2 estimator}.\ We introduce a new centrality measure $\phi^{\ssup{2}}$ and study the ranking $\hs_2$ based on this centrality measure. Theorem \ref{thm:main} shows that the upper tail decays faster than the ranking based on Jordan centrality, as 
    \begin{equation}\label{eq:main asymptotic}
        \p\big(\hs_2(v)\ge Sv\big)\asymp \frac{1+\log S}{S^2},\qquad S\to\infty.
    \end{equation}
    This improvement comes at the cost of a heavier lower tail, as
    $\p(\hs_2(v)\le v/S)\asymp S^{-2}$ for this estimator. Furthermore, the tail bounds \eqref{eq:main asymptotic} allow us to determine the asymptotic growth of the risk for any value of $\alpha >0$ (Corollary \ref{cor:new method}), which turns out to be optimal up to constant factors (Corollary \ref{cor:optimal}).
    
    \item[-] \emph{Jordan estimator}.\  For the estimator \(\hs_J\) induced by Jordan centrality, we obtain matching upper and lower bounds, up to constants, for the upper tail uniformly in $v$ and $n$; see Theorem~\ref{thm:jordan}. That is,
    \[
        \p\big(\hs_J(v)\ge Sv\big)\asymp \frac{1}{S}, \qquad S\to\infty.
    \]
    We also show that for the estimator induced by Jordan centrality, the lower tail decays much faster than the upper tail, as $\log\p(\hs_J(v)\le v/S)\asymp -S$.
    This upper-tail estimate allows us to identify the order of the parametrized risk \eqref{eq:RiskDef2} for Jordan centrality for all values of $\alpha > 0$, determining regimes in which this order is suboptimal relative to that of $\hs_2$ (Corollary~\ref{cor:new method}). 
\end{itemize}

Our analysis of these two estimators suggests a qualitative tradeoff that we expect to hold more broadly (for instance, for rankings based on rumor centrality): improvements in the upper-tail behavior of the estimation error come at the expense of worse lower-tail behavior, and vice versa. From this perspective, the Jordan-2 estimator may be viewed as comparatively balanced, in that both its upper and lower tails exhibit quadratic decay, up to logarithmic factors.  
Next, we formalize our model, the estimators, and our results.

\subsection{The model: uniform random recursive trees}\label{sec:model}

We study uniform random recursive trees (RRT). We denote by $T_n=(V_n,E_n)$ a RRT of size $n$, where the vertex set is $V_n=[n]$, and the random edge set $E_n$ is sampled recursively as follows. Start with $T_1$, consisting of a single vertex labeled $1$, the root, and no edges. Given $T_n$, the tree $T_{n+1}$ is constructed by first choosing $u\in \{1,\ldots,n\}$ uniformly at random, independent of $T_1,\ldots,T_n$, and then adding a new vertex labeled $n+1$ and the undirected edge $\{u,n+1\}$ to the tree. We also write $\mathrm{pa}(n+1)=u$ and say that $u$ is the \emph{parent} of $n+1$. 

\smallskip
The tree $T_n$ is \emph{recursive}: its vertex labels are increasing on any simple (non-backtracking) path starting at the root. We sometimes relabel trees by applying a permutation to their vertex labels. When the vertex labels of a labeled tree $\rt$ are permuted by $\tau$, we denote the resulting tree by $\rt^{\tau}$. In general, $\rt^{\tau}$ is not a recursive tree.

\medskip 
\noindent\textbf{History estimation.}\ Our goal is to estimate the history of the tree, that is, to estimate the arrival time of each vertex using only the unlabeled tree structure. Formally,  given a vertex-labeled tree $\rt_n$ on $n$ vertices, an estimator $\hs$ is a (possibly random) bijection $\hs:V_n \to [n]$. Throughout, we only consider \emph{label-invariant estimators}, which satisfy, for any fixed labeled tree $\rt$, any permutation of its vertices $\tau$, and any vertex $u$,

\begin{equation}\label{eq:def_label-invariant}
    \hs\left(u,\rt\right)
    \quad 
    \overset{\mathcal{L}}{=} \quad
    \hs\left(\tau(u),\rt^{\tau}\right),
\end{equation}
where the equality would be deterministic if $\hs$ itself were deterministic given $\rt$.

\subsection{Jordan and Jordan-$2$ centrality}
We next introduce the Jordan centrality and the Jordan-2 centrality, and the orderings introduced by them.
Given a tree $\rt$ and a vertex $v$ in this tree, let $(\rt, v)$ denote the tree rooted at $v$, and, for a vertex $u$, let $(\rt, v)_{u\downarrow }$ be the subtree of $u$  in this rooted tree.
We define the \emph{Jordan centrality}\footnote{
The more common convention defines Jordan centrality by $\psi_{\rt}(v)=\max_{u\sim v}|(\rt,v)_{u\downarrow}|=|\rt|-\phi_\rt(v)$. We use $\phi_{\rt}$ because it typically coincides with the size of the fringe subtree $(t,1)_{v\downarrow}$ and extends directly to the Jordan-$k$ centrality defined in Section \ref{sec:discussion}.} 
of a vertex $v$ by 
\begin{equation}\label{eq:neg-jordan}
    \phi_\rt(v) := \min_{u\sim v}  \big|(\rt, u)_{v\downarrow}\big|,
\end{equation}
where the minimum is taken over all neighbors of $v$ in $\rt$. 
Ordering vertices decreasingly by $\phi_\rt$ (with random tie-breaking) yields the \emph{Jordan ordering} $\hs_J$ (with random tie-breaking). Thus, rank \(1\) is assigned to the vertex with largest centrality, and smaller values of \(\hs_J(v)\) correspond to earlier estimated arrival times.
For vertices $v$ with $|(\rt,1)_{v\downarrow}|\leq |\rt|/2$, the minimum in \eqref{eq:neg-jordan} is attained at the parent of $v$, and $\phi_\rt(v)=|(\rt,1)_{v\downarrow}|$ is the size of the \emph{fringe tree} of $v$. For $v\in\rt$, we denote the vertex attaining the minimum in \eqref{eq:neg-jordan}, an estimate of the parent of $v$, by 
\begin{equation}\label{eq:v1}
v^{\ssup{1}}:=\argmin_{u\in \rt: u\sim v} \big|(\rt, u)_{v\downarrow}\big|
\end{equation} 
with tie-breaking discussed in Definition \ref{def:jordan2}.

\begin{definition}[Jordan-$2$ centrality]\label{def:jordan2}
    For a finite tree $\rt=(V_n,E_n)$, we define the \emph{Jordan-$2$ centrality} $\phi_{\rt}^{\ssup{2}} : V_n \to \R_{\geq 0}$ by
\begin{equation}
    \phi_\rt^{\ssup{2}}(v):=  \phi_\rt(v) \cdot \big(\phi_\rt(v^{\ssup{1}}) \vee  \phi_\rt(v) \big).\label{eq:NNBJordanDef}
\end{equation}
If $v^{\ssup{1}}$ is not uniquely defined, we let $v^{\ssup{1}}$ be such that the resulting choice minimizes $\phi_\rt^{\ssup{2}}$.  The Jordan-2 ordering $\widehat\sigma_2$ is given by ordering vertices decreasingly by $\phi_\rt^{\ssup{2}}$, and breaking ties uniformly at random.
\end{definition}
For the uniform recursive tree $T_n$, we also write $\phi_{n}$ and $\phi_{n}^{\ssup{2}}$ for $\phi_{T_n}$ and $\phi_{T_n}^{\ssup{2}}$, respectively.
Note that the Jordan-$2$ centrality also satisfies
\begin{equation*}
    \phi_\rt^{\ssup{2}}(v) =  \phi_\rt(v) \cdot \big(\phi_\rt(v^{\ssup{1}}) \vee  \phi_\rt(v) \big) = \big|(\rt, v^{\ssup{1}})_{v\downarrow}\big|\cdot\left(\big|(\rt, v^{\ssup{2}})_{v^{\ssup{1}}\downarrow}\big| \vee \big|(\rt, v^{\ssup{1}})_{v\downarrow}\big|\right),
\end{equation*}
where $v^\ssup{2}$ is an estimator for the parent of $v^\ssup{1}$, that is,
\begin{equation}\label{eq:v2}
v^{\ssup{2}}:= 
\argmin_{u\in \rt: u\sim v^{\ssup{1}}} \big|(\rt,\, u)_{v^\ssup{1}\downarrow}\big|.
\end{equation}
Figure~\ref{fig:NBJ} presents two examples of the relative positions of $v, v^{\ssup{1}}$, and $v^{\ssup{2}}$. 
\begin{figure}[t]
    \centering
     \begin{minipage}[t]{0.48\textwidth}
        \centering
        \begin{tikzpicture}[scale=0.6,
		vertex/.style={circle, fill=black, minimum size=7pt, inner sep=0pt},
		edge/.style={thick, black}
		]
		
		
		\node[vertex, label={[right, xshift=1pt, yshift=-1pt] $v^{(2)}$}] (v2) at (0, 0) {};
		
		\node[vertex, label={[right, xshift=1pt, yshift=-1pt] $v^{(1)}$}] (v1) at (-3.5, -2.5) {};
		\node[vertex] (m) at (0.8, -2.5) {};
		\node[vertex] (r) at (4.2, -2.5) {};
		
		\node[vertex, label={[right, xshift=1pt] $v$}] (v) at (-4.8, -4.5) {};
		\node[vertex] (v1r) at (-2.2, -4.5) {};
		
		\node[vertex] (v_1) at (-5.4, -6.5) {};
		\node[vertex] (v_2) at (-4.2, -6.5) {};
		
		\node[vertex] (v1r_1) at (-3.1, -6.5) {};
		\node[vertex] (v1r_2) at (-2.2, -6.5) {};
		\node[vertex] (v1r_3) at (-1.3, -6.5) {};
		
		\node[vertex] (mm) at (0.8, -4.5) {};
		\node[vertex] (mr) at (2.5, -4.5) {};
		
		\node[vertex] (mm_1) at (-0.1, -6.5) {};
		\node[vertex] (mm_2) at (0.5, -6.5) {};
		\node[vertex] (mm_3) at (1.1, -6.5) {};
		\node[vertex] (mm_4) at (1.7, -6.5) {};
		
		\node[vertex] (rc) at (4.2, -4.5) {};
		
		\node[vertex] (rc_1) at (3.6, -6.5) {};
		\node[vertex] (rc_2) at (4.8, -6.5) {};

		\pgfdeclarelayer{bg}
		\pgfsetlayers{bg,main}
		
		\begin{pgfonlayer}{bg}
			\fill[blobteal] plot [smooth cycle, tension=0.5] coordinates {
				(-3.0, -1.2)
				(-1.0, -3.8)
				(-0.7, -6.5)
				(-2.5, -7.4)
				(-5.4, -7.4)
				(-6.3, -6.0)
				(-5.2, -3.2)
				(-4.2, -1.5)
			};
			\fill[blobpink] plot [smooth cycle, tension=0.5] coordinates {
				(-4.8, -4)
				(-4.2, -4.5)
				(-3.8, -6.6)
				(-4.8, -7.1)
				(-5.8, -6.7)
				(-5.2, -4.5)
			};
		\end{pgfonlayer}
		
		
		\draw[edge] (v2) -- (v1);
		\draw[edge] (v2) -- (m);
		\draw[edge] (v2) -- (r);
		
		\draw[edge] (v1) -- (v);
		\draw[edge] (v1) -- (v1r);
		\draw[edge] (v) -- (v_1);
		\draw[edge] (v) -- (v_2);
		\draw[edge] (v1r) -- (v1r_1);
		\draw[edge] (v1r) -- (v1r_2);
		\draw[edge] (v1r) -- (v1r_3);
		
		\draw[edge] (m) -- (mm);
		\draw[edge] (m) -- (mr);
		\draw[edge] (mm) -- (mm_1);
		\draw[edge] (mm) -- (mm_2);
		\draw[edge] (mm) -- (mm_3);
		\draw[edge] (mm) -- (mm_4);
		
		\draw[edge] (r) -- (rc);
		\draw[edge] (rc) -- (rc_1);
		\draw[edge] (rc) -- (rc_2);
		
	\end{tikzpicture}
        \caption*{}
    \end{minipage}
    \hfill
    \begin{minipage}[t]{0.48\textwidth}
        \centering
        \begin{tikzpicture}[scale=0.6,
		vertex/.style={circle, fill=black, minimum size=7pt, inner sep=0pt},
		edge/.style={thick, black}
		]
		
		
		\node[vertex] (1) at (2,6) {};
		
		\node[vertex, label={[left, xshift=0pt, yshift=1pt] $v^{(1)}$}] (21) at (-1,4) {};
		\node[vertex] (22) at (5,4) {};
		
		\node[vertex] (31) at (-2.5,2) {};
		\node[vertex, label={[left, xshift=-1pt, yshift=1pt] $v^{(2)}=v$}] (32) at (0.5,2) {};
		\node[vertex] (33) at (3,2) {};
		\node[vertex] (34) at (5,2) {};
		\node[vertex] (35) at (6,2) {};
		
		\node[vertex] (41) at (-3,0) {};
		\node[vertex] (42) at (-2,0) {};
		\node[vertex] (43) at (-1,0) {};
		\node[vertex] (44) at (0,0) {};
		\node[vertex] (45) at (1,0) {};
		\node[vertex] (46) at (2,0) {};
		\node[vertex] (47) at (3,0) {};
		\node[vertex] (48) at (4,0) {};
		\node[vertex] (49) at (5,0) {};
		
		\node[vertex] (51) at (0,-2) {};
		\node[vertex] (52) at (1,-2) {};
		
		\draw[edge] (21) -- (1);
		\draw[edge] (22) -- (1);
		
		\draw[edge] (31) -- (21);
		\draw[edge] (32) -- (21);
		\draw[edge] (33) -- (21);
		\draw[edge] (34) -- (22);
		\draw[edge] (35) -- (22);
		
		\draw[edge] (41) -- (31);
		\draw[edge] (42) -- (31);
		\draw[edge] (43) -- (32);
		\draw[edge] (44) -- (32);
		\draw[edge] (45) -- (32);
		\draw[edge] (46) -- (32);
		\draw[edge] (47) -- (33);
		\draw[edge] (48) -- (33);
		\draw[edge] (49) -- (34);
		
		\draw[edge] (51) -- (44);
		\draw[edge] (52) -- (45);

		\pgfdeclarelayer{bg}
		\pgfsetlayers{bg,main}
		
		\begin{pgfonlayer}{bg}
			\fill[blobpink] plot [smooth cycle, tension=0.5] coordinates {
				(0.5, 2.5)
				(2, 1.2)
				(2.2, -1.5)
				(1, -2.5)
				(0, -2.5)
				(-1.3, -1.5)
				(-1.5, 1)
			};
			
			\fill[blobteal] plot [smooth cycle, tension=0.7] coordinates {
				(2, 6.5)
				(5, 4.5)
				(6.5, 2)
				(6, -0.5)
				(3.4, -0.5)
				(2.5, 0)
				(2.5, 1.5)
				(1.5, 2.5)
				(0, 3)
				(-1.75, 3.5)
				(-1, 5)
			};

		\end{pgfonlayer}
	\end{tikzpicture}
        \caption*{}
    \end{minipage}
    \vspace{-0.4cm}
    \caption{The Jordan-$2$ centrality of a vertex $v$ in two trees $\rt$. The light-red tree  represents $(\rt, v^{\ssup{1}})_{v\downarrow}$.
    The tree contained in the turquoise area is $(\rt, v^{\ssup{2}})_{v^{\ssup{1}}\downarrow}$. The Jordan-$2$ centrality $\phi_\rt^\ssup{2}(v)$ is given by the product of the number of vertices contained in the light-red and the turquoise areas for these examples. Typically, the turquoise tree is a superset of the light-red tree, but exceptions are possible, as shown in the right tree where $v^\sss{(2)}=v$. When the light-red tree is larger than the turquoise tree, the Jordan-2 centrality is given by the square of the size of the light-red tree.
    }
    \label{fig:NBJ}
    \vspace{-0.2cm}
\end{figure}

\smallskip 
Typically, the Jordan-2 is obtained by multiplying the Jordan centrality of a vertex by the Jordan centrality of an estimator of the parent of this vertex. This allows for more robust history estimation, as stated in Theorems \ref{thm:main} and \ref{thm:jordan} below. One problem with the Jordan centrality defined in \eqref{eq:neg-jordan} is that it is very sensitive to $v$ remaining a leaf for a long time. Considering also the ancestral line of the vertex leads to $\phi_n^{\ssup{2}}$ being less sensitive to the event that $v$ remains a leaf for a long time. A refined heuristic for the Jordan-2 centrality is given at the beginning of Section~\ref{sec:NBjordan}.
\smallskip

The maximum $(\phi_\rt(v^{\ssup{1}}) \vee  \phi_\rt(v) )$ in \eqref{eq:NNBJordanDef} is useful to handle the exceptional configuration where $v^\ssup{1} \neq \mathrm{pa}(v)$ (see e.g.\ Lemma~\ref{lem:minima}) and to ensure $\phi_n^{\ssup{2}}(v) \geq \phi_n(v)^2$. Additionally, it simplifies some calculations throughout our analysis. To get intuition, we make an observation. For each vertex $v\ge 2$, 
\begin{equation}\label{eq:jordan-upper}
    \phi_\rt(v) = \min_{u\sim v}\big|(\rt,u)_{v\downarrow}\big| \le  \big|(\rt,\pa(v))_{v\downarrow}\big|=\big|(\rt,1)_{v\downarrow}\big|.
\end{equation}
As a result, regardless if $v^\ssup{1}$ is the parent or a child of $v$, we also have $\phi_\rt(v^\ssup{1})\le \big|(\rt,1)_{\pa(v)\downarrow}\big|$, so 
\begin{equation}\label{eq:upper-bound-jordan2}
\begin{aligned}
\phi^\ssup{2}_{\rt}(v)  = \phi_\rt(v) \cdot \big(\phi_\rt(v^\ssup{1})\vee \phi_\rt(v)\big) 
&
\le \big|(\rt,1)_{v\downarrow}\big| \cdot \Big( \big|(\rt,1)_{\pa(v)\downarrow}\big| \vee \big|(\rt,1)_{v\downarrow}\big| \Big)\\& \leq \big|(\rt,1)_{v\downarrow}\big|\cdot \big|(\rt,1)_{\pa(v)\downarrow}\big|, \qquad \qquad\qquad  v\ge 2.
\end{aligned}
\end{equation}
This upper bound is sharp if $\big|(\rt,1)_{\pa(v)\downarrow}\big|\le |\rt|/2$.
Lemma \ref{lem:minima} below establishes a lower bound.


\subsection{Main results}

We start with a lower bound on the performance of arbitrary label-invariant estimators, which we prove in Section \ref{sec:lower bounds}. This lower bound implies that $|\hs(v)-v|$ is of the same order as $v$ (with positive probability), for all label-invariant estimators~$\hs$.

\begin{proposition}[Pointwise estimation for arbitrary estimators]\label{prop:lower_bound}
    Let $\hs$ be a label-invariant estimator of the arrival times in $T_n$. Then, for any $n\ge 3$, and any $v\in [n]$,
    \begin{equation*}
        \p\Big( |\hs(v)-v| \geq \frac{v}{56} \Big) \geq \frac{1}{224}.
    \end{equation*}
\end{proposition}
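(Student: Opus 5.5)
The idea is to exploit label-invariance together with the fact that the law of $T_n$ is uniform over recursive labelings: $\p(T_n=\rt)=1/(n-1)!$ for every recursive tree $\rt$ on $[n]$. For a fixed $v$ and a second label $w\neq v$, let $\tau=\tau_{v,w}$ be the transposition of $v$ and $w$. Suppose $v<w$. Then $\rt^{\tau}$ is again recursive if and only if
\begin{itemize}
\item[(a)] every child of $v$ in $\rt$ has label larger than $w$, and
\item[(b)] $\pa(w)<v$.
\end{itemize}
The remaining constraints are automatic: the children of $w$ have labels larger than $w>v$, and $\pa(v)<v<w$. Let $E_w$ be the event that (a) and (b) hold. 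Then $\rt\mapsto\rt^{\tau}$ is a bijection from $E_w$ onto a set $E'_w$ of recursive trees, and it preserves probability. Label-invariance \eqref{eq:def_label-invariant} gives $\hs(v,\rt)\overset{\mathcal L}{=}\hs(w,\rt^{\tau})$. Hence, for every set $A\subseteq[n]$,
\[
\p\big(\hs(v)\in A,\ E_w\big)=\p\big(\hs(w)\in A,\ E'_w\big).
\]
The case $w<v$ is symmetric: there we need the children of $w$ to exceed $v$ and $\pa(v)<w$.

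\textbf{Step 1: $E_w$ has probability bounded below.} Take a window $W$ of about $cv$ labels adjacent to $v$. Use $W\subseteq(v,2v]$ if $2v\le n$, and $W\subseteq[v/2,v)$ otherwise; the case $n\ge 3$ guarantees that one of these windows is available. In the RRT, the events ``$k$ does not attach to $v$'' for $k=v+1,\dots,w$ and the choice of $\pa(w)$ are independent, so
\[
\p(E_w)\ \ge\ \prod_{k=v}^{w-1}\Big(1-\frac1k\Big)\cdot\frac{v-1}{w-1}\ \ge\ \frac{v-1}{w}\cdot\frac{v-1}{w-1},
\]
which is at least a constant $c_0$ (for instance about $1/4$) for $w\le 2v$ and $v\ge2$. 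The case $w<v$ is analogous: the requirement that no vertex in $(w,v]$ attaches to $w$ has probability $\approx w/v$, and $\pa(v)<w$ has probability $(w-1)/(v-1)$.

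\textbf{Step 2: averaging over $w$.} Let $A=\{k:|k-v|<v/56\}$, so $|A|\le v/28+1$. Suppose, for contradiction, that $\p(\hs(v)\notin A)<\delta$ with $\delta=1/224$. Then for each $w\in W$,
\[
\p(\hs(w)\in A,\ E'_w)=\p(\hs(v)\in A,\ E_w)\ \ge\ c_0-\delta .
\]
On the other hand, $\hs$ is a bijection, so
\[
\sum_{w\in W}\p(\hs(w)\in A)\ \le\ \E\big|\hs^{-1}(A)\big|=|A|\le v/28+1 .
\]
Summing the previous display over $w\in W$ therefore gives $|W|(c_0-\delta)\le v/28+1$. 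Choosing $|W|\approx v/2$ makes this impossible once $c_0-\delta$ exceeds roughly $1/14$, and the constants $56$ and $224$ leave room for this.

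\textbf{Small $v$.} For small $v$ (say $v<56$), the statement reduces to $\p(\hs(v)\ne v)\ge 1/224$, since $|A|=1$. The same averaging argument works with $W$ consisting of a single suitable neighbour $w=v\pm1$, using $\p(E_w)\ge c_0$. For $v=1$, take $w=2$: swapping $1$ and $2$ always preserves recursiveness, so $\hs(1)$ and $\hs(2)$ have the same law, and hence $\p(\hs(1)\neq1)\ge1/2$.

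\textbf{Main obstacle.} I expect the main work to be bookkeeping. One has to choose the window $W$ (above or below $v$, depending on whether $2v\le n$) so that $\p(E_w)$ is uniformly bounded below while $|W|$ is a fixed fraction of $v$. One also has to treat rounding and small $v$ carefully enough to land on the explicit constants $1/56$ and $1/224$. The conceptual step, the swap bijection combined with the counting bound $\E|\hs^{-1}(A)|=|A|$, should go through directly.
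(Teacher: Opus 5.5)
Your argument is correct and takes a genuinely different route from the paper.

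\textbf{The paper's route.} The paper conditions on one random set $I_v(T_n)$: the vertices in $[v/2,v]$ that are leaves at time $v$ and whose parent arrived before $v/2$. It shows that every permutation of this set preserves recursivity and preserves the set itself. Hence, conditionally on $I_v(T_n)=\mathcal I\ni v$, all the $\hs(u)$ with $u\in\mathcal I$ have the same law, and bijectivity gives $|\hs(v)-v|\ge|\mathcal I|/4$ with conditional probability at least $1/4$. It then needs a truncated first-moment bound on $|I_v(T_n)|$, combined with $\p(\pa(v)<v/2)$, to get $\p(|I_v|\ge 4\delta v,\ v\in I_v)\ge 1/28-\delta$. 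That step is where the weak constant $1/224$ comes from.

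\textbf{Your route.} You use only pairwise transpositions, and your characterization of when $\rt^{\tau_{v,w}}$ is recursive is exact for $w\ge 2$. The observation $\p(T_n=\rt)=1/(n-1)!$ gives the measure-preserving bijection $E_w\to E'_w$ directly, so the paper's appeal to an external result is unnecessary. Bijectivity of $\hs$ then enters once, through $\sum_w \p(\hs(w)\in A)\le |A|$.

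\textbf{What each buys.} Your route avoids building a jointly exchangeable class and the concentration step for its size; it needs only the explicit product formulas for $\p(E_w)$. It also gives much better constants. Keeping all terms, it yields
$\p(\hs(v)\notin A)\ge \big(1+\sum_{w\in W}\p(E_w)-|A|\big)/(1+|W|)$.
With $W=\{\lceil v/2\rceil,\dots,v-1\}$ this is about $1/2$ for large $v$, rather than $1/224$. The paper's route instead gives a structural statement: an explicit class of order $v$ mutually indistinguishable vertices.

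\textbf{One slip to fix (small $v$).} As written, your counting inequality $\sum_{w\in W}\p(\hs(w)\in A)\le |A|$ does not count $v$ itself. With $|W|=1$ and $|A|=1$ it only yields $\p(E_w)-\delta\le 1$, which is no contradiction. For example, when $n=v=3$ the only usable swap is $w=2$. Add the term for $v$:
$\p(\hs(v)\in A)+\sum_{w\in W}\p(\hs(w)\in A)\le |A|$.
This gives $(1-\delta)+(\p(E_w)-\delta)\le 1$, which is contradictory because:
\begin{itemize}
\item for $v\ge 3$, $\p(E_{v-1})=\p(\pa(v)<v-1)=(v-2)/(v-1)\ge 1/2$;
\item for $v=2$, $\p(E_3)=\p(\pa(3)=1)=1/2$.
\end{itemize}
Also note that your conditions for $w<v$ implicitly require $w\ge 2$. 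A swap involving the root needs the separate treatment you give for $v=1$.
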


This proposition implies that one should focus on the relative error of $\hs(v)/v$. Unlike root-finding, arrival-time estimation naturally distinguishes between two directions of error: an arrival time may be estimated substantially later or earlier than it truly is. This distinction is only weakly reflected by the risk functional \eqref{eq:RiskDef2}: estimators with markedly different pointwise behavior may nevertheless have the same leading-order risk; see Section \ref{sec:risk}. 
We therefore distinguish between \emph{overestimation} and \emph{underestimation}, and control the upper tail (the event $\left\{ \hs(v) \geq Sv \right\}$) and lower tail (the event $\left\{ \hs(v) \leq v/S \right\}$) separately, for large values of $S$, and uniformly over $v \in [n]$. The next theorem describes our results for the Jordan-2 ordering.

\begin{theorem}[Pointwise estimation of the Jordan-$2$ ordering]\label{thm:main}
    For every $\eps > 0$, there exist constants $c, C > 0$ such that for any $n \ge 1$, any $v \in [n]$ and any $S\in \left[ 1,c (n/v)^{1-\eps} \right]$, the upper tail satisfies
    \begin{equation}\label{jordan2:overest_combined}
        c\frac{1+\log S}{S^2} 
        \ \leq\ \p \big( \hs_{2}(v) \geq S v \big) 
        \ \leq\ C\frac{ 1+\log S}{S^2}.
    \end{equation}
    The upper bound holds for all $S\ge  1$.

    Moreover, there exist constants $c', C', \delta > 0$ such that for any $n \ge 1$, any $v \in [n/4]$, and any $S\in[1,\delta\sqrt{v}]$, the lower tail satisfies
    \begin{equation}\label{jordan2:underest_combined}
        \frac{c'}{S^2} 
        \ \leq\  \p\Big(\hs_2(v) \leq \frac{1}{S}v\Big) 
        \ \leq\ \frac{C'}{S^2}.
    \end{equation}
    The upper bound holds for all $S \geq 1$.
\end{theorem}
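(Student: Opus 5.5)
The plan is to reduce both tails to statements about the pair of fringe sizes $(X_v, X_{\pa(v)})$, where $X_u := |(T_n,1)_{u\downarrow}|$. The estimator $\hs_2(v)$ is then compared with counts of vertices whose Jordan-2 centrality exceeds a threshold.

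In the RRT, $X_v/n$ is approximately $\mathrm{Beta}(1,v-1)$, so $X_v \approx (n/v)E_1$ with $E_1$ roughly exponential. The parent $\pa(v)$ is uniform on $[v-1]$. Moreover, $X_{\pa(v)} \ge X_v + (\text{rest})$, where the rest is approximately $(n/\pa(v))E_2$ with an independent factor. By \eqref{eq:upper-bound-jordan2} and Lemma~\ref{lem:minima} (the lower bound), $\phi^{\ssup{2}}_n(v)$ is sandwiched between constant multiples of $X_v\cdot X_{\pa(v)}$, up to the exceptional event that $X_{\pa(v)} > n/2$. That event only helps $v$ rank highly, or is handled separately.

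Writing $\hs_2(v) = \#\{u : \phi^{\ssup{2}}_n(u) \ge \phi^{\ssup{2}}_n(v)\}$, I would first show a counting lemma: for a threshold $t = \theta n^2/v^2$, the number $N(t)$ of vertices $u$ with $X_uX_{\pa(u)} \ge t$ satisfies $\E N(t) \asymp v/\theta$ up to a logarithmic factor. This comes from the above distributional description summed over $u$, with a second-moment or Markov bound for concentration.

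For the upper tail, $\hs_2(v) \ge Sv$ essentially requires $\phi_n^{\ssup{2}}(v) \lesssim n^2/(Sv)^2$, i.e.\ $E_1\cdot(E_1 + (v/\pa(v))E_2) \lesssim S^{-2}$. I would compute this probability directly. Small $E_1$ has density near $0$, and $\pa(v)/v = U$ is uniform, so the probability is $\p(E_1 E_2 \lesssim U S^{-2})$, and $\p(E_1E_2 \le x) \asymp x\log(1/x)$. This gives $(1+\log S)/S^2$. The upper bound then follows by Markov applied to $\E N(\cdot)$ and splitting on the value of $\phi^{\ssup{2}}_n(v)$ dyadically. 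The lower bound requires showing that, conditionally on $v$ having a small product, at least $Sv$ other vertices exceed it with positive probability. For this I would use a first-and-second-moment argument on $N(t)$, restricted to vertices $u \le Sv/2$. The restriction $S \le c(n/v)^{1-\eps}$ keeps the fringe asymptotics valid.

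For the lower tail, $\hs_2(v) \le v/S$ requires $\phi_n^{\ssup{2}}(v)$ to exceed the centralities of most of the first $v$ vertices. Because $\phi^{\ssup{2}} \ge \phi^2$ and the product structure is heavy-tailed, the dominant scenario is $\pa(v)$ being very early ($\pa(v) \le v/S^2$, probability about $S^{-2}$) together with $E_1$ of order one, so that $X_vX_{\pa(v)} \approx n^2/(v\pa(v))$. Conversely, Markov applied to $N$ and the tail of the product yield the matching $C'/S^2$ bound. The condition $S \le \delta\sqrt v$ ensures that $\pa(v) \approx v/S^2 \ge 1$ is meaningful, and $v \le n/4$ avoids the central-vertex regime.

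The main obstacle is the lower bound on the upper tail: proving that enough competitors beat $v$ despite the correlations between fringe sizes that share ancestors. I would first try conditioning on the tree up to time $Sv$ and using approximate independence of the subsequent Pólya-urn growth.
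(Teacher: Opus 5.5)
There is a genuine gap in the global step, i.e.\ in turning statements about the centrality of $v$ into statements about its rank. Your local heuristics point the right way: the computation $\p(E_1E_2\lesssim U/S^2)\asymp(\log S)/S^2$, and the early-parent mechanism for the lower tail. But first and second moments of the competitor count $N$ are far too weak.

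First, the scaling is off: $\E N(\theta(n/v)^2)\asymp v/\sqrt\theta$, not $v/\theta$, because the number of $u$ with $u\,\pa(u)\lesssim M$ is of order $\sqrt M$. For the upper-tail upper bound the threshold must be $(Cn/(Sv))^2$. There $\E N\asymp Sv/C$, so Markov only gives $\p(N\ge Sv)=O(1)$. Splitting $\phi^{\ssup{2}}_n(v)$ dyadically does not help: even if it were independent of the count, $\sum_{k\ge0}(1+k)2^{-k}\cdot 2^{k/2}/S\asymp 1/S$, which is just the Jordan rate. A second-moment bound gives at best $O(1/(Sv))$. That is not $O((\log S)/S^2)$ when $v$ is small compared with $S$, e.g.\ $v\in\{1,2\}$, i.e.\ root finding.

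The lower-tail lower bound has the same problem. The local event has probability only $\asymp S^{-2}$. So the number of vertices above $(CSn/v)^2$ must be at most $v/S$ except on an event of probability $\ll S^{-2}$, and Markov gives only a constant there.

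What is needed is a bound that beats every power of $S$, uniformly in $v$. This is Lemma~\ref{lem:mixed large}: $\p(\#\{u:\mathrm{Fr}_n(u)\mathrm{Fr}_n(\pa(u))>(n/t)^2\}>Ct)\le e^{-t^{1/4}}$, the technical core of the paper. It rests on an idea missing from your plan: the fringe-splitting (Ulam--Harris) construction, in which children's fringe sizes are generated by successive uniform splits given the parent's size. This provides the independence needed. It is combined with an exponential tail for the number of vertices with fringe size in $(\ell,2\ell]$ (Lemma~\ref{lem:fringe tree count}). It also uses a geometric tail for the number of children carrying an $\alpha$-fraction of their parent's mass (Lemma~\ref{lem:stochastic_domination}).

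Second, in the upper-tail lower bound and the lower-tail upper bound, the global event is a lower deviation: too few vertices have large centrality. Markov says nothing about that. Your $N$ is built from $X_uX_{\pa(u)}\ge\phi^{\ssup{2}}_n(u)$, which points the wrong way for this purpose. The paper instead uses $\phi^{\ssup{2}}_n\ge\phi_n^2$ and counts $u\in\{8t,\dots,16t\}$ with $\mathrm{Fr}^{>16t}_n(u)>n/(64t)$. These indicators are negatively associated with mean at least $1/2$, so a Chernoff bound gives $Ce^{-t/8}$ (Lemma~\ref{claim:S over 64v}).

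With $t=Sv$ this error is negligible against $(\log S)/S^2$. So the conditioning you single out as the main obstacle is never needed. Note also that restricting to $u\le Sv/2$ could never produce $Sv$ competitors. With $t=v/S$, the error $Ce^{-v/(8S)}$ is of order one when $S$ is comparable to $v$, so large $S$ needs its own argument. The paper splits at $S=v^{3/4}$ and uses $\phi^{\ssup{2}}_n(v)\le n\,\mathrm{Fr}_n(v)$ with threshold $(n/(64v^{1/4}))^2$.

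Finally, the exceptional event $\{X_{\pa(v)}>n/2\}$ does not only help $v$. Off this event, $\phi^{\ssup{2}}_n(v)=X_vX_{\pa(v)}$ exactly. On it, suppose $\pa(v)=1$ and one child of the root carries almost all vertices. Then $\phi_n(1)$ is small and $\phi^{\ssup{2}}_n(v)\ll nX_v=X_vX_{\pa(v)}$, which pushes $v$ down the ranking. The event has probability of order $1/v$, so it cannot be discarded by a union bound once $S$ is much larger than $\sqrt v$.

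Lemma~\ref{lem:minima} does not give $\phi^{\ssup{2}}_n(v)\gtrsim X_vX_{\pa(v)}$. It bounds $\phi^{\ssup{2}}_n(v)$ from below by a minimum of four products of witnessed sizes $\mathrm{Fr}^{>v}_n$ of distinct vertices. These are exchangeable and negatively associated (Lemma~\ref{lem:beta-bin-fringe}). Claim~\ref{claim:negative_corelation} then yields the bound $C(1+\log t)/t^2$ with no exceptional event at all.
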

For $v=1$, the upper bound on the probability of overestimating the label of $v$, \eqref{jordan2:overest_combined}, directly implies the following corollary for root-finding with the Jordan-2 ordering.

\begin{corollary}[Root detection using Jordan-2 centrality]
    There exists a constant $C>0$ such that for all $\varepsilon\in(0,1)$, the set $\mathcal{S}_2$ containing the $C\sqrt{\log(1/\varepsilon)/\varepsilon}$ vertices with largest Jordan-2 centrality contains vertex~$1$ with probability at least $1-\varepsilon$.
\end{corollary}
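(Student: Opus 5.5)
The corollary should follow directly from the upper bound in \eqref{jordan2:overest_combined} applied with $v=1$. That upper bound holds for all $S\ge 1$, uniformly in $n$.

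The first step is to translate the event. Let $\mathcal S_2$ be the set of the $k$ vertices with the largest Jordan-2 centrality, with ties broken by the same uniform random tie-breaking that defines $\hs_2$. Then vertex $1$ lies in $\mathcal S_2$ exactly when $\hs_2(1)\le k$. Hence
\[
\p(1\notin\mathcal S_2)=\p(\hs_2(1)\ge k+1)\le \p(\hs_2(1)\ge S\cdot 1)
\]
for any $S\le k+1$. Take $S=k$; this requires $k\ge 1$. The upper bound in Theorem~\ref{thm:main} then gives
\[
\p(1\notin \mathcal S_2)\le C_0\frac{1+\log k}{k^2},
\]
where $C_0$ is the constant from the theorem (with, say, $\eps=1/2$).

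The second step is to choose $k=\lceil C\sqrt{\log(1/\eps)/\eps}\rceil$ and check that $C_0(1+\log k)/k^2\le\eps$ for a suitable absolute constant $C$.

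\begin{itemize}
\item \emph{Small $\log(1/\eps)$.} For $\eps$ close to $1$, $\log(1/\eps)$ is small, so it is cleaner to use $k\ge C\sqrt{\max(1,\log(1/\eps))/\eps}$. Alternatively, note that for $\eps\ge \eps_0$ the statement is trivial as soon as $C$ is large enough to make $k$ exceed a fixed constant.
\item \emph{Main regime.} Otherwise $\log k\le \tfrac12\log(1/\eps)+\log(C\sqrt{\log(1/\eps)})\le c_1(1+\log C)\log(1/\eps)$, using $\log\log(1/\eps)\lesssim\log(1/\eps)$. Therefore
\[
C_0\frac{1+\log k}{k^2}\le C_0\frac{c_2(1+\log C)\log(1/\eps)}{C^2\log(1/\eps)/\eps}=\frac{C_0c_2(1+\log C)}{C^2}\,\eps,
\]
which is at most $\eps$ once $C$ is large, since $(1+\log C)/C^2\to0$.
\end{itemize}

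The only mild subtlety is handling $\eps$ near $1$, where $\log(1/\eps)\to0$. I would deal with it either by the trivial bound for $\eps\ge\eps_0$, or by reading $\log(1/\eps)$ as $\log(e/\eps)$ up to constants. If $k>n$, then $\mathcal S_2$ is the whole vertex set and the claim is trivial. Beyond these points, no new probabilistic input is needed.
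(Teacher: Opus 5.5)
Your argument is the one the paper intends. The paper's entire justification is the remark that the $v=1$ case of the upper bound in \eqref{jordan2:overest_combined} directly implies the corollary. Your translation $\{1\notin\mathcal S_2\}=\{\hs_2(1)\ge k+1\}$ is correct, and so is your check that $C_0(1+\log k)/k^2\le\varepsilon$ for $\varepsilon\le e^{-1}$ and $C$ large.

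One claim in your handling of $\varepsilon$ near $1$ is wrong, though. You say that for $\varepsilon\ge\varepsilon_0$ the statement is trivial once $C$ is large enough to make $k$ exceed a fixed constant. But $\sqrt{\log(1/\varepsilon)/\varepsilon}\to 0$ as $\varepsilon\to 1$, so for every fixed $C$ the prescribed size drops below $1$ near $\varepsilon=1$, and no choice of $C$ makes $k$ large there. Rounding up to $k=1$, you need $\p(\hs_2(1)=1)\ge 1-\varepsilon$ for all $n$, i.e.\ $\inf_n\p(\hs_2(1)=1)>0$. Theorem~\ref{thm:main} cannot give this, since its constant makes the bound at $S=2$ vacuous.

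The missing input is short. Suppose every child $c$ of $1$ has $\mathrm{Fr}_n(c)<n/2$. Then $\phi_n(1)>n/2>\mathrm{Fr}_n(u)\ge\phi_n(u)$ for all $u\neq 1$. Hence $\phi_n^{\ssup{2}}(u)\le\phi_n(u)\,\phi_n(1)<\phi_n(1)^2\le\phi_n^{\ssup{2}}(1)$, and therefore $\hs_2(1)=1$. This event has positive probability for each $n\ge 3$ (e.g.\ the star). It is bounded away from $0$ uniformly in $n$: under the fringe-size splitting, for large $n$ it occurs whenever the first child of the root gets mass in $[n/4,n/2)$ and the second child gets between a third and two thirds of the remaining mass.

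Set $c_0:=\inf_n\p(\hs_2(1)=1)>0$. The range $\varepsilon>1-c_0$ is then immediate. On $[e^{-1},1-c_0]$ you have $k\ge C\sqrt{c_0}$, so your main-regime bound applies after enlarging $C$.

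Your alternative of replacing $\log(1/\varepsilon)$ by $\max(1,\log(1/\varepsilon))$ changes the statement rather than proving it. It is probably what the paper means, since the paper does not address this regime either.
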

For small values of $\eps>0$, the required set $\mathcal{S}_2$ defined above is much larger compared to the optimal confidence set at confidence level $1-\eps$, whose size is of order $\exp\big(\Theta\big(\sqrt{\log(1/\varepsilon)}\big)\big)$,  see \cite{addarioberry2024optimalrootrecoveryuniform}. On the other hand, when using the Jordan ordering, the required set has size that scales as $1/\varepsilon$, see \cite[Theorem 3]{BuDeLu17}, which also follows from the next theorem.

\begin{theorem}[Pointwise estimation of the Jordan ordering]\label{thm:jordan}
    For the Jordan-based estimator $\hs_{J}$, there exist constants $c, C, \delta > 0$ such that for all $n \ge 1$, any $v \in [n]$ and any $S \in [1, \delta n/v]$, the upper tail satisfies
    \begin{equation}\label{jordan:overest_combined}
        \frac{c}{S} 
        \ \leq\ \p \big( \hs_{J}(v) \geq S v\big) 
        \ \leq\ \frac{C}{S}.
    \end{equation}
    The upper bound holds for all $S \geq 1$.

    Moreover, there exist constants $c', c'', C', C'', \delta' > 0$ such that for all $n \ge 1$, any vertex $v \in [n/2]$, and any $S \in [1, \min(\sqrt{v},\delta'v)]$, the lower tail satisfies
    \begin{equation}\label{jordan:underest_combined}
        c''\exp(-C''S) 
        \ \leq\ \p \Big( \hs_{J}(v) \leq \frac{1}{S} v\Big) 
        \ \leq\ C'\exp(-c'S).
    \end{equation}
    The upper bound holds for all $v \in [n]$ and $S \in [1, \sqrt{v}]$; the lower bound holds for all $v\in[n/2]$ and $S\in[1,\delta'v]$.
\end{theorem}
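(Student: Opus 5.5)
We reduce both tails to the fringe-subtree sizes $F_n(u):=|(T_n,1)_{u\downarrow}|$ for $u\ge 2$. By \eqref{eq:jordan-upper}, $\phi_n(u)\le F_n(u)$, with equality whenever $F_n(u)\le n/2$. Fringe sizes in the RRT have a Pólya urn description. Conditionally on $T_u$, the pair $(F_n(u),\, n-F_n(u))$ evolves as a Pólya urn started from $(1,u-1)$. Hence $F_n(u)-1\sim\mathrm{BetaBin}(n-u,1,u-1)$, and $F_n(u)/n\to \beta_u\sim\mathrm{Beta}(1,u-1)$. In particular $\p(F_n(u)\ge x n/u)\le C e^{-cx}$ and $\p(F_n(u)\le y n/u)\le Cy$, uniformly in $u,n$. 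The rank satisfies
\[
\hs_J(v)-1\ \stackrel{d}{\approx}\ \#\{u\ne v:\phi_n(u)>\phi_n(v)\}
\]
(plus ties), so both tails become counting problems.

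\textbf{Upper tail.} The event $\{\hs_J(v)\ge Sv\}$ forces either $F_n(v)$ to be small, or many late vertices to have large fringe trees. Put $m:=F_n(v)$. If $m\ge n/(4S v)\cdot S^{0}$ (say $m\ge a n/(Sv)$ for a small constant $a$), I bound
\[
\E\,\#\{u:\ \phi_n(u)\ge a n/(Sv)\}\ \le\ \sum_{u}\p\big(F_n(u)\ge a n/(Sv)\big) + O(1).
\]
The term $O(1)$ accounts for the at most two vertices whose Jordan minimum is not attained at the parent. The sum is $\sum_u Ce^{-a u/(Sv)}\le C' Sv/a$. This is too crude, since it gives order $Sv$ with no gain. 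I therefore choose thresholds more carefully and write
\[
\p(\hs_J(v)\ge Sv)\le \p\big(F_n(v)\le b n/(Sv)\big)+\p\big(\#\{u: F_n(u)\ge bn/(Sv)\}\ge Sv\big).
\]
The first term is $\le C b/S$ by the Beta$(1,v-1)$ density bound near $0$. For the second, the count of vertices with fringe fraction $\ge t$ concentrates around $\sum_u \p(\beta_u\ge t)\approx \sum_u e^{-ut}\approx 1/t = Sv/b$. This is below $Sv$ once $b>1$, but then the first term is no longer small. The fix is to use $b$ a large constant in the count together with Markov's inequality on the count's excess. Because only a fraction of the vertices counted lie above the threshold, the actual expected count is $\le (1+o(1))Sv/b$. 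Markov then gives probability $\le 1/b$, which is not of order $1/S$.

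So instead I plan a finer decomposition. I condition on $F_n(v)=m$ and compare with the vertices $u\le v$ only through the deterministic inequality: at most $v$ vertices arrive before $v$. I then count late vertices $u>v$ with $F_n(u)>m$, writing this count as $\#\{u>v: F_n(u)>m\}$. Its conditional expectation, given $\beta_v=s$, is $\lesssim \sum_{u>v}\p(\beta_u>s)\lesssim 1/s$. Markov then gives $\p(\hs_J(v)\ge Sv\mid \beta_v=s)\lesssim \min(1, 1/(sSv))$. Integrating against the density $(v-1)(1-s)^{v-2}\le v$ near $0$ yields
\[
\int_0^1 \min\Big(1,\frac{1}{sSv}\Big)\, v\,e^{-vs}\,ds\ \asymp\ \frac{1+\log S}{S},
\]
which is off by a logarithm. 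The main obstacle is removing this $\log$. My approach is to use a second-moment or exponential bound on the count instead of Markov: given $\beta_v=s$, the number of $u$ with $\beta_u>s$ is a sum of weakly dependent indicators with mean $\asymp 1/s$, so it exceeds $Sv$ with probability $\le e^{-c Sv s}$ once $Sv\ge 2C/s$. Integrating then gives $\p\lesssim \p(\beta_v\le C/(Sv))\asymp 1/S$. To control the dependence, I will use the stick-breaking / Dirichlet representation of the fringe fractions (splitting intervals in the continuous-time embedding) and treat finite-$n$ errors via the beta-binomial tails. For the lower bound $c/S$, I use the event $\{F_n(v)\le n/(Sv)\}$ of probability $\asymp 1/S$, valid for $S\le \delta n/v$. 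On this event, law-of-large-numbers concentration of the count shows that with positive conditional probability at least $Sv$ vertices have larger fringe sizes.

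\textbf{Lower tail.} The event $\{\hs_J(v)\le v/S\}$ requires $\phi_n(v)$ to exceed $\phi_n(u)$ for at least $v-v/S$ of the vertices $u<v$. Equivalently, $F_n(v)$ must be larger than most fringe trees of earlier vertices. Since $v$ is itself a descendant-free competitor of its ancestors only, the direct approach is this: among $u<v$, roughly $v/2$ are ancestors-free of $v$, and their fringe fractions are asymptotically independent Beta$(1,u-1)$ variables. For $v$ to beat $(1-1/S)v$ of them, we need $\beta_v\gtrsim S/v$ (probability $e^{-cS}$). Otherwise we need an atypical event in which $\gtrsim v/S$ of the earlier fringe trees have fraction below $\beta_v\lesssim S/v$. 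A large-deviation (Chernoff) bound on that count, again via the independence structure of the Pólya urns, gives $e^{-cS}$ as long as $S\le\sqrt v$. For the matching lower bound $e^{-CS}$, I force $\beta_v\ge CS/v$, which has probability $e^{-C''S}$, and check via concentration that then $v$ beats all but $v/S$ earlier vertices with positive probability. This needs $v\le n/2$ and $S\le\delta'v$ so that the fringe of $v$ stays below $n/2$ and $\phi_n(v)=F_n(v)$.
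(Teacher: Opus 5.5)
Your upper-tail decomposition matches the paper's: a local event that $\phi_n(v)$ is small, plus a global event that many vertices have large centrality. You also rightly see that Markov's inequality on the count costs a factor $\log S$. But the whole difficulty then lies in an exponential tail bound for $\#\{u:\mathrm{Fr}_n(u)>n/t\}$ beyond $Ct$, and you only assert it. Your justification, a ``sum of weakly dependent indicators'', does not hold. These indicators are monotone along ancestral lines: one vertex with a large fringe tree forces all its ancestors to have one. So neither a Chernoff nor a second-moment bound applies off the shelf.

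The paper proves this bound in Section~\ref{subsec:fringe} via fringe-size splitting on the Ulam--Harris tree. In a window $(\ell,2\ell]$, consider nodes whose parent has fringe at least $\ell+1$ and whose own fringe lies in $[\ell/2,2\ell]$. Each such node falls into $[\ell/2,\ell]$ with conditional probability at least $1/3$. Nodes in $[\ell/2,\ell]$ have disjoint fringe trees, so there are at most $2n/\ell$ of them. A binomial large deviation then caps the count in $(\ell,2\ell]$ (Lemma~\ref{lem:fringe tree count}), and summing over windows gives Corollary~\ref{coro:fringe tree counts large}. With this, the plain union bound at threshold $\widetilde C n/(Sv)$ already gives $C/S$, and conditioning on $\beta_v$ is unnecessary. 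Your worry that a constant $b>1$ makes the first term ``no longer small'' is unfounded, since $Cb/S=O(1/S)$.

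There are two further gaps in the upper tail.
\begin{itemize}
\item The local term is $\p(\phi_n(v)\le x)$. Bounding it by $\p(\mathrm{Fr}_n(v)\le x)$ goes the wrong way, because $\phi_n(v)\le\mathrm{Fr}_n(v)$; for $v=1$ it fails outright. The paper uses Lemma~\ref{lem:minima}, $\phi_n(v)\ge\min(\mathrm{Fr}_n^{>v}(v),\mathrm{Fr}_n^{>v}(1))$, where both terms are distributed as $\mathrm{Fr}_n(v)$.
\item The lower bound $c/S$ needs a lower-tail bound on the number of vertices whose Jordan centrality (not merely fringe size) exceeds the threshold. The paper gets this from the negatively associated witnessed sizes $\mathrm{Fr}_n^{>16Sv}(u)$, $u\in[8Sv,16Sv]$, plus a union bound on $\{\mathrm{Fr}_n(u)\ge n/2\}$ (Lemma~\ref{claim:S over 64v}). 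It then uses $\p(A\cap B)\ge\p(A)-\p(B^c)$ instead of conditioning.
\end{itemize}

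For the lower tail, the fringe fractions of earlier vertices are not asymptotically independent, because fringe trees are nested. The Chernoff step must instead use two facts: $(\mathrm{Fr}_n^{>m}(u))_{u\le m}$ is Dirichlet--multinomial, hence negatively associated, and $\mathrm{Fr}_n^{>m}(u)\le\mathrm{Fr}_n(u)$.

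More seriously, your lower bound fails once $v/S\ll\log v$, which the claimed range $S\le\delta'v$ includes. Your argument forces $\beta_v\ge CS/v$ and then claims that $v$ beats all but $v/S$ vertices with positive conditional probability. However, every ancestor $a$ of $v$ with $\mathrm{Fr}_n(a)\le n/2$ satisfies $\phi_n(a)=\mathrm{Fr}_n(a)>\mathrm{Fr}_n(v)=\phi_n(v)$. The depth of $v$ is of order $\log v$ and independent of $\mathrm{Fr}_n(v)$, so this conditional probability tends to $0$. Within $S\le\sqrt v$ this is harmless, but the theorem claims the lower bound up to $\delta'v$.

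The paper treats $S\ge v^{1/10}$ by additionally forcing $\pa(v)=1$, which costs only $1/(v-1)\ge e^{-CS}$ in that range. Given $\pa(v)=1$ and $\mathrm{Fr}_n(v)=k$, the rest of the tree is an RRT of size $n-k$, so Markov's inequality suffices. For $S\le v^{1/10}$, the paper instead subtracts the failure probability from Corollary~\ref{coro:fringe tree counts large}.
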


Theorems \ref{thm:main} and  \ref{thm:jordan} have straightforward consequences when plugged into the definition of the risk \eqref{eq:RiskDef2} introduced by Briend \emph{et al.}\ \cite{briend2024estimating}, improving on their results. We state these consequences in Section \ref{sec:risk}. Before that, we present a numerical illustration and a heuristic for the different behavior of the Jordan-2 ordering vs.\ the Jordan ordering.

\medskip 
\noindent\textbf{Numerical illustration.\ }Figure \ref{fig:Simulation:underestimation} illustrates the tail behaviors predicted by Theorems \ref{thm:main} and~\ref{thm:jordan}. The simulations are based on $2\times 10^4$ independent uniform random recursive trees of size $n=10^4$. When evaluating the empirical probability of the event $\{\hs(v) \geq Sv\}$, we averaged over the vertices $v\in(21,50]$, while for the plots of the lower tail corresponding to the event $\{\hs(v)\le v/S\}$, we averaged over the vertices $v \in (8000, 9000]$. These choices reflect the finite-size constraints: large overestimation factors are only visible when $v$ is small compared to $n$, whereas large underestimation factors require $v$ to be sufficiently large. 

\begin{figure}[h]
	{
    \begin{tikzpicture}[scale=1]
	\begin{loglogaxis}[
		xlabel={$S$},
        xlabel style={yshift=8pt},
		ylabel={},
		title={Upper tail},
        title style={yshift=-5pt},
		grid=both,
		major grid style={gray},
		minor grid style={gray!20},
		xtick distance=10,
		ytick distance=10,
		width=0.35\textwidth,
		height=5.5cm,
        legend pos = south west, 
        legend cell align=left,
		]
		
		\addplot[
		thick, red
		]
		coordinates {
			(5, 0.21460666666666667)
			(6, 0.1755)
			(7, 0.14832)
			(8, 0.12851666666666667)
			(9, 0.11309333333333334)
			(10, 0.10096166666666667)
			(11, 0.091115)
			(12, 0.08291833333333333)
			(13, 0.07618333333333334)
			(14, 0.070445)
			(15, 0.06554333333333333)
			(16, 0.061121666666666664)
			(17, 0.05733333333333333)
			(18, 0.05401333333333333)
			(19, 0.05102166666666667)
			(20, 0.04835)
			(21, 0.04586833333333333)
			(22, 0.04365)
			(23, 0.041705)
			(24, 0.03983)
			(25, 0.03807166666666666)
			(26, 0.036625)
			(27, 0.03511333333333334)
			(28, 0.03378833333333333)
			(29, 0.032595)
			(30, 0.03145)
			(31, 0.030336666666666668)
			(32, 0.029248333333333335)
			(33, 0.02827)
			(34, 0.027443333333333333)
			(35, 0.026485)
			(36, 0.025705)
			(37, 0.02492)
			(38, 0.024213333333333333)
			(39, 0.023545)
			(40, 0.022971666666666668)
			(41, 0.022353333333333333)
			(42, 0.021716666666666665)
			(43, 0.021148333333333335)
			(44, 0.020611666666666667)
			(45, 0.02007)
			(46, 0.01963)
			(47, 0.019175)
			(48, 0.018708333333333334)
			(49, 0.018306666666666666)
			(50, 0.017918333333333335)
			(51, 0.017513333333333332)
			(52, 0.01704)
			(53, 0.016658333333333334)
			(54, 0.016233333333333332)
			(55, 0.015896666666666667)
			(56, 0.015538333333333333)
			(57, 0.015201666666666667)
			(58, 0.014903333333333333)
			(59, 0.01461)
			(60, 0.014375)
			(61, 0.014166666666666666)
			(62, 0.013953333333333333)
			(63, 0.013731666666666666)
			(64, 0.013513333333333334)
			(65, 0.01334)
			(66, 0.013163333333333334)
			(67, 0.012978333333333333)
			(68, 0.01274)
			(69, 0.012485)
			(70, 0.012236666666666667)
			(71, 0.011995)
			(72, 0.011793333333333333)
			(73, 0.011553333333333334)
			(74, 0.011378333333333334)
			(75, 0.011168333333333334)
			(76, 0.010993333333333334)
			(77, 0.010806666666666666)
			(78, 0.010661666666666667)
			(79, 0.010491666666666667)
			(80, 0.010333333333333333)
			(81, 0.010171666666666667)
			(82, 0.010028333333333334)
			(83, 0.009915)
			(84, 0.009816666666666666)
			(85, 0.009706666666666667)
			(86, 0.009591666666666667)
			(87, 0.009456666666666667)
			(88, 0.009346666666666666)
			(89, 0.00922)
			(90, 0.009118333333333334)
			(91, 0.009011666666666666)
			(92, 0.008925)
			(93, 0.00886)
			(94, 0.00876)
			(95, 0.008706666666666666)
			(96, 0.008633333333333333)
			(97, 0.008533333333333334)
			(98, 0.00847)
			(99, 0.008411666666666666)
			(100, 0.008353333333333334)
		};
		\addlegendentry{Jordan}
		
		\addplot[
		thick, blue
		]
		coordinates {
			(5, 0.14660333333333334)
			(6, 0.10718333333333334)
			(7, 0.08226)
			(8, 0.06528833333333334)
			(9, 0.05309666666666667)
			(10, 0.04389666666666667)
			(11, 0.036925)
			(12, 0.031428333333333336)
			(13, 0.0271)
			(14, 0.023705)
			(15, 0.020768333333333333)
			(16, 0.018361666666666665)
			(17, 0.01623)
			(18, 0.014493333333333334)
			(19, 0.013075)
			(20, 0.011833333333333333)
			(21, 0.010745)
			(22, 0.009788333333333333)
			(23, 0.008973333333333333)
			(24, 0.008196666666666666)
			(25, 0.00749)
			(26, 0.006908333333333333)
			(27, 0.006355)
			(28, 0.005923333333333333)
			(29, 0.0054733333333333335)
			(30, 0.005098333333333333)
			(31, 0.004758333333333333)
			(32, 0.004401666666666667)
			(33, 0.00412)
			(34, 0.0038283333333333333)
			(35, 0.0035666666666666668)
			(36, 0.0033566666666666667)
			(37, 0.0031316666666666667)
			(38, 0.002948333333333333)
			(39, 0.00279)
			(40, 0.002628333333333333)
			(41, 0.0025083333333333333)
			(42, 0.0023583333333333334)
			(43, 0.002225)
			(44, 0.0020633333333333333)
			(45, 0.0019433333333333334)
			(46, 0.001835)
			(47, 0.0017266666666666667)
			(48, 0.001665)
			(49, 0.0015833333333333333)
			(50, 0.0015)
			(51, 0.001425)
			(52, 0.0013483333333333333)
			(53, 0.0012733333333333333)
			(54, 0.0012033333333333334)
			(55, 0.0011516666666666667)
			(56, 0.00109)
			(57, 0.0010466666666666667)
			(58, 0.000995)
			(59, 0.0009633333333333333)
			(60, 0.0009266666666666667)
			(61, 0.0008983333333333333)
			(62, 0.0008516666666666667)
			(63, 0.00081)
			(64, 0.0007716666666666667)
			(65, 0.0007366666666666667)
			(66, 0.0007116666666666667)
			(67, 0.0006833333333333333)
			(68, 0.0006666666666666666)
			(69, 0.0006433333333333333)
			(70, 0.0006116666666666667)
			(71, 0.0005933333333333333)
			(72, 0.0005716666666666667)
			(73, 0.0005433333333333334)
			(74, 0.0005283333333333333)
			(75, 0.0005116666666666667)
			(76, 0.0004933333333333334)
			(77, 0.00048)
			(78, 0.00046333333333333334)
			(79, 0.00044833333333333335)
			(80, 0.00042666666666666667)
			(81, 0.0004133333333333333)
			(82, 0.00040166666666666665)
			(83, 0.00039)
			(84, 0.00038)
			(85, 0.000365)
			(86, 0.000355)
			(87, 0.00033)
			(88, 0.000325)
			(89, 0.00031)
			(90, 0.00030333333333333335)
			(91, 0.00028333333333333335)
			(92, 0.0002716666666666667)
			(93, 0.00026)
			(94, 0.000255)
			(95, 0.00024333333333333333)
			(96, 0.00023)
			(97, 0.00022333333333333333)
			(98, 0.00022333333333333333)
			(99, 0.00021166666666666667)
			(100, 0.00021)
		};
		\addlegendentry{Jordan-2}
		
		\addplot[
		dashed, black
		]
		coordinates {
			(5, 0.2)
			(100, 0.01)
		};
		
		\addplot[
		dashed, black
		]
		coordinates {
			(5, 4/5^2)
			(100, 4/100^2)
		};

	\end{loglogaxis}

    \hspace{0.33\textwidth}
		
		\begin{semilogyaxis}[
			xlabel={$S$},
			ylabel={},
            xlabel style={yshift=8pt},
			title={Lower tail: Jordan},
            title style={yshift=-5pt},
			grid=both,
			major grid style={gray},
			minor grid style={gray!20},
			xtick distance=2,
			ytick distance=10,
			width=0.35\textwidth,
			height=5.5cm,
			]

			\addplot[
			thick, red
			]
			coordinates {
				(2, 1.0)
				(3, 0.14998195)
				(4, 0.0232965)
				(5, 0.00347325)
				(6, 0.00031255)
				(7, 3.05e-5)
				(8, 3.95e-6)
				(9, 5.0e-7)
			};

		\end{semilogyaxis}

		\hspace{0.33\textwidth}
		
		\begin{loglogaxis}[
			xlabel={$S$},
            xlabel style={yshift=8pt},
			ylabel={},
			title={Lower tail: Jordan-2},
            title style={yshift=-5pt},
			grid=both,
			major grid style={gray},
			minor grid style={gray!20},
			xtick distance=10^0.5,
			ytick distance=10,
			width=0.35\textwidth,
			height=5.5cm,
			]

			\addplot[
			thick, blue
			]
			coordinates {
				(3, 0.10121575)
				(4, 0.03584185)
				(5, 0.0185427)
				(6, 0.0113976)
				(7, 0.00770945)
				(8, 0.0055491)
				(9, 0.0041935)
				(10, 0.0032759)
				(11, 0.00262995)
				(12, 0.00215515)
				(13, 0.00179665)
				(14, 0.0015202)
				(15, 0.0013023)
				(16, 0.00112605)
				(17, 0.00098065)
				(18, 0.0008605)
				(19, 0.00076335)
				(20, 0.00067995)
				(21, 0.00060795)
				(22, 0.0005462)
				(23, 0.00048955)
				(24, 0.0004376)
				(25, 0.00039105)
				(26, 0.00035025)
				(27, 0.0003111)
				(28, 0.000275)
				(29, 0.00024205)
				(30, 0.0002096)
				(31, 0.00017985)
				(32, 0.00015205)
				(33, 0.000129)
				(34, 0.00010395)
				(35, 8.435e-5)
				(36, 6.76e-5)
				(37, 5.595e-5)
				(38, 4.655e-5)
				(39, 3.935e-5)
				(40, 3.405e-5)
				(41, 2.98e-5)
				(42, 2.53e-5)
				(43, 2.225e-5)
				(44, 1.97e-5)
				(45, 1.745e-5)
				(46, 1.47e-5)
				(47, 1.24e-5)
				(48, 1.05e-5)
				(49, 8.95e-6)
				(50, 7.05e-6)
				(51, 6.2e-6)
				(52, 4.45e-6)
				(53, 3.8e-6)
				(54, 3.25e-6)
				(55, 2.9e-6)
				(56, 2.65e-6)
				(57, 2.2e-6)
				(58, 1.9e-6)
				(59, 1.5e-6)
				(60, 1.25e-6)
				(61, 1.15e-6)
				(62, 1.0e-6)
				(63, 6.5e-7)
				(64, 5.0e-7)
			};

			\addplot[
			dashed, black
			]
			coordinates {
				(3, 1/3^2)
				(64, 1/64^2)
			};
		\end{loglogaxis}
		\end{tikzpicture}

    }
	\vspace{-0.2cm}
	\caption{
    The left panel shows the empirical probabilities of $\{\hs(v)\ge Sv\}$ for the Jordan and Jordan-2 orderings. The observed slopes are consistent with expected asymptotics ($S^{-1}$ for Jordan vs.\ $S^{-2\pm o(1)}$ for Jordan-2). The dashed lines are the lines $1/S$ and $4/S^2$, respectively. The middle panel shows the lower tail (empirical probability of $\{\hs_J(v) \leq v/S\}$) for the Jordan ordering on a semi-logarithmic scale, illustrating the exponential decay in Theorem \ref{thm:jordan}. The right panel illustrates the polynomial decay for the lower tail of the Jordan-2 ordering, with the dashed line corresponding to $1/S^2$.  
      For small and intermediate values of $S$, the decay is close to $1/S^2$, while for larger $S$, the probability drops faster, in line with the range $S\lesssim\sqrt{v}$ in the lower bound in Theorem \ref{thm:main}.
    }
	\label{fig:Simulation:underestimation}
    \vspace{-0.3cm}
\end{figure}

\medskip
\noindent\textbf{Heuristics and comparison.\ }
The different tail behaviors of the two orderings can be understood from the growth of the fringe trees. For the Jordan ordering, the upper tail is governed by the event that vertex $v$ remains a leaf until time of order $Sv$. This event has probability of order $1/S$; on it, the fringe tree of $v$ looks like the fringe tree of a vertex that arrived around time $Sv$, leading to an overestimation by a factor of order $S$. By contrast, underestimation requires the fringe tree of $v$ to be atypically large, of order $Sn/v$, an event exponentially unlikely in $S$.

\smallskip 
For the Jordan-2 ordering, overestimation requires not only the fringe tree of $v$, but also the fringe tree of its parent, to be small. Heuristically, 
\[ 
\Big\{\hs_2(v)\ge Sv\Big\} \approx \bigg\{
\mathrm{Fr}_n(v)\cdot\mathrm{Fr}_n(\mathrm{pa}(v))
\lesssim \left(\frac{n}{Sv}\right)^2
\bigg\} = \bigg\{\mathrm{Fr}_n(v)\frac{v}{n} \cdot \mathrm{Fr}_n(\mathrm{pa}(v))\frac{v}{n} \lesssim \frac{1}{S^2} \bigg\},
\]
where $\mathrm{Fr}_n(u) = |(T_n,1)_{u\downarrow}|$ denotes the size of the fringe tree of $u$.
The probability of this event is of order $(\log S)/S^2$. The lower tail has a different mechanism: $v$ can be estimated as much older than it is when its parent arrived much earlier than $v$. If the parent arrived before time $v/S^2$, an event of probability of order $1/S^2$, then the product of the two fringe tree sizes is typically larger by a factor $S^2$. This causes the Jordan-2 ordering to rank $v$ as if it had arrived around time $v/S$. 
We see that, compared to the ordering $\hs_J$ based on the Jordan centrality, the ordering $\hs_2$  based on the Jordan-2 centrality improves the upper tail from order $1/S$ to essentially $1/S^2$. However, this gain comes at the cost of a thicker lower tail, which is exponentially small for the Jordan ordering, but polynomially small for the ordering based on the Jordan-2 centrality.

\subsection{Risk}\label{sec:risk}
The pointwise tail bounds established in Proposition \ref{prop:lower_bound} and Theorems~\ref{thm:main} and~\ref{thm:jordan} readily yield bounds on the risk $R_\alpha$ defined in \eqref{eq:RiskDef2}, which measures the global performance of an estimator~\cite{briend2024estimating}. 
In particular, the Jordan-2 ordering attains the optimal order of the risk for every $\alpha>0$. 
\begin{corollary}[Lower bounds for arbitrary estimators]\label{cor:optimal}
    For each $\alpha>0$, there exists a constant $c_\alpha$ such that for any label-invariant estimator $\hs$, and any $n\ge 2$,
    \begin{equation*}
        R_{\alpha}(\hs) \ge 
        \begin{dcases}
            c_\alpha n^{2-\alpha}, &\text{if }\alpha\in(0,2),\\
            c_\alpha\log n,&\text{if }\alpha=2, \\
            c_\alpha,&\text{if }\alpha>2.
        \end{dcases}
    \end{equation*}
\end{corollary}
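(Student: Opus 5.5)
The plan is to derive Corollary~\ref{cor:optimal} directly from Proposition~\ref{prop:lower_bound} by linearity of expectation. The key steps are to bound each summand from below and then sum the resulting lower bounds.

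\textbf{Bounding each summand.} By linearity,
\[
R_\alpha(\hs)=\sum_{v=1}^n \frac{\E\big[|\hs(v)-v|\big]}{v^{\alpha}}.
\]
For $n\ge 3$, Markov-type reasoning with Proposition~\ref{prop:lower_bound} gives
\[
\E\big[|\hs(v)-v|\big]\ \ge\ \frac{v}{56}\,\p\Big(|\hs(v)-v|\ge \frac{v}{56}\Big)\ \ge\ \frac{v}{56\cdot 224}
\]
for every $v\in[n]$. Hence
\[
R_\alpha(\hs)\ \ge\ \frac{1}{12544}\sum_{v=1}^n v^{1-\alpha}.
\]

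\textbf{Summing.} The remaining step is elementary.
\begin{itemize}
\item For $\alpha\in(0,2)$, the sum is at least $\int_0^{n-1}x^{1-\alpha}\,\md x \gtrsim n^{2-\alpha}$, or alternatively at least the contribution of $v\in[n/2,n]$, which is $\ge (n/2)\cdot\min(n/2,n)^{1-\alpha}\ge c\,n^{2-\alpha}$.
\item For $\alpha=2$, the sum is the harmonic sum, which is $\ge \log n$.
\item For $\alpha>2$, the $v=1$ term alone gives $1$.
\end{itemize}

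\textbf{The case $n=2$.} This case is not covered by the proposition and must be handled separately. Here $\hs$ is a bijection $[2]\to[2]$. The two vertices of the single-edge tree are exchanged by the automorphism, so label-invariance forces $\hs(1)\overset{\mathcal L}{=}\hs(2)$. Therefore $\p(\hs(1)\ne 1)=1/2$, and $R_\alpha\ge 1/2$. By shrinking $c_\alpha$, this bound matches every case, since $n^{2-\alpha}$ and $\log n$ are bounded at $n=2$.

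There is no real obstacle; the only care needed is the $n=2$ edge case and choosing $c_\alpha$ uniformly.
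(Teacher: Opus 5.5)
Your proof is correct and follows the paper's route: the paper writes $R_\alpha(\hs)=\sum_u u^{1-\alpha}\int_0^{n/u}\p(|\hs(u)-u|\ge uy)\,\rd y$ and bounds the integral below by $\int_0^{1/56}\frac{1}{224}\,\rd y$ using Proposition~\ref{prop:lower_bound}, which is exactly your Markov-type bound, before summing $u^{1-\alpha}$; your separate treatment of $n=2$ covers a case the paper leaves implicit. One small slip in the summation step for $\alpha\in(1,2)$. There the comparison $\sum_{v\le n}v^{1-\alpha}\ge\int_0^{n-1}x^{1-\alpha}\,\md x$ is false, because the integrand is decreasing and $\int_0^1 x^{1-\alpha}\,\md x=1/(2-\alpha)$ can be large. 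Also, the minimum over $v\in[n/2,n]$ should read $\min\{(n/2)^{1-\alpha},n^{1-\alpha}\}$. Your alternative argument restricting to $v\in[n/2,n]$ still gives $\sum_{v\le n}v^{1-\alpha}\ge\tfrac14 n^{2-\alpha}$, so the conclusion stands.
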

Corollary \ref{cor:optimal} improves the lower bounds obtained in \cite[Theorem 1]{briend2024estimating} for the risk $R_\alpha$ when $\alpha=2$. We write $f_\alpha(n)\asymp g(n)$ if  $f_\alpha(n)/g(n)\in [\tfrac{1}{C_\alpha}, C_\alpha]$ for  some constant $C_\alpha>0$ and all $n\ge 1$.

\begin{corollary}[Jordan-$2$ and Jordan ordering]\label{cor:new method}
   For each $\alpha>0$, 
    \begin{equation*}
        R_\alpha(\hs_2) \asymp 
        \begin{dcases}
            n^{2-\alpha},& \text{if }\alpha\in(0,2),\\
            \log n,&\text{if }\alpha=2,\\
            1,&\text{if }\alpha>2,
        \end{dcases}
        \qquad \qquad 
        R_\alpha(\hs_J) \asymp
        \begin{dcases}
            n^{2-\alpha},& \text{if }\alpha\in(0,2),\\
            (\log n)^2,&\text{if }\alpha=2,\\
            \log n,&\text{if }\alpha>2.
        \end{dcases}
    \end{equation*}
\end{corollary}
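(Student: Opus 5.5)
We derive Corollary~\ref{cor:new method} from the pointwise bounds of Theorems~\ref{thm:main} and~\ref{thm:jordan} via the layer-cake formula
\[
\E|\hs(v)-v| = \int_0^\infty \p(|\hs(v)-v|>t)\,\rd t ,
\]
splitting the error into overestimation and underestimation. Since $\hs(v)\in[n]$, we always have $|\hs(v)-v|\le n$.

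\textbf{Underestimation.} This part contributes at most $v$. The lower-tail bounds give
\[
\E\big[(v-\hs(v))_+\big]\le v\int_0^1 \p\big(\hs(v)\le (1-s)v\big)\,\rd s \le C v ,
\]
so summing yields $\sum_v v^{1-\alpha}$. This is $\asymp n^{2-\alpha}$ for $\alpha<2$, $\log n$ for $\alpha=2$, and $O(1)$ for $\alpha>2$. In every regime this is at most the claimed order for $\hs_2$, and hence also for $\hs_J$.

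\textbf{Overestimation.} Write
\[
\E\big[(\hs(v)-v)_+\big]\le v\int_1^{n/v}\p\big(\hs(v)\ge Sv\big)\,\rd S ,
\]
which uses the upper bounds valid for all $S\ge1$.
\begin{itemize}
\item For $\hs_2$, the integrand is $C(1+\log S)/S^2$, which is integrable. So each $v$ contributes $O(v)$, and the total has the same order as the underestimation term.
\item For $\hs_J$, the integrand is $C/S$, giving $O\big(v(1+\log(n/v))\big)$. The sum $\sum_v v^{1-\alpha}\log(n/v)$ is $O(n^{2-\alpha})$ for $\alpha<2$, since $\int_0^1 x^{1-\alpha}\log(1/x)\,\rd x<\infty$. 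It is $O(\log^2 n)$ for $\alpha=2$ and $O(\log n)$ for $\alpha>2$; in the last case the $v=1$ term alone is $\log n$.
\end{itemize}

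\textbf{Lower bounds.} For $\hs_2$ they follow directly from Corollary~\ref{cor:optimal}. The same holds for $\hs_J$ when $\alpha<2$.

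For $\hs_J$ with $\alpha\ge2$, we use the lower bound in \eqref{jordan:overest_combined}. For $S\in[2,\delta n/v]$,
\[
\p\big(\hs_J(v)-v\ge (S-1)v\big)\ge c/S .
\]
Integrating gives
\[
\E|\hs_J(v)-v|\ge c\,v\int_2^{\delta n/v}\frac{\rd S}{S}\ge c'\,v\log(n/v)\quad\text{whenever } v\le \delta n/4 .
\]
Summing $v^{1-\alpha}\log(n/v)$ over $v\le \delta n/4$ gives the following:
\begin{itemize}
\item For $\alpha=2$, this is $\asymp\sum_{v\le \delta n/4}\log(n/v)/v\asymp\log^2 n$.
\item For $\alpha>2$, the $v=1$ term alone gives $\log n$.
\end{itemize}

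\textbf{Main obstacle.} The only delicate point is bookkeeping at the edges of the admissible ranges of $S$, namely $S\le \delta n/v$ and $v\le n/2$. Restricting to $v\le \delta n/4$ and $S\ge2$ handles this. For small $n$, the constants absorb everything.
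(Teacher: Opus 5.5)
Your argument is correct and is essentially the paper's proof: a layer-cake representation of $\E|\hs(v)-v|$, with the part below $v$ bounded trivially and the part above $v$ controlled by the all-$S$ upper bounds in Theorems~\ref{thm:main} and~\ref{thm:jordan}. For the part below $v$, the paper simply bounds the probability by $1$ for $y\le 1$; you invoke the lower-tail estimates, which are not needed, and the Jordan-2 lower-tail upper bound is in any case only stated for $v\in[n/4]$. Your lower bounds go via Corollary~\ref{cor:optimal} and, for $\hs_J$ with $\alpha\ge 2$, via the $c/S$ lower bound in \eqref{jordan:overest_combined} integrated over $S\in[2,\delta n/v]$ for $v\le \delta n/4$; these are exactly the ``analogous'' cases the paper leaves to the reader, and your bookkeeping for them checks out.
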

\begin{proof}[Proofs] We give the proof of the upper bound on {$R_\alpha(\hs_2)$} in Corollary \ref{cor:new method}; the other bounds follow analogously.
    The  risk defined in \eqref{eq:RiskDef2} can be rewritten as follows:
    \begin{align}
    \notag R_\alpha(\hs_2)=\sum_{u=1}^nu^{-\alpha}\E\big[|\hs_2(u)-u|\big]&= \sum_{u=1}^n u^{-\alpha}\int_0^n\p\left(|\hs_2(u)-u|\ge x\right)\ \rd x\\
    & \label{eq:coro5.1insert} =  \sum_{u=1}^n u^{1-\alpha}\int_0^{n/u}\p\left(|\hs_2(u)-u|\ge u y\right)\ \rd y.
    \end{align}
     When $y > 1$, $|\hs_2(u)-u| \geq u y$  implies that $\hs_2(u) \geq uy$, since $\hs_2(u) \geq 0$. Thus, we get that 
     \begin{align*}
         \int_0^{n/u}\p\left(|\hs_2(u)-u|\ge u y\right)\ \rd y \leq 1 + \int_1^{\infty}\p\left(\hs_2(u)\ge u y\right)\ \rd y
         \overset{\eqref{jordan2:overest_combined}}{\leq }
         1 + \int_1^{\infty} C \frac{1+\log(y)}{y^2} \rd y \eqqcolon C^\star,
     \end{align*}
     where $C^\star>0$ is a constant not depending on $u$. Inserting this bound into \eqref{eq:coro5.1insert} finishes the proof of the upper bound. The other bounds in Corollaries~\ref{cor:optimal} and \ref{cor:new method} can be obtained analogously.
\end{proof}

For $\alpha \geq 2$, Corollary \ref{cor:new method} improves the risk upper bound on the Jordan ordering of $(\log n)^4$ from   \cite[Theorem~4]{briend2024estimating}. It was already noted below  \cite[Theorem 4]{briend2024estimating} that the risk of the Jordan ordering should be at least of order $\log n$ for all $\alpha\ge 2$. This bound is sharp for $\alpha>2$; for $\alpha= 2$ the risk is of order $(\log n)^2$ instead. When $\alpha<2$, the bounds are of the optimal order,  as also noted in \cite{briend2024estimating}.

 \smallskip 
The proof of Corollary~\ref{cor:new method} highlights a limitation of the risk $R_\alpha$. Indeed,  controlling $R_\alpha$ only requires bounds on the \emph{upper tail} of the pointwise estimation error: if an estimator $\hs$ satisfies $\p\left(\hs(v)\ge Sv \right)\le f(S)$ uniformly in $v$ and $n$ for an integrable function $f$, the risk is always of the optimal order. 
Thus, once the upper tail is integrable, further improvements do not affect the leading-order asymptotics of the risk, while the \emph{lower tail} is invisible altogether. In particular, estimators with qualitatively different lower-tail behavior may nevertheless have the same asymptotic risk. This motivates the pointwise perspective developed throughout the paper.

\subsection{Further directions}\label{sec:discussion}
The tradeoff between upper and lower tails described above raises the question whether there are estimators that can strictly improve upon the Jordan-2 ordering in one tail without sacrificing performance in the other, or whether the observed phenomenon is inevitable.
\begin{problem}\label{pr:outperform-jordan2}
    Find a label-invariant estimator $\hs_\ast$ such that there exist constants $a,b > 2$, such that for all $\varepsilon>0$ there exists a constant $C>0$ such that for all $n\ge 1$, $v\in[n]$, and $S\ge 1$,
    \[
    \p\left(\hs_\ast(v)\le v/S\right)\le C S^{-a+\varepsilon}, \qquad \text{and }\qquad \p\left(\hs_\ast(v)\ge vS\right)\le CS^{-b+\varepsilon}.
    \]
\end{problem}
We have no evidence that such estimators exist; proving non-existence would be equally interesting. 
Such an estimator would be genuinely better at the pointwise level, even though it would not improve the risk $R_\alpha$ (defined in~\eqref{eq:RiskDef2}) by more than a constant factor, since $R_\alpha$ is insensitive, to first order, to the lower tail. Capturing this improvement would require a risk functional that distinguishes upper- and lower-tail errors in a more refined way.

The Jordan ordering fails the desired upper-tail bound by Theorem~\ref{thm:jordan}. More generally, orderings that incorporate more information about the ancestral line of a vertex may improve the upper tail, but we expect them to do so at the expense of the lower tail. This motivates the following extension of the Jordan-$2$ construction.
Adopting the convention $v^\ssup{0}=v$, define recursively
\[
v^\ssup{j}:=\argmin_{u\in \rt: u\sim v^\ssup{j-1}} \big|(\rt,\, u)_{v^\ssup{j-1}\downarrow}\big|, \qquad j\ge 1.
\]
Thus, $v^\ssup{j}$ estimates the $j$-th ancestor of $v$, with ties resolved analogously to the case $j=1$ in Definition \ref{def:jordan2}. For $k\ge 3$, we define the Jordan-$k$ centrality by 
\[
\phi^{\ssup{k}}_\rt(v):= \left( \prod_{j=0}^{k-1} \big|(\rt, v^{\ssup{j+1}})_{v^{\ssup{j}}\downarrow}\big| \right) \vee \left( \phi^{\ssup{k-1}}_\rt(v) \right)^{\frac{k}{k-1}}.
\]
The Jordan-$k$ ordering is obtained by ordering vertices decreasingly with respect to $\phi^{\ssup{k}}$. Note that $k=1$ recovers the standard Jordan ordering introduced by $\phi$ in \eqref{eq:neg-jordan}.

We expect that increasing $k$ makes the estimator less likely to overestimate the arrival time, with upper tails of order $S^{-k\pm o(1)}$, but more likely to underestimate it, with lower tails of order $S^{-k/(k-1) \pm o(1)}$. The mechanism behind the lower tail is that young vertices attached close to the root may be estimated as much older than they actually are, an effect that becomes stronger as more ancestral information is incorporated. In this sense, Jordan-2 appears to be the most balanced member of the family, as both tails exhibit quadratic decay up to logarithmic factors.

\smallskip 
A separate question concerns the far lower tail. The lower-tail estimates in Theorems~\ref{thm:main} and~\ref{thm:jordan} describe a polynomial regime and an exponential regime, respectively, valid for small and intermediate values of $S$. For the Jordan-$2$ ordering, this polynomial decay cannot persist for $S\gg\sqrt v$. Indeed, Remark~\ref{rem:stretched-exp} shows that $\p\big(\hs_2(v) \leq v/S\big)$ is already stretched-exponentially small when $S\gg \sqrt{v}$.
Figure~\ref{fig:Simulation:underestimation} provides empirical evidence for this transition for large values of $S$. We conjecture that the lower tail of the Jordan-2 ordering undergoes a  transition from polynomial decay to stretched-exponential decay, and that a similar phenomenon occurs for the Jordan ordering. 

\medskip\noindent\textbf{Organisation.\ }The core technical contribution of the paper is the proof of Theorem \ref{thm:main} on the Jordan-2 centrality in Section \ref{sec:NBjordan}. Along the way, we prove almost all prerequisite lemmas needed to prove Theorem \ref{thm:jordan} on the Jordan centrality in Section \ref{sec:Jordan}, following a similar strategy. Afterwards, in Section \ref{sec:lower bounds}, we prove Proposition \ref{prop:lower_bound} on arbitrary estimators. Appendix \ref{sec:appendix} contains proofs of technical lemmas.

\section{Performance of the Jordan-$2$ ordering}\label{sec:NBjordan}
In this section, we study the Jordan-$2$ centrality measure $\phi^{\ssup{2}}$ and its associated ordering $\hs_2$, proving the four inequalities of Theorem \ref{thm:main} in Lemmas \ref{lem:Jordan2_uppertail_UB}, 
\ref{lem:Jordan2_upper_tail_lower_bound},
 \ref{lem:Jordan2_lower_tail}, and \ref{lem:Jordan2_lower_tail_lower_bound}, respectively.
We start with an outline.

\medskip 
\noindent \textbf{Outline: upper tail -- upper bound.\ }
Since $\hs_2(\cdot)$ ranks vertices in decreasing order of Jordan-2 centrality, the event  $\{\hs_2(v)\ge Sv\}$ implies that at least $Sv$ vertices have Jordan-2 centrality at least  $\phi_n^\ssup{2}(v)$. 
The proof splits the event $\{\hs_2(v)\ge Sv\}$ into two possibilities: the Jordan-2 centrality of the fixed vertex $v$ is unusually small, or there are unusually many vertices whose Jordan-2 centrality exceeds an explicit threshold. We use that, for some constant $C>0$,
\begin{equation}\label{eq:hs2 split}
    \big\{\hs_2(v)\ge Sv\big\}\subseteq  \Big\{\phi_n^\ssup{2}(v)\le \left(C\tfrac{n}{Sv}\right)^2\Big\}\,\cup\, \Big\{\big|\big\{u\in[n]: \phi_n^\ssup{2}(u) \geq \left(C\tfrac{n}{Sv}\right)^2\big\}\big| \ge Sv\Big\}.
\end{equation}
Our cutoff value for $\phi_n^\ssup{2}(v)$ is motivated by its typical scale: recall from \eqref{eq:NNBJordanDef} that 
\begin{equation}\label{eq:outline-before-intuition-phi2}
\phi_n^\ssup{2}(v)=\phi_n(v)\left(\phi_n(v)\vee\phi_n(v^\ssup{1})\right),
\end{equation}
where $\phi_n(v)$ and $\phi_n(v^\ssup{1})$ estimate the sizes of the related subtrees $(T_n, 1)_{v\downarrow}$ and $(T_n, 1)_{\pa(v)\downarrow}$. These subtree sizes follow a beta-binomial distribution and are typically of order $n/v$ each.

\smallskip

The probability of the first event in the union in \eqref{eq:hs2 split} gives the dominant contribution to the upper bound on $\p(\hs(v)\ge Sv)$. This event depends on the local structure around $v$, i.e.,  the fringe trees of $v$ and $\mathrm{pa}(v)$.
We derive a deterministic lower bound on $\phi_n^\ssup{2}(v)$ as the product of two subtree sizes of vertices that arrive before time $v$, but where we only count descendants arriving after time $v$. These restricted subtree sizes follow a beta-binomial distribution, and because the subtrees involved are disjoint, their sizes are negatively associated. A tail bound for the product of two negatively associated beta-binomial variables then yields the decay of order  $(\log S)/S^2$.

\smallskip 
The second event in the union in \eqref{eq:hs2 split}---that many vertices have Jordan-2 centrality at least $\big(\tfrac{n}{Sv}\big)^2$---requires information about subtree sizes throughout the tree, and is substantially more difficult.
To control it, we use a construction of the Ulam--Harris embedding of the tree. In this construction---called fringe-tree splitting---the root begins with mass $n$, and its children receive random integer masses that sum up to $n-1$; the procedure continues recursively for every vertex with mass at least $2$. This construction provides enough independence to count vertices with large Jordan-2 centrality and forms the other main technical ingredient of the proof.

\medskip 
\noindent\textbf{Outline: upper tail -- lower bound.\ }
For $\{\hs_2(v)\geq Sv\}$ to hold, it suffices that at least $Sv$ vertices have Jordan-2 centrality exceeding $\phi_n^\ssup{2}(v)$. Using a similar cutoff for $\phi_n^\ssup{2}(v)$ to \eqref{eq:hs2 split},
$$ 
\big\{\hs_2(v)\geq Sv\big\}\, \supseteq\,
\Big\{\phi_n^\ssup{2}(v)\leq \left(\tfrac{1}{64S}\tfrac{n}{v}\right)^2\Big\} 
\, \setminus \,
\Big\{\big|\big\{u\in[n]: \phi_n^\ssup{2}(u) > \left(\tfrac{1}{64S}\tfrac{n}{v}\right)^2\big\}\big|< Sv\Big\}.
$$
We establish a lower bound on the first probability on the right-hand side analogously to the reasoning below \eqref{eq:outline-before-intuition-phi2}. This reduces lower bounding $\p\big(\hs_2(v)\geq Sv\big)$ to establishing an upper bound on the probability of the second event on the right-hand side.

To have few vertices with large Jordan-2 centrality, it is required that few vertices in the tree have a fringe tree of size at least $n/(64Sv)$. 
Since the tree $T_n$ typically contains order $Sv$ vertices whose fringe trees are at least $n/(Sv)$,
having substantially fewer such vertices is unlikely.

\medskip 
\noindent\textbf{Outline: lower tail -- upper and lower bounds.\ }
For both the upper and lower bound on the probability that the Jordan-2 estimator is unlikely small, we use similar decomposition as for the upper tail. The main difference is that we instead need to control $\p\big(\phi_n^\ssup{2}(v)\ge (Sn/v)^2\big)$, which is dominated by the probability that the parent of $v$ arrived before time $v/S^2$.

\medskip 
\noindent\textbf{Organisation of the section.\ }
Thus, to prove the four inequalities of Theorem \ref{thm:main}, we always decompose the relevant event into a local event on the Jordan-2 centrality of $v$, and a global event ensuring that the Jordan-2 centrality throughout the tree behaves regularly. 
The probability of the local event provides the dominant contribution for all of these tail bounds, whereas the probability of the global event is negligible.
In Section~\ref{subsec:fringe}, we introduce the fringe-splitting construction and develop the counting lemmas used to control the number of vertices with Jordan-$2$ centrality at least $r>0$. The remaining subsections complete the proofs of the four inequalities in Theorem \ref{thm:main}. That is, in Section \ref{sec:Upper_tail_UB_J2}, we finish the proof of the upper bound on the upper tail by combining these results. In Section \ref{sec:Upper_tail_LB_J2}, we prove a lower bound on the upper tail. In Section \ref{sec:Lower_tail_UB-J2}, we analyze the lower tail relying on the auxiliary lemmas developed before.

\subsection{The Jordan-2 centrality of a fixed vertex}\label{sec:jordan2-v}
In this section, we analyze the Jordan-2 centrality of a fixed vertex $v$. The probability that it is unlikely small or large forms the main contribution to the bounds in Theorem \ref{thm:main}. The lower tail on the centrality corresponds to an upper tail on the ordering and vice versa. We postpone the proofs of technical (relatively standard) lemmas to the appendix. 

\subsubsection{Lower tail}\label{subsec:jordan2c-lower}
The main goal of this section is to prove the following lemma.

\begin{lemma}[Lower tail of the Jordan-2 centrality]\label{lem:boosted small}
    There exists a constant $C>0$ such that for any $v\in [n]$ and for any $t \geq 1$, the random variable $\phi_n^{\ssup{2}}(v)$ satisfies 
    \begin{equation}\label{eq:lem2.1 upper bound}
        \p \left( \phi_n^{\ssup{2}}(v) \leq \left( \frac{n}{t v} \right)^2 \right) \leq C \frac{1+\log t}{t^2},
    \end{equation}
    Furthermore, for every $\eps > 0$, there exists a constant $c>0$ so that for any $n\geq 3$, $v\in [n/4]$ and $t\in\big(\sqrt{2},\tfrac{n}{v \sqrt{3}} \wedge (n/v)^{1-\eps}\big)$,
    \begin{equation}\label{eq:lem2.1 lower bound}
        \p \left( \phi_n^{\ssup{2}}(v)
    \leq
    \left( \frac{n}{t v} \right)^2 \right) \geq c \frac{\log t}{t^2}.
    \end{equation}
    Moreover, there exists a constant $c^\prime>0$ so that for any $n\geq 3$, $v\in [n/4]$ and $t\in\big(1, \frac{1}{\sqrt{3}} (n/v) \big)$,
    \begin{equation}\label{eq:lem2.1 lower bound version2}
        \p \left( \phi_n^{\ssup{2}}(v)
    \leq
    \left( \frac{n}{t v} \right)^2 \right) \geq c \frac{1}{t^2}.
    \end{equation}
\end{lemma}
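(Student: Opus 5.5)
For the upper bound \eqref{eq:lem2.1 upper bound}, we first establish a deterministic lower bound on $\phi_n^{\ssup{2}}(v)$ by a product of two subtree sizes that are simple to analyse. For $v\ge 2$, write $p=\pa(v)<v$. Let $A_v$ be the number of vertices in $\{v,\dots,n\}$ whose ancestral line passes through $v$. Let $B_v$ be the number of vertices in $\{v+1,\dots,n\}$ whose ancestral line first hits $[v]$ at $p$. The aim is to show
\[
\phi_n^{\ssup{2}}(v) \ge \min\big(A_v,\,n-\ldots\big)\cdot\big(\cdots\big),
\]
that is, up to the exceptional configurations of Lemma~\ref{lem:minima}, $\phi_n^{\ssup{2}}(v)\gtrsim A_v\,(A_v+B_v)\ge A_v B_v$. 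Here the factor $\phi_n(v)\vee\phi_n(v^{\ssup{1}})$ is what lets us handle the case $v^{\ssup{1}}\neq\pa(v)$: if $v^{\ssup{1}}$ is a child of $v$, then $\phi_n(v)$ is at least $n/2$, and the bound is trivial for $t\ge 2$. Lemma~\ref{lem:minima} is exactly this lower bound, and I would invoke it.

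Next I would analyse $(A_v,B_v)$ via a Pólya urn. At time $v$ we have $v$ "blocks" (the vertices of $[v]$), each of which attracts subsequent vertices proportionally to its current weight. The vector of block masses is Dirichlet--multinomial, so $(A_v,B_v)$ is beta-binomial-type with limits $(n/v)\cdot(\mathrm{Exp},\mathrm{Exp})$ jointly, and the two sizes are negatively associated. Then
\[
\p\big(A_vB_v\le (n/(tv))^2\big)\lesssim \p\big(XY\le t^{-2}\big), \qquad X,Y\ \text{i.i.d.\ Gamma}(1)\text{-like},
\]
and $\p(XY\le s)\asymp s\log(1/s)$ gives $(1+\log t)/t^2$. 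To make this uniform in $n,v$, I would use explicit beta-binomial small-ball bounds: $\p(A_v\le a)\le C a v/n$, and similarly for $B_v$ conditionally on $A_v$, where negative association (or direct conditioning, since $B_v$ given $A_v$ is again beta-binomial on $n-v-A_v$ trials) justifies the product. Summing over dyadic scales of $A_v$ then yields the $\log t$ factor. The case $v=1$ is trivial, since $\phi_n(1)\ge n/2$.

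For the lower bounds, we reverse the inequality. By \eqref{eq:upper-bound-jordan2}, $\phi_n^{\ssup{2}}(v)\le \mathrm{Fr}_n(v)\,\mathrm{Fr}_n(p)$. Now $\mathrm{Fr}_n(p)\le \mathrm{Fr}_v(p)\cdot(\text{growth})$, and more precisely $\mathrm{Fr}_n(p)$ is the urn mass of block $p$ started from weight $\mathrm{Fr}_v(p)$. For \eqref{eq:lem2.1 lower bound version2} I would choose the event that $\mathrm{Fr}_v(p)=2$ (i.e.\ $p$ has no other descendants by time $v$, which has probability bounded below when $p\approx v/2$, so $v\le n/4$ is used), that block $v$ receives at most $n/(tv)$ vertices, and that block $p\setminus v$ receives at most $n/(tv)$ vertices. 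Each of these has probability $\asymp 1/t$, and they are roughly independent by the Dirichlet structure, giving $c/t^2$. The constraint $t<n/(v\sqrt3)$ keeps the thresholds $\ge 1$. For \eqref{eq:lem2.1 lower bound} we gain the log by summing over dyadic $k$ the events $\{\mathrm{Fr}_n(v)\le 2^k n/(tv),\ \mathrm{Fr}_n(p)\le 2^{-k}n/(tv)\}$ for $1\le 2^k\le t$. These events are approximately disjoint in the leading-order sense, each having probability $\asymp t^{-2}$, which gives $\log t/t^2$. The condition $t\le (n/v)^{1-\eps}$ ensures every threshold $2^{-k}n/(tv)\ge (n/v)^{\eps}$ stays large, so that discrete beta-binomial probabilities match their continuum Gamma approximations up to constants.

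The main obstacle is the first step: the deterministic comparison of $\phi_n^{\ssup{2}}(v)$ with the pair of restricted subtree sizes. This must handle all configurations where $v^{\ssup{1}}$ or $v^{\ssup{2}}$ are not true ancestors, i.e.\ subtrees exceeding $n/2$. I would handle it by noting that in every such case one factor is already $\ge n/2\ge n/(tv)$, so the event reduces to a single-variable small-ball bound of order $1/t^2$.
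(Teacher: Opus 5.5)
Your overall plan matches the paper's: Lemma~\ref{lem:minima} with P\'olya-urn exchangeability, negative association and a dyadic product bound for \eqref{eq:lem2.1 upper bound}, and the event that $\pa(v)$ is a leaf in $T_{v-1}$ for the lower bounds. Two steps fail as written, however.

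\textbf{The logarithmic lower bound \eqref{eq:lem2.1 lower bound}.} Your dyadic events $\{\mathrm{Fr}_n(v)\le 2^k n/(tv),\ \mathrm{Fr}_n(p)\le 2^{-k}n/(tv)\}$ have the roles reversed. The fringe tree of $v$ sits strictly inside that of $p$, so $\mathrm{Fr}_n(v)<\mathrm{Fr}_n(p)$ always. Your events are therefore just the nested events $\{\mathrm{Fr}_n(p)\le 2^{-k}n/(tv)\}$. Since $\mathrm{Fr}_n(p)\ge\mathrm{Fr}_n^{>v}(p)+\mathrm{Fr}_n^{>v}(v)$, whose lower tail is quadratic, their probabilities are $O(4^{-k}t^{-2})$, and the sum is only $O(t^{-2})$.

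The logarithm comes instead from the parent ranging over scales: $\mathrm{Fr}_n(p)\in(2^{k-1},2^k]\cdot n/(tv)$ together with $\mathrm{Fr}_n(v)\le 2^{-k}n/(tv)$. You then need a genuine lower bound on these joint probabilities, and negative association points the wrong way. The paper gets it by conditioning on $\cE=\{\pa(v)\text{ is a leaf in }T_{v-1},\ \pa(v)\ge 2\}$, which has probability $1/2$. On $\cE$:
\begin{itemize}
\item $\mathrm{Fr}_n(\pa(v))-2\sim\mathrm{BetaBin}(n-v,2,v-2)$, whose pmf is at least $c\,k(v/n)^2$ for $k\le n/v$ (Claim~\ref{claim:c_B}; this is where $v\le n/4$ is used);
\item given that sum, $\mathrm{Fr}_n(v)$ is uniform on $\{1,\dots,\mathrm{Fr}_n(\pa(v))-1\}$.
\end{itemize}
Summing $\sum_k (k-2)/k^2$ up to $(n/(tv))^2\wedge(n/v)$ gives the factor $\log\big((n/(tv))^2\wedge n/v\big)$.

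This also corrects your reading of the $\eps$-condition. The thresholds $2^{-k}n/(tv)$ with $2^k$ near $t$ are not $\ge(n/v)^\eps$; they may even be below $1$. Only about $\log(n/(tv))$ scales are usable, and $t\le(n/v)^{1-\eps}$ is what guarantees this number is $\gtrsim\eps\log t$.

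\textbf{The exceptional configurations in \eqref{eq:lem2.1 upper bound}.} Your handling of these is wrong.
\begin{itemize}
\item If $v^{\ssup{1}}$ is a child $c$ of $v$, it is $\mathrm{Fr}_n(v)$ that exceeds $n/2$. By contrast, $\phi_n(v)=n-\mathrm{Fr}_n(c)$ can be far below $n/2$; it is only guaranteed to contain the fringe trees of $1$ and $2$ witnessed after time $v$.
\item Likewise $\phi_n(1)=n-\max_c\mathrm{Fr}_n(c)$ is not $\ge n/2$: already $\mathrm{Fr}_n(2)$ is uniform on $[n-1]$.
\end{itemize}
These cases produce the terms $\mathrm{Fr}_n^{>v}(v)\mathrm{Fr}_n^{>v}(1)$, $\mathrm{Fr}_n^{>v}(v)\mathrm{Fr}_n^{>v}(2)$ and $\mathrm{Fr}_n^{>v}(1)\mathrm{Fr}_n^{>v}(2)$ in Lemma~\ref{lem:minima}. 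Each requires the same two-variable small-ball bound (via a union bound and exchangeability), not a trivial one.

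Two small-$v$ cases also need care:
\begin{itemize}
\item At $v=2$ the generic bound of Lemma~\ref{lem:minima} contains $\mathrm{Fr}_n(2)^2$, whose lower tail is of order $1/t$. You therefore need the separate bound \eqref{eq:what happens at 2}, which uses fringe trees witnessed after time $3$.
\item The case $v=1$ is not trivial; reduce it to $v=2$ via $\phi^{\ssup{2}}_n(1)\overset{\mathcal L}{=}\phi^{\ssup{2}}_n(2)$.
\end{itemize}
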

We start by proving the upper bound. The key step is to lower bound the Jordan-2 centrality $\phi_n^\ssup{2}(v)$ by the product of suitable subtree sizes. We first introduce fringe trees witnessed after time $m$ and then state some of their basic distributional properties. These sizes admit a simple P\'olya-urn description and form a family of negatively associated random variables, which will be crucial for controlling the lower tail of $\phi_n^\ssup{2}(v)$.

\begin{definition}[Fringe trees]\label{def:fringe}
    For a vertex $v\in [n]$, we call the subtree of $v$ in the rooted tree $(T_n, 1)$ the fringe tree attached to $v$, or the fringe tree of $v$. For its size, we write 
\begin{equation}
    \mathrm{Fr}_n(v):=|(T_n, 1)_{v\downarrow}|=|(T_n, \pa(v))_{v\downarrow}|,
\end{equation}
    where the last equality holds for all $v \geq 2$.
    Furthermore, given $m \in \N$ and $v \in [n]$, we define the fringe tree of $v$ witnessed after time $m$ as the set of vertices in $(T_n, 1)_{v\downarrow}$ whose unique path to $v$ in $T_n$ has no edges  that arrived before time $m+1$.  We write $\mathrm{Fr}_n^{>m}(v)$ for its size. 
\end{definition}
\begin{figure}[t]
    \begin{subfigure}{0.47 \textwidth}
    \centering
        \begin{tikzpicture}[scale=1]

    \node[draw, circle, minimum size=4mm, inner sep = 0 pt] (1) at (0,0) {$1$};
    \node[draw, circle, minimum size=4mm, inner sep = 0 pt, blue, thick] (2) at (-1,-1) {$2$};
    \node[draw, circle, minimum size=4mm, inner sep = 0 pt, blue, thick] (3) at (-2,-2) {$3$};
    \node[draw, circle, minimum size=4mm, inner sep = 0 pt] (4) at (0,-1) {$4$};
    \node[draw, circle, minimum size=4mm, inner sep = 0 pt, blue, thick] (5) at (-1,-2) {$5$};
    \node[draw, circle, minimum size=4mm, inner sep = 0 pt, blue, thick] (6) at (-2.5,-3) {$6$};
    \node[draw, circle, minimum size=4mm, inner sep = 0 pt, blue, thick] (7) at (-1.5,-3) {$7$};
    \node[draw, circle, minimum size=4mm, inner sep = 0 pt] (8) at (0,-2) {$8$};
    \node[draw, circle, minimum size=4mm, inner sep = 0 pt, blue, thick] (9) at (-0.5,-3) {$9$};
    \node[draw, circle, minimum size=4mm, inner sep = 0 pt] (10) at (1,-1) {$10$};
    \node[draw, circle, minimum size=4mm, inner sep = 0 pt] (11) at (1,-2) {$11$};

    \draw (2) edge (1);
    \draw (3) edge[blue, thick] (2);
    \draw (4) edge (1);
    \draw (5) edge[blue, thick] (2);
    \draw (6) edge[blue, thick] (3);
    \draw (7) edge[blue, thick] (5);
    \draw (8) edge (4);
    \draw (9) edge[blue, thick] (5);
    \draw (10) edge (1);
    \draw (11) edge (10);

\end{tikzpicture}
    \end{subfigure}
    \begin{subfigure}{0.47 \textwidth}
    \centering
        \begin{tikzpicture}[scale=1]

    \node[draw, circle, minimum size=4mm, inner sep = 0 pt] (1) at (0,0) {$1$};
    \node[draw, circle, minimum size=4mm, inner sep = 0 pt, red, thick] (2) at (-1,-1) {$2$};
    \node[draw, circle, minimum size=4mm, inner sep = 0 pt] (3) at (-2,-2) {$3$};
    \node[draw, circle, minimum size=4mm, inner sep = 0 pt] (4) at (0,-1) {$4$};
    \node[draw, circle, minimum size=4mm, inner sep = 0 pt, red, thick] (5) at (-1,-2) {$5$};
    \node[draw, circle, minimum size=4mm, inner sep = 0 pt] (6) at (-2.5,-3) {$6$};
    \node[draw, circle, minimum size=4mm, inner sep = 0 pt, red, thick] (7) at (-1.5,-3) {$7$};
    \node[draw, circle, minimum size=4mm, inner sep = 0 pt] (8) at (0,-2) {$8$};
    \node[draw, circle, minimum size=4mm, inner sep = 0 pt, red, thick] (9) at (-0.5,-3) {$9$};
    \node[draw, circle, minimum size=4mm, inner sep = 0 pt] (10) at (1,-1) {$10$};
    \node[draw, circle, minimum size=4mm, inner sep = 0 pt] (11) at (1,-2) {$11$};

    \draw (2) edge (1);
    \draw (3) edge (2);
    \draw (4) edge (1);
    \draw (5) edge[red, thick] (2);
    \draw (6) edge (3);
    \draw (7) edge[red, thick] (5);
    \draw (8) edge (4);
    \draw (9) edge[red, thick] (5);
    \draw (10) edge (1);
    \draw (11) edge (10);

\end{tikzpicture}
    \end{subfigure}
    \caption{The fringe tree of $2$ (blue, left panel), and the fringe tree  of $2$ witnessed after time $m\in\{3,4\}$ (red, right panel). We have $\mathrm{Fr}_{11}(2)=6=\mathrm{Fr}_{11}^{>2}(2)$ and $\mathrm{Fr}_{11}^{>3}(2)=\mathrm{Fr}_{11}^{>4}(2)=4$. The fringe tree of $2$ witnessed after time $m\ge 5$ consists only of vertex $2$ itself. 
    }
    \label{fig:fringe witness}
\end{figure}
For example, if $m \leq v$, then $\mathrm{Fr}_n^{> m}(v) = \mathrm{Fr}_n(v)$. On the other hand, if, say, $v=1$ and $m=2$, then $\mathrm{Fr}_n^{>2}(1)$ counts the size of the fringe tree attached to $1$ that can be witnessed after time $2$. That is, we do not include the fringe tree of $2$ and thus get that $\mathrm{Fr}_n^{> 2}(1) = \mathrm{Fr}_n(1) - \mathrm{Fr}_n(2)$. Figure~\ref{fig:fringe witness} visualises the fringe tree and the fringe tree witnessed after time $m$.

The size of the fringe tree is directly related to the Jordan centrality, since by \eqref{eq:jordan-upper} and \eqref{eq:upper-bound-jordan2}
\begin{equation*}
    \phi_n(v) \leq \mathrm{Fr}_n(v), \quad \text{ and for $v\ge 2$,} \quad \phi_n^{\ssup{2}}(v) \leq \mathrm{Fr}_n(v) \mathrm{Fr}_n(\mathrm{pa}(v)).
\end{equation*}
Further, if $\mathrm{Fr}_n(v) \leq \frac{n}{2}$, we also have
\begin{equation*}
    \phi_n(v) = \mathrm{Fr}_n(v) \quad \text{ and } \quad \phi_n^{\ssup{2}}(v) \geq (\phi_n(v))^2 = (\mathrm{Fr}_n(v))^2 . 
\end{equation*}

The witnessed fringe-tree sizes $\left(\mathrm{Fr}_n^{>m}(1),\ldots, \mathrm{Fr}_n^{>m}(m)\right)$ behave like the counts of colors in a Pólya urn started with $m$ colors. As a consequence, they are exchangeable, negatively associated, and have explicit one-dimensional tail bounds.
We give the proof of this lemma in the Appendix \ref{sec:appendix} below.

\begin{restatable}[Joint fringe-tree-size distribution]{lemma}{betabinfringe}
    \label{lem:beta-bin-fringe} The following statements hold:
\begin{enumerate}[itemsep=0pt]
    \item The variables $\left(\mathrm{Fr}_n^{>m}(1),\ldots, \mathrm{Fr}_n^{>m}(m)\right)$ are exchangeable for all $n\ge m$.
    \item The variables $(\mathrm{Fr}_n^{> m}(u) :u\in[m])$ are negatively associated for all $n\ge m$.
    \item Let $n\in\N$, $m\in[n]$ and $M\ge 1$. For any $u\in[m]$, 
    \begin{equation}\label{eq:fringe-ub}
    \p\left(\mathrm{Fr}_n^{>m}(u)\le \frac{n}{Mm}\right) = \p\left(\mathrm{Fr}_n(m)\le \frac{n}{M m}\right) \le \frac{2}{M}.
    \end{equation}
    For $M\ge 4$, $m\in[2, n/M]$, and $u\in[m]$, 
    \begin{equation}\label{eq:fringe-lb}
    \p\left(\mathrm{Fr}_n^{>m}(u)\le \frac{n}{Mm}\right) = \p\left(\mathrm{Fr}_n(m)\le \frac{n}{Mm}\right) \ge \frac{1}{8M}.
    \end{equation}
    \end{enumerate}
\end{restatable}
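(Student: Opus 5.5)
Since $(\mathrm{Fr}_n^{>m}(u))_{u\in[m]}$ are the colour counts of a P\'olya urn, the plan is to make this identification precise and then read off all three statements from standard urn facts. Run the process from time $m$. At each step $k\ge m$, vertex $k+1$ attaches to a uniform vertex of $T_k$. The key observation concerns a newly arriving vertex $w>m$. It belongs to the witnessed fringe tree of $u\in[m]$ exactly when its parent is $u$ or its parent lies in that witnessed fringe tree, because the edge $\{\mathrm{pa}(w),w\}$ arrives after time $m$. Every vertex $k>m$ therefore lies in exactly one of the $m$ witnessed fringe trees, namely that of its unique ancestor in $[m]$ reached by walking up until the first vertex in $[m]$. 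Hence $\sum_{u\le m}\mathrm{Fr}_k^{>m}(u)=k$, and given the current counts, vertex $k+1$ joins $u$'s witnessed tree with probability $\mathrm{Fr}_k^{>m}(u)/k$. So the vector is a standard P\'olya urn with $m$ colours, each started with one ball and replacement $+1$. In particular it is Dirichlet-multinomial: conditional on $(D_1,\dots,D_m)\sim\mathrm{Dir}(1,\dots,1)$, the $n-m$ extra balls are multinomial.

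Exchangeability (item 1) follows because the initial configuration and the dynamics are symmetric in the colours. For negative association (item 2), I would use that a Dirichlet-multinomial with all parameters equal is a mixture of multinomials, but mixtures do not preserve NA in general, so it is cleaner to use a different representation. I would cite the known result that P\'olya urn / Dirichlet-multinomial vectors are negatively associated. If I wanted a self-contained argument, I would represent the counts as $(X_1,\dots,X_m)$ with $X_u-1$ i.i.d.\ geometric (Bose--Einstein: uniform over compositions), conditioned on $\sum_u X_u=n$. I would then invoke Joag-Dev--Proschan: independent log-concave variables conditioned on their sum are NA. Geometric laws are log-concave, so this settles item 2. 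I expect this step to be the main subtlety, mainly in choosing the right representation; the uniform-composition description comes from the urn with all-one initial weights, whose law is uniform over compositions of $n$ into $m$ positive parts.

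For item 3, uniformity over compositions gives the exact marginal
\[
\p\big(\mathrm{Fr}_n^{>m}(u)=j\big)=\binom{n-j-1}{m-2}\Big/\binom{n-1}{m-1},\qquad 1\le j\le n-m+1 .
\]
The same holds for $\mathrm{Fr}_n(m)$. Indeed $\mathrm{Fr}_n(m)=\mathrm{Fr}_n^{>m}(m)$, since all edges in $m$'s fringe tree arrive after time $m$, so the claimed equality is exchangeability. For the bounds, I would sum the tail $\p(X\le x)=1-\binom{n-\lfloor x\rfloor-1}{m-1}/\binom{n-1}{m-1}$ with $x=n/(Mm)$. The upper bound comes from $1-\prod_{i<m}\big(1-\tfrac{\lfloor x\rfloor}{n-1-i}\big)\le \sum_{i<m}\tfrac{x}{n-m}$. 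When $m\le n/2$ this is $\le xm/(n-m)\le 2/M$. When $m>n/2$ and $M>2$, we have $x<1$, so the probability is zero; when $M\le 2$ the bound $2/M\ge1$ is trivial. For the lower bound with $M\ge4$ and $m\le n/M$, $x\ge1$ and $\lfloor x\rfloor\ge x/2$. Then $\p(X\le x)\ge 1-(1-\lfloor x\rfloor/(n-1))^{m-1}\ge 1-e^{-(m-1)x/(2n)}$. The exponent satisfies $(m-1)x/(2n)\ge x m/(4n)=1/(4M)\le 1/16$, since $m\ge2$. Using $1-e^{-y}\ge y/2$ for $y\le1$ gives $\ge 1/(8M)$. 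These are routine checks of constants.
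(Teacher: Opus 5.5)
Your proof is correct, but it takes a different route from the paper in items 2 and 3.

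Item 1 and the P\'olya-urn identification match the paper, which also models the witnessed fringe trees as an $m$-colour urn started with one ball of each colour.

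\textbf{Item 2.} The paper simply asserts that the Dirichlet-multinomial law implies negative association. You supply the mechanism:
\begin{itemize}
\item with all initial weights equal to one, the counts are uniform over compositions of $n$ into $m$ positive parts;
\item equivalently, they are i.i.d.\ shifted geometrics conditioned on their sum;
\item the Joag-Dev--Proschan theorem for independent log-concave variables conditioned on their sum then gives negative association.
\end{itemize}
Your remark that mixtures of multinomials need not inherit negative association is exactly why this step adds something over the paper's one-line justification.

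\textbf{Item 3.} The paper works with the probability mass function. It shows that $k\mapsto \p(\mathrm{Fr}_n(m)=k)$ is non-increasing for $m\ge 2$. For the upper bound this gives $\p(\mathrm{Fr}_n(m)\le x)\le x\,\p(\mathrm{Fr}_n(m)=1)=x\frac{m-1}{n-1}$, hence $2/M$ in one line for every $m\ge 2$ with no case split. For the lower bound it uses $\p(\mathrm{Fr}_n(m)\le k)\ge k\,\p(\mathrm{Fr}_n(m)=k)$ and controls the telescoping ratios.

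You instead use the exact stars-and-bars tail $\p(X>k)=\binom{n-k-1}{m-1}/\binom{n-1}{m-1}$. This turns both bounds into one-line product estimates. The cost is the split into $m\le n/2$ versus $m>n/2$ in the upper bound.

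One small slip: that ratio equals $\prod_{i=0}^{m-2}\bigl(1-\frac{k}{n-1-i}\bigr)$, a product of $m-1$ factors, not the $m$ factors your index range suggests. This is harmless, since you only over-bound, and the constants $2/M$ and $1/(8M)$ come out as claimed.
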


\smallskip 
The next lemma gives a deterministic lower bound on $\phi_n(v)$ and $\phi_n^{\ssup{2}}(v)$ in terms of the sizes of (witnessed) fringe trees. Note that the lemma is deterministic and therefore holds for all recursive trees of size $n$.

\begin{lemma}[Deterministic lower bound on the Jordan-2 centrality]\label{lem:minima}
    Let $v\geq 2$. Then
    \begin{align}
    \notag\phi_n(v) &\ge \min\left(\mathrm{Fr}^{>v}_n(v), \mathrm{Fr}^{>v}_n(1)\right), \quad\text{and }\\
    \phi_n^\ssup{2}(v)&\ge \min\left(\mathrm{Fr}_n^{>v}(v)\mathrm{Fr}_n^{>v}(\pa(v)),  \,\,\,  \mathrm{Fr}_n^{>v}(v) \mathrm{Fr}_n^{>v}(1),   \,\,\,  \mathrm{Fr}_n^{>v}(v) \mathrm{Fr}_n^{>v}(2),   \,\,\,  \mathrm{Fr}_n^{>v}(1)\mathrm{Fr}_n^{>v}(2)\right).\label{eq:what happens above 2}
    \end{align}
    Further, for $v=2$, one has
    \begin{equation}\label{eq:what happens at 2}
        \phi_n^\ssup{2}(2) \ge \min\left(\mathrm{Fr}_n^{>3}(2)\mathrm{Fr}_n^{>3}(1),  \,\,\,  \mathrm{Fr}_n^{>3}(2)\mathrm{Fr}_n^{>3}(3),   \,\,\,  \mathrm{Fr}_n^{>3}(1)\mathrm{Fr}_n^{>3}(3)\right).
    \end{equation}
\end{lemma}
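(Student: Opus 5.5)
The lemma is deterministic, so the plan is a case analysis on the neighbour attaining the minimum in \eqref{eq:neg-jordan}. It rests on one structural fact about witnessed fringe trees. Fix $m\ge v$ and, for $u\in[m]$, let $B_m(u)$ be the set of vertices counted by $\mathrm{Fr}_n^{>m}(u)$, so $|B_m(u)|=\mathrm{Fr}_n^{>m}(u)$. Since every vertex $w>m$ has a unique first ancestor in $[m]$, the sets $B_m(1),\dots,B_m(m)$ partition $[n]$. Each $B_m(u)$ is connected, and every edge of $T_n$ between two different blobs belongs to $T_m$.

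The key observation is the following. Remove an edge $\{x,y\}$ of $T_n$. If the edge lies inside $B_m(u)$, then every blob other than $B_m(u)$ lies entirely on one side. If the edge lies in $T_m$, then every blob lies entirely on one side. We also use monotonicity: $\mathrm{Fr}_n^{>v}(u)\le \mathrm{Fr}_n^{>p}(u)$ for $u\le p\le v$, so we may work with threshold $m=v$ throughout. Write $B_u:=\mathrm{Fr}_n^{>v}(u)$.

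\textbf{Step 1: the bound on $\phi_n(v)$.} Since $T_n$ is recursive, the neighbours of $v$ are $p:=\pa(v)<v$ and children $w>v$, and all these children lie in $B_v(v)$.
\begin{itemize}
\item For the parent side, $(T_n,p)_{v\downarrow}$ is the fringe tree of $v$, which contains $B_v(v)$. Hence its size is at least $B_v$.
\item For a child $w$, the side $(T_n,w)_{v\downarrow}$ is the complement of the fringe tree of $w$. That fringe tree is contained in $B_v(v)\setminus\{v\}$. So this side contains all of $T_v$ and every blob $B_v(u)$ with $u\ne v$.
\end{itemize}
Because $v\ge2$, the child side contains $B_v(1)$, which gives $\phi_n(v)\ge\min(B_v,B_1)$. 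When $v\ge3$ it also contains $B_v(2)$, so the child side has size at least $B_1+B_2$.

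\textbf{Step 2: the bound on $\phi_n^{\ssup{2}}(v)$ for $v\ge3$.} We split according to $v^{\ssup1}$. Since the argument works for every admissible choice of $v^{\ssup1}$, the tie-breaking convention of Definition~\ref{def:jordan2} is irrelevant.
\begin{itemize}
\item If $v^{\ssup1}$ is a child of $v$, then by Step 1 we have $\phi_n(v)\ge B_1+B_2$. Hence $\phi_n^{\ssup2}(v)\ge\phi_n(v)^2\ge B_1B_2$.
\item If $v^{\ssup1}=p$, then $\phi_n(v)=\mathrm{Fr}_n(v)\ge B_v$, and so $\phi_n^{\ssup2}(v)\ge B_v\,\phi_n(p)$. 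It remains to show $\phi_n(p)\ge\min(B_p,B_1,B_2)$, by the same edge-removal argument applied to $p$ with threshold $v$.
\end{itemize}
For $p\ge2$ we argue as follows. The parent side of $p$ contains $B_v(p)$. For a child $w$ of $p$, the side $(T_n,w)_{p\downarrow}$ is the complement of $\mathrm{Fr}(w)$. This complement contains $B_v(1)$ in both cases: if $w>v$, the fringe of $w$ lies inside $B_v(p)$; if $w\le v$, the edge $\{w,p\}$ is in $T_v$, so no vertex of $B_v(1)$ descends from $w$.

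For $p=1$ we consider children $w$ of $1$. If $w\le v$, the complement of $\mathrm{Fr}(w)$ contains $B_v(1)$. If $w>v$, then $w\in B_v(1)$ and the complement contains $B_v(2)$.

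Combining the two cases gives exactly the four products in \eqref{eq:what happens above 2}.

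\textbf{Step 3: the case $v=2$, and the main obstacle.} The argument above breaks down here. The child side of $v=2$ need not contain $B_v(2)$, so we cannot bound $\phi_n(v)^2$ below by $B_1B_2$; this is why the threshold is raised to $m=3$ in \eqref{eq:what happens at 2}. With blobs $B_3(1),B_3(2),B_3(3)$ we redo the analysis, using that vertex $3$ is attached either to $1$ or to $2$. The parent side of $2$ contains $B_3(2)$. The child side via $w=3$ contains $B_3(1)\cup B_3(2)$. The child side via $w>3$ contains $B_3(1)$, and also $B_3(3)$, since $3\notin\mathrm{Fr}(w)$.

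We then proceed as in Step 2:
\begin{itemize}
\item If $v^{\ssup1}$ is a child, then $\phi_n(2)^2$ is at least $B_3(1)B_3(2)$ or $B_3(1)B_3(3)$.
\item If $v^{\ssup1}=1$, we bound $\phi_n(1)$ below by blob sizes from the children of $1$. Such a child may be $2$, may be $3$, or may be larger; in each case the complement of its fringe contains two of the three blobs.
\end{itemize}
This yields the three products in \eqref{eq:what happens at 2}.

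The delicate point throughout is verifying which blobs survive on each side for children $w\le v$ versus $w>v$. I would handle this uniformly with the edge-removal observation, namely that only one blob can be split by removing an edge.
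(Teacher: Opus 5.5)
Your proposal is correct and follows essentially the same route as the paper. In both, the case split is on whether $v^{\ssup{1}}$ is the parent or a child of $v$. The child case is handled via $\phi_n(v)^2$, and the parent case via a lower bound on $\phi_n(\pa(v))$. Each bound comes from locating a witnessed fringe tree on the appropriate side of the removed edge. Your partition into the sets $B_m(u)$ just makes these containment checks uniform. For $v=2$, your parent case also avoids the paper's extra split according to where vertex $3$ attaches.

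Your restriction of \eqref{eq:what happens above 2} to $v\ge 3$ is not a gap; it is forced. Take the path in which each vertex $i\ge 2$ is attached to $i-1$, with $n\ge 8$. Then $\phi_n(2)=2$, $2^{\ssup{1}}=3$ and $\phi_n(3)=3$, so $\phi_n^{\ssup{2}}(2)=6$. The right-hand side of \eqref{eq:what happens above 2} at $v=2$, however, equals $\mathrm{Fr}_n^{>2}(2)\,\mathrm{Fr}_n^{>2}(1)=n-1>6$. The paper's Case 3 tacitly needs $v\ge 3$: for $v=2$, the fringe tree of $2$ witnessed after time $2$ contains $c$. This does no harm downstream, because that inequality is only ever applied for $v\ge 3$, with \eqref{eq:what happens at 2} covering $v=2$, exactly as in your plan.

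One small slip in Step 3 concerns the claim that the complement always contains two of the three blobs. If $3\sim 2$, the complement of the fringe tree of the child $2$ of $1$ is exactly $B_3(1)$, a single blob. What you actually need, and what holds for every child of $1$, is that this complement contains $B_3(1)$ or $B_3(3)$. That gives $\phi_n(1)\ge\min\big(\mathrm{Fr}_n^{>3}(1),\mathrm{Fr}_n^{>3}(3)\big)$, and hence the two products in \eqref{eq:what happens at 2} that involve $\mathrm{Fr}_n^{>3}(2)$.
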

The special case for $v=2$ is needed, since we need the pairs of vertices in the products to be different. For $v=2$, the third term in \eqref{eq:what happens above 2} equals $\mathrm{Fr}_n(2)^2$.
\begin{proof}
 By the definition of $\phi_n(v)$ and $v^\ssup{1}$ in \eqref{eq:v1} and \eqref{eq:neg-jordan}, we have
 \begin{equation}\label{eq:phi-recap}
 \phi_n(v)=\min_{u\sim v}\big|(T_n, u)_{v\downarrow}\big|, \qquad v^\ssup{1}=\argmin_{u\sim v}\big|(T_n, u)_{v\downarrow}\big|, \qquad
 \phi_n(v^\ssup{1})=\min_{u\sim v^\ssup{1}}\big|(T_n, u)_{v^\ssup{1}\downarrow}\big|,
 \end{equation}
 where $(T_n,u)_{v\downarrow}$ denotes the subtree of $v$ in the tree $T_n$ when rooted at $u$. 

 \smallskip 
 For the lower bound on $\phi_n(v)$, we distinguish two cases, see also Figure \ref{fig:two cases}. 

\begin{figure}[h]
		
		\centering
	
	\begin{subfigure}{0.45\textwidth}
\centering
\begin{tikzpicture}[scale=1]

    \node[draw, circle, minimum size=4mm, inner sep = 0 pt, label=left:{$v^\ssup{1}$}] (1) at (0,-0) {$1$};
    \node[draw, circle, minimum size=4mm, inner sep = 0 pt] (2) at (0,-1) {$2$};
    \node[draw, circle, minimum size=4mm, inner sep = 0 pt, label=left:{$v$}] (3) at (-1,-1) {$3$};
    \node[draw, circle, minimum size=4mm, inner sep = 0 pt] (4) at (-1,-2) {$4$};
    \node[draw, circle, minimum size=4mm, inner sep = 0 pt] (5) at (-0.5,-3) {$5$};
    \node[draw, circle, minimum size=4mm, inner sep = 0 pt] (6) at (1,-1) {$6$};
    \node[draw, circle, minimum size=4mm, inner sep = 0 pt] (7) at (-1.5,-3) {$7$};
    \node[draw, circle, minimum size=4mm, inner sep = 0 pt] (8) at (1,-2) {$8$};
    \node[draw, circle, minimum size=4mm, inner sep = 0 pt] (9) at (0,-2) {$9$};

    \draw (2) -- (1);
    \draw (3) -- (1);
    \draw (4) -- (3);
    \draw (5) -- (4);
    \draw (6) -- (1);
    \draw (7) -- (4);
    \draw (8) -- (6);
    \draw (9) -- (2);

\end{tikzpicture}
\end{subfigure}
\hfill
\begin{subfigure}{0.45\textwidth}
\centering
\begin{tikzpicture}[scale=1]

    \node[draw, circle, minimum size=4mm, inner sep = 0 pt] (1a) at (0,0) {$1$};
    \node[draw, circle, minimum size=4mm, inner sep = 0 pt] (2a) at (-1,-1) {$2$};
    \node[draw, circle, minimum size=4mm, inner sep = 0 pt, label=left:{$v$}] (3a) at (0,-1) {$3$};
    \node[draw, circle, minimum size=4mm, inner sep = 0 pt, label=left:{$v^\ssup{1}$}] (4a) at (0,-2) {$4$};
    \node[draw, circle, minimum size=4mm, inner sep = 0 pt] (5a) at (-1,-3) {$5$};
    \node[draw, circle, minimum size=4mm, inner sep = 0 pt] (6a) at (1,-2) {$6$};
    \node[draw, circle, minimum size=4mm, inner sep = 0 pt] (7a) at (0,-3) {$7$};
    \node[draw, circle, minimum size=4mm, inner sep = 0 pt] (8a) at (1,-1) {$8$};
    \node[draw, circle, minimum size=4mm, inner sep = 0 pt] (9a) at (1,-3) {$9$};

    \draw (2a) -- (1a);
    \draw (3a) -- (1a);
    \draw (4a) -- (3a);
    \draw (5a) -- (4a);
    \draw (6a) -- (3a);
    \draw (7a) -- (4a);
    \draw (8a) -- (1a);
    \draw (9a) -- (4a);

\end{tikzpicture}
\end{subfigure}
	
	\caption{For $v=3$, we have $v^{\ssup{1}}=1=\mathrm{pa}(v)$ for the left tree, but $v^{\ssup{1}}=c=4 \neq \mathrm{pa}(v)$ for the right tree.}\label{fig:two cases}
\end{figure}
 
If $\pa(v)$ attains the minimum in the first equation in \eqref{eq:phi-recap}, then $\phi_n(v)=\big|(T_n, \mathrm{pa}(v) )_{v\downarrow}\big|=\mathrm{Fr}_n(v)=\mathrm{Fr}^{>v}_n(v)$. Assume next that the minimum is instead attained at a child $c$ of $v$, so $\phi_n(v)=|(T_n, c)_{v\downarrow}|$. We claim that the tree $(T_n, c)_{v\downarrow}$ contains the fringe tree of vertex $1$ witnessed after time $v$. Indeed, if we pick a vertex $w$ in this witnessed fringe tree, then the unique path from $w$ to $c$ has to contain the edge $\{v, \pa(v)\}$ which arrived before time $v+1$, and thus $w \in (T_n, c)_{v\downarrow}$. Thus, we get that $\phi_n(v) = |(T_n, c)_{v\downarrow}| \ge  \mathrm{Fr}_n^{>v}(1)$ in this case. 
 
 As a result, we see that in either case we have $\phi_n(v)\ge \min(\mathrm{Fr}_n^{>v}(v), \mathrm{Fr}_n^{>v}(1))$ when $v\ge 2$, proving the first inequality of Lemma \ref{lem:minima}. \\


\begin{figure}[h]
\centering

\begin{subfigure}{0.3\textwidth}
\centering
\begin{tikzpicture}[scale=1]

    \node[draw, circle, minimum size=4mm, inner sep = 0 pt] (1) at (0,0) {$1$};
    \node[draw, circle, minimum size=4mm, inner sep = 0 pt] (4) at (0,-1) {$4$};
    \node[draw, circle, minimum size=4mm, inner sep = 0 pt, label=left:{$v^\ssup{1}$}] (2) at (-1,-1) {$2$};
    \node[draw, circle, minimum size=4mm, inner sep = 0 pt, label=left:{$v$}] (3) at (-1,-2) {$3$};
    \node[draw, circle, minimum size=4mm, inner sep = 0 pt] (5) at (-0.5,-3) {$5$};
    \node[draw, circle, minimum size=4mm, inner sep = 0 pt] (6) at (1,-1) {$6$};
    \node[draw, circle, minimum size=4mm, inner sep = 0 pt] (7) at (-1.5,-3) {$7$};
    \node[draw, circle, minimum size=4mm, inner sep = 0 pt] (8) at (1,-2) {$8$};
    \node[draw, circle, minimum size=4mm, inner sep = 0 pt] (9) at (0,-2) {$9$};

    \draw (2) -- (1);
    \draw (3) -- (2);
    \draw (4) -- (1);
    \draw (5) -- (3);
    \draw (6) -- (1);
    \draw (7) -- (3);
    \draw (8) -- (6);
    \draw (9) -- (4);

\end{tikzpicture}
\caption{Case 1: $v^\ssup{1}=\pa(v)\ge 2$.}
\end{subfigure}
\hfill
\begin{subfigure}{0.3\textwidth}
\centering
\begin{tikzpicture}[scale=1]

    \node[draw, circle, minimum size=4mm, inner sep = 0 pt, label=left:{$v^\ssup{1}$}] (1) at (0,0) {$1$};
    \node[draw, circle, minimum size=4mm, inner sep = 0 pt] (2) at (0,-1) {$2$};
    \node[draw, circle, minimum size=4mm, inner sep = 0 pt, label=left:{$v$}] (3) at (-1,-1) {$3$};
    \node[draw, circle, minimum size=4mm, inner sep = 0 pt] (4) at (-1,-2) {$4$};
    \node[draw, circle, minimum size=4mm, inner sep = 0 pt] (5) at (-0.5,-3) {$5$};
    \node[draw, circle, minimum size=4mm, inner sep = 0 pt] (6) at (1,-1) {$6$};
    \node[draw, circle, minimum size=4mm, inner sep = 0 pt] (7) at (-1.5,-3) {$7$};
    \node[draw, circle, minimum size=4mm, inner sep = 0 pt] (8) at (1,-2) {$8$};
    \node[draw, circle, minimum size=4mm, inner sep = 0 pt] (9) at (0,-2) {$9$};

    \draw (2) -- (1);
    \draw (3) -- (1);
    \draw (4) -- (3);
    \draw (5) -- (4);
    \draw (6) -- (1);
    \draw (7) -- (4);
    \draw (8) -- (6);
    \draw (9) -- (2);

\end{tikzpicture}
\caption{Case 2: $v^\ssup{1}=\pa(v)=1$.}
\end{subfigure}
\hfill
\begin{subfigure}{0.3\textwidth}
\centering
\begin{tikzpicture}[scale=1]

    \node[draw, circle, minimum size=4mm, inner sep = 0 pt] (1b) at (0,0) {$1$};
    \node[draw, circle, minimum size=4mm, inner sep = 0 pt] (2b) at (-1,-1) {$2$};
    \node[draw, circle, minimum size=4mm, inner sep = 0 pt, label=left:{$v$}] (3b) at (0,-1) {$3$};
    \node[draw, circle, minimum size=4mm, inner sep = 0 pt, label=left:{$v^\ssup{1}$}] (4b) at (0,-2) {$4$};
    \node[draw, circle, minimum size=4mm, inner sep = 0 pt] (5b) at (-1,-3) {$5$};
    \node[draw, circle, minimum size=4mm, inner sep = 0 pt] (6b) at (1,-2) {$6$};
    \node[draw, circle, minimum size=4mm, inner sep = 0 pt] (7b) at (0,-3) {$7$};
    \node[draw, circle, minimum size=4mm, inner sep = 0 pt] (8b) at (1,-1) {$8$};
    \node[draw, circle, minimum size=4mm, inner sep = 0 pt] (9b) at (1,-3) {$9$};

    \draw (2b) -- (1b);
    \draw (3b) -- (1b);
    \draw (4b) -- (3b);
    \draw (5b) -- (4b);
    \draw (6b) -- (3b);
    \draw (7b) -- (4b);
    \draw (8b) -- (1b);
    \draw (9b) -- (4b);

\end{tikzpicture}
\caption{Case 3: $v^\ssup{1} = c \neq \pa(v)$.\ }
\end{subfigure}

\caption{The three cases we distinguish for the lower bound on $\phi_n^{\ssup{2}}(v)$.}
\label{fig:three cases}
\end{figure}

 \smallskip 
 We proceed to the proof of inequality \eqref{eq:what happens above 2}.
 Recall from~\eqref{eq:NNBJordanDef} that 
 \[
 \phi_n^\ssup{2}(v) = (\phi_n(v))^2\vee\left(\phi_n(v)\cdot \phi_n(v^\ssup{1})\right).
 \]
 For the lower bound on $\phi_n^{\ssup{2}}$, we distinguish three cases; see Figure \ref{fig:three cases}.

 \smallskip\noindent\emph{Case 1: $v^\ssup{1}=\pa(v)\ge 2$.\ }
 The parent of $v$ attains the minimum in the first equation in \eqref{eq:phi-recap}, so $\phi_n(v)=\mathrm{Fr}_n(v)=\mathrm{Fr}^{>v}_n(v)$. If $v^\ssup{1}\ge 2$, it follows by the lower bound on $\phi_n$ that 
 \begin{equation}\nonumber 
     \phi_n^\ssup{2}(v)\ge \phi_n(v)\cdot\phi_n(v^\ssup{1}) = \mathrm{Fr}_n^{>v}(v) \cdot\phi_n(\mathrm{pa}(v)) \ge \mathrm{Fr}_n^{>v}(v) \cdot \min\left(\mathrm{Fr}^{>v}_n(\pa(v)), \mathrm{Fr}^{>v}_n(1)\right).
 \end{equation}
 \noindent\emph{Case 2: $v^\ssup{1}=\pa(v)=1$.\ }
Similar to case 1, $\phi_n(v)=\mathrm{Fr}_n(v)=\mathrm{Fr}^{>v}_n(v)$. Since $1$ has no parent, the minimum in \eqref{eq:phi-recap} is attained at one of its children, say $c$.

If $c\ge 3$, the tree $(T_n, c)_{1\downarrow}$ must contain the entire fringe tree of $2$: indeed, since $c\geq 3$ and $2$ are both children of $1$, the trees $(T_n,1)_{2 \downarrow}$ and $(T_n,1)_{c \downarrow}$ are disjoint and thus $(T_n,1)_{2 \downarrow} \subseteq T_n \setminus (T_n,1)_{c\downarrow} = (T_n,c)_{1\downarrow}$.


If $c=2$,  $(T_n, 2)_{1\downarrow}$ contains the entire fringe tree of $1$ witnessed after time $v\ge 2$: the unique path in $T_n$ from a vertex in $(T_n,1)_{2\downarrow}$ to a vertex in the fringe tree of $1$ witnessed after time $v$ must traverse the edge $\{1,2\}$ which is not in the fringe tree of $1$ witnessed after time $v\ge 2$. Thus, $\phi_n(1) = |(T_n,2)_{1\downarrow}| \ge \mathrm{Fr}_n^{>v}(1)$ if $c = 2$. 
Combining these subcases, if $v^\ssup1=\pa(v)=1$, then
\[\phi_n^\ssup{2}(v)\ge \phi_n(v)\phi_n(1)\ge \mathrm{Fr}_n^{>v}(v)\cdot\min\left(\mathrm{Fr}_n^{>v}(1), \mathrm{Fr}_n^{>v}(2)\right).\]

\smallskip
\noindent\emph{Case 3: $v^\ssup{1}\neq \pa(v)$.\ }
We use the bound $\phi_n^\ssup{2}(v)\ge (\phi_n(v))^2$. The minimum in the first equation in \eqref{eq:phi-recap} is attained at a child $c$ of $v$. Since $c>v\ge 2$, the subtree $(T_n, c)_{v\downarrow}$ contains vertices $1$ and~$2$. Therefore, the subtree $(T_n, c)_{v\downarrow}$ also contains the fringe trees of $1$ and $2$ witnessed after time $v$. 
Thus, 
\[
\phi_n^\ssup{2}(v)\ge (\phi_n(v))^2\ge \max \left(\mathrm{Fr}_n^{>v}(1) , \mathrm{Fr}_n^{>v}(2)\right)^2 \ge \mathrm{Fr}_n^{>v}(1)\mathrm{Fr}_n^{>v}(2).
\]
Since exactly one of the three cases we discussed needs to occur, the second inequality of Lemma~\ref{lem:minima} follows.

\smallskip
We are left to show inequality \eqref{eq:what happens at 2} for $v=2$. We distinguish 4 cases.

\noindent\emph{Case 1: $2^\ssup{1}=1, 3 \sim 2$.\ } The fringe trees of $2$ and $3$ witnessed after time $3$ and are contained in the fringe tree of $2$. Further, since $2^\ssup{1}=1$, one has $\phi_n(2) = \mathrm{Fr}_n(2)$, implying that
\begin{equation*}
	\phi_n^\ssup{2}(2) \geq \phi_n(2)^2 = \mathrm{Fr}_n(2)^2 \geq \left(\mathrm{Fr}_n^{>3}(2) + \mathrm{Fr}_n^{>3}(3)\right)^2 \geq \mathrm{Fr}_n^{>3}(2) \mathrm{Fr}_n^{>3}(3) .
\end{equation*}

\noindent\emph{Case 2: $2^\ssup{1} = c \geq 3, 3 \sim 2$.\ } If $c=3$, then the fringe tree of $2$ witnessed after time $3$ is contained in the tree $(T_n, c)_{2 \downarrow}$. If $c>3$, then the fringe tree of $3$ witnessed after time $3$ is contained in the tree $(T_n, c)_{2 \downarrow}$. This implies that $\phi_n(2) = |(T_n, c)_{2\downarrow}| \geq \min(\mathrm{Fr}_n^{>3}(2), \mathrm{Fr}_n^{>3}(2))$. Further, in either case, the fringe tree of $1$ witnessed after time $3$ is contained in the tree $(T_n,c)_{2\downarrow}$. Thus we see that
\begin{equation*}
	\phi_n^\ssup{2}(2) \geq \phi_n(2)^2 \geq \mathrm{Fr}_n^{>3}(1)\cdot \min(\mathrm{Fr}_n^{>3}(2), \mathrm{Fr}_n^{>3}(3))
	= 
	\min(\mathrm{Fr}_n^{>3}(1) \mathrm{Fr}_n^{>3}(2), \mathrm{Fr}_n^{>3}(1) \mathrm{Fr}_n^{>3}(3)) .
\end{equation*}

\noindent\emph{Case 3: $2^\ssup{1} = 1, 3 \sim 1$.\ } If $a \coloneqq 1^{\ssup{1}} \in \{2,3\}$, then the fringe tree of $1$ witnessed after time $3$ is contained in the tree $(T_n,a)_{1\downarrow}$. If $a = 1^{\ssup{1}} > 3$, then the fringe tree of $3$ witnessed after time $3$ is contained in the tree $(T_n,a)_{1\downarrow}$. This implies that $\phi_n(1) = |(T_n,a)_{1\downarrow}| \geq \min(\mathrm{Fr}_n^{>3}(1), \mathrm{Fr}_n^{>3}(3))$. Further, since $2^\ssup{1} = 1 = \mathrm{pa}(v)$, we always have that $\phi_n(2) = \mathrm{Fr}_n(2) \geq \mathrm{Fr}_n^{>3}(2)$. Combining these inequalities gives
\begin{equation*}
	\phi_n^{\ssup{2}}(2) \geq  \phi_n(2) \phi_n(1) \geq \mathrm{Fr}_n^{>3}(2) \cdot \min(\mathrm{Fr}_n^{>3}(1), \mathrm{Fr}_n^{>3}(3))
	=
	\min(\mathrm{Fr}_n^{>3}(2) \mathrm{Fr}_n^{>3}(1) , \mathrm{Fr}_n^{>3}(2) \mathrm{Fr}_n^{>3}(3)) .
\end{equation*}

\noindent\emph{Case 4: $2^\ssup{1} \geq 3, 3 \sim 1$.\ } Since $3 \nsim 2$, we have $a \coloneqq 2^{\ssup{1}} \geq 4$, and thus the tree $(T_n,a)_{2 \downarrow}$ contains both the fringe tree of $1$ witnessed after time $3$ as well as the fringe tree of $2$ witnessed after time $3$, implying that
\begin{equation*}
	\phi_n^\ssup{2}(2) \geq \phi_n(2)^2 =|(T_n,a)_{2\downarrow}| \geq \left(\mathrm{Fr}_n^{>3}(1) + \mathrm{Fr}_n^{>3}(3)\right)^2 \geq \mathrm{Fr}_n^{>3}(1) \mathrm{Fr}_n^{>3}(3) .
\end{equation*}
Since exactly one of the four preceding cases needs to occur, this shows inequality \eqref{eq:what happens at 2}.
\end{proof}

By Lemma~\ref{lem:minima}, the upper bound on the lower tail has been reduced to controlling products of witnessed fringe-tree sizes. Since these variables are negatively associated, the following elementary claim allows us to study the lower tail of their product.
\begin{claim}\label{claim:negative_corelation}
Let  $X,Y$ be two non-negative random variables that are negatively associated and for which there exists a constant $C > 0$ such that for all $s > 0$,
\begin{align*}
    \p \left( X \leq s\right),\  \p \left( Y \leq s \right) \leq Cs .
\end{align*}
Then, for any $s\in(0,1]$,
\begin{align*}
    \p \left( XY \leq s \right)  \leq 2 C^2  (\log_2(1/s)+1)  s + 2 C s.
\end{align*}
\end{claim}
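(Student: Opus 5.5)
We will prove the claim with a dyadic decomposition of the range of $X$, using negative association only to factor probabilities of joint lower-tail events.

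The relevant consequence of negative association is this. For non-decreasing functions, the indicators $\mathbbm{1}\{X\le a\}$ and $\mathbbm{1}\{Y\le b\}$ are both non-increasing. Negative association therefore gives, for all $a,b\ge 0$,
\[
\p(X\le a,\, Y\le b)\le \p(X\le a)\,\p(Y\le b)\le C^2 ab,
\]
where the second step uses the hypothesis.

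Fix $s\in(0,1]$ and set $K:=\lceil \log_2(1/s)\rceil$, so that $2^{-K}\le s$. Split the event $\{XY\le s\}$ according to the size of $X$:
\[
\{XY\le s\}\subseteq \{X\le s\}\cup \{X>1,\ Y\le s\}\cup \bigcup_{k=1}^{K}\{2^{-k}<X\le 2^{-k+1},\ Y\le s2^{k}\}.
\]
To see that this covers every case, suppose $X>s$ and $XY\le s$. If $X>1$, then $Y<s$. Otherwise $X\in(s,1]\subseteq(2^{-K},1]$, so $X$ lies in exactly one dyadic interval $(2^{-k},2^{-k+1}]$ with $1\le k\le K$. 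On that interval $X>2^{-k}$, hence $Y\le s/X< s2^{k}$.

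Now bound each piece with the hypothesis and the factorization.
\begin{itemize}
\item The first piece has probability at most $Cs$.
\item The second piece has probability at most $\p(Y\le s)\le Cs$.
\item For the $k$-th dyadic piece, enlarge the event to $\{X\le 2^{-k+1},\ Y\le s2^k\}$. By the factorization its probability is at most $C^2\cdot 2^{-k+1}\cdot s2^k=2C^2 s$.
\end{itemize}
Summing over $k$ gives $2C^2Ks$, and $K\le \log_2(1/s)+1$. Adding the three contributions yields
\[
\p(XY\le s)\le 2C^2(\log_2(1/s)+1)s+2Cs,
\]
which is exactly the claimed bound.

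The only step needing care is the covering argument, in particular placing the boundary case $X\in(s,2^{-K+1}]$ into the last dyadic interval. That case is covered because $2^{-K}\le s<X$.
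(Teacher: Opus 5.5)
Your proof is correct and follows essentially the same route as the paper: a dyadic decomposition of the range of $X$, negative association to bound each joint lower-tail probability by $2C^2 s$, and two extra $Cs$ terms for the extreme ranges. The only differences are cosmetic. You anchor the dyadic intervals at $1$ and split off $\{X>1\}$, whereas the paper anchors them at $s$ and splits off $\{Y\le s\}$; your version gives the slightly smaller count $\lceil\log_2(1/s)\rceil$ of dyadic pieces.
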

\begin{proof}
We bound $
\p(XY\le s) \le \p(X\le s)+\p(Y\le s)
+ \p\left(XY\le s, X> s, Y> s\right).$ \\
The first two terms are each at most $Cs$. For the third term we use a dyadic decomposition for $X$: for $j\ge0$, define $I_j \coloneqq (s2^j, s2^{j+1}]$. If $X\in I_j$ and $XY\le s$, then \(Y\le 2^{-j}\). Hence,
\[
\begin{aligned}
\p\left(XY\le s,\ X> s,\ Y> s\right)
&
\le \sum_{j=0}^{\lfloor\log_2(1/s)\rfloor} \p \left( X \in I_j, Y \leq 2^{-j}  \right) \\
&\le \sum_{j=0}^{\lfloor\log_2(1/s)\rfloor} \p\left(X\le s2^{j+1}, Y\le 2^{-j}\right).
\end{aligned}
\]
By negative association,
\[
\p\left(X\le s2^{j+1}, Y\le 2^{-j}\right)
\le \p(X\le s2^{j+1})\cdot \p(Y\le 2^{-j}) \le Cs2^{j+1}\cdot C2^{-j} = 2C^2 s.
\]
Summing over $j=0,\dots,\lfloor\log_2(1/s)\rfloor$ yields the claimed bound.
\end{proof}

To obtain the matching lower bound in Lemma~\ref{lem:boosted small}, we next control the product $\mathrm{Fr}_n(v)\mathrm{Fr}_n(\pa(v))$. The following lemma provides the required estimate. Its proof is deferred to Appendix \ref{sec:appendix}.

\begin{restatable}{lemma}{appendixfringesize}
\label{lem:appendix_fringe_size}
    For every $\eps>0$ there exists a constant $c>0$ such that for any  $n\ge 3, v\in[3,n/4], s\in(3(v/n)^2 \vee (v/n)^{2-\eps},1/2)$, 
    \begin{equation}\label{eq:lem2.6 (1)}
        \p\Big(\mathrm{Fr}_n(v)\mathrm{Fr}_n(\pa(v)) \le s(n/v)^2\Big)\ge c s \log(1/s).
    \end{equation}
    Further, there exists a constant $c>0$ such that for any  $n\ge 3, v\in[3,n/4], s\in(3(v/n)^{2},1)$, 
    \begin{equation}\label{eq:lem2.6 (2)}
        \p\Big(\mathrm{Fr}_n(v)\mathrm{Fr}_n(\pa(v)) \le s(n/v)^2\Big)\ge c s.
    \end{equation} 
\end{restatable}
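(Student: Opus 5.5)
The plan is to condition on a simple local configuration at time $v$ and then use the Pólya-urn (Dirichlet-multinomial) description of the witnessed fringe trees from Lemma~\ref{lem:beta-bin-fringe}. Let $u:=\pa(v)$, which is uniform on $[v-1]$ and independent of the rest of $T_{v-1}$. Consider the event
\[
G:=\big\{u\in[\lceil v/2\rceil,v-1]\text{ and $u$ is a leaf in }T_{v-1}\big\}.
\]
Since $v\ge3$, the interval is nonempty. A vertex $u\ge v/2$ stays a leaf until time $v-1$ with probability $\prod_{w=u+1}^{v-1}(1-1/(w-1))\ge c$. Hence $\p(G)\ge c_0>0$.

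On $G$, the fringe tree of $u$ in $T_v$ is exactly $\{u,v\}$. Therefore
\[
\mathrm{Fr}_n(v)=\mathrm{Fr}_n^{>v}(v)=:A,\qquad \mathrm{Fr}_n(u)=\mathrm{Fr}_n^{>v}(u)+\mathrm{Fr}_n^{>v}(v)=:B+A,
\]
so $\mathrm{Fr}_n(v)\mathrm{Fr}_n(\pa(v))=A(A+B)$. Moreover, $G$ is measurable with respect to $T_v$, while $(\mathrm{Fr}_n^{>v}(w))_{w\in[v]}$ is a Pólya urn started at time $v$ with one ball of each of $v$ colours. The urn is independent of $T_v$, and by exchangeability (part~1 of Lemma~\ref{lem:beta-bin-fringe}), conditionally on $G$ the pair $(A,B)$ has the law of two fixed colour counts. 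It therefore suffices to lower bound $\p\big(A(A+B)\le s(n/v)^2\big)$ for two fixed colours.

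The key computation is an explicit joint lower bound. For the urn, $(\mathrm{Fr}_n^{>v}(w)-1)_{w\le v}$ is uniform over weak compositions of $n-v$ into $v$ parts. Hence
\[
\p(A=a,B=b)=\binom{n-a-b+v-3}{v-3}\Big/\binom{n-1}{v-1},
\]
which is at least $c(v/n)^2$ whenever $1\le a,b\le n/v$; here I use $v\le n/4$. Summing over the box gives
\[
\p\big(A\le x n/v,\ B\le y n/v\big)\ge c\,xy\qquad\text{for } v/n\le x,y\le1.
\]
For $v\le n/4$ the lattice-point count is comparable to the area, since $xn/v\ge1$.

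With this bound the two estimates follow. For \eqref{eq:lem2.6 (2)}, take $x=y=\sqrt{s}/2$. Then $A(A+B)\le 2x^2(n/v)^2\le s(n/v)^2$, and $x\ge v/n$ is guaranteed by $s\ge3(v/n)^2$ (after adjusting constants). This gives probability $\ge c s$.

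For \eqref{eq:lem2.6 (1)}, use a dyadic decomposition mirroring Claim~\ref{claim:negative_corelation} in reverse:
\[
\{A(A+B)\le s(n/v)^2\}\supseteq\bigcup_j\Big\{A\in\big(2^{-j-1},2^{-j}\big]\tfrac nv,\ B\le \tfrac{s}{2}\,2^{j}\tfrac nv\Big\}.
\]
Here $j$ ranges over indices with $2^{-j}\le\sqrt{s/2}$ (so that $A^2\le s/2\cdot(n/v)^2$), $2^{-j-1}\ge v/n$, and $s2^j/2\le1$. These events are disjoint. By the box estimate and inclusion–exclusion on $A$ (using the upper bound $\p(A\le xn/v, B\le yn/v)\le Cxy$ from the same formula, to subtract the sub-box), each event has probability $\ge c\,2^{-j}\cdot s2^{j}=cs$. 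The number of admissible $j$ is roughly
\[
\log_2\!\Big(\min\big(\sqrt{1/s},\ n/v\big)\Big)-\log_2\!\Big(\max\big(1,\sqrt{1/s}\,\big)\Big)\ \ \text{over the range } 2^{j}\in\big[\sqrt{2/s},\,\min(2/s,\,n/(2v))\big].
\]
This is $\gtrsim\log(1/s)$ precisely because $s\ge (v/n)^{2-\eps}$ gives $n/v\ge s^{-1/(2-\eps)}$, which exceeds $s^{-1/2}$ by a factor $s^{-\eps'}$. Also $s<1/2$ keeps $\log(1/s)$ bounded away from $0$. Multiplying by $\p(G)\ge c_0$ gives $c\,s\log(1/s)$.

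The main obstacle is the finite-$n$ box estimate: the two-sided bound $\p(A\approx x n/v,\,B\le yn/v)\asymp xy$ must hold uniformly, including when $xn/v$ is only of order one. I would handle it directly from the composition formula, bounding the ratio of binomial coefficients by $(1-(a+b)/n)^{v}(v/n)^2\ge e^{-O(1)}(v/n)^2$ for $a+b\le 2n/v$. I would also double-check the case $v$ small, where $\lceil v/2\rceil\le v-1$ requires $v\ge3$ as assumed.
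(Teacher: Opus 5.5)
Your proof is in substance the paper's. The paper also conditions on the parent being a leaf at time $v-1$: its event $\cE=\{\pa(v)\ge 2 \text{ is a leaf in } T_{v-1}\}$ has probability exactly $1/2$, so your extra restriction $\pa(v)\ge v/2$ is harmless but not needed. It writes $\mathrm{Fr}_n(v)\mathrm{Fr}_n(\pa(v))=A(A+B)$ with $A=\mathrm{Fr}_n^{>v}(v)$ and $B=\mathrm{Fr}_n^{>v}(\pa(v))$. It then uses two facts: $K=A+B$ satisfies $K-2\sim\mathrm{BetaBin}(n-v,2,v-2)$ with $\p(K=k)\ge c_{\mathrm{B}}(k-2)(v/n)^2$ for $2\le k\le n/v$ (Claim~\ref{claim:c_B}), and $A$ is uniform on $\{1,\dots,K-1\}$ given $K$. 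Together these are exactly your flat joint law $\p(A=a,B=b)\gtrsim (v/n)^2$ on $[1,n/v]^2$.

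The only difference is bookkeeping: the paper sums over $K=k$, whereas you sum over dyadic shells of $A$. Your cap $2^{-j}\le\sqrt{s/2}$ is in fact slightly more careful than the paper's computation. The paper uses $\lfloor \frac{s}{k}(n/v)^2\rfloor/(k-1)$ as a conditional probability also for small $k$, where it exceeds $1$. The lemma survives there only because the terms with $k\gtrsim\sqrt{s}\,n/v$ alone already give order $s\log\min(\sqrt{s}\,n/v,\,1/\sqrt{s})$.

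Three local repairs are needed in your write-up.

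First, the pmf formula is wrong. Fixing two parts of the composition leaves $n-v-a-b+2$ balls for $v-2$ parts, so $\p(A=a,B=b)=\binom{n-a-b-1}{v-3}\big/\binom{n-1}{v-1}=\frac{(v-1)(v-2)}{(n-1)(n-2)}\prod_{j=0}^{a+b-3}\frac{n-v-j}{n-3-j}$. The expression you wrote is not this; it even exceeds $1$ when $v$ is of order $n$. Your lower bound $\gtrsim(v/n)^2$ for $a,b\le n/v$ does hold for the correct formula.

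Second, the inclusion--exclusion step fails as stated. Combining a lower bound $c\,xy$ with an upper bound $C\,xy$ only gives $c\,2^{-j}y-C\,2^{-j-1}y$, which is negative unless $C<2c$. Instead, sum the pointwise lower bound on $\p(A=a,B=b)$ over the $\gtrsim s(n/v)^2$ lattice points of each shell. The $a$-range contains an integer since $2^{-j-1}n/v\ge 1$, and the $b$-range is nonempty since $\frac{s}{2}2^{j}\frac{n}{v}\ge\sqrt{s/2}\,\frac{n}{v}>1$.

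Third, the count of admissible $j$ is only at least $\frac{\eps}{2(2-\eps)}\log_2(1/s)-O(1)$, which can be zero for moderate $s$. For $s\in[s_0(\eps),1/2)$, simply invoke \eqref{eq:lem2.6 (2)}, since there $s\gtrsim_\eps s\log(1/s)$.

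With these fixes your argument proves the lemma.
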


Next, we prove the main result of this Section, Lemma \ref{lem:boosted small}.

\begin{proof}[Proof of Lemma \ref{lem:boosted small}]
Since $\phi_n^\ssup{2}(1)$ and $\phi_n^\ssup{2}(2)$ have the same distribution, we assume without loss of generality that $v\ge 2$, at the cost of changing the constants $C,c$.
    We start by proving the upper bound. Let $\cG_v$ be the $\sigma$-algebra generated by the evolution of the tree up to time $v$, i.e., $\cG_v=\sigma\left( \{u, \mathrm{pa}(u) \} ; 2 \leq u \leq v \right)$. Note that the random variables $\left(\mathrm{Fr}_n^{>v}(j) : 1\leq j \leq n \right)$ are independent of $\cG_v$, whereas $\mathrm{pa}(v)$ is measurable with respect to $\cG_v$.
    By Lemma \ref{lem:minima} and a union bound,
    \begin{align*}
        \p \left( \phi_n^{\ssup{2}}(v) \leq \left( \frac{n}{t v} \right)^2 \right)
        &
        \le 
        \E \left[ \p \left( \min_{(x,y) \in \left\{ (v,\mathrm{pa}(v)), (v,1), (v,2), (1,2) \right\}} \mathrm{Fr}_n^{>v}(x) \mathrm{Fr}_n^{>v}(y) \leq \left( \frac{n}{t v} \right)^2 \,\Big|\, \cG_v \right)  \right]
        \\
        &
        \leq
        \E \left[ \sum_{(x,y) \in \left\{ (v,\mathrm{pa}(v)), (v,1), (v,2), (1,2) \right\}} \p \left(  \mathrm{Fr}_n^{>v}(x) \mathrm{Fr}_n^{>v}(y) \leq \left( \frac{n}{t v} \right)^2 \,\Big|\, \cG_v \right)  \right].
    \end{align*}   
    The pairs of witnessed fringe-trees in the sum are identically distributed by the exchangeability in Lemma \ref{lem:beta-bin-fringe}(1), and they are independent of $\cG_v$. Further, for $v \geq 3$, all pairs in $\{(v,\mathrm{pa}(v))$, $(v,1)$, $(v,2)$, $(1,2)\}$ consist of two different vertices. Thus, 
     \[
     \p\left( \phi_n^{\ssup{2}}(v) \leq \left( \frac{n}{t v} \right)^2 \right)\le 4\p \left( \frac{v}{n}\mathrm{Fr}_n^{>v}(v) \cdot \frac{v}{n}\mathrm{Fr}_n^{>v}(1) \leq \frac{1}{t^2} \right).
     \]
     The upper bound on the lower tail in Lemma \ref{lem:beta-bin-fringe}(3) implies that for all $M\ge 1$,
     \[
     \p\left(\frac{v}{n}\mathrm{Fr}_n^{>v}(1)\le \frac{1}{M}\right) = \p\left(\frac{v}{n}\mathrm{Fr}_n^{>v}(v)\le \frac{1}{M}\right) = \p\left(\mathrm{Fr}_n(v)\le \frac{n}{Mv}\right) \le \frac{2}{M}.
     \]
        Since $\tfrac{v}{n}\mathrm{Fr}_n^{>v}(1)$ and $\tfrac{v}{n}\mathrm{Fr}_n^{>v}(v)$ are negatively associated by Lemma \ref{lem:beta-bin-fringe}(2), Claim \ref{claim:negative_corelation} with $s=1/t^2$ implies that 
        \[
        \p \left( \phi_{n}^{\ssup{2}}(v) \leq \left( \frac{n}{t v} \right)^2 \right)
        \leq
        4\p \left( \frac{v}{n}\mathrm{Fr}_n^{>v}(v) \cdot \frac{v}{n}\mathrm{Fr}_n^{>v}(1) \leq \frac{1}{t^2} \right)
        \le 4\cdot 2\cdot 2^2\cdot \frac{1+\log_2 t^2}{t^2} +\frac{4\cdot2\cdot 2}{t^2},
        \]
        finishing the upper bound for $v \geq 3$. For $v=2$, we have, using similar arguments and \eqref{eq:what happens at 2}, that
        \begin{multline*}
        	\p \left( \phi_n^{\ssup{2}}(v) \leq \left( \frac{n}{t v} \right)^2 \right)
        	\le \sum_{(x,y) \in \left\{ (1,2), (2,3), (1,3) \right\}}
        	\p \left( \mathrm{Fr}_n^{>3}(x) \mathrm{Fr}_n^{>3}(y) \leq \left( \frac{n}{t v} \right)^2 \right)
        	\\
        	= 3
        	\p \left( \frac{3}{n} \mathrm{Fr}_n^{>3}(1) \frac{3}{n} \mathrm{Fr}_n^{>3}(2) \leq \frac{9}{4 t^2} \right) \leq C \frac{1 + \log t}{t^2},
        \end{multline*}   
        for some constant $C< \infty$ and all $t\geq 1$.

        We turn to the lower bound, starting with $v\ge 3$.
        The lower bound follows from inequality \eqref{eq:lem2.6 (1)} of Lemma \ref{lem:appendix_fringe_size}, since
        \begin{align*}
            \p \left( \phi_n^{\ssup{2}}(v) \leq \left( \frac{n}{tv} \right)^2 \right) \geq \p \left( \mathrm{Fr}_n(v) \mathrm{Fr}_n(\pa(v)) \leq t^{-2} \left( \frac{n}{v} \right)^2 \right) \geq c t^{-2} \log(t^2) = 2c \frac{\log t}{t^2},
        \end{align*}
        for some constant $c>0$ (depending on $\eps>0$) and all $t > 0$ with $3(v/n)^2 \vee (v/n)^{2-\eps} \leq t^{-2} \leq 1/2$, which is equivalent to $\sqrt{2} \leq t \leq \tfrac{1}{\sqrt{3}} (n/v) \wedge (n/v)^{1-\eps/2}$. Since $(n/v)^{1-\eps/2} \geq (n/v)^{1-\eps}$, \eqref{eq:lem2.1 lower bound} follows.
        The proof of \eqref{eq:lem2.1 lower bound version2} follows analogously, with \eqref{eq:lem2.6 (1)} replaced by \eqref{eq:lem2.6 (2)}.

        For $v=2$, observe that conditioned on the event $\mathrm{pa}(3)=1$, the tree $T_3$ consists of a line of length $3$ with vertices $2$ and $3$ at the ends. In particular, conditioned on $\mathrm{pa}(3)=1$, $\phi_n^{\ssup{2}}(2)$ has the same distribution as $\phi_n^{\ssup{2}}(3)$. Furthermore, $\phi_n^{\ssup{2}}(3)$ is independent of $\mathrm{pa}(3)$, since the tree $T_3$ always consists of a line of length $3$, with $3$ at one of its endpoints, implying that
        \begin{align*}
            \p \big( \phi_n^{\ssup{2}}(2) \leq s \big) &\geq \p(\pa(3)=1)\p \big( \phi_n^{\ssup{2}}(2) \leq s \mid \mathrm{pa}(3)=1 \big)  \\&= 
            \frac{1}{2}\p \big( \phi_n^{\ssup{2}}(3) \leq s \mid \mathrm{pa}(3)=1 \big) 
            =
            \frac{1}{2}\p \left( \phi_n^{\ssup{2}}(3) \leq s\right) 
        \end{align*}
        for all $s > 0$. The proofs of \eqref{eq:lem2.1 lower bound} and \eqref{eq:lem2.1 lower bound version2} for $v=2$ follow by decreasing the constant $c$.
\end{proof}

\subsubsection{Upper tail}\label{subsec:jordan2c-upper}
We next turn to the analysis of the upper tail of the Jordan-2 centrality $\phi^{\ssup{2}}$, which will be used to study the lower tail of the Jordan-2 ordering $\hs_2(v)$. We state the lower and upper bounds in the following two lemmas.
\begin{lemma}[Upper tail of the Jordan-2 centrality: lower bound]\label{lem:J2_LT_LB_main_term}
    There exists a constant $C>0$ such that for any $n\geq 3$, $v\in[n]$, and any $S \geq 1$,
    
    $$\p\left(\phi_n^\ssup{2}(v)\ge S\left(\frac{n}{v}\right)^2\right)  
        \geq \frac{1}{128S}-\frac{C}{v}.$$
\end{lemma}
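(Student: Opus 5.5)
The mechanism behind a large Jordan-2 centrality is, heuristically, that the parent of $v$ arrived early: if $\pa(v)\le v/S$, then $\mathrm{Fr}_n(\pa(v))$ is typically of order $Sn/v$. Combined with $\mathrm{Fr}_n(v)$ of order $n/v$, this gives $\phi_n^\ssup{2}(v)\approx S(n/v)^2$, provided $v^\ssup{1}=\pa(v)$ and the relevant fringe trees are not too large. Since $\pa(v)$ is uniform on $[v-1]$, the event $\{\pa(v)\le v/S\}$ has probability about $1/S$. The term $C/v$ absorbs rounding and small-index effects, as well as the probability of the exceptional configurations where $v$ or $\pa(v)$ sits near the centroid. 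Note that if $S\ge v/128$ the right-hand side is negative for $C\ge 1$, so we may assume $S\le v/128$, and likewise that $v$ is larger than a fixed constant.

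\emph{Step 1: a deterministic lower bound.} Suppose $\mathrm{Fr}_n(\pa(v))\le n/2$. Then also $\mathrm{Fr}_n(v)\le n/2$, so $\phi_n(v)=\mathrm{Fr}_n(v)$ and $\phi_n(\pa(v))=\mathrm{Fr}_n(\pa(v))$. If moreover $v^\ssup{1}=\pa(v)$, then $\phi_n^\ssup{2}(v)=\mathrm{Fr}_n(v)\mathrm{Fr}_n(\pa(v))$. Ties in the choice of $v^\ssup{1}$ only occur when $\mathrm{Fr}_n(v)\ge n/2$, which is excluded here. Hence on the event
\[
A:=\{\pa(v)\le v/S\}\cap\{\mathrm{Fr}_n(v)\ge a\,n/v\}\cap\{\mathrm{Fr}_n(\pa(v))\ge b\,Sn/v\}\cap\{\mathrm{Fr}_n(\pa(v))\le n/2\}
\]
with $ab\ge1$, we get $\phi_n^\ssup{2}(v)\ge S(n/v)^2$. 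This is why the upper cap on $\mathrm{Fr}_n(\pa(v))$ is needed; I would handle it by additionally requiring $\pa(v)\ge 2$.

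\emph{Step 2: probability of $A$.} Condition on $\cG_v$, that is, on $T_v$ including $p:=\pa(v)$. Use witnessed fringe trees after time $v$. We have $\mathrm{Fr}_n(v)=\mathrm{Fr}_n^{>v}(v)$, and $\mathrm{Fr}_n(p)\ge\mathrm{Fr}_n^{>p}(p)$. Since these quantities are hard to combine directly, I would instead use the sequential Pólya-urn description, and pick constants so that $1/128$ comes out.
\begin{itemize}
\item First, at time $p$, the fringe tree of $p$ evolves as a Pólya urn started from $1$ out of $p$, so $\mathrm{Fr}_v(p)/v$ is roughly $\mathrm{Beta}(1,p-1)$. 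The probability that $\mathrm{Fr}_v(p)\ge v/(2p)$ is at least a constant, about $e^{-1/2}$, minus $O(1/v)$ corrections. This gives fringe mass $\ge 1/(2p)$ of the tree at time $v$, and $1/(2p)\ge S/(2v)$.
\item Next, from time $v$ on, the masses of the subtree of $p$ and of $v$'s own fringe (witnessed after $v$) are urn proportions. The fraction of $p$'s subtree is at least $1/(4p)$ with probability bounded below, via Markov or Chebyshev on a martingale, and $\mathrm{Fr}_n(v)\ge n/(4v)$ with probability at least $1/2$ by \eqref{eq:fringe-ub} with $M=8$.
\item Negative association between these urn masses goes the wrong way for a product of lower bounds. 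I would therefore take a union bound on the complements, arranging each failure probability small enough that the intersection has conditional probability at least $1/4$.
\end{itemize}
The cap $\mathrm{Fr}_n(p)\le n/2$ holds with probability at least $1/2$ for $p\ge2$ by exchangeability: $\mathrm{Fr}_n(p)$ dominated by $\mathrm{Fr}_n(2)$ is symmetric around $n/2$. Alternatively, I could shrink the constants: replace $S$ by $S/32$ in the threshold for $p$ and compensate through the constant $128$. Multiplying by $\p(2\le\pa(v)\le v/S)\ge 1/S-2/v$ gives the bound $\frac{1}{128S}-\frac{C}{v}$.

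\emph{Main obstacle.} The hard part is bookkeeping the joint lower bound on $\mathrm{Fr}_n(v)$ and $\mathrm{Fr}_n(\pa(v))$ with explicit constants, while keeping $\mathrm{Fr}_n(\pa(v))\le n/2$. I would resolve it by conditioning sequentially: first on $T_v$, then handling the urn for $v$'s fringe inside $p$'s fringe, whose mass is $\mathrm{Beta}(1,\mathrm{Fr}_v(p)-1)$-like. Each stage then contributes an explicit constant factor, and no correlation inequality is required.
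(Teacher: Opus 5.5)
Your skeleton matches the paper's. You restrict to $\{\mathrm{Fr}_n(\pa(v))\le n/2\}$, where $\phi_n^\ssup{2}(v)=\mathrm{Fr}_n(v)\,\mathrm{Fr}_n(\pa(v))$, and then ask for an early parent and typical fringe sizes. But the two steps that actually carry the proof do not work as you propose them.

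\textbf{The cap.} Requiring $\pa(v)\ge 2$ does not imply $\mathrm{Fr}_n(\pa(v))\le n/2$. The marginal bound $\p(\mathrm{Fr}_n(p)\le n/2)\ge 1/2$ cannot be combined with your other events either. The cap is decreasing in $\mathrm{Fr}_n(p)$ while $\{\mathrm{Fr}_n(p)\ge bSn/v\}$ is increasing, so you cannot multiply the probabilities. In a union bound, a failure probability of $1/2$ consumes your whole budget. Indeed, the failure probability of the cap is of order one for $p=O(1)$. It becomes small only after averaging over the uniform parent: $\p(\mathrm{Fr}_n(\pa(v))>n/2)=\frac{1}{v-1}\sum_{i<v}\p(\mathrm{Fr}_n(i)>n/2\mid \pa(v)=i)\le C/v$, which is \eqref{eq:proba-parent-large}. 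The paper simply subtracts this probability, and that subtraction is where the $-C/v$ in the statement comes from. Your opening paragraph alludes to this, but Step 2 does something else.

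\textbf{The joint lower bound.} $\mathrm{Fr}_n(v)$ and $\mathrm{Fr}_n(\pa(v))$ are positively dependent, not negatively, because the fringe tree of $v$ lies inside that of $\pa(v)$. The paper exploits exactly this. $\mathrm{Fr}_n(v)$ is independent of $\pa(v)$, and conditional on $\{\pa(v)=u,\ \mathrm{Fr}_n(v)\ge n/(4v)\}$, $\mathrm{Fr}_n(u)$ stochastically dominates its unconditional law. So the events $\{\mathrm{Fr}_n(v)\ge n/(4v)\}$ and $\{\mathrm{Fr}_n(u)\ge n/(4u)\}$, each of probability at least $1/2$ by \eqref{eq:fringe-ub} with $M=4$, occur jointly with probability at least $1/4$. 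Restricting to $u\le v/(16S)$ and using $S\le v/16$ gives $\frac14\cdot\frac{\lfloor v/(16S)\rfloor}{v-1}\ge \frac{1}{128S}$.

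Your union bound with the thresholds you name does not give $1/4$:
\begin{itemize}
\item The bound you invoke only controls the failure of $\{\mathrm{Fr}_n(v)\ge n/(4v)\}$ by $1/2$.
\item Your first urn stage alone fails with probability about $1-e^{-1/2}\approx 0.39$. That already exceeds $3/4$ in total before the second stage is counted.
\item For that second stage, Markov gives no lower-tail bound, and Chebyshev is vacuous when the initial count is $O(1)$.
\item With $a=b=1/4$, the event $A$ needs $\pa(v)\le v/(16S)$, not $\pa(v)\le v/S$.
\end{itemize}
A union bound can be rescued only by retuning with the one-stage estimate \eqref{eq:fringe-ub}. For example, take thresholds $\tfrac{3}{16}\tfrac{n}{v}$ and $\tfrac{3}{16}\tfrac{n}{u}$, each failing with probability at most $3/8$ (the second using domination under $\{\pa(v)=u\}$), together with $\pa(v)\le \tfrac{9v}{256S}$. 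This gives $\tfrac{9}{1024S}-O(1/v)$. Your two-stage urn loses a constant at each stage and is too lossy for this.
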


\begin{restatable}[Upper tail of the Jordan-2 centrality: upper bound]{lemma}{Fringefringeparentlarge}
    \label{lem:Fringe_fringe_parent_large}
    There exists a constant $C>0$ such that for any $n\geq 2$, $v\in[n]$ and $S > 0$, 
    \[
    \p\Big(\phi_n^\ssup{2}(v) \ge S\left(\frac{n}{v}\right)^2 \Big) \le \frac{C}{S}.
    \] 
\end{restatable}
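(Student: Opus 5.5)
We prove Lemma~\ref{lem:Fringe_fringe_parent_large} by reducing to the product of the fringe-tree sizes of $v$ and its parent, using the deterministic upper bound \eqref{eq:upper-bound-jordan2}. For $v\ge 2$ we have $\phi_n^\ssup{2}(v)\le \mathrm{Fr}_n(v)\,\mathrm{Fr}_n(\pa(v))$. For $v=1$ the bound is trivial up to constants: $\phi_n^\ssup{2}(1)\le n^2$, so the probability vanishes for $S>1$, and we may take $C\ge 1$. It therefore suffices to show
\[
\p\Big(\mathrm{Fr}_n(v)\,\mathrm{Fr}_n(\pa(v))\ge S(n/v)^2\Big)\le \frac{C}{S}.
\]
By Markov's inequality this follows once we establish $\E[\mathrm{Fr}_n(v)\mathrm{Fr}_n(\pa(v))]\le C(n/v)^2$, uniformly in $v\in[2,n]$ and $n$.

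To bound the expectation, condition on $\pa(v)=u$, which has probability $1/(v-1)$ for each $u\in[v-1]$. At time $v$, the fringe tree of $u$ has some size $a$ (including $v$), and the fringe tree of $v$ has size $1$. We view $\mathrm{Fr}_n(\pa(v))$ and $\mathrm{Fr}_n(v)$ in terms of the Pólya urn from time $v$ to $n$. The fringe tree of $v$ is one color with initial mass $1$. The fringe tree of $u$ is a color with initial mass $a$ that contains $v$'s color. Standard Pólya-urn moments, starting from $v$ balls, give
\[
\E[\mathrm{Fr}_n(v)]\asymp n/v, \qquad \E[\mathrm{Fr}_n(v)\mathrm{Fr}_n(u)\mid \mathrm{Fr}_v(u)=a]\lesssim \frac{a\,n^2}{v^2}.
\]
The second bound uses the joint second moment of nested urn masses, $\E[XY]\le \E[Y^2]$-type bounds with $Y\sim$ beta-binomial with parameters $(a,v-a)$ scaled by $n/v$, giving order $(a n/v)(n/v)$. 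It then remains to bound $\E[\mathrm{Fr}_v(\pa(v))]$. Given $T_{v-1}$, the parent is chosen uniformly, so
\[
\E[\mathrm{Fr}_{v-1}(\pa(v))]=\frac{1}{v-1}\sum_{u<v}\E[\mathrm{Fr}_{v-1}(u)]=\frac{1}{v-1}\,\E\Big[\sum_u \mathrm{Fr}_{v-1}(u)\Big].
\]
The inner sum equals the sum of depths plus $v-1$, which is of order $v\log v$. This yields a bound of order $(\log v)\,n^2/v^2$, which is off by a logarithm.

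This logarithm is the main obstacle: a size-biased early ancestor can make the parent's fringe tree large. We would fix it with a truncated first moment instead of plain Markov. Since $\mathrm{Fr}_n(v)\,\mathrm{Fr}_n(\pa(v))\le \mathrm{Fr}_n(v)\cdot n$, we split according to whether the parent's normalized fringe tree exceeds a threshold $K$. On the part where $\mathrm{Fr}_n(\pa(v))\frac{v}{n}\le K$, Markov applied to $\mathrm{Fr}_n(v)$ gives a contribution $\lesssim K/S$ in the bad scale. Alternatively, we bound the tail directly:
\[
\p\big(\mathrm{Fr}_n(\pa(v))\ge y\,n/v\big)\lesssim 1/y
\]
by the same uniform-parent averaging. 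The probability of $\{\mathrm{Fr}_n(u)\ge y n/v\}$ summed over $u<v$ is $\lesssim v/y$ by exchangeability-type tail bounds (the beta tail of $\mathrm{Fr}_n(u)\approx n\,\mathrm{Beta}(1,u-1)$ gives $\p(\mathrm{Fr}_n(u)\ge yn/v)\lesssim e^{-yu/v}$, and $\sum_u e^{-yu/v}\lesssim v/y$). We combine this with the conditional independence of $\mathrm{Fr}_n(v)$ given the parent's subtree mass. Conditionally on $\mathrm{Fr}_n(\pa(v))=m$, the variable $\mathrm{Fr}_n(v)$ is roughly $m\cdot\mathrm{Beta}(1,\cdot)$ with mean $\lesssim n/v$, and conditional Markov gives
\[
\p(\,\cdot \mid m)\le \frac{C\,m\,(n/v)}{S(n/v)^2}\wedge 1.
\]
Integrating $\min(y/S,1)$ against a tail $\lesssim 1/y$ gives order $(\log S)/S$. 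The remaining logarithm is removed by using the sharper fact that, given $\pa(v)=u$, the mean of $\mathrm{Fr}_n(v)$ is $\asymp n/v$ regardless of $m$, together with $\mathrm{Fr}_n(v)\le \mathrm{Fr}_n(\pa(v))$. We would then apply Markov to the product via $\E[XY\,\Ind{Y\le S n/v}]\lesssim (n/v)^2\log$-free bounds, using the $e^{-yu/v}$ tails. Getting this last step log-free is where we expect the real work to lie.
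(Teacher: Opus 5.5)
You have the reduction to $\mathrm{Fr}_n(v)\,\mathrm{Fr}_n(\pa(v))$ via \eqref{eq:upper-bound-jordan2} right, the treatment of $v=1$ is correct, and you correctly diagnose that plain Markov loses a factor $\log v$. The tail $\p(\mathrm{Fr}_n(\pa(v))\ge y\,n/v)\lesssim 1/y$ you invoke is exactly \eqref{eq:proba-parent-large2}. Your derivation of it sums unconditional tails of $\mathrm{Fr}_n(u)$ and so overlooks that conditioning on $\pa(v)=u$ size-biases $\mathrm{Fr}_n(u)$; you can simply cite the lemma instead.

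The genuine gap is the final step, which you explicitly leave open, and the routes you sketch cannot close it. Any argument that controls $\mathrm{Fr}_n(v)$ only through a first moment, conditional or truncated, reproduces the logarithm:
\begin{itemize}
\item Your conditional Markov bound can at best give $\E[\min\{Y/S,1\}]$ with $Y=\mathrm{Fr}_n(\pa(v))\,v/n$. This is of order $(\log S)/S$, because $Y$ has a tail of order $1/y$.
\item For the same reason, the truncated moment $\E[\mathrm{Fr}_n(v)\mathrm{Fr}_n(\pa(v))\Ind{\mathrm{Fr}_n(\pa(v))\le Sn/v}]$ is itself of order $(n/v)^2\log S$ for $2\le S\le v$.
\item A dyadic split over the size of $\mathrm{Fr}_n(v)$ does not help either if you can only use $\p(A\cap B)\le\min\{\p(A),\p(B)\}$; that again sums to $(\log S)/S$.
\end{itemize}
What you need is a factorized bound $\p(\mathrm{Fr}_n(v)\ge 2^\ell n/v,\ W\ge S2^{-\ell-2}n/v)\lesssim e^{-2^\ell}\,2^{\ell}/S$ for the second factor $W$. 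This is unavailable for $W=\mathrm{Fr}_n(\pa(v))$, which is positively dependent on $\mathrm{Fr}_n(v)$ since $\mathrm{Fr}_n(\pa(v))\ge\mathrm{Fr}_n(v)$.

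The missing idea, which is how the paper proceeds, is to peel $v$'s own subtree off its parent's. Write $\mathrm{Fr}_n^{\neq v}(u):=\mathrm{Fr}_n(u)-\mathrm{Fr}_n(v)\Ind{v\in(T_n,1)_{u\downarrow}}$. Then $\mathrm{Fr}_n(\pa(v))\mathrm{Fr}_n(v)=\mathrm{Fr}_n^{\neq v}(\pa(v))\mathrm{Fr}_n(v)+\mathrm{Fr}_n(v)^2$.
\begin{itemize}
\item The square term is handled by Markov with $\E[\mathrm{Fr}_n(v)^2]\lesssim (n/v)^2$, which follows from \eqref{eq:proba_fringe-large}.
\item For the cross term, fix $u<v$. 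Both $\mathrm{Fr}_n^{\neq v}(u)$ and $\mathrm{Fr}_n(v)$ are functions of $(\pa(w))_{w\neq v}$, hence independent of $\pa(v)$.
\item They are also negatively associated: given $\mathrm{Fr}_n(v)=k$, the rest of the tree is a uniform attachment tree on $n-k$ vertices.
\end{itemize}
So after conditioning on $\pa(v)=u$, the dyadic pieces factorize. Use the exponential tail of $\mathrm{Fr}_n(v)$, together with $\mathrm{Fr}_n^{\neq v}(\pa(v))\le\mathrm{Fr}_n(\pa(v))$ and \eqref{eq:proba-parent-large2} after averaging back over $u$. The cross term is then at most of order $\frac1S+\sum_{\ell\ge0}e^{-2^\ell}\frac{2^\ell}{S}\lesssim\frac1S$.

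In your conditional formulation, the equivalent requirement is a second-moment (or exponential) conditional tail for $\mathrm{Fr}_n(v)$. That would turn $\min\{y/S,1\}$ into $\min\{(y/S)^2,1\}$, which does integrate to $O(1/S)$ against a $1/y$ tail. The decoupling via $\mathrm{Fr}_n^{\neq v}$ is what makes such a bound accessible without analyzing the law of $\mathrm{Fr}_n(v)$ given $\mathrm{Fr}_n(\pa(v))$.
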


The proofs of these lemmas rely on the following auxiliary lemma on fringe tree sizes, complementing Lemma \ref{lem:beta-bin-fringe}. The lemma assumes $v\ge2$, as $\mathrm{Fr}_n(1)=n$ deterministically.

\begin{restatable}[Fringe tree: upper tail]{lemma}{fringetreedistrib}
\label{lem:proba_large_fringe}
    For any $n\geq 3$, $v\ge 2$, and any $k\geq 1$,
    \begin{equation}\label{eq:proba_fringe-k}\p\left(\mathrm{Fr}_n(v)=k\right)
    \leq
    2e^2\cdot\frac{v}{n}\cdot\exp\left(-k\frac{v}{n}\right).\end{equation}
    Moreover, for any $t>0$,

    \begin{equation}\label{eq:proba_fringe-large}\p\left(\mathrm{Fr}_n(v)\geq t\frac{n}{v} \right) 
    \leq 
    2e^2\exp (-t),\end{equation}
    and for all $v \leq \frac{n}{2}$, $n\ge 2$, and $t \in \left[1,\frac{v}{8}\right]$,
    \begin{equation}\label{eq:proba-parent-large-lower}
        \p \left( \mathrm{Fr}_n(v) > t \frac{n}{v} \right) \geq  \frac{1}{2} \exp\left( - 8t \right). 
    \end{equation}
    Furthermore, there exists a constant $C>0$ such that for all $v\ge 2$, $n\geq v$, and $t>0$,
    \begin{equation}\label{eq:proba-parent-large2}
        \p\left(\mathrm{Fr}_n(\pa(v))\ge t\frac{n}{v} \right) \le \frac{C}{t}, 
    \end{equation}
    and
    \begin{equation}\label{eq:proba-parent-large}
        \p\left(\mathrm{Fr}_n(\pa(v))\ge \frac{n}{2} \right) \le \frac{C}{v} .
    \end{equation}
\end{restatable}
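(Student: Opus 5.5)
The statement collects several tail estimates for fringe-tree sizes. I would derive all of them from the Pólya-urn description behind Lemma~\ref{lem:beta-bin-fringe}, with a conditioning on the tree up to time $v$ for the two statements about $\pa(v)$.

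\textbf{Step 1: the exact law of $\mathrm{Fr}_n(v)$.} At time $v$, the fringe tree of $v$ consists of one vertex out of $v$. Afterwards it evolves as one colour in a two-colour Pólya urn started from $(1,v-1)$. Hence $\mathrm{Fr}_n(v)-1$ is beta-binomial with parameters $(n-v,1,v-1)$. Writing $j=k-1$, the probability mass function is
\begin{equation*}
\p(\mathrm{Fr}_n(v)=k)=\frac{v-1}{n-1}\prod_{i=1}^{v-2}\frac{n-1-i-j}{n-1-i}.
\end{equation*}
Each factor is at most $1-j/n\le e^{-j/n}$. This gives the bound $\frac{v}{n}e^{-(k-1)(v-2)/n}$. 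Since $e^{(v-2)/n+2k/n}$ is at most $e^2$ on the relevant range, this yields \eqref{eq:proba_fringe-k}.

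Summing \eqref{eq:proba_fringe-k} over $k\ge tn/v$ gives a geometric series. It is bounded by $\frac{v}{n}\cdot\frac{e^{-t}}{1-e^{-v/n}}$. Using $1-e^{-x}\ge x/2$ for $x\le1$, this gives \eqref{eq:proba_fringe-large}; the constant may need slight adjustment.

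For \eqref{eq:proba-parent-large-lower}, I would lower-bound the same product by $(1-2j/n)^{v}\ge e^{-4jv/n}$ when $j\le n/4$. I would then sum over $k\in(tn/v,2tn/v]$. The condition $t\le v/8$ ensures $2tn/v\le n/4$, and $v\le n/2$ keeps $(v-1)/(n-1)\ge v/(2n)$. This produces roughly $\tfrac{t}{2}e^{-8t}\ge\tfrac12 e^{-8t}$.

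\textbf{Step 2: conditioning for the parent.} Let $\cG_v$ be the $\sigma$-algebra generated by the tree up to time $v$, as in the proof of Lemma~\ref{lem:boosted small}. Let $A:=|(T_v,1)_{\pa(v)\downarrow}|$ be the size of the fringe tree of $\pa(v)$ at time $v$. Then $\mathrm{Fr}_n(\pa(v))$ is the total of $\mathrm{Fr}_n^{>v}(w)$ over the $A$ vertices $w\le v$ of that subtree. By the exchangeable urn structure of Lemma~\ref{lem:beta-bin-fringe}, given $\cG_v$ this total is $A$ plus a beta-binomial variable with parameters $(n-v,A,v-A)$. An exponential-moment (or Chernoff) bound for the Beta$(A,v-A)$ mixing variable then gives
\begin{equation*}
\p\Big(\mathrm{Fr}_n(\pa(v))\ge t\tfrac nv\,\Big|\,\cG_v\Big)\le C e^{-ct/A}.
\end{equation*}
Note that Markov's inequality alone is too weak here, since $\E[A]\asymp\log v$.

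\textbf{Step 3: the tail of $A$.} Conditionally on $T_{v-1}$, $\pa(v)$ is uniform on $[v-1]$ and $A=1+\mathrm{Fr}_{v-1}(\pa(v))$. By \eqref{eq:proba_fringe-large} applied at size $v-1$, for $s\ge2$,
\begin{equation*}
\p(A\ge s)\le \frac1{v-1}\Big(1+\sum_{u=2}^{v-1}2e^2e^{-(s-1)u/(v-1)}\Big)\le \frac{C}{s}.
\end{equation*}
The term $u=1$ is harmless: $A\le v$ always, so only $s\le v$ matters, and then $1/(v-1)\lesssim 1/s$.

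Combining with Step 2 via a dyadic decomposition over $A\in[2^i,2^{i+1})$ gives
\begin{equation*}
\p\big(\mathrm{Fr}_n(\pa(v))\ge tn/v\big)\le \sum_i \frac{C}{2^i}\,e^{-ct/2^{i+1}}\le \frac{C'}{t}.
\end{equation*}
This proves \eqref{eq:proba-parent-large2}. Finally, \eqref{eq:proba-parent-large} is the special case $t=v/2$.

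\textbf{Main obstacle.} I expect the main obstacle to be Step 2: making precise that, conditionally on $\cG_v$, the fringe size of $\pa(v)$ has an exponential tail at scale $An/v$ that is uniform in $n$ and $v$. This requires a clean beta-binomial Chernoff estimate with initial weight $A$. The point is to avoid the lossy mean bound, which would only give $(\log v)/t$.
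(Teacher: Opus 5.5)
Your proposal is correct. For \eqref{eq:proba_fringe-k} and \eqref{eq:proba_fringe-large} it is essentially the paper's argument: explicit beta-binomial mass function, factorwise bound, geometric sum. Your factorization over $v-2$ factors is slightly cleaner than the paper's, which works with $k-1$ factors and needs a convexity check.

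Your derivation actually gives \eqref{eq:proba_fringe-k} with $e^2$ in place of $2e^2$. Since $x/(1-e^{-x})\le 1/(1-e^{-1})<2$ on $(0,1]$, the constant $2e^2$ in \eqref{eq:proba_fringe-large} then needs no adjustment.

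For \eqref{eq:proba-parent-large-lower}, the paper uses the de Finetti representation and Cantelli's inequality. You instead bound the mass function from below on $k\in(tn/v,2tn/v]$, which also yields the exact constant. The window contains at least $\lfloor tn/v\rfloor\ge\tfrac23\,tn/v$ integers, and $t\in[1,v/8]$ forces $v\ge 8$, so $(v-1)/(n-1)\ge\tfrac78\,v/n$. Together these give at least $\tfrac{7}{12}te^{-8t}$.

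The genuine difference is \eqref{eq:proba-parent-large2}.
\begin{itemize}
\item The paper splits on $\{\mathrm{Fr}_n(v)\ge tn/(2v)\}$. On the complement, it uses a grafting coupling to show that, given $\pa(v)=i$, the part of the parent's fringe tree outside $v$'s subtree is dominated by an unconditioned $\mathrm{Fr}_n(i)$. Averaging $2e^2e^{-it/(2v)}$ over uniform $i\in[v-1]$ then gives $O(1/t)$ using only single-vertex tails. This is shorter.
\item You condition on $\cG_v$ and treat $\mathrm{Fr}_n(\pa(v))$ as one aggregated urn colour started from $A$ balls. This is somewhat heavier but isolates the mechanism more transparently. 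The polynomial $1/t$ tail comes entirely from the time-$v$ size $A$, via uniformity of $\pa(v)$. The evolution after time $v$ contributes only exponential tails at scale $An/v$.
\end{itemize}

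The step you flag as the main obstacle closes quickly with tools already in the paper. Take $\lambda=v/(2n)$. Negative association and exchangeability (Lemma~\ref{lem:beta-bin-fringe}) give $\E[e^{\lambda\mathrm{Fr}_n(\pa(v))}\mid\cG_v]\le(\E[e^{\lambda\mathrm{Fr}_n(v)}])^{A}$. By \eqref{eq:proba_fringe-k}, this is at most $(4e^2)^A$. Hence $\p(\mathrm{Fr}_n(\pa(v))\ge tn/v\mid\cG_v)\le(4e^2)^Ae^{-t/2}$. Combining with your Step 3 gives $\p(\mathrm{Fr}_n(\pa(v))\ge tn/v)\le\p\big(A>t/(4\log(4e^2))\big)+e^{-t/4}\le C/t$. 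This even makes your dyadic summation unnecessary.
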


The proof of this lemma follows from standard properties of P\'olya urns and is given in Appendix~\ref{sec:appendix}, along with the proof of Lemma \ref{lem:Fringe_fringe_parent_large}. We finish this section with a proof of the lower bound in Lemma~\ref{lem:J2_LT_LB_main_term}.

\begin{proof}[Proof of Lemma \ref{lem:J2_LT_LB_main_term}]
The statement of the lemma is clear for $S > \frac{v}{16}$ or for $v=1$, by taking the constant $C$ large enough. Thus, we can assume that $S \leq \frac{v}{16}$ for the rest of the proof.
    Fix some $n\geq1$ and $S\geq 1$.  We first relate the Jordan-2 centrality to the product of the fringe-tree size of $v$ and its parent, that is,

    \begin{align*}
        \p\left(\phi_n^\ssup{2}(v) \ge S\left(\frac{n}{v}\right)^2\right)
        \geq 
        \p\left(\left\{\phi_n^\ssup{2}(v) \ge S\left(\frac{n}{v}\right)^2\right\}\cap \left\{ \mathrm{Fr}_n(\pa(v))\leq \frac{n}{2} \right\} \right).
    \end{align*}
    If $\mathrm{Fr}_n(\pa(v))\leq n/2$, then $\mathrm{Fr}_n(v)\leq n/2$ and hence $\phi^\ssup{2}_n(v)=\mathrm{Fr}_n(\pa(v))\mathrm{Fr}_n(v)$. Therefore,

    \begin{equation}\nonumber 
    \begin{aligned}
        \p\left(\phi_n^\ssup{2}(v) \ge S \left(\frac{n}{v}\right)^2\right)
        &\geq 
        \p\Big(\mathrm{Fr}_n (\pa(v))\mathrm{Fr}_n(v) \ge S\left(\frac{n}{v}\right)^2 \Big)-\p\Big( \mathrm{Fr}_n(\pa(v)) > \frac{n}{2} \Big).
    \end{aligned}
    \end{equation}
    By \eqref{eq:proba-parent-large}, there exists a constant $C>0$ such that for all $v\ge 2$, and $n\ge 3$, the probability of the event $\{\mathrm{Fr}_n(\pa(v)) > n/2\}$ is at most $C/v$. So, 
    \begin{equation}\label{eq:Jordan2_lower_tail_LB_decompo}
    \begin{aligned}
        \p\left(\phi_n^\ssup{2}(v) \ge S\left(\frac{n}{v}\right)^2\right)
        &\geq 
        \p\Big(\mathrm{Fr}_n (\pa(v))\mathrm{Fr}_n(v) \ge S\left(\frac{n}{v}\right)^2 \Big)-C/v.
    \end{aligned}
    \end{equation}
    In the remainder, we establish a lower bound on the probability on the right-hand side of this inequality. We intersect with the likely events that both $v$ and its parent have a fringe-tree size of the right order. That is,
    \begin{equation*}
    \begin{aligned}
        &\p\left( \mathrm{Fr}_n(\pa(v))\mathrm{Fr}_n(v) \ge S\left(\frac{n}{v}\right)^2 \right)
        \\
        &
        \hspace{2cm}
        \geq 
        \p\left( \left\{\mathrm{Fr}_n(\pa(v))\mathrm{Fr}_n(v) \ge S\left(\frac{n}{v}\right)^2\right\} \cap
         \left\{\mathrm{Fr}_n(v) \geq \frac{n}{4v}\right\}
        \cap
        \left\{\mathrm{Fr}_n(\pa(v)) \geq \frac{n}{4\pa(v)}\right\}\right)\\
        &
        \hspace{2cm}
        \geq 
        \p\left( \left\{\frac{n^2}{16v\pa(v)} \ge S \left( \frac{n}{v}\right)^2 \right\}  \cap
         \left\{\mathrm{Fr}_n(v) \geq \frac{n}{4v}\right\}
        \cap
        \left\{\mathrm{Fr}_n(\pa(v)) \geq \frac{n}{4\pa(v)}\right\}\right).
    \end{aligned}
    \end{equation*}
    The event $\left\{\tfrac{n^2}{16v\pa(v)} \ge S \left( n/v\right)^2 \right\}$ is satisfied for $\mathrm{pa}(v) \leq v/(16S)$.
    Using that $\pa(v)$ is uniformly distributed on $[v-1]$ and is independent from $\mathrm{Fr}_n(v)$, the law of total probability yields
    
    \begin{equation*}
    \begin{aligned}
        \p\Big(\mathrm{Fr}_n(\pa(v)) &\mathrm{Fr}_n(v) \ge S \left(\frac{n}{v}\right)^2 \Big) \\
        &\hspace{0.1cm}\geq 
        \frac{1}{v-1}\sum_{u\leq v/(16S)}
        \p\left( \left\{\mathrm{Fr}_n(v) \geq \frac{n}{4v}\right\}
        \cap
        \left\{\mathrm{Fr}_n(u) \geq \frac{n}{4u}\right\}\ \Big| \ \pa(v)=u \right)\\
        &\hspace{0.1cm}=
        \frac{\p\left(\mathrm{Fr}_n(v) \geq \tfrac{n}{4v}\right)}{v-1}\sum_{u\leq v/(16S)}
        \p\left( 
        \left\{\mathrm{Fr}_n(u) \geq \frac{n}{4u}\right\} \Big|  \left\{\mathrm{Fr}_n(v) \geq \frac{n}{4v}\right\} \cap \left\{\pa(v)=u\right\} \right),
    \end{aligned}
    \end{equation*} 
    where the last inequality holds since $\mathrm{Fr}_n(v)$ and $\mathrm{pa}(v)$ are independent.
    Furthermore, the random variable
    $\mathrm{Fr}_n(u)  \mid  \left\{\mathrm{Fr}_n(v) \geq \tfrac{n}{4v}\right\} \cap \left\{\pa(v)=u\right\}$ stochastically dominates $\mathrm{Fr}_n(u)$,  so that
    \begin{equation*}
    \begin{aligned}
        \p\left(\mathrm{Fr}_n(\pa(v)) \mathrm{Fr}_n(v) \ge S\left(\frac{n}{v}\right)^2 \right)
        &\geq 
        \frac{\p\left(\mathrm{Fr}_n(v) \geq \tfrac{n}{4v}\right)}{v-1}\sum_{u\leq v/(16S)}
        \p\left( 
        \mathrm{Fr}_n(u) \geq \frac{n}{4u} \right).
    \end{aligned}
    \end{equation*}
    Using Lemma \ref{lem:beta-bin-fringe}(3), we get that $\p\left(\mathrm{Fr}_n(v) \geq \frac{n}{4v}\right)\geq \tfrac{1}{2}$ and $\p\left( \mathrm{Fr}_n(u) \geq \tfrac{n}{4u} \right)\geq \tfrac{1}{2} $, so

    $$\p\left(\mathrm{Fr}_n(v) \mathrm{Fr}_n(\pa(v))\ge S\left(\tfrac{n}{v}\right)^2\right)  
    \geq
    \frac{\tfrac{1}{2}}{v-1} \lfloor v/(16S) \rfloor \frac{1}{2}
    \geq
    \frac{1}{128S},$$
    where we used the assumption $S \leq \frac{v}{16}$ for the last inequality. Inserting this into \eqref{eq:Jordan2_lower_tail_LB_decompo} concludes the proof of the lemma.
\end{proof}

\subsection{Many vertices with large Jordan-2 centrality is unlikely}\label{subsec:fringe}
We now proceed to controlling the \emph{global} events to bridge the Jordan-2 centrality to the Jordan-2 ordering. We start with the upper bound on the upper tail, bounding the second event in \eqref{eq:hs2 split} in the outline: the event that many vertices have unusually large Jordan-2 centrality. 

By the upper bound $\phi_n^\ssup{2}(u)\le \mathrm{Fr}_n(u)\cdot \mathrm{Fr}_n(\pa(u))$, see \eqref{eq:upper-bound-jordan2}, it is enough to bound the number of vertices for which this product exceeds a threshold of order $(n/t)^2$. 
The following lemma is the main result of this subsection.

\begin{restatable}{lemma}{mixedlarge}
\label{lem:mixed large} 
    There exists a constant $C>0$ such that for all $n\in\N$ and $t \ge 1$, 
    \begin{equation}
        \p\left(\big|\big\{v\in[n]\setminus{1}: \mathrm{Fr}_n\left(\pa(v)\right)\mathrm{Fr}_n(v) > (n/t)^2\big\}\big|>Ct\right) \le \exp\left(-t^{1/4}\right).
    \end{equation}    
\end{restatable}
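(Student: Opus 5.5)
The plan is to reduce the count to a two-level count inside the tree of vertices with large fringe trees, and to control it using the fringe-tree splitting construction announced in the outline of this section. The first observation is that $\mathrm{Fr}_n(v)\le \mathrm{Fr}_n(\pa(v))$ for every $v\ge2$. So $\mathrm{Fr}_n(\pa(v))\mathrm{Fr}_n(v)>(n/t)^2$ forces $u:=\pa(v)$ to satisfy $\mathrm{Fr}_n(u)>n/t$. Define
\[
A_t:=\{u\in[n]:\mathrm{Fr}_n(u)>n/t\}.
\]
This set is closed under taking parents, so it is a subtree containing the root. Every vertex we must count is a child $v$ of some $u\in A_t$ with $\mathrm{Fr}_n(v)>n^2/(t^2\,\mathrm{Fr}_n(u))$. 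We therefore bound
\[
N_t:=\sum_{u\in A_t}\big|\{v \text{ child of } u:\ \mathrm{Fr}_n(v)>n^2/(t^2\mathrm{Fr}_n(u))\}\big|.
\]

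\textbf{Step 1 (few large fringe trees).} For every $s\ge1$ (in practice dyadic $s=2^k t$ and $s=2^{-k}t$), we show that $|A_s|\le C_0 s$ except on an event of probability at most $\exp(-c s^{1/2})$. In the fringe-tree splitting, a vertex of mass $m$ hands masses $m_1,m_2,\dots$ to its children, with $\sum_i m_i=m-1$. These masses are distributed like uniform stick-breaking: the first child gets $\mathrm{Unif}\{1,\dots,m-1\}$, and one recurses on the remainder. Given its own mass, each vertex splits independently. Hence $|A_s|$ is the total progeny of a branching-type process in which a vertex of mass $m$ has on average about $1+\log(m s/n)$ children of mass $>n/s$. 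The vertices of $A_s$ with mass in $(2^j n/s,2^{j+1}n/s]$ are about $s2^{-j}$ in number. Since the masses of disjoint subtrees sum to at most $n$, the expected size is $O(s)$. Concentration comes from an exponential-moment (martingale) argument over the independent splits. An alternative route for the first moment is the P\'olya-urn bound \eqref{eq:proba_fringe-large} applied to $\sum_v \p(\mathrm{Fr}_n(v)>n/s)$; the tail would then need a separate argument.

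\textbf{Step 2 (children of large vertices).} Condition on the masses of all vertices in $A_t$. For $u\in A_t$ with mass $m_u=\mathrm{Fr}_n(u)$, the number of children with mass $>x_u:=n^2/(t^2 m_u)$ is dominated by $1+\mathrm{Geom}$-type variables with mean at most $1+\log(m_u/x_u)=1+2\log(m_u t/n)$. This comes from the stick-breaking, where each successive child takes a uniform fraction of the remaining mass. These counts are conditionally independent across $u$. Summing, $\E[N_t\mid A_t]\le \sum_{u\in A_t}\big(1+2\log(m_u t/n)\big)$. Grouping $u$ by dyadic scale of $m_u t/n$ and using Step 1 at $s=t2^{-k}$, scale $k$ contributes at most $C t2^{-k}(1+k)$. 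The sum over $k$ is $O(t)$ on the good event of Step 1. A Chernoff bound for sums of independent variables with exponential tails then gives $N_t\le Ct$ with probability $1-\exp(-ct)$ on that event.

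\textbf{Main obstacle.} The hard part is the union over scales in Step 1. The scales with very large $m_u$ (small $s=t2^{-k}$) have only a few vertices, so their deviation probabilities are not small in $t$. I would handle this by truncation. Apply the concentration bound only for scales with $s\ge t^{1/2}$, and at the top scales use the crude deterministic bound $|A_s|\le |A_{t^{1/2}}|$. These top scales contribute at most $O(t^{1/2}\log t)=o(t)$ to the sum. Losing powers in the exponent in this way is what yields the stated $\exp(-t^{1/4})$ rather than $\exp(-ct)$. Some care is also needed to make the geometric domination in the splitting rigorous for integer masses.
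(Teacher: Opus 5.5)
Your architecture matches the paper's. Parents must satisfy $\mathrm{Fr}_n(u)>n/t$; you group them by the dyadic scale of $\mathrm{Fr}_n(u)$; you dominate the number of children of mass at least $2^{-2(j+1)}\mathrm{Fr}_n(u)$ by $O(j+1)$ plus a geometric variable via uniform stick-breaking (the paper's Lemma~\ref{lem:stochastic_domination}); and you treat the top scales separately. Your truncation at $s=t^{1/2}$ is a legitimate substitute for the paper's choice $r_j=\lceil\sqrt{C}\,0.6^jt\rceil$. The gap is Step~1, which is the real content of the lemma and which you only assert. The union over scales that you call the main obstacle is bookkeeping once Step~1 is available, and your justification does not prove Step~1:
\begin{itemize}
\item Nodes whose masses lie in one band $(\ell,2\ell]$ need not have disjoint subtrees, since an ancestral line can stay in the same band for several generations. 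So ``masses of disjoint subtrees sum to at most $n$'' gives no bound on their number.
\item The first-moment bound from \eqref{eq:proba_fringe-large} gives $\E|A_s|=O(s)$ but no tail.
\item The exponential-moment argument is not routine. With a potential linear in the mass, a split in which one child keeps almost all of the parent's mass produces no decrease, so you must quantify that such splits are rare.
\item The rate $\exp(-cs^{1/2})$ is not derived from anything. You also need the exponent constant to be tunable through $C_0$ to reach $\exp(-t^{1/4})$ with constant one.
\end{itemize}

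The missing idea is the band comparison of Lemma~\ref{lem:fringe tree count}. Let $B_1$ be the non-root nodes with mass in $(\ell,2\ell]$, and $B_2$ the nodes with mass in $[\ell/2,\ell]$ whose parent has mass $>\ell$. Nodes of $B_2$ do have disjoint subtrees, so $|B_2|\le 2n/\ell$ deterministically. In the splitting construction, whenever a child of a node of mass $>\ell$ receives a mass in $[\ell/2,2\ell]$, that mass is uniform on $[\ell/2,2\ell\wedge R_i]$. It therefore falls in $[\ell/2,\ell]$ with conditional probability at least $1/3$ given the past. Hence $N_n^{\mathrm{Fr}}(\ell,2\ell]\ge r\ge 20n/\ell$ forces $\mathrm{Bin}(t',1/3)\le t'/5$ for some $t'\ge r-1$, which gives $\p(N_n^{\mathrm{Fr}}(\ell,2\ell]\ge r)\le 27e^{-0.04r}$. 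Summing over the $O(\log s)$ bands $\ell=2^kn/s$ with $r_k=20s2^{-k}+As^{1/2}$ yields your Step~1, with exponent constant $0.04A$.

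A secondary point concerns Step~2. You condition on all masses in $A_t$, which fixes the positions and masses of the children of $u$ that lie in $A_t$. The claimed conditional domination and independence across $u$ then require an argument. The paper avoids this by revealing masses in breadth-first order and coupling with i.i.d.\ geometric variables, $|F_2|\le\sum_{k\le|F_1|}(6(j+1)+Y_k)$. It then bounds $\p(J_j>r_j)$ and the sum of $r_j$ such terms separately.
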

Unlike the analysis of the lower tail of the Jordan-2 centrality of a fixed vertex, Lemma \ref{lem:mixed large} requires understanding of global counts of vertices whose subtree sizes lie in prescribed ranges, together with information about the fringe-tree sizes of their children. Counting these is difficult in the arrival-order description of $T_n$ given in Section \ref{sec:model}. 

To overcome this, we use an equivalent sampling of the random recursive tree on the Ulam--Harris embedding. To motivate this construction, we first state a construction close to the arrival-order model and record its properties. We then proceed to the fringe-splitting construction, which provides the conditional independence needed for our counting arguments. Both constructions are well-known, see for example \cite[Section 2.1]{addarioberry2024leafstrippinguniformattachment} or \cite{goldschmidt2016short, Baumler2026, neveu1986arbres} and references therein. Afterwards, we prove two auxiliary lemmas that also rely on the fringe-splitting construction, and eventually prove Lemma \ref{lem:mixed large}. 

\medskip 

To start the constructions, we first define the \emph{Ulam--Harris} (UH) description of a tree. Consider the set of vertices
\begin{equation}
    \cU := \bigcup_{j=0}^\infty \cU_j:=\bigcup_{j=0}^\infty \N^j, \qquad \text{where}\quad \cU_0=\{\emptyset\},
\end{equation}
whose elements we call Ulam--Harris nodes. The root is the empty sequence $\emptyset$, and each node $u\in\cU$ has ordered child slots $\{(u,i)\}_{i\ge 1}$. A node is said to be occupied once a vertex of the growing tree has been placed at that location. For $k\in \N$, we also say that $u$ is the parent of $(u,k)$ and that $\left((u,i)\right)_{i\in \{1,\ldots,k-1\}}$ are the left siblings of $(u,k)$.
Both constructions produce a UH tree with additional node labels for the arrival times. In the first construction, labels are assigned dynamically as the tree grows; in the second, they are drawn only after the underlying tree on $n$ vertices has been generated. In both cases, the labeled unordered tree obtained by forgetting the Ulam--Harris ordering has the same law as the random recursive tree. 

\medskip 
\noindent\emph{Construction 1: Ulam--Harris embedding.\ }
In the first construction, we embed the recursive tree into the Ulam–Harris tree respecting the Ulam--Harris left-to-right order.
The process starts with the root $\emptyset$, labeled $1$. 
At each step $t\ge2$, a new vertex arrives and chooses one of the currently occupied nodes uniformly at random as its parent.
If the chosen parent is the Ulam–Harris node $u$, the new vertex is placed at its first available child slot $(u,k)$, where $k$ is the smallest index such that $(u,k)$ is not yet occupied.
The node is then labeled with the arrival time $t$ and marked as occupied.

\smallskip 
This construction yields a labeled ordered tree $A_n\subseteq\cU$ with $|A_n|=n$, where labels increase along ancestral lines, and among siblings the label order coincides with their lexicographic order in the Ulam--Harris tree. For each occupied node $u\in A_n$, we write 
\[
\mathrm{Fr}_n(u):= \big|\{v\in A_n : v\text{ has $u$ as an ancestor in $\cU$}\}\big|
\]
for the fringe-tree size of $u\in \cU$ (so we abuse notation slightly, as $\mathrm{Fr}_n$ can act both on $\cU$ and on $V_n=[n]$). 
We record three properties:
\begin{enumerate}[leftmargin=*, itemsep=0em]
\item Forgetting the Ulam–Harris labels recovers the random recursive tree defined above, in which each vertex $t$ attaches uniformly to a vertex in $[t-1]$. 
    \item Conditional on the occupied set $A_n=a_n$ (without arrival time labels), each admissible labeling -- labels increase along simple root-to-leaf paths, and among siblings, the label order is consistent with the left-to-right order in the Ulam--Harris tree -- is equally likely. Indeed, the UH-embedding is uniquely determined by the sequence of parent choices $(\mathrm{pa}(v))_{v\in[2,n]}$, and each such sequence has probability $\prod_{v=2}^n1/(v-1)$. Thus all sequences resulting in the occupied set $A_n$ are equally likely.
    \item Conditional on any realization of the tree such that the fringe tree sizes of $u$ and $(u,1),\ldots, (u,k)$ equal $m$ and $m_1,\ldots, m_k$, respectively, the size of the fringe tree of $(u,k+1)$ is uniformly distributed on $\{1,\ldots, m-(1+m_1+\ldots+m_k)\}$ if $1+m_1+\ldots+m_k<m$, and equals $0$ otherwise. This follows from a standard P\'olya-urn argument that tracks the evolution of the subtree sizes. 
\end{enumerate}
Observations 2 and 3 motivate the next construction.

\medskip 
\noindent\emph{Construction 2: Fringe-size splitting.\ }
An equivalent way to generate the tree is to work directly with fringe-tree sizes, rather than attaching vertices one by one. We start with the root $\emptyset$, whose fringe tree contains all $n$ vertices. The remaining $n-1$ vertices are allocated recursively along the Ulam--Harris tree by successive uniform splits. 

\smallskip 
Given a node $u$ with $\mathrm{Fr}_n(u)=m>1$, we reveal the fringe tree sizes of its children from left to right. Set $R_1=m-1$; for its leftmost child $(u,1)$, let $\mathrm{Fr}_n\left((u,1)\right)=X_1\sim\mathrm{Unif}\{1,\ldots, R_1\}$. For $k\ge 1$, work conditionally on $R_{k}$ and $X_{k}$ for $k\ge 1$, and continue while $R_{k+1}\ge1$:
\begin{equation}\label{eq:uniform-splitting}
R_{k+1}=R_k-X_k, \qquad\text{and}\qquad \mathrm{Fr}_n\left((u,k+1)\right)=X_{k+1}\sim\mathrm{Unif}\{1,\ldots, R_{k+1}\}.
\end{equation}
This construction provides the occupied set $A_n$ in the Ulam--Harris tree, but does not assign arrival-time labels. Conditional on the occupied set, we let the arrival-time labeling be uniform among all admissible labelings, matching Construction 1. By Observations 2 and 3 above, the resulting tree has the same distribution as the uniform random recursive tree (omitting the Ulam--Harris ordering).

\smallskip 
The subtree sizes can be sampled in any admissible order---that is, the fringe-tree size of each vertex $(u,k)$ in the Ulam--Harris tree is sampled after its parent $u$ and left siblings, $\left((u,i)\right)_{i\in \{1,\ldots,k-1\}}$, have been assigned fringe-tree sizes. The recursive allocation may therefore be carried out in depth-first, breadth-first, or any other admissible order without changing the distribution. 

\medskip 
\noindent\emph{Auxiliary lemmas.}\ 
We use the independence in this second construction to prove two auxiliary lemmas on the fringe-tree-size distribution, which then leads to the proof of Lemma \ref{lem:mixed large}. We introduce notation for the number of vertices with fringe-tree size in an interval $(a,b]$, where $a,b \in \N\cup \{+\infty\}$. Let
\begin{equation}
N_n^\mathrm{Fr}(a,b]:=\big|\big\{ u \in A_n: \mathrm{Fr}_n(u)\in (a,b]\big\}\big| = \big|\big\{ v\in[n]: \mathrm{Fr}_n(v)\in (a,b]\big\}\big|.
\end{equation}
Informally, the size of the fringe-tree of vertex $v$ is of order $n/v$; thus the number of vertices with fringe-tree size in $[\ell+1, 2\ell]$ should be of order $n/\ell$. The next lemma bounds the upper tail. 
We also write $A_n^\prime = A_n \setminus \{\emptyset\}$ for the set of occupied nodes that are not the root.

\begin{lemma}[Fringe-tree size counts]\label{lem:fringe tree count}
    Let $\ell, n\in\N$, and assume $r\ge 20\tfrac{n}{\ell}$. Then, 
    \[
    \p\left(N_n^\mathrm{Fr}(\ell, 2\ell] \ge r\right)\le 27e^{-0.04r}.
    \]
    \begin{proof}
        The statement is clear for $\ell \geq n$, since $N_n^\mathrm{Fr}(\ell, 2\ell]=0$ for $\ell \geq n$. Thus, assume that $\ell < n$ for the rest of the proof.
        We work under the fringe-tree splitting construction. Recall that $A_n^\prime$ denotes the set of occupied Ulam--Harris nodes in the tree of size $n$ that are not the root.
        Let 
        \begin{align*}
            B_1&:=\big\{(u,i)\in A_n^\prime: \mathrm{Fr}_n\left((u,i)\right)\in(\ell, 2\ell]\big\}, \\
            B_2&:=\big\{(u,i)\in A_n^\prime: \mathrm{Fr}_n(u)\ge \ell+1, \ \mathrm{Fr}_n\left((u,i)\right)\in[\ell/2, \ell]\big\}.
        \end{align*}
        The lemma aims to establish an upper bound on the size of the set $B_1$. The set $B_2$ will assist with this. Note that $B_1\cup B_2$ is the set of nodes that are not the root, whose fringe size lies in $(\ell/2, 2\ell]$, and whose parents have fringe size at least $\ell+1$. We bound 
        \begin{align*}
            \p\left(N_n^\mathrm{Fr}(\ell, 2\ell] \ge r\right)& \leq \sum_{t=r-1}^\infty\p\left(|B_1|+|B_2|=t, |B_1|\ge r-1\right)\\ &
            \leq \sum_{t=r-1}^\infty\big(\p\left(|B_1|\ge 4t/5, |B_1|+|B_2|=t\right)+\p\left(|B_2| > t/5, |B_1|+|B_2|=t\right) \big).
        \end{align*}
        The fringe trees attached to vertices in $B_2$ are disjoint, as a node with fringe-tree size at most $\ell$ can never have a descendant with fringe-tree size strictly larger than $\ell$. Thus, $|B_2|\cdot \ell/2 \le n$, which is equivalent to $|B_2|\le 2n/\ell$. As $t\ge r-1\ge 20n/\ell-1 > 10n/\ell$ by assumption, the second probability in the sum is equal to 0 for all $t \geq r-1$. Thus, 
        \[
        \p\left(N_n^\mathrm{Fr}(\ell, 2\ell] \ge r\right) \le \sum_{t=r-1}^\infty\p\left(|B_1|\ge 4t/5, |B_1|+|B_2|=t\right).
        \]
        
        For $1\leq i \leq n$, let $\{ u^i_1,\ldots,u^i_{n^i}\}= [n]^i$ in lexicographic ordering  and let $\{ v_1,\ldots,v_{K} \} = \bigcup_{i=1}^n [n]^i$ with $K=\sum_{i=1}^n n^i$, so that the level of $v_j$, i.e., the unique value $i\in [n]$ s.t. $v_j \in [n]^i$, is non-decreasing in $j$, and so that each block $[n]^i$ is in the lexicographic ordering.
        The sets $B_1$ and $B_2$ can be constructed iteratively via the fringe-tree splitting construction, by sampling fringe-tree sizes of the vertices $v_1,\ldots,v_K$. Let $B_1^j$ and $B_2^j$ be the sets
        \begin{align*}
            B_1(j)& := \big\{(u,i)\in \{v_1,\ldots,v_j\} : \mathrm{Fr}_n\left((u,i)\right)\in(\ell, 2\ell]\big\}, \\
            B_2(j)& := \big\{(u,i)\in \{v_1,\ldots,v_j\} : \mathrm{Fr}_n(u)\ge \ell+1, \ \mathrm{Fr}_n\left((u,i)\right)\in[\ell/2, \ell]\big\}
        \end{align*}

        Let $v_j=(u,i)$ and suppose that we reveal the uniform random variable $X_i = \mathrm{Fr}_n((u,i))$ in~\eqref{eq:uniform-splitting} determining the fringe-tree size of $(u,i)$ in two stages: first we reveal whether $X_i\in[\ell/2, 2\ell]$, then conditional on this event we sample whether $(u,i)$ belongs to $B_1$ or $B_2$; independently of the past, we have that $X_i\in[\ell/2, \ell]$ with probability at least $1/3$ as $X_i$ is uniform on $[\ell/2, (2\ell)\wedge R_i]$. As a result, we can couple $|B_1(j)|, |B_2(j)|$ with a sequence of i.i.d. Bernoulli(1/3)-distributed random variables $Y_1,Y_2,\ldots$ so that
        \begin{equation*}
            |B_2(j)| \geq \sum_{\ell=1}^{|B_1(j)| + |B_2(j)|} Y_\ell 
        \end{equation*}
        for all $j=1,\ldots,K$. Since $|B_1(K)|=|B_1|$ and $|B_2(K)|=|B_2|$, this implies that
        \begin{align*}
            \p\left(N_n^\mathrm{Fr}(\ell, 2\ell] \ge r\right)
            &\leq \sum_{t=r-1}^\infty\p\left(|B_1|\ge \tfrac{4}{5}t, |B_1|+|B_2|=t\right)\\
            &= \sum_{t=r-1}^\infty\p\left(|B_2|\le \tfrac{1}5t, |B_1|+|B_2|=t\right)\\ &\le \sum_{t=r-1}^\infty\p\left(\mathrm{Bin}(t, 1/3)\le t/5\right)\\& \le \sum_{t=r-1}^\infty\exp\left(-t\Lambda_{1/3}(0.2)\right)\le \sum_{t=r-1}^\infty e^{-0.04t} \le 27 e^{-0.04r},
            \end{align*}
            where 
        \begin{equation}\label{eq:large-dev}
            \Lambda_p(x):=x\log(x/p)+(1-x)\log\left((1-x)/(1-p)\right)
        \end{equation} is the large-deviation rate function of a Bernoulli-$p$ distribution.
    \end{proof}
\end{lemma}
The next lemma bounds the number of children with a large fringe tree  relative to their parent. 
\begin{lemma}[Children with large fringe trees] \label{lem:stochastic_domination}
Let $n\in\N, u \in \cU$, and $\alpha\in(0,1)$. Then, for all $k\ge 3\log_2(1/\alpha)$ and $\ell \in \N$
    \[
    \p\left(\big|\{i\in\N : \mathrm{Fr}_n\left((u,i)\right) \ge \alpha \mathrm{Fr}_n(u)\}\big|\,>\, k \, \mid \, \mathrm{Fr}_n(u) =\ell\right)\ \le \ 0.95^k.
    \]
    \begin{proof} 
        We analyze the splitting as in \eqref{eq:uniform-splitting}.
        We track for each child of $u$ if its fringe-tree size exceeds slightly less than its expectation: let
        \[
        Y_j=\Ind{X_j\ge R_j/2}, \qquad Y^{(k)}=\sum_{j=1}^k Y_j.
        \]
        Observe that $\p(Y_j=1 \mid Y_1,\ldots,Y_{j-1})\ge 1/2$ as $X_j$ is uniform on $\{1,\ldots, R_j\}$. Moreover, each time $Y_j=1$, the remaining mass halves, i.e.,  $R_{j+1}\le R_j/2$. Hence, after $m$ halvings the remaining mass is at most $R_12^{-m}$. In particular, if the number of halvings among the first $k$ children exceeds $\log_2(1/\alpha)$, then the remaining mass before child $k+1$ is less than $\alpha \mathrm{Fr}_n(u)$, and it is impossible to have a $(k+1)$-st child of size at least $\alpha \mathrm{Fr}_n(u)$, i.e., $R_{k+1} < \alpha \mathrm{Fr}_n(u)$.
        So, if there are at least $k+1$ children of $u$ with fringe-tree size at least $\alpha \mathrm{Fr}_n(u)$, we must have that $Y^{(k)}\le \log_2(1/\alpha)$. Since $Y^{(k)}$ stochastically dominates a binomial random variable with $p=1/2$, we find for $k\ge 3\log_2(1/\alpha)$,
        \begin{multline*}
        \p\left(\big|\{i\in\N : \mathrm{Fr}_n\left((u,i)\right) \ge \alpha \mathrm{Fr}_n(u)\}\big|>k\, \mid \, \mathrm{Fr}_n(u) =\ell \right) \le \p\big(Y^{(k)}\le \log_2(1/\alpha)\big)
        \\
        \le \p\left(\mathrm{Bin}(k, 1/2)\le k/3\right)\le\exp\left(-k\Lambda_{1/2}(\tfrac{1}{3})\right)\le 0.95^k,
        \end{multline*}
        where $\Lambda_{1/2}$ is the large deviations rate function of a Bernoulli(1/2)-distribution, see \eqref{eq:large-dev}. 
    \end{proof}
\end{lemma}

Using Lemmas \ref{lem:fringe tree count} and \ref{lem:stochastic_domination}, we prove the main lemma of this section, that we recall below.

\mixedlarge*

    \begin{proof}
        We first split the event inside the probability by considering dyadic intervals for the potential fringe-tree size of $u$: 
        \[
        \begin{aligned}
        \big\{(u,i)\in A_n^\prime: \mathrm{Fr}_n(u)&\mathrm{Fr}_n\left((u,i)\right) > (n/t)^2\big\}\\
        &\subseteq
        \bigcup_{j=0}^{\lfloor\log_2t\rfloor}\big\{(u,i)\in A_n^\prime: \mathrm{Fr}_n(u)\in\big(2^j\tfrac{n}{t}, 2^{j+1}\tfrac{n}{t}\big], \mathrm{Fr}_n((u,i))\ge 2^{-(j+1)}\tfrac{n}{t}\big\}.
        \end{aligned}
        \]
        Assuming that for each $j\in\{0,\ldots, \lfloor\log_2t\rfloor\}$, 
        \[
        \big|\big\{(u,i)\in A_n^\prime: \mathrm{Fr}_n(u)\in\big(2^j\tfrac{n}{t}, 2^{j+1}\tfrac{n}{t}\big], \mathrm{Fr}_n\left((u,i)\right)\ge 2^{-(j+1)}\tfrac{n}{t}\big\}\big| \le (C/5)\cdot t\cdot 0.8^j, 
        \]
        we have 
        \[
        \big|\big\{(u,i)\in A_n^\prime: \mathrm{Fr}_n(u)\mathrm{Fr}_n\left((u,i)\right) > (n/t)^2\big\}\big| \le \sum_{j=0}^{\lfloor\log_2t\rfloor }(C/5)\cdot t\cdot 0.8^j \le Ct.
        \]
        By a union bound, and using that $2^{-(j+1)}\tfrac{n}{t}\ge 2^{-2(j+1)}\mathrm{Fr}_n(u)$ if $\mathrm{Fr}_n(u)\in (2^j \tfrac{n}{t}, 2^{j+1}\tfrac{n}{t}]$, 
        \[
        \begin{aligned}
        &\p\left(\big|\big\{(u,i)\in A_n^\prime: \mathrm{Fr}_n(u)\mathrm{Fr}_n\left((u,i)\right) > (n/t)^2\big\}\big|>Ct\right)\\ 
        &\le 
        \sum_{j=0}^{\lfloor \log_2 t\rfloor}\p\left(\big|\big\{(u,i)\in A_n^\prime: \mathrm{Fr}_n(u)\in\left(2^j\tfrac{n}{t}, 2^{j+1}\tfrac{n}{t}\right], \mathrm{Fr}_n\left((u,i)\right)\ge 2^{-(j+1)}\tfrac{n}{t}\big\}\big| > (C/5) 0.8^jt\right) \\
         &\le \sum_{j=0}^{\lfloor \log_2 t\rfloor}\p\left(\big|\big\{(u,i)\in A_n^\prime: \mathrm{Fr}_n(u)\in\left(2^j\tfrac{n}{t}, 2^{j+1}\tfrac{n}{t}\right], \mathrm{Fr}_n\left((u,i)\right)\ge 2^{-2(j+1)}\mathrm{Fr}_n(u)\big\}\big| > (C/5)  0.8^jt\right)
        .\end{aligned}
        \]
        We bound the summands individually using Lemmas \ref{lem:fringe tree count} and \ref{lem:stochastic_domination} and the fringe-tree splitting construction. Define
        \begin{equation}\label{eq:rj}
        J_j=N_n^\mathrm{Fr}\big(2^j\tfrac{n}{t}, 2^{j+1}\tfrac{n}t\big], \qquad r_j:=\lceil\sqrt{C}0.6^jt\rceil.
        \end{equation}

    Similarly to the proof of Lemma \ref{lem:fringe tree count}, for $0 \leq i \leq n$, let $\{ u^i_1,\ldots,u^i_{n^i}\}= [n]^i$ in lexicographic ordering  and let $\{v_1,\ldots,v_{K}\} = \bigcup_{i=1}^n [n]^i$ with $K=\sum_{i=0}^{n} n^i$ be so that the level of $v_j$, i.e., the unique value $i\in [n]$ s.t. $v_j \in [n]^i$, is non-decreasing in $j$, and so that each block $[n]^i$ is in the lexicographic ordering.
        For $1 \leq \ell \leq K$, define the sets $F_1(\ell)$ and $F_2(\ell)$ by
        \begin{align*}
            F_1(\ell)& := \left\{ u \in \{v_1,\ldots,v_\ell\} : \mathrm{Fr}_n \left(u\right)\in \left( 2^j \frac{n}{t}, 2^{j+1} \frac{n}{t} \right] \right\}, \\
            F_2(\ell)& := \left\{(u,i)\in \mathcal{U} : u \in \{v_1,\ldots,v_\ell\} , \mathrm{Fr}_n \left(u\right)\in \left( 2^j \frac{n}{t}, 2^{j+1} \frac{n}{t} \right], \mathrm{Fr}_n\left((u,i)\right) \geq 2^{-2(j+1)} \mathrm{Fr}_n(u) \right\}.
        \end{align*}
        Let $\cF_\ell$ be the $\sigma$-algebra $\cF_\ell=\sigma\left( \mathrm{Fr}_n((u,i)) : u \in \{\emptyset,v_1,\ldots,v_\ell\}, i\in \N \right)$.
        Let $u = v_{\ell} \in \cU$ with fringe-tree size  $\mathrm{Fr}_n(u)\in\big(2^j\tfrac{n}{t}+1, 2^{j+1}\tfrac{n}{t}\big]$. Conditional on $\cF_{\ell-1}$, the sizes of the fringe trees $\left(\mathrm{Fr}_n((u,i))\right)_{i\geq 1}$ have the same distribution as the sizes of these fringe trees, when conditioned on $\mathrm{Fr}_n(u)$. (Note that $\mathrm{Fr}_n(u)$ is measurable with respect to $\cF_{\ell-1}$). Lemma \ref{lem:stochastic_domination} with $\alpha=2^{-2(j+1)}$ implies that its number of children with fringe-tree size at least  $2^{-2(j+1)}\mathrm{Fr}_n(u)$ has an exponentially decaying tail: for every $m\ge 0$,
        \begin{multline*}
            \p\left(\big|\{i: \mathrm{Fr}_n\left((u,i)\right) \ge 2^{-2(j+1)}\mathrm{Fr}_n(u)\}\big|>6(j+1)+m \ \big|\ \cF_{\ell-1} \right)
            \\
            =
            \p\left(\big|\{i: \mathrm{Fr}_n\left((u,i)\right) \ge 2^{-2(j+1)}\mathrm{Fr}_n(u)\}\big|>6(j+1)+m \ \big|\ \mathrm{Fr}_n(u) \right)
            \leq 0.95^{6(j+1)+m} \le 0.95^m
        \end{multline*}
        Hence, conditional on $\cF_{\ell-1}$, the number of children of $u$ with $\mathrm{Fr}_n\left((u,i)\right) \ge 2^{-2(j+1)}\mathrm{Fr}_n(u)$
        is stochastically dominated by a shifted geometric random variable $6(j+1)+Y$, where $Y\sim\mathrm{Geo}(0.05)$. Using this inductively, we see that there exist i.i.d. random variables $Y_1, Y_2,\ldots$ with $Y_i \sim \mathrm{Geo}(0.05)$ so that
        \begin{equation*}
            |F_2(\ell)| \leq \sum_{k=1}^{|F_1(\ell)|} \left( 6(j+1) + Y_k \right).
        \end{equation*}
        for all $1 \leq \ell \leq K$. Thus, we obtain that 
        \begin{align}
            &\p\left(\big|\big\{(u,i)\in A_n^\prime: \mathrm{Fr}_n(u)\mathrm{Fr}_n\left((u,i)\right) > (n/t)^2\big\}\big|>Ct\right)\nonumber\\
            &\qquad\le
            \sum_{j=0}^{\lfloor \log_2 t\rfloor}\p\bigg(\sum_{k=1}^{J_j} \left(Y_{k}+6(j+1)\right)>(C/5) 0.8^j t\bigg)\nonumber \\
            &\qquad\le
            \sum_{j=0}^{\lfloor \log_2 t\rfloor}\Bigg(\p\left(J_j>r_j\right)+ \p\bigg(\sum_{k=1}^{r_j} \left(Y_{k}+6(j+1)\right)>(C/5) 0.8^j t\bigg)\Bigg) 
            .\label{eq:lem-mixed-pr1}
        \end{align}
        To bound the first summands, we use Lemma \ref{lem:fringe tree count} for $\ell=2^j\tfrac{n}{t}$ and $r=r_j$. This requires that 
        \[r_j = \lceil\sqrt{C}t0.6^j\rceil\ge 20\cdot  2^{-j}t,\]
        which is satisfied for all $j$ provided that $C$ is a sufficiently large constant. As $\log(0.6)/\log(2)=-0.73...>-0.75$, we obtain for $C$ large enough
        \begin{align}
            \sum_{j=0}^{\lfloor \log_2 t\rfloor}\p\left(J_j>r_j\right)&\le 27\sum_{j=0}^{\lfloor \log_2 t\rfloor}\exp\left(-0.04 \sqrt{C}0.6^jt\right)\nonumber\\&\le 27(1+\lfloor\log_2t\rfloor) \exp\left(-0.04\sqrt{C}t^{1+\log(0.6)/\log(2)}\right)\le \tfrac{1}{2}\exp\left(-t^{1/4}\right).\label{eq:lem-mixed-pr2}
        \end{align}
        We proceed to the second term within the sum in \eqref{eq:lem-mixed-pr1}. As $Y_{k}+6(j+1)\le 7(j+1)Y_{k}$, $r_j \leq 2 \sqrt{C}0.6^j t$, and $\E\left[ \exp(0.04 Y_k) \right]<\infty$, we get that 
        \begin{multline*}
         \p\bigg(\sum_{k=1}^{r_j} \left(Y_{k}+6(j+1)\right)>(C/5) 0.8^j t\bigg) \le \p\bigg(\sum_{k=1}^{r_j} Y_{k}>r_j\frac{C0.8^j}{35r_j(j+1)} t\bigg).
         \\
         \leq
         \left(\E\big[\exp\left(0.04Y_{k}\right)\big]\exp\left(-\frac{0.04 C 0.8^j }{35 \cdot 2 \sqrt{C} 0.6^j (j+1) t } t \right)\right)^{r_j} \leq \left( \exp\left(- 1 \right) \right)^{r_j} \leq \exp\left(- \sqrt{C} t 0.6^j \right) .
        \end{multline*}
        Combining this bound with \eqref{eq:lem-mixed-pr1} and \eqref{eq:lem-mixed-pr2}, we obtain for $t\ge1$ and $C$ sufficiently large,
        \begin{multline*}
        \p\left( \left|\left\{(u,i)\in A_n^\prime: \mathrm{Fr}_n(u)\mathrm{Fr}_n\left((u,i)\right) > (n/t)^2\right\}\right|>Ct \right)\\ 
        \le 
        \tfrac{1}{2}\exp\left(-t^{1/4}\right)+ \sum_{j=0}^{\lfloor\log_2t\rfloor} \exp\left(- \sqrt{C} t 0.6^j \right)
        \le 
        \exp\left(-t^{1/4}\right). \qedhere
        \end{multline*}
    \end{proof}

We finish the section with an immediate corollary,  which we use in Section~\ref{sec:Jordan}.

\begin{corollary}\label{coro:fringe tree counts large} There exists a constant $C\ge 1$ such that for all  $t > 0$ and $n\in\N$
	\begin{equation*}
		\p \left( \big| \big\{ v\in [n]  : \mathrm{Fr}_{n}(v)  > n/t \big\} \big| >  Ct \right) \leq  \exp \left( 1 -t^{1/4} \right).
	\end{equation*}
\begin{proof}
The statement is clear for $t\leq 1$. So let $t> 1$.
Every $v\in \{u\in [n]:\mathrm{Fr}_n(u)>n/t\}$ other than the root has a parent $u\in \cU$ and a child-index $i$ such that $(u,i)\in A_n^\prime$ and $\mathrm{Fr}_n(u)\mathrm{Fr}_n((u,i))>\,(n/t)^2$. Thus
\[
|\{v\in [n]:\mathrm{Fr}_n(v)>n/t\}\big|\le 1 + \big|\{(u,i)\in A_n^\prime:\ \mathrm{Fr}_n(u)\mathrm{Fr}_n((u,i)) > (n/t)^2\}\big|.
\]
Lemma \ref{lem:mixed large} implies the probability that the right-hand term exceeds $Ct+1$ is at most $\exp(-t^{1/4})$, which yields the claim by increasing the constant $C$.
\end{proof}
\end{corollary}

\subsection{Upper tail: upper bound}\label{sec:Upper_tail_UB_J2}
We combine the lemmas from the Sections \ref{subsec:jordan2c-lower} and \ref{subsec:fringe} to prove the first part of Theorem \ref{thm:main}.

\begin{lemma}\label{lem:Jordan2_uppertail_UB}
    There exists a constant $C>0$ such that for any $n\ge 1$, any $v\in [n]$, and any $S\ge 1$,
    \begin{align}
        \p \left( \hs_{2}(v) \geq S v \right) & \leq C\frac{ 1+\log S}{S^2}.
    \end{align}
\end{lemma}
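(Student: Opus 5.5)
We follow the decomposition \eqref{eq:hs2 split}. Set $t:=Sv/C_0$ for a large constant $C_0$ (to be fixed in terms of the constant $C$ of Lemma~\ref{lem:mixed large}), and put $r:=(n/t)^2=(C_0 n/(Sv))^2$. If $\hs_2(v)\ge Sv$, then at least $Sv-1$ vertices $u\neq v$ have $\phi_n^\ssup{2}(u)\ge \phi_n^\ssup{2}(v)$, up to the effect of random tie-breaking, which only helps. Hence either $\phi_n^\ssup{2}(v)\le r$, or more than $Sv-1$ vertices satisfy $\phi_n^\ssup{2}(u)>r$; strict versus non-strict inequality is handled by perturbing $r$ by a factor of $2$. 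We may assume $S$ is larger than a constant, since otherwise the claim is trivial after enlarging $C$.

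\emph{Local term.} By Lemma~\ref{lem:boosted small} applied with $t'=S/C_0$ (valid once $S\ge C_0$),
\[
\p\big(\phi_n^\ssup{2}(v)\le (n/(t'v))^2\big)\le C\,\frac{1+\log (S/C_0)}{(S/C_0)^2}\le C'\,\frac{1+\log S}{S^2}.
\]

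\emph{Global term.} By \eqref{eq:upper-bound-jordan2}, $\phi_n^\ssup{2}(u)\le \mathrm{Fr}_n(u)\mathrm{Fr}_n(\pa(u))$ for $u\ge2$, and vertex $1$ contributes at most one extra vertex. The global event is therefore contained in
\[
\Big\{\big|\{u\in[n]\setminus\{1\}:\mathrm{Fr}_n(\pa(u))\mathrm{Fr}_n(u)>(n/t)^2\}\big|\ge Sv-2\Big\}.
\]
Choose $C_0$ such that $Ct=CSv/C_0\le Sv-2$; for instance $C_0=2C$ works when $Sv\ge 4$, and small $Sv$ is trivial. Lemma~\ref{lem:mixed large} then bounds this probability by $\exp(-t^{1/4})\le \exp(-(S/C_0)^{1/4})$ using $v\ge1$. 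This is $O(S^{-2})$ uniformly in $v$ and $n$. The case $t<1$, i.e.\ $Sv<C_0$, is absorbed into the constant.

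The only delicate points are bookkeeping: handling ties in the random ordering, the exceptional vertex $1$, and ensuring $t\ge1$ so that Lemma~\ref{lem:mixed large} applies. The substantive work, namely the local lower-tail bound via negative association and the global count via fringe splitting, is already contained in Lemmas~\ref{lem:boosted small} and~\ref{lem:mixed large}. The main point to check is that the stretched-exponential global bound is uniform in $v$. This holds because $t=Sv/C_0\ge S/C_0$, so the bound is at least as good as in the worst case $v=1$.
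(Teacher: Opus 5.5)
Your proposal is correct and is essentially the paper's proof. Both split into two parts:
\begin{itemize}
\item the local event $\{\phi_n^{\ssup{2}}(v)\le (C_0 n/(Sv))^2\}$, handled by Lemma~\ref{lem:boosted small} with $t=S/C_0$;
\item the global count event, handled via $\phi_n^{\ssup{2}}(u)\le \mathrm{Fr}_n(u)\mathrm{Fr}_n(\pa(u))$ and Lemma~\ref{lem:mixed large} with $t=Sv/C_0$.
\end{itemize}
The paper takes $C_0=2C_1$, exactly as you do. One small nit: the count over $u\neq 1$ is strictly greater than $Sv-2$, not merely at least $Sv-2$; you need the strict version so that the boundary case $Ct=Sv-2$ still falls under the strict inequality in Lemma~\ref{lem:mixed large}.
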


\begin{proof}
    We follow the reasoning from the outline at the beginning of Section \ref{sec:NBjordan}.  If $\hs_2(v)\ge Sv$, there are at least $Sv$ vertices with Jordan-2 centrality at least as large as $\phi_n^\ssup{2}(v)$. So,
\begin{align}
    \p\left(\hs_2(v)\ge Sv\right) \leq \p\left(\big|\big\{u\in [n]: \phi_n^\ssup{2}(u)\ge \phi_n^{\ssup{2}}(v)\big\}\big|\ge Sv\right).
\end{align}
We consider two cases: either $\phi_n^\ssup{2}(v)$ is unlikely small, or there are many vertices with large Jordan-2 centrality. Let $C_1$ be the constant $C$ from Lemma \ref{lem:mixed large}. We bound
\begin{align*}
    \p\left(\hs_2(v)\ge Sv\right)&\le \p\left(\phi_n^\ssup{2}(v)\le \left(\frac{2C_1n}{Sv}\right)^2\right) +  \p\left(\Big|\Big\{u\in[n]: \phi_n^\ssup2(u)> \left(\frac{2C_1n}{Sv}\right)^2\Big\}\Big| > Sv\right) 
    .
\end{align*}
 We use that $\phi_n^\ssup{2}(v)\le \mathrm{Fr}_n(v)\mathrm{Fr}_n(\pa(v))$ by \eqref{eq:upper-bound-jordan2} for $v\ge2$. Hence, 
\begin{align*}
    & \p\left(\hs_2(v)\ge Sv\right)
    \\
    &\le \p\left(\phi_n^\ssup{2}(v)\le \left(\frac{2C_1n}{Sv}\right)^2\right) +  \p\left(\Big|\Big\{u\in[n]\setminus\{1\}: \mathrm{Fr}_n(u)\mathrm{Fr}_n(\pa(u))> \left(\frac{2C_1n}{Sv}\right)^2\Big\}\Big| > Sv-1\right) 
    \\
    &\le \p\left(\phi_n^\ssup{2}(v)\le \left(\frac{2C_1n}{Sv}\right)^2\right) +  \p\left(\Big|\Big\{u\in[n]\setminus\{1\}: \mathrm{Fr}_n(u)\mathrm{Fr}_n(\pa(u))> \left(\frac{2C_1n}{Sv}\right)^2\Big\}\Big| > C_1 \frac{Sv}{2C_1}\right)
\end{align*}
where the last inequality holds for $S\geq 2$.
Applying inequality \eqref{eq:lem2.1 upper bound} from Lemma \ref{lem:boosted small} with $t=S/(2C_1)$ to the first term and Lemma \ref{lem:mixed large} with $t=Sv/(2C_1)$ to the second term, we get that for $S\geq 2C_1 \vee 2$,
\begin{equation*}
    \p\left(\hs_2(v)\ge Sv\right) \le C_2\cdot  4C_1^2\frac{1+\log\left( S/(2C_1)\right)}{S^2} + \exp\left(-(2C_1)^{-1/4}S^{1/4}\right), 
\end{equation*}
where $C_2$ is the constant $C$ from Lemma \ref{lem:boosted small}.
As a result, there exists a constant $C>0$ such that for all $S\geq 2C_1 \vee 2$, 
\[
\p\left(\hs_2(v)\ge Sv\right)\le C\frac{1+\log S}{S^2}.
\]
Increasing the constant $C$ also implies that the same inequality holds for all $S\geq 1$.
\end{proof}

\subsection{Upper tail: lower bound}\label{sec:Upper_tail_LB_J2}

We proceed with the analysis of the lower bound of the upper tail, i.e., the first inequality in \eqref{jordan2:overest_combined}.

\begin{lemma}\label{lem:Jordan2_upper_tail_lower_bound}
        For every $\eps>0$ there exist positive constants $c=c(\eps), \delta=\delta(\eps)$ such that for all $n\geq 3$, all $v\in[ n/4]$, and all $S\in[1, \delta \left(n/v\right)^{1-\eps}]$
    \begin{equation}\label{eq:lem2.15 (1)}
        \mathbb{P}\left(\hs_2(v)\geq Sv\right) \geq  c \frac{1+\log(S)}{S^2}.
    \end{equation}
    Furthermore, there exist positive constants $S_0,c^\prime, \delta$ such that for all $n\geq 3$, all $v \leq n/4$ and all $S\in[S_0, \delta \tfrac{n}{v}]$,
    \begin{equation}\label{eq:lem2.15 (2)}
        \mathbb{P}\left(\hs_2(v)\geq Sv\right) \geq  c^\prime \frac{1}{S^2}.
    \end{equation}
\end{lemma}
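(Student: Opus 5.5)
We follow the decomposition from the outline of Section~\ref{sec:NBjordan}. Set the threshold $r:=\big(\tfrac{n}{64Sv}\big)^2$. If $\phi_n^\ssup{2}(v)\le r$ and at least $Sv$ vertices $u\neq v$ satisfy $\phi_n^\ssup{2}(u)>r$, then $v$ is ranked after all of them, so $\hs_2(v)\ge Sv$. Hence
\[
\p\big(\hs_2(v)\ge Sv\big)\ \ge\ \p\big(\phi_n^\ssup{2}(v)\le r\big)\ -\ \p\Big(\big|\{u\in[n]:\phi_n^\ssup{2}(u)>r\}\big|< Sv+1\Big).
\]
For the first term we apply Lemma~\ref{lem:boosted small} with $t=64S$. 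Inequality \eqref{eq:lem2.1 lower bound} gives $c\,\log(64S)/(64S)^2\ge c''(1+\log S)/S^2$, valid once $64S\in(\sqrt2,(n/v)^{1-\eps}\wedge \tfrac{n}{\sqrt3 v})$. This holds for $S\le\delta (n/v)^{1-\eps}$ with $\delta$ small, because $(n/v)^{1-\eps}\le n/v$ and $n/v\ge4$. Inequality \eqref{eq:lem2.1 lower bound version2} gives the $c'/S^2$ version for $S\le \delta n/v$.

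For the second (global) term we use $\phi_n^\ssup{2}(u)\ge\phi_n(u)^2$, together with $\phi_n(u)=\mathrm{Fr}_n(u)$ whenever $\mathrm{Fr}_n(u)\le n/2$. Every $u$ with $\mathrm{Fr}_n(u)\in\big(\tfrac{n}{64Sv},\tfrac n2\big]$ therefore has $\phi_n^\ssup{2}(u)>r$. Vertices with $\mathrm{Fr}_n(u)>n/2$ form an ancestral path from the root, and every such $u\ge2$ still has $\phi_n(u)\ge$ the size of the complement of its subtree through its child on the path. It is simpler to discard them: the number of such vertices is at most the depth, and one can bound it crudely. It is enough to show
\[
\p\Big(N_n^{\mathrm{Fr}}\big(\tfrac{n}{64Sv},\tfrac n2\big]< 2Sv\Big)\ \le\ e^{-c_0 Sv}
\]
(or any bound $o(S^{-2})$ uniformly), and to note that $\p(\text{more than } Sv \text{ vertices have } \mathrm{Fr}_n>n/2)$ is also tiny, e.g.\ via Corollary~\ref{coro:fringe tree counts large} with $t=2$. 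Since $Sv\ge1$ and $S\ge1$, a bound of the form $e^{-c_0 Sv}\le e^{-c_0 S}$ is negligible against $(1+\log S)/S^2$ after adjusting constants. For \eqref{eq:lem2.15 (1)} with $S$ near $1$ we may lose constants, and we handle small $S$ by monotonicity in $S$, shrinking $c$.

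The main obstacle is this lower-deviation bound on the number of vertices with fringe tree larger than $m:=n/(64Sv)$, where $m\ge 1$ since $S\le\delta n/v$. The expected count is of order $n/m=64Sv$. The plan is to use the arrival-order description directly: a vertex $w\le 32Sv$ has $\mathrm{Fr}_n(w)\ge\mathrm{Fr}_n^{>32Sv}(w)$. By Lemma~\ref{lem:beta-bin-fringe}, the witnessed sizes $(\mathrm{Fr}_n^{>K}(w))_{w\le K}$ with $K=\lfloor 32Sv\rfloor$ are exchangeable Pólya-urn counts. Each exceeds $n/(2K)\ge m$ with probability at least $1/2$ by \eqref{eq:fringe-ub} with $M=2$. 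Via the Dirichlet$(1,\dots,1)$ representation (uniform spacings), the fraction of the $K$ colours receiving at least half their fair share concentrates exponentially in $K$. Bounding the spacings with i.i.d.\ exponentials divided by their sum, and handling the binomial fluctuation given the limiting proportions, gives $\p(\#<K/4)\le e^{-cK}$. Negative association (Lemma~\ref{lem:beta-bin-fringe}(2)) applied to the indicators $\Ind{\mathrm{Fr}_n^{>K}(w)\le m}$ gives an alternative Chernoff-type route, and this is what I would try first. We need $K/4\ge 2Sv$, which holds since $K\ge 32Sv-1$. Finally, at most a bounded number of these vertices can have $\mathrm{Fr}_n>n/2$ with non-negligible probability; for this we use \eqref{eq:proba-parent-large}-type estimates, or simply require the count to exceed $Sv$ plus the number of vertices with fringe larger than $n/2$.
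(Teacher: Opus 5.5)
Your proposal is essentially the paper's proof: the same decomposition with threshold $(n/(64Sv))^2$, Lemma~\ref{lem:boosted small} at $t=64S$ for the local term, the global term via $\phi_n^\ssup{2}\ge\phi_n^2$, witnessed fringe trees and a Chernoff bound for negatively associated indicators (this is Lemma~\ref{claim:S over 64v} with $t=Sv$), and monotonicity in $S$ for small $S$. Two small repairs are needed. First, \eqref{eq:fringe-ub} with $M=2$ is vacuous as stated (the bound is $2/M=1$), so take $K=\lfloor 16Sv\rfloor$ and $M=4$ instead. Second, Corollary~\ref{coro:fringe tree counts large} with $t=2$ only yields probability $e^{1-2^{1/4}}\approx 0.83$, which is not small; control the vertices with $\mathrm{Fr}_n>n/2$ instead by noting that having more than $k$ of them forces some label $w>k$ to satisfy $\mathrm{Fr}_n(w)>n/2$, so a union bound with \eqref{eq:proba_fringe-large} gives $O(e^{-k/2})$ (the paper avoids this altogether by only counting labels in $\{8t,\dots,16t\}$).
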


Before going to the proof of this lemma, we state and prove an auxiliary lemma on the global event that few vertices have a typical fringe-tree size. It constitutes a counterpart of Lemma \ref{lem:mixed large}. 
\begin{lemma}\label{claim:S over 64v}
        There exists a  constant $C>0$ such that for any $n\geq 1$, and $t \leq n/16$,
        \begin{equation}\label{eq:S over 64v}\begin{aligned}
        \p\Big(\Big|\Big\{u \in [n] &: \phi_n^\ssup{2}(u) > \Big( \frac{n}{64t}\Big)^2 \Big\}\Big|\le t\Big)\\ &\le
            \p\left(\Big|\left\{u \in [n] : \phi_n(u) >  \frac{n}{64t} \right\}\Big|\le t\right) \leq C \exp \Big( -  \frac{t}{8} \Big) .\end{aligned}
        \end{equation}
    \end{lemma}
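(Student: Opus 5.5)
The first inequality in \eqref{eq:S over 64v} is deterministic. By Definition \ref{def:jordan2}, $\phi_n^\ssup{2}(u)\ge \phi_n(u)^2$ for every $u$. Hence every $u$ with $\phi_n(u)>n/(64t)$ also satisfies $\phi_n^\ssup{2}(u)>(n/(64t))^2$. The set in the second probability is therefore contained in the set in the first, and the event $\{|\cdot|\le t\}$ for the larger set implies the same event for the smaller set.

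For the second inequality, the plan is to reduce to fringe-tree sizes and exhibit many disjoint, essentially independent chances for a vertex to have a moderately large fringe tree. Recall that $\phi_n(u)=\mathrm{Fr}_n(u)$ whenever $\mathrm{Fr}_n(u)\le n/2$. Set $m:=\lceil 2t\rceil$, so that $m\le n/8$ by the assumption $t\le n/16$. It suffices to show that, with probability at least $1-C e^{-t/8}$, more than $t$ vertices $u\in[m]$ satisfy $\mathrm{Fr}_n(u)\in (n/(64t),n/2]$.

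The tool is the witnessed fringe trees. We have $\mathrm{Fr}_n(u)\ge \mathrm{Fr}_n^{>m}(u)$, and the vector $(\mathrm{Fr}_n^{>m}(u))_{u\in[m]}$ is a Pólya urn with $m$ colours started from one ball each. Equivalently, it is a Dirichlet$(1,\dots,1)$ mixture of multinomials: $n-m$ balls are distributed according to uniform spacings $(D_1,\dots,D_m)$ of $[0,1]$.

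\textbf{Lower bound on many witnessed fringe trees.} Represent $D_u=E_u/\sum_{w\le m} E_w$ with i.i.d.\ exponentials $E_u$.
\begin{itemize}
\item On the event $\{\sum_w E_w\le 2m\}$, which fails with probability $e^{-cm}$, the inequality $D_u\ge 1/(16t)$ follows from $E_u\ge 2m/(16t)\approx 1/4$.
\item The number of $u$ with $E_u\ge 1/4$ is $\mathrm{Bin}(m,e^{-1/4})$, with mean about $0.78m\approx 1.56t$. It exceeds $t$ except with probability $e^{-ct}$.
\item Given the spacings, each count $\mathrm{Fr}_n^{>m}(u)$ is $\mathrm{Bin}(n-m,D_u)$ with mean at least $(7n/8)/(16t)\ge n/(19t)$. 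A Chernoff bound gives $\mathrm{Fr}_n^{>m}(u)>n/(64t)$ except with probability $\exp(-c\,n/t)\le e^{-c'}$. The exceptional probability is therefore only uniformly small per vertex, not small in $t$.
\item To handle this, I would use the fact that the conditional binomials are negatively associated given the spacings (multinomial). Then a Chernoff bound on the number of failures among the roughly $1.56t$ good colours applies, since each fails with probability at most a small constant when $n/t\ge 16$. This keeps more than $t$ successes except with probability $e^{-ct}$.
\end{itemize}
Tuning the constants, for instance by replacing $2t$ with $4t$ for room, should give the rate $t/8$ after absorbing lower-order terms into $C$.

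\textbf{Remaining constraint.} We also need $\mathrm{Fr}_n(u)\le n/2$ for the counted vertices. At most one vertex per root-to-leaf chain can fail this, and the vertices with $\mathrm{Fr}_n(u)>n/2$ form a single path from the root. The path can have many vertices in $[m]$, however. Rather than bounding this directly, I would use the general identity $\phi_n(u)=\min(\mathrm{Fr}_n(u),\,n-\mathrm{Fr}_n(u)+1)\ge \min(\mathrm{Fr}_n(u),\mathrm{Fr}_n^{>m}(1))$ from Lemma \ref{lem:minima}'s argument. Including vertex $1$'s witnessed tree among the good colours, the same spacing argument shows that all counted $u\ge 2$ have $\phi_n(u)>n/(64t)$ on the good event.

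\textbf{Main obstacle.} The hardest step is the concentration: turning per-vertex constant-probability bounds into an $e^{-ct}$ bound for the count, with explicit constant $1/8$. I expect the Dirichlet/exponential representation together with negative association of multinomial counts to handle it, at the cost of some constant juggling.
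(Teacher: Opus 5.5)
Your first inequality is fine, and so is the reduction $\phi_n(u)=\mathrm{Fr}_n(u)\ge \mathrm{Fr}_n^{>m}(u)$ on $\{\mathrm{Fr}_n(u)\le n/2\}$. The gap is in how you handle the vertices with $\mathrm{Fr}_n(u)>n/2$: as written, it destroys the exponential bound.

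The inequality $\phi_n(u)\ge\min(\mathrm{Fr}_n^{>m}(u),\mathrm{Fr}_n^{>m}(1))$ for $2\le u\le m$ is valid. (The formula $\phi_n(u)=\min(\mathrm{Fr}_n(u),n-\mathrm{Fr}_n(u)+1)$ is only a lower bound, not an identity, since the child term is $n-\max_c\mathrm{Fr}_n(c)$.) The problem is that it makes \emph{every} counted vertex depend on the single event $\{\mathrm{Fr}_n^{>m}(1)>n/(64t)\}$. That event fails with probability bounded away from zero. Indeed, $\mathrm{Fr}_n^{>m}(1)$ has the law of $\mathrm{Fr}_n(m)$, and for $m=2t$ the threshold is $n/(32m)$. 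So \eqref{eq:fringe-lb} gives a failure probability of at least $1/256$ for all $1\le t\le n/64$. In your own formulation, the good event requires $E_1\ge 1/4$, so it fails with probability at least $1-e^{-1/4}$. On the failure event your lower bound says nothing about any $u$. The argument therefore only yields $\p(\cdots\le t)\le Ce^{-ct}+c_0$ with a constant $c_0>0$, not $Ce^{-t/8}$.

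The missing idea is to subtract the heavy vertices rather than route every vertex through vertex $1$. Outside an event of exponentially small probability, only few counted vertices can be heavy. Your remark that the heavy path `can have many vertices in $[m]$' describes an exponentially unlikely event. The vertices with $\mathrm{Fr}_n(u)>n/2$ form a path from the root with increasing labels. If there are $k$ of them, one has label at least $k$, and by \eqref{eq:proba_fringe-large} this has probability at most $\sum_{u\ge k}2e^2e^{-u/2}$.

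Take $k=t/4$ and, say, $m=4t$. You then need more than $5t/4$ large witnessed trees, each present with probability at least $7/8$ by \eqref{eq:fringe-ub}, and your scheme goes through.

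The paper does this more economically. It counts only $u\in\{8t,\dots,16t\}$, with witness time $16t$, so every counted vertex has label at least $8t$. A union bound then gives $\p(\exists u\in\{8t,\dots,16t\}:\mathrm{Fr}_n(u)\ge n/2)\le 38e^{-4t}$.

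The concentration step you flag as the main obstacle needs neither the exponential representation nor conditioning on the spacings. The witnessed sizes are negatively associated unconditionally (Lemma~\ref{lem:beta-bin-fringe}(2)), and each exceeds $n/(64t)$ with probability at least $1/2$ by \eqref{eq:fringe-ub}. The Chernoff bound for negatively associated indicators then gives $e^{-t/8}$ directly.
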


    \begin{proof}
    As $\phi_n^\ssup{2}(u)\ge \phi_n(u)^2$ by \eqref{eq:NNBJordanDef}, the first inequality in \eqref{eq:S over 64v} follows. We turn to the inequality on the second line in \eqref{eq:S over 64v}.

    It suffices to consider the case when $t \in \N$, which we will assume for the rest of the proof. Since $t\le n/16$, we have $\left\{ 8t , \ldots, 16t \right\} \subset [n]$.
    For a vertex $u \in \left\{ 8t , \ldots, 16t \right\}$ with $\mathrm{Fr}_n(u) \leq \frac{n}{2}$ we have $\mathrm{Fr}_n^{>16t}(u) \leq \mathrm{Fr}_n(u) = \phi_n(u)$. Hence, 

    \begin{align}
         \notag\p&\left(\Big|\left\{u \in [n] : \phi_n(u) >  \frac{n}{64t} \right\}\Big|\le t\right)\\
         &\notag\hspace{2cm}\leq \p\left(\Big|\left\{u \in  \left\{ 8t , \ldots, 16t \right\} : \phi_n(u) > \frac{n}{64t}\right\}\Big|\le t\right) \\
        &\hspace{2cm} \notag
        \le  \p\left(\exists u\in  \left\{ 8t , \ldots, 16t \right\} : \mathrm{Fr}_n(u)\geq \frac{n}{2}\right) 
        \\
        & \notag\hspace{2cm}
        \hspace{1.2cm}
        +  \p\left(\Big|\left\{u\in  \left\{ 8t , \ldots, 16t \right\} : \mathrm{Fr}_n(u) > \frac{n}{64t}\right\}\Big|\le t\right) \\
        & \label{eq:union bound term 1}
        \hspace{2cm}\le  \p\left(\exists u\in  \left\{ 8t , \ldots, 16t \right\} : \mathrm{Fr}_n(u)\geq \frac{n}{2}\right) 
        \\
        & \label{eq:union bound term 2}
        \hspace{2cm}\hspace{1.2cm}
        +  \p\left(\Big|\left\{u\in  \left\{ 8t, \ldots, 16t \right\} : \mathrm{Fr}_n^{>16t}(u) > \frac{n}{64t}\right\}\Big|\le t\right),
    \end{align}
    where $\mathrm{Fr}_n^{>16t}(u)$ denotes the fringe tree witnessed after time $16t$, see Definition \ref{def:fringe}.
    The first summand in \eqref{eq:union bound term 1} can be bounded by a union bound and Lemma \ref{lem:proba_large_fringe}:
    \begin{multline}
        \p\left(\exists u\in  \left\{ 8t , \ldots, 16t \right\} : \mathrm{Fr}_n(u)\geq \frac{n}{2}\right) 
        \leq \sum_{u =  8t  }^{16t} \p \left( \mathrm{Fr}_n(u)\geq \frac{n}{2} \right)
        \\ \label{large fringe}
        =
        \sum_{u =  8t  }^{16t} \p \left( \mathrm{Fr}_n(u)\geq \frac{u}{2} \frac{n}{u} \right)
        \overset{\eqref{eq:proba_fringe-large}}{\leq} \sum_{u =  8t  }^{16t} 2 e^2 \exp\left( - \frac{u}{2} \right) \leq \frac{2 \exp \left( \frac{5}{2} - 4t \right) }{\sqrt{e}-1} \leq 38 \exp \left( -4t \right).
    \end{multline}
    For the second summand, \eqref{eq:union bound term 2}, observe that for each $u \in \left\{ 8t , \ldots , 16t \right\}$, we have
    \begin{align*}
        \p \left( \mathrm{Fr}_n^{>16t}(u) >\frac{n}{64 t} \right)
        = 
        1 -
        \p \left( \mathrm{Fr}_n \left( 16t \right) \leq  \frac{n}{4 \cdot 16t} \right)
        \overset{\eqref{eq:fringe-ub}}{\geq} 1 - \frac{2}{4} = \frac{1}{2}.
    \end{align*}
    Linearity of expectation implies that
    \begin{align*}
        \E \left[ \Big| \left\{ u \in \left\{ 8t , \ldots , 16t \right\} : \mathrm{Fr}_n^{>16t}(u) >  \frac{n}{64 t} \right\}\Big| \right] \geq 8t \frac{1}{2} = 4t.
    \end{align*}
    Since the terms $\mathrm{Fr}_n^{>16t}(u),  u \in \left\{ 8t, \ldots , 16t \right\}$, are negatively associated by Lemma \ref{lem:beta-bin-fringe}(2), we can apply the Chernoff bound for negatively associated Bernoulli random variables (see \cite[Theorem 4.5]{mitzenmacher2017probability} and \cite[Proposition 5]{dubhashi1996balls}) and get that 
    \begin{equation*}
        \p\left(\Big|\left\{u\in  \left\{ 8t , \ldots, 16t  \right\} : \mathrm{Fr}_n^{>16t}(u) >  \frac{n}{64t} \right\}\Big|\le t\right)
        \leq 
        \exp \left( - t/8 \right).\qedhere
    \end{equation*}
    \end{proof}

We are now ready to prove Lemma \ref{lem:Jordan2_upper_tail_lower_bound}.

\begin{proof}[Proof of Lemma \ref{lem:Jordan2_upper_tail_lower_bound}]
    Let $\eps>0$. By \eqref{eq:lem2.1 lower bound} there exists a constant $c^\star=c^\star(\eps)$ such that
    \begin{align*}
        \p \left( \phi_n^{\ssup{2}}(v)
    \leq
    \left( \frac{n}{t v} \right)^2 \right) \geq c^\star \frac{\log t}{t^2}
    \end{align*}
    for all $t \in \big( \sqrt{2}, \tfrac{n}{v \sqrt{3}} \wedge (n/v)^{1-\eps}\big)$. 
    Let $n\ge3$, $v\in[n/4]$ and $S\in\big[1, \tfrac{1}{1024\sqrt{3}} (n/v)^{1-\eps} \big]$. We start by noting that to have $\hs_2(v)\ge Sv$, it suffices that at least $Sv$ vertices have Jordan-$2$ centrality larger than $\phi_n^{(2)}(v)$. So,
    \begin{align}
        \p\left(\hs_2(v) \ge Sv\right)
        &\notag \geq 
        \p\left(\big|\big\{u\in [n]: \phi_n^\ssup{2}(u) >\phi^\ssup{2}_n(v)\big\}\big|\ge Sv\right) \\
        &\notag \geq
        \p\left(\left\{\phi_n^\ssup{2}(v)\leq \left(\tfrac{1}{64S}\tfrac{n}{v}\right)^2\right\} 
        \cap
        \left\{\big|\big\{u\in [n]: \phi_n^\ssup{2}(u) >\left(\tfrac{1}{64S}\tfrac{n}{v}\right)^2\big\}\big|\ge Sv\right\}\right)\\
        & \label{eq:Jordan2_upper_tail_LB_decompo} \geq \p\left(\phi_n^\ssup{2}(v)\leq \left(\tfrac{1}{64S}\tfrac{n}{v}\right)^2\right) 
        -
        \p\left(\big|\big\{u\in [n]: \phi_n^\ssup{2}(u) > \left(\tfrac{1}{64S}\tfrac{n}{v}\right)^2 \big\}\big|< Sv\right).
    \end{align}
    The first term of \eqref{eq:Jordan2_upper_tail_LB_decompo} is lower bounded using inequality \eqref{eq:lem2.1 lower bound} from Lemma \ref{lem:boosted small} evaluated at $t=64S$. The second term is bounded from above by Lemma \ref{claim:S over 64v} $t=Sv$ (our assumed upper bound on $S$ implies that  $t=Sv\le n/16$). As a result, there exist constants $C,c>0$ such that 
\begin{equation}\nonumber
    \begin{aligned}
        \p\left(\hs_2(v) \ge Sv\right)
        & \geq c^\star \frac{1+\log S}{S^2} - C\exp\big(-vS/8\big) \geq c \frac{1+\log S}{S^2},
    \end{aligned}
    \end{equation}
    where the last inequality holds for some constant $c>0$ small enough and all $S\geq M$, where $M$ is a large enough constant. Let $\delta = \tfrac{1}{M 1024 \sqrt{3}}$. This shows inequality \eqref{eq:lem2.15 (1)} for $M \leq S \leq \tfrac{1}{1024 \sqrt{3}} (n/v)^{1-\eps}$. When $1 \leq S \leq M$, then there is nothing to show when $\delta (n/v)^{1-\eps} < 1$. But when $1\leq \delta (n/v)^{1-\eps}$, we have that
    \begin{align*}
        \p\left(\hs_2(v) \ge Sv\right)
        \geq
        \p\left(\hs_2(v) \ge M v \right) \geq c \frac{1+\log M}{M^2} > 0,
    \end{align*}
    since $M \leq \tfrac{1}{1024 \sqrt{3}} (n/v)^{1-\eps}$. This shows \eqref{eq:lem2.15 (1)}, by changing the constant $c$.
    
    The proof of \eqref{eq:lem2.15 (2)} follows analogously, since
    \begin{align*}
        \p\left(\hs_2(v) \ge Sv\right)
        &\notag \geq 
        \p\left(\big|\big\{u\in [n]: \phi_n^\ssup{2}(u) >\phi^\ssup{2}_n(v)\big\}\big|\ge Sv\right) \\
        &\notag \geq \p\left(\phi_n^\ssup{2}(v)\leq \left(\tfrac{1}{64S}\tfrac{n}{v}\right)^2\right) 
        -
        \p\left(\big|\big\{u\in [n]: \phi_n^\ssup{2}(u) > \left(\tfrac{1}{64S}\tfrac{n}{v}\right)^2 \big\}\big|< Sv\right)
        \\
        &
        \overset{\eqref{eq:lem2.1 lower bound version2}}{\geq} c \frac{1}{(64S)^2} - C \exp \left( -vS/8 \right) \geq c^\prime \frac{1}{S^2},
    \end{align*}
    where $c$ is the constant from inequality \eqref{eq:lem2.1 lower bound version2}. The second-to-last inequality holds for $1 \leq 64 S \leq \tfrac{1}{\sqrt{3}}(n/v)$, and the last inequality holds for all $S\geq S_0$, where $S_0$ is a large enough constant. \qedhere 
\end{proof}

\subsection{Lower tail}\label{sec:Lower_tail_UB-J2}
We turn our attention to the lower tail, which analyses the probability that the Jordan-2 ordering underestimates the arrival time of a vertex. As for the upper tail,  we split the event $\left\{ \hs_2(v)\leq \tfrac{1}{S}v \right\}$ into a local event on the Jordan-2 centrality of $v$ (analysed in Section \ref{subsec:jordan2c-upper}), and a global event, for which we have established the prerequisites along the way when controlling the upper tail. 
The following lemma constitutes the upper bound in \eqref{jordan2:underest_combined} in Theorem~\ref{thm:main}.

\begin{lemma}[Upper bound]\label{lem:Jordan2_lower_tail}
    For the Jordan-$2$ ordering $\hs_2$, there exists a positive constant $C$ such that for any $n\geq 1$, any vertex $v \in[n/4]$ and any $S\geq 1$, 

    \begin{equation}\label{eq:lem:jordan2lowertaillowerbound}
        \mathbb{P}\Big(\hs_2(v)\leq \frac{1}{S}v\Big) \leq  \frac{C}{S^2}.
    \end{equation}
\end{lemma}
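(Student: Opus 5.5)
Following the outline, I split $\{\hs_2(v)\le v/S\}$ into a local event on $\phi_n^\ssup{2}(v)$ and a global event on the whole tree. If $\hs_2(v)\le v/S$, then fewer than $v/S$ vertices have Jordan-2 centrality strictly larger than $\phi_n^\ssup{2}(v)$. Fix a small constant $\eta>0$ and the threshold $r:=\eta S^2(n/v)^2$. Then
\[
\p\Big(\hs_2(v)\le \tfrac{v}{S}\Big)\le \p\Big(\phi_n^\ssup{2}(v)\ge \eta S^2 (n/v)^2\Big)+\p\Big(\big|\{u\in[n]:\phi_n^\ssup{2}(u)>\eta S^2(n/v)^2\}\big|< v/S\Big).
\]
The first term is at most $C/(\eta S^2)$ by Lemma~\ref{lem:Fringe_fringe_parent_large} applied with $S^2\eta$ in place of $S$. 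This is the dominant $S^{-2}$ contribution.

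For the second term, set $t:=v/(64\sqrt{\eta}S)$. Then $\eta S^2(n/v)^2=(n/(64t))^2\cdot(64)^2\eta\cdot(\ldots)$, and I choose $\eta=64^{-2}$ so that $\eta S^2 (n/v)^2=(n/(64t'))^2$ with $t'=v/S$. Lemma~\ref{claim:S over 64v} with $t=v/S$ then applies, since $v/S\le v\le n/4$. It bounds the probability that at most $v/S$ vertices have centrality above this threshold by $C\exp(-v/(8S))$. Strictly fewer than $v/S$ is contained in this event, so the second term is at most $C\exp(-v/(8S))$.

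It remains to show $\exp(-v/(8S))\lesssim S^{-2}$. This holds when $S\le \sqrt{v}$, since then $v/S\ge S$ and $e^{-S/8}\le C/S^2$. The main obstacle is the range $S>\sqrt v$, where the global bound degenerates. Here I will argue directly. The event $\hs_2(v)\le v/S<\sqrt v$ forces $\phi_n^\ssup{2}(v)$ to exceed the centrality of all but fewer than $\sqrt v$ vertices. Lemma~\ref{claim:S over 64v} with $t=\lfloor\sqrt v\rfloor\wedge (v/S)$, or simply $t$ of order $1$, still shows that with probability $1-Ce^{-t/8}$ there are many vertices of centrality above $(n/(64t))^2$. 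So it suffices to bound $\p(\phi_n^\ssup{2}(v)\ge (n/(64t))^2)$ for $t$ of order $v/S$. By Lemma~\ref{lem:Fringe_fringe_parent_large} this is at most $C t^2/ n^2\cdot (n/v)^2\cdot$const $=C(v/S)^{-2}\cdot$... Rather than push the constant-$t$ route, I would instead use the monotonicity trick. For $S>\sqrt v$ and $v/S<1$, the event requires $\hs_2(v)<1$, which is empty. For $1\le v/S\le\sqrt v$, I would apply the bound at $S'=\sqrt v$: the probability is at most $\p(\hs_2(v)\le v/S')$, which by the above is at most $C/v+Ce^{-\sqrt v/8}$.

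This $C/v$ is not $\le C/S^2$ when $S^2>v$, so this case needs more. I would handle it by noting that $\hs_2(v)\le v/S$ with $v/S$ small requires $\phi_n^\ssup{2}(v)$ to be among the top $k:=\lfloor v/S\rfloor$ values. Lemma~\ref{claim:S over 64v} with $t=k$ and Lemma~\ref{lem:Fringe_fringe_parent_large} bound this by $C(k/v)^2\cdot 64^2+Ce^{-k/8}$. The first part is $\le C/S^2$. The residual $e^{-k/8}$ for bounded $k$ requires a separate estimate: $\p(\hs_2(v)\le k)\le \p(\phi_n^\ssup{2}(v)\ge \phi^\ssup{2}$ of a typical top vertex$)$. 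For this I expect to use that $\phi_n^\ssup{2}(v)\ge c n^2$ has probability $O(1/v^2)$. This follows by combining \eqref{eq:proba-parent-large} with the fact that both $\mathrm{Fr}_n(v)$ and $\mathrm{Fr}_n(\pa(v))$ must then be of order $n$. Hence this tail is $O(v^{-2})\le O(S^{-2})$, and verifying it is the step I expect to be hardest.
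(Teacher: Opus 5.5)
Your first step is exactly the paper's treatment of the range $S<v^{3/4}$. You split at the threshold $(Sn/(64v))^2$, bound the local term with Lemma~\ref{lem:Fringe_fringe_parent_large}, and bound the global term with Lemma~\ref{claim:S over 64v} at $t=v/S$. This gives $C/S^2+Ce^{-v/(8S)}$; you need $S\ge 4$ to ensure $t\le n/16$, which is harmless. The bound suffices as long as $e^{-v/(8S)}\lesssim S^{-2}$, i.e.\ up to $S$ of order $v/\log v$.

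The gap is the complementary range. There $k=v/S$ is at most of order $\log v$, not merely bounded, and you must show $\p(\hs_2(v)\le k)=O(v^{-2})$. Your proposed fix only handles the local half. The estimate $\p(\phi_n^{\ssup{2}}(v)\ge cn^2)=O(v^{-2})$ is true: it forces $\mathrm{Fr}_n(v)\ge cn$, which has probability at most $2e^2e^{-cv}$ by \eqref{eq:proba_fringe-large}. (By contrast, \eqref{eq:proba-parent-large} alone only gives $C/v$.)

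You also need the global half: outside an event of probability $O(v^{-2})$, at least $k$ vertices have centrality at least $cn^2$. For fixed $c$ and $k\ge 2$ this is false. Consider the event that every branch at the root has fewer than $cn$ vertices; its probability converges to a positive constant. On it, every $u\neq 1$ has $\phi_n(u)\le \mathrm{Fr}_n(u)<cn$, hence $\phi_n^{\ssup{2}}(u)<cn^2$.

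Nor can you rescue the first approach by taking a larger $t$ in Lemma~\ref{claim:S over 64v} while still using Lemma~\ref{lem:Fringe_fringe_parent_large} locally. You need $t\gtrsim \log v$ for $Ce^{-t/8}=O(v^{-2})$. But at threshold $(n/(64t))^2$, Lemma~\ref{lem:Fringe_fringe_parent_large} only yields $C(64t/v)^2$, which is not $O(S^{-2})$ when $S$ is close to $v$.

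The missing idea is to decouple the count from $v/S$ by monotonicity, and to use the exponential fringe tail rather than the $1/S$ tail for the local term. For $v^{3/4}\le S\le v$ (for $S>v$ the event is empty) one has $v/S\le v^{1/4}$, so
\[
\{\hs_2(v)\le v/S\}\subseteq \{\phi_n^{\ssup{2}}(v)\ge r\}\cup\{|\{u\in[n]:\phi_n^{\ssup{2}}(u)\ge r\}|\le v^{1/4}\},\qquad r=\Big(\frac{n}{64v^{1/4}}\Big)^2 .
\]
Lemma~\ref{claim:S over 64v} with $t=v^{1/4}$ bounds the second event by $Ce^{-v^{1/4}/8}$. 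For the first event, use $\phi_n^{\ssup{2}}(v)\le n\,\phi_n(v)\le n\,\mathrm{Fr}_n(v)$: it forces $\mathrm{Fr}_n(v)\ge n/(2^{12}\sqrt v)$, which has probability at most $2e^2e^{-\sqrt v/2^{12}}$ by \eqref{eq:proba_fringe-large}. Both bounds are $O(v^{-2})\le O(S^{-2})$ because $S\le v$. This is the paper's second case. Without it, or an equivalent global estimate with failure probability $O(v^{-2})$, your argument does not cover $S\gtrsim v/\log v$.
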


\begin{proof}
We give two (slightly) different proofs, depending whether $S < v^{3/4}$ or $S \geq v^{3/4}$.
We start the proof for the case where $ S < v^{3/4}$.
Without loss of generality, let $S$ be such that $\frac{v}{S} \in \N$, and assume that $S\ge 32$; inequality \eqref{eq:lem:jordan2lowertaillowerbound} clearly holds for $S\leq 32$ and $C$ large enough.
If $\hs_2(v)\leq \tfrac{1}{S} v$, there are no more than $\tfrac{1}{S}v$ vertices with Jordan-2 centrality \emph{strictly} larger than $\phi_n^\ssup{2}(v)$. So,

    \begin{align*}
        \p\Big(\hs_2(v)\le \tfrac{1}{S}v \Big) &
        \leq \p\Big(\big|\big\{u\in[n]: \phi_n^\ssup{2}(u) >
        \phi_n^\ssup{2}(v)\big\}\big|\le \tfrac{1}{S}v\Big)
        \\
        &
        \leq \p\Big(\big|\big\{u\in[n]: \phi_n^{\ssup{2}}(u) >
        \left( S\tfrac{n}{64v} \right)^2 \big\}\big|\le \tfrac{1}{S}v\Big) + \p\Big(\phi_n^\ssup{2}(v)\ge \big(S\tfrac{n}{64v}\big)^2\Big).
    \end{align*}
    The first probability on the right-hand side is bounded from above by Lemma \ref{claim:S over 64v} for $t=v/S$, which is at most $n/32$, by the assumption $S\ge 32$. The second probability is bounded from above using Lemma \ref{lem:Fringe_fringe_parent_large} with $S_{\ref{lem:Fringe_fringe_parent_large}}=(S/64)^2$. 
    Combined, we obtain that there exist constants $C^\prime, C$ so that
    \[
    \p\Big(\hs_2(v)\le \tfrac{1}{S}v \Big)
        \leq C^\prime \exp\Big(-\frac{v}{8S}\Big) + \frac{64^2C^\prime}{S^2} \leq 
        C^\prime \exp \Big(-\frac{S^{1/3}}{8}\Big) + \frac{64^2C^\prime}{S^2} \leq \frac{C}{S^2},
    \]
    where we used the assumption $S < v^{3/4}$ for the second inequality.

\smallskip
    Next, we consider the case where $v \geq S\geq v^{3/4}$. For the proof of \eqref{eq:lem:jordan2lowertaillowerbound}, it suffices to consider large values of $v$, say $v^{3/4} \geq 32$. Indeed, when $v^{3/4} \leq 32$, then \eqref{eq:lem:jordan2lowertaillowerbound} is trivially satisfied for $C\geq 32^{8/3}$, since the left-hand side of \eqref{eq:lem:jordan2lowertaillowerbound} equals $0$ for $S > 32^{4/3} \geq v$.
    Using the elementary inequality
    \begin{equation*}
        \phi_n^{\ssup{2}}(v) \leq n \phi_n(v) \leq n \mathrm{Fr}_n(v), 
    \end{equation*}
    we see that
    \begin{align*}
        \left\{ \hs_2(v) \leq \frac{v}{S} \right\}  & \subseteq \left\{ \phi_n^{\ssup{2}}(v) \geq \Big(\frac{n}{64v^{1/4}}\Big)^2 \right\} \cup \left\{ \Big| \Big\{ u \in [n] : \phi_n^{\ssup{2}}(u) \geq \Big(\frac{n}{64v^{1/4}}\Big)^2 \Big\} \Big| \leq \frac{v}{S} \right\}
        \\
        &
        \subseteq \left\{ \mathrm{Fr}_n(v) \geq \frac{n}{2^{12}\sqrt{v}} \right\} \cup \left\{ \Big| \Big\{ u \in [n] : \phi_n^{\ssup{2}}(u) \geq \Big(\frac{n}{64v^{1/4}}\Big)^2 \Big\} \Big| \leq v^{1/4} \right\} .
    \end{align*}
    To bound the probability of the first event on the right-hand side, we apply inequality \eqref{eq:proba_fringe-large} from Lemma \ref{lem:proba_large_fringe}. For the second event, we use Lemma \ref{claim:S over 64v} for $t=v^{1/4}$, which applies for $v^{1/4}\le n/16$, which holds since $v^{3/4}\ge 32$. We obtain
    \begin{align*}
        \p \left( \hs_2(v) \leq \frac{v}{S} \right)
        &
        \leq
        \p \left( \mathrm{Fr}_n(v) \geq \frac{\sqrt{v}}{2^{12}} \frac{n}{v} \right) + \p \left( \Big| \Big\{ u \in [n] : \phi_n^{\ssup{2}}(u) \geq \Big(\frac{n}{64v^{1/4}}\Big)^2 \Big\} \Big| \leq v^{1/4} \right)
        \\
        &
        \leq 2 e^2 \exp\left( - \frac{\sqrt{v}}{2^{12}} \right) + C \exp \left( - \frac{v}{8 v^{3/4}} \right) \leq \frac{C^\prime}{v^{2}} \leq \frac{C^\prime}{S^2},
    \end{align*}
    for some constant $C'$, where we used the assumption $S \leq v$ for the last inequality.
    
    Lastly, inequality \eqref{eq:lem:jordan2lowertaillowerbound} is clear for $S>v$, since the left-hand side of inequality \eqref{eq:lem:jordan2lowertaillowerbound} equals $0$ for $S>v$.
\end{proof}

\begin{remark}\label{rem:stretched-exp}
    The proof of \eqref{eq:lem:jordan2lowertaillowerbound} for the case $S \geq v^{3/4}$ actually shows that $\p\left( \hs_2(v) \leq v/S \right)$ is stretched-exponentially small in $S$. A slightly modified proof also shows this for any $S\geq v^{1/2+\eps}$, for any $\eps>0$. Thus, the restriction to $S \leq \delta \sqrt{v}$ in the lower bound in \eqref{jordan2:underest_combined} is essentially sharp.
\end{remark}

To conclude the analysis of the Jordan-2 ordering, we prove a lower bound on the lower tail establishing \eqref{jordan2:underest_combined}, complementing Lemma \ref{lem:Jordan2_lower_tail}.

\begin{lemma}[Lower bound]\label{lem:Jordan2_lower_tail_lower_bound}
        For the Jordan-$2$ ordering $\hs_2$, there exist positive constants $C$, $\delta$ such that for any $n\geq 1$, any vertex $v \in[n]$ and any $S\in[1, \delta\sqrt{v}]$ 

    \begin{equation}
        \mathbb{P}\Big(\hs_2(v)\leq \frac{1}{S}v\Big) \geq  \frac{C}{S^2}.
    \end{equation}
\end{lemma}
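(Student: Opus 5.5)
To prove the lemma, we mirror the lower bound for the upper tail in Lemma~\ref{lem:Jordan2_upper_tail_lower_bound}, but with the roles of the local and global events reversed. It suffices that $\phi_n^\ssup{2}(v)$ is large while few vertices exceed that level. Fix a constant $K$ (to be chosen large) and the threshold $\tau:=K S^2(n/v)^2$. If $\phi_n^\ssup{2}(v)\ge \tau$ and at most $v/S-1$ vertices $u\neq v$ satisfy $\phi_n^\ssup{2}(u)\ge \tau$, then $\hs_2(v)\le v/S$ regardless of tie-breaking. Hence
\[
\p\Big(\hs_2(v)\le \tfrac{v}{S}\Big)\ \ge\ \p\big(\phi_n^\ssup{2}(v)\ge \tau\big)\ -\ \p\Big(\big|\{u\in[n]\setminus\{1\}:\mathrm{Fr}_n(u)\mathrm{Fr}_n(\pa(u))\ge \tau\}\big|\ge \tfrac{v}{S}-2\Big),
\]
where we used $\phi_n^\ssup{2}(u)\le \mathrm{Fr}_n(u)\mathrm{Fr}_n(\pa(u))$ from \eqref{eq:upper-bound-jordan2}, and at most one further vertex (the root) is left uncontrolled.

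For the local term, we apply Lemma~\ref{lem:J2_LT_LB_main_term} with $S_{\ref{lem:J2_LT_LB_main_term}}=KS^2$. This gives $\p(\phi_n^\ssup{2}(v)\ge\tau)\ge \frac{1}{128KS^2}-\frac{C}{v}$. Since $S\le\delta\sqrt v$, we have $C/v\le C\delta^2/S^2$, so choosing $\delta$ small relative to $K$ makes this at least $\frac{1}{256KS^2}$. The case $S>v/16$ excluded in that lemma's proof is irrelevant here because $S\le \delta\sqrt v$. Small $v$ (say $v$ bounded) forces $S$ to be bounded, and then we only need a positive lower bound on $\p(\hs_2(v)\le v/S)$; this can be obtained in the same way, or absorbed by adjusting constants.

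For the global term, we apply Lemma~\ref{lem:mixed large} with $t=\sqrt{K}Sv/... $. Writing $\tau=(n/t)^2$ gives $t=v/(\sqrt K S)$. That lemma bounds by $\exp(-t^{1/4})$ the probability that more than $C_1 t = C_1 v/(\sqrt K S)$ vertices exceed the threshold. Choosing $K\ge 4C_1^2$ gives $C_1v/(\sqrt KS)\le v/(2S)\le v/S-2$ once $v/S\ge 4$, which holds since $v/S\ge \sqrt v/\delta$. The error term is then $\exp(-(v/(\sqrt K S))^{1/4})\le \exp(-c\,v^{1/8})$. Because $S^2\le\delta^2 v$, this is at most $c'/S^2$ with as small a constant as desired, for $v$ large.

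The main obstacle is the bookkeeping of the constants. $K$ must be fixed first, from $C_1$; then $\delta$ must be chosen to beat both $C/v$ and the stretched-exponential term relative to $\frac{1}{256KS^2}$; and small $v$ must be handled separately. A conceptual subtlety is that the lemma is stated for $v\in[n]$, while Lemma~\ref{lem:J2_LT_LB_main_term} holds for all $v\in[n]$, so no restriction $v\le n/4$ is needed. For $v$ close to $n$, $\tau$ may exceed the trivial bound $n^2$ only if $KS^2>v^2$, which is excluded by $S\le\delta\sqrt v$.
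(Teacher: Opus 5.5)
Your proposal is correct and follows essentially the paper's own proof. The paper uses the same decomposition with threshold $(2C^\star S n/v)^2$ (your $K=4(C^\star)^2$), bounds the local term via Lemma~\ref{lem:J2_LT_LB_main_term} and the global term via Lemma~\ref{lem:mixed large} with $t=v/(2C^\star S)$, and absorbs both $C/v$ and the stretched-exponential error into $c/S^2$ using $S\le\delta\sqrt v$. Your separate handling of small $v$ is unnecessary: for small $\delta$ the range $[1,\delta\sqrt v]$ is empty unless $v\ge\delta^{-2}$, so $v$ is automatically large.
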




\begin{proof}
    Let us fix some $n\geq1$, $v\in  [n]$ and $S\geq 1$. We start by noting that to have $\hs_2(v)\le \tfrac{1}{S}v$, it suffices that no more than $v/S$ vertices have Jordan-$2$ centrality \emph{at least} $\phi_n^{(2)}(v)$. So,

    $$\p\left(\hs_2(v)\le \frac{1}{S}v\right) 
        \geq
        \p\left(\big|\big\{u \in [n]: \phi_n^\ssup{2}(u) \ge\phi^\ssup{2}_n(v)\big\}\big|\le v/S\right).$$
    Moreover, for any constant positive constant $C^\star$
    \begin{equation}\label{eq:Jordan_2_LT_LB_decompo}
    \begin{aligned}
        \p\left(\hs_2(v)\le \frac{1}{S}v\right) 
        &\ge
        \p\left(\left\{\phi_n^\ssup{2}(v) \ge \left(2C^\star S\tfrac{n}{v}\right)^2\right\} \cap \Big\{ \big|\big\{u \in [n] : \phi^\ssup{2}_n(u) \ge\phi^\ssup{2}_n(v)\big\}\big|\le v/S \Big\}\right)\\
        &\ge
        \p\left(\phi_n^\ssup{2}(v) \ge \left(2C^\star S\tfrac{n}{v}\right)^2\right) - \p\left(\big|\big\{u\in [n]: \phi^\ssup{2}_n(u) \ge \left(2C^\star S\tfrac{n}{v}\right)^2\big\}\big|> \frac{1}{S}v\right).
    \end{aligned}
    \end{equation}
    Choose $C^\star \in (0,\infty)$ large enough, as in Lemma \ref{lem:mixed large}, so that
    \begin{equation}\label{eq:lem2.10 ref}
        \p\left(\big|\big\{u\in [n] \setminus {1}: \mathrm{Fr}_n(u)\mathrm{Fr}_n(\pa(u)) \ge \left(\tfrac{n}{t}\right)^2\big\}\big|> C^\star t \right) \leq \exp \left( - t^{1/4} \right)
    \end{equation}
    for all $t\geq 1$. Let $\delta \leq \frac{1}{2 C^\star} $, so that $v/S \geq 2$ and $\tfrac{v}{2 C^\star S} \geq 1$. Applying the previous inequality for $t=\tfrac{v}{2 C^\star S}$, we get that for $S \leq \delta v$,
    \begin{align*}
    &  \p\left(\big|\big\{u\in [n]: \phi^\ssup{2}_n(u) \ge \left(2 C^\star S\tfrac{n}{v}\right)^2\big\}\big|> \frac{1}{S}v\right)
    \\
    & \hspace{3cm}
        \leq 
        \p\left(\big|\big\{u\in [n] \setminus {1}: \mathrm{Fr}_n(u)\mathrm{Fr}_n(\pa(u)) \ge \left(2 C^\star S \tfrac{n}{v}\right)^2\big\}\big|> \frac{1}{S}v - 1\right)
        \\
        &\hspace{3cm}
        \leq \p\left(\big|\big\{u\in [n] \setminus {1}: \mathrm{Fr}_n(u)\mathrm{Fr}_n(\pa(u)) \ge \left(2 C^\star S\tfrac{n}{v}\right)^2\big\}\big|> C^\star \frac{v}{2C^\star S} \right)\\ &
        \hspace{3cm}
        \leq \exp \left( - \left( \tfrac{v}{2 C^\star S} \right)^{1/4} \right) ,
    \end{align*}
    where the first inequality holds since $\phi_n^\ssup{2}(u)\leq \mathrm{Fr}_n(u)\mathrm{Fr}_n(\pa(u))$.
    Plugging the above into \eqref{eq:Jordan_2_LT_LB_decompo} and using Lemma \ref{lem:J2_LT_LB_main_term} at $(2C^\star S)^2$,
    \begin{equation*}
    \begin{aligned}
        \p\left(\hs_2(v)\le \frac{1}{S}v\right)
        &\geq
        \frac{1}{128\cdot (2C^\star S)^2}  - \frac{C}{v} - \exp \left( - \left( \tfrac{v}{2C^\star S} \right)^{1/4} \right)\geq \frac{c'}{S^2}
    \end{aligned}
    \end{equation*}
    for some constants $C>0$ and $c^\prime >0$ and all $S \in \left[1, \delta \sqrt{v} \right]$, for a sufficiently small constant $\delta>0$.
\end{proof}

\subsection{Proof of Theorem \ref{thm:main}}

\begin{proof}
    The proof of the theorem follows immediately from combining Lemmas \ref{lem:Jordan2_uppertail_UB}, 
\ref{lem:Jordan2_upper_tail_lower_bound},
 \ref{lem:Jordan2_lower_tail}, and \ref{lem:Jordan2_lower_tail_lower_bound}.
\end{proof}

\section{Performance of the Jordan ordering}\label{sec:Jordan}
The proof of Theorem~\ref{thm:jordan} follows the same philosophy as for Jordan-2, outlined at the beginning of Section \ref{sec:NBjordan}. The auxiliary lemmas developed in Section \ref{sec:NBjordan} allow us to carry out the analysis with substantially less work.  In Section \ref{sec:jordanuppertails}, we analyse the upper tail, i.e., the probability of the event $\{\hs_J(v) \geq Sv \}$, proving~\eqref{jordan:overest_combined}. In Section \ref{sec:jordanlowertails}, we analyse the lower tail, i.e., the probability of the event $\{\hs_J(v) \leq v/S \}$, proving  \eqref{jordan:underest_combined}.

\subsection{Upper tail}\label{sec:jordanuppertails}

In this section, we study the probability of the event $\left\{ \hs_J(v) \geq S v \right\}$ for large values of $S$. Lemma~\ref{lem:JordanLowerBound} gives a lower bound on this probability, whereas Lemma \ref{lem:JordanUpperBound} gives an upper bound. Together, these two lemmas directly imply the  two inequalities in \eqref{jordan:overest_combined} in Theorem \ref{thm:jordan}.

\begin{lemma}[Jordan ordering: lower bound for the upper tail]\label{lem:JordanLowerBound}
    Consider the Jordan ordering $\hs_J$. 
    There exists positive constants $c$ and $\delta$ such that for all $n\ge1$, any  vertex $v\in [n]$, and any $S\in[1, \delta n/v]$, 
    \begin{equation}\label{eq:jordan lower bound}
        \p \left( \hs_{J}(v) \geq S v \right) \geq \frac{c}{S}.
    \end{equation}
\end{lemma}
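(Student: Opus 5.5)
We mirror the proof of Lemma~\ref{lem:Jordan2_upper_tail_lower_bound} and split into a local and a global event. To have $\hs_J(v)\ge Sv$ it suffices that at least $Sv$ vertices have Jordan centrality strictly larger than $\phi_n(v)$. Hence
\[
\p\big(\hs_J(v)\ge Sv\big)\ \ge\ \p\Big(\phi_n(v)\le \tfrac{n}{64Sv}\Big)\ -\ \p\Big(\big|\big\{u\in[n]:\phi_n(u)>\tfrac{n}{64Sv}\big\}\big|< Sv\Big).
\]
The second probability is controlled by the second line of \eqref{eq:S over 64v} in Lemma~\ref{claim:S over 64v}, applied with $t=Sv$. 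This is allowed as long as $Sv\le n/16$, which holds for $S\le\delta n/v$ with $\delta\le 1/16$, and it gives the bound $C\exp(-Sv/8)\le C\exp(-S/8)$.

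For the local term we use $\phi_n(v)\le\mathrm{Fr}_n(v)$ from \eqref{eq:jordan-upper}, which holds for $v\ge2$. Therefore
\[
\p\big(\phi_n(v)\le \tfrac{n}{64Sv}\big)\ \ge\ \p\big(\mathrm{Fr}_n(v)\le \tfrac{n}{Mv}\big), \qquad M:=64S .
\]
By \eqref{eq:fringe-lb} in Lemma~\ref{lem:beta-bin-fringe}, with $m=v$ and $M=64S\ge4$, this is at least $1/(8M)=1/(512S)$. The condition there is $v\in[2,n/M]$, i.e.\ $S\le n/(64v)$, which is again ensured by taking $\delta\le 1/64$. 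Combining the two bounds gives $\p(\hs_J(v)\ge Sv)\ge \frac{1}{512S}-C e^{-S/8}\ge \frac{c}{S}$ for all $S\ge S_0$, with $S_0$ a large constant.

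It remains to handle small $S$ and the vertex $v=1$.
\begin{itemize}
\item For $1\le S\le S_0$ we use monotonicity as in Lemma~\ref{lem:Jordan2_upper_tail_lower_bound}: $\p(\hs_J(v)\ge Sv)\ge\p(\hs_J(v)\ge S_0v)\ge c/S_0$. This is valid provided $S_0\le\delta' n/v$, and we shrink $\delta$ by the factor $S_0$ so that the range $S\le \delta n/v$ forces this whenever it is nonempty.
\item For $v=1$, the bound $\phi_n(1)\le \mathrm{Fr}_n(1)$ is useless. Instead we use that $\phi_n(1)$ and $\phi_n(2)$ have the same law: the unlabeled tree $T_2$ is symmetric, so $1$ and $2$ are exchangeable for any label-invariant statistic. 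Hence $\hs_J(1)\overset{\mathcal L}{=}\hs_J(2)$, and $\p(\hs_J(1)\ge S)=\p(\hs_J(2)\ge 2\cdot S/2)\ge c/S$ after adjusting constants.
\item Values $v\ge2$ close to $n$ are covered automatically by the restriction $S\le\delta n/v$.
\end{itemize}

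The main obstacle is bookkeeping the parameter ranges so that both Lemma~\ref{claim:S over 64v} and \eqref{eq:fringe-lb} apply simultaneously. In particular, \eqref{eq:fringe-lb} requires $v\ge 2$ and $M\ge 4$, which is why the cases $v=1$ and small $S$ must be treated separately. A further point to check is that the global error term $Ce^{-Sv/8}$ is dominated by $1/(512S)$ uniformly in $v$; this holds because it is bounded by $Ce^{-S/8}$.
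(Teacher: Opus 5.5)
Your proposal is correct and follows essentially the same argument as the paper. It uses the same local/global split at the threshold $n/(64Sv)$, with \eqref{eq:fringe-lb} giving $1/(512S)$ and Lemma~\ref{claim:S over 64v} at $t=Sv$ controlling the global event. It then handles small $S$ by monotonicity (shrinking $\delta$ by a factor of order $S_0$) and the case $v=1$ by the $1\leftrightarrow 2$ symmetry.
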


\begin{proof}
	We start by noting that it suffices to prove the result for $n \geq n_0$ and $S\geq S_0$ large enough. Let $v\in\{2,\ldots,n\}$ and $S \in[1, \frac{n}{64v}]$. Define the two events 
	\begin{align*}
		\cA & = \left\{ \mathrm{Fr}_{n}(v) \leq \frac{n}{64 Sv} \right\}, \\
		\cB & =  
		\left\{ \left| \left\{ u \in [n] : \phi_n(u) > \frac{n}{64Sv} \right\} \right| > Sv \right\} .
	\end{align*}
	On the event $\cA$, the fringe-tree size of $v$ is smaller than $n/2$, so $\phi_n(v) = \mathrm{Fr}_n(v) \leq \frac{n}{64Sv}$. Moreover, on the event $\cB$, there are at least $Sv$ vertices $u$ with $\phi_n(u)> \frac{n}{64Sv}$. Thus, on the event $\cA \cap \cB$ one has that
	\begin{equation*}
		\left| \left\{ u\in [n] : \phi_{n}(u) > \phi_{n}(v) \right\} \right|\, > \,  Sv,
	\end{equation*}
	which implies that
	\begin{equation*}
		\p \left( \hs_J(v)\ge Sv \right) \ \geq\ \p \left( \cA \cap \cB \right) \ \ge\ \p \left( \cA \right) - \p \left( \cB^c \right).
	\end{equation*}
    To bound $\p(\cA)$, we apply the lower bound on the lower tail of the fringe-tree size in \eqref{eq:fringe-lb} in Lemma \ref{lem:beta-bin-fringe}(3), which requires $S \le n/(64v)$, which holds by our assumptions on $S$. We obtain
	
	\begin{equation*}
		\p \left( \cA \right) = \p\left(\mathrm{Fr}_n(v)\le \frac{n}{64 Sv}\right) \geq \frac{1}{8\cdot 64S}=\frac{1}{512 S}.
	\end{equation*}
	In order to bound $\p(\cB^c)$ from above, we use Lemma \ref{claim:S over 64v} and get that for $S\leq \frac{n}{16 v}$,
	\begin{align*}
		\p\left( \cB^c \right) = \p \left(\left|\left\{ u \in [n] : \phi_n(u) > \frac{n}{64Sv} \right\}\right| \leq Sv\right) \leq C \exp(-Sv/8) \leq C \exp(-S/8) ,
	\end{align*}
	where $C$ is the constant from inequality \eqref{eq:S over 64v}.
	Thus, we see that
	\begin{align*} 
		\p \left( \hs_{J}(v) \geq S v \right) \geq \p \left( \cA \right) - \p \left( \cB^c \right)
		\geq
		\frac{1}{512S} - C e^{-S/8} \geq \frac{c_\star}{S} ,
	\end{align*}
	for some constant $c_\star>0$, and all $S$ large enough, say $S\geq S_0 > 1$. It remains to prove the result for $1\leq S\leq S_0$. Let $\delta = \tfrac{1}{(S_0 +1) 64}$. For $v > \delta n$, there is nothing to show since $\delta \tfrac{n}{v} < 1$ and we only consider $1\leq S\leq \delta \tfrac{n}{v}$. If $\delta \tfrac{n}{v} \geq 1$ (and thus $S_0 \leq \tfrac{n}{64v})$, then for $v\in \{2,\ldots, \lfloor \delta n \rfloor \}$, and $1 \leq S \leq S_0$, we have
    \begin{equation*}
        \p\left( \hs_J(v) \geq Sv \right) \geq \p\left( \hs_J(v) \geq S_0 v \right) \geq \frac{c_\star}{S_0} \geq \frac{c_\star / S_0}{S},
    \end{equation*}
    showing inequality \eqref{eq:jordan lower bound} with $c=c_\star / S_0$ and $\delta= \tfrac{1}{(S_0+1) 64}$, for all $v \in \{2,\ldots,\lfloor \delta n \rfloor \}$. For $v=1$, observe that the vertices $1$ and $2$ are indistinguishable for any label-invariant estimator, so that for $1\leq S \leq \delta \frac{n}{v}$ we get that
    \begin{align*}
        \p\left( \hs_J(1) \geq S \cdot 1 \right) = \p\left( \hs_J(2) \geq S/2 \cdot 2 \right) \geq \frac{c}{S/2} \geq \frac{c}{S},
    \end{align*}
    where we applied inequality \eqref{eq:jordan lower bound}, which was previously established for $v=2$.
\end{proof}

We proceed to an upper bound on the upper tail for the Jordan ordering, for which we employ a similar strategy as in the proof of Theorem \ref{thm:main}, which gave an upper bound on the upper tail for the Jordan-2 ordering.

\begin{lemma}[Jordan ordering: upper bound for the upper tail]\label{lem:JordanUpperBound}
    For the Jordan ordering $\hs_J$, there exists a positive constant $C$ such that for any $n\ge 1$, any vertex $v\in[n]$, and any $S\ge 1$,
    \begin{equation}\label{eq:devi}
        \p \left( \hs_{J}(v)\geq S v \right) \leq \frac{C}{S}.
    \end{equation}
\end{lemma}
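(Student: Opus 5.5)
We follow the proof of Lemma~\ref{lem:Jordan2_uppertail_UB}, replacing Jordan-2 quantities by Jordan ones. If $\hs_J(v)\ge Sv$, then at least $Sv$ vertices (counting $v$ itself) have Jordan centrality at least $\phi_n(v)$. For a constant $C_1\ge 1$ to be chosen (the constant of Corollary~\ref{coro:fringe tree counts large}), we split
\[
\p\left(\hs_J(v)\ge Sv\right)\le \p\left(\phi_n(v)\le \frac{2C_1 n}{Sv}\right)+\p\left(\Big|\Big\{u\in[n]:\phi_n(u)>\frac{2C_1 n}{Sv}\Big\}\Big|> Sv-1\right).
\]
It suffices to treat $S\ge S_0$ for a large constant $S_0$; smaller $S$ are absorbed into $C$. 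The case $v=1$ reduces to $v=2$ by exchangeability of $1$ and $2$, as in the proof of Lemma~\ref{lem:JordanLowerBound}. So we may assume $v\ge 2$.

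\emph{Global term.} Since $\phi_n(u)\le \mathrm{Fr}_n(u)$ by \eqref{eq:jordan-upper} for $u\ge 2$, and $\phi_n(1)\le n$ contributes at most one vertex, the second event is contained in
\[
\Big\{\big|\{u\in[n]:\mathrm{Fr}_n(u)>n/t\}\big|> Sv-2\Big\},\qquad t=\frac{Sv}{2C_1}.
\]
For $S\ge S_0$ we have $Sv-2\ge C_1t$. Corollary~\ref{coro:fringe tree counts large} then bounds the probability by $\exp\big(1-(Sv/(2C_1))^{1/4}\big)\le \exp\big(1-(S/(2C_1))^{1/4}\big)$. This is $O(1/S)$ uniformly in $v$ and $n$.

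\emph{Local term.} This is the main term. By the first inequality of Lemma~\ref{lem:minima},
\[
\phi_n(v)\ge \min\left(\mathrm{Fr}_n^{>v}(v),\mathrm{Fr}_n^{>v}(1)\right).
\]
A union bound and the exchangeability in Lemma~\ref{lem:beta-bin-fringe}(1) give
\[
\p\left(\phi_n(v)\le \frac{2C_1n}{Sv}\right)\le 2\,\p\left(\mathrm{Fr}_n(v)\le \frac{n}{(S/(2C_1))v}\right).
\]
We then apply \eqref{eq:fringe-ub} with $M=S/(2C_1)$, which needs $M\ge1$, i.e. $S\ge 2C_1$. This yields the bound $8C_1/S$. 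Summing the two terms proves \eqref{eq:devi} for $S\ge S_0$, and hence for all $S\ge 1$ after enlarging $C$.

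The only delicate point is the bookkeeping in the global term: the threshold $n/t$ and the count $Sv-2$ must line up with Corollary~\ref{coro:fringe tree counts large}, including the off-by-one from the root and from $v$ itself. Taking $S_0$ large handles this. The local term is easier than in the Jordan-2 case, since no product of negatively associated variables (Claim~\ref{claim:negative_corelation}) is needed, which explains the $1/S$ rather than $(\log S)/S^2$ rate.
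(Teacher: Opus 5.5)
Your proposal is correct and follows the paper's proof essentially step for step. You use the same split into a local event $\{\phi_n(v)\le n/(Mv)\}$, bounded via Lemma~\ref{lem:minima}, exchangeability and \eqref{eq:fringe-ub}, and a global count, bounded via $\phi_n\le \mathrm{Fr}_n$ and Corollary~\ref{coro:fringe tree counts large} with $M$ proportional to $S$, plus the same reduction of $v=1$ to $v=2$; your bookkeeping of the off-by-one terms (the root, and strict versus non-strict thresholds) is, if anything, slightly more careful than the paper's.
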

\begin{proof}
     Fix some $n\in\N$, $v\in[n]$ and $S\ge 1$. Since $\hs_J$ is a permutation of $[n]$, if $S>n/v$ then the probability in \eqref{eq:devi} equals 0. Further, it suffices to consider $S$ large enough and $v \leq \frac{n}{2}$, by taking the constant $C$ in \eqref{eq:devi} large enough.
     
    We first assume $v\ge 2$. Note that if $\hs_J(v)\ge Sv$ then at least $Sv$ vertices have Jordan centrality at least $\phi_n(v)$. Thus, for some $M=M(S,v)>0$, we decompose
    $$
    \begin{aligned}
    \{\hs_J(v)\ge Sv\} &\subseteq \Big\{ \big|\big\{u \in [n] : \phi_{n}(u) \geq \phi_{n}(v)\big\} \big| \geq Sv \Big\} \\
    &\subseteq \left\{\phi_n(v) \le \frac{n}{Mv}\right\}\cup \left\{ \left|\left\{u \in [n] : \phi_{n}(u) \geq \frac{n}{Mv}\right\} \right| \geq Sv \right\}.
    \end{aligned}
    $$
    By Lemma \ref{lem:minima}, $\phi_n(v)\ge \min(\mathrm{Fr}_n^{>v}(v), \mathrm{Fr}_n^{>v}(1))$, which we use for the first event in the union. Further, by the exchangeability in Lemma \ref{lem:beta-bin-fringe}(1), $\mathrm{Fr}_n^{>v}(v)$ and $\mathrm{Fr}_n^{>v}(1)$ have the same distribution as $\mathrm{Fr}_n(v)$.
    For the second event we use the upper bound $\phi_n(u)\le \mathrm{Fr}_n(u)$, see \eqref{eq:jordan-upper}. 
    Therefore, 
    \[
    \begin{aligned}
    \p\left(\hs_J(v)\ge Sv\right)\ & 
    \leq
    \p \Big(\phi_n(v) \le \frac{n}{Mv}\Big) + \p\left(  \left|\left\{u \in [n] : \phi_{n}(u) \geq \frac{n}{Mv}\right\} \right| \geq Sv \right)\\
    &
    \le\  2\p\left(\mathrm{Fr}_n(v) \le \frac{n}{Mv}\right)+ \p\left( \left|\left\{u \in [n] : \mathrm{Fr}_n(u) \geq \frac{n}{Mv}\right\} \right| \geq Sv \right).
    \end{aligned}
    \]
    Inequality \eqref{eq:fringe-ub} from Lemma \ref{lem:beta-bin-fringe}(3) (which actually holds for all $M>0$) provides an upper bound for the first term, implying that 
    \[
    \p\left(\hs_J(v)\ge Sv\right) \le  \frac{4}{M}+ \p\left( \left|\left\{u \in [n] : \mathrm{Fr}_n(u) \geq \frac{n}{Mv}\right\} \right| \geq Sv \right).
    \]
    We bound the remaining term using Corollary \ref{coro:fringe tree counts large}. Let $C$ be the constant from Corollary~\ref{coro:fringe tree counts large}, let $\widetilde{C}>C$, and let $M\coloneqq \frac{S}{C}$. Then Corollary \ref{coro:fringe tree counts large} implies that 
    \begin{multline*}
    \p\left(\hs_J(v)\ge Sv\right) \le  \frac{4}{S/\widetilde{C}} + \p\bigg( \Big|\Big\{u \in [n] : \mathrm{Fr}_{n}(u) \geq \frac{n}{Sv/\widetilde{C}}\Big\} \Big| \geq \widetilde{C} \frac{Sv}{\widetilde{C}} \bigg)
    \\
    \le \frac{4 \widetilde{C}}{S}  + \exp\left(-\widetilde{C}^{-1/4}\left(Sv\right)^{1/4}\right) \leq \frac{8 \widetilde{C}}{S},
    \end{multline*}
    where the last inequality holds for $S$ large enough. To prove \eqref{eq:devi} for $v=1$, note that vertices $1$ and $2$ are indistinguishable for any label-invariant estimator. Thus for $S$ large enough, 
    \[
    \p\left(\hs_J(1)\ge S\cdot 1\right) = \p\left(\hs_J(2)\ge (S/2)\cdot 2\right)\le \frac{8\widetilde{C}}{S/2}=\frac{16\widetilde{C}}{S},
    \]
    so that the result \eqref{eq:devi} follows by taking the constant $C$ in \eqref{eq:devi} large enough.
\end{proof}

\subsection{Lower tail}\label{sec:jordanlowertails}

In this section, we provide upper and lower bounds on  $\p\big( \hs_J(v)\leq v/S \big)$.

\begin{lemma}\label{lem:JordanLowerBoundUpper}
    For the Jordan ordering $\hs_J$, there exist positive constants $C$ and $c$ such that for any $n\geq 1$, any vertex $v\in [n]$ and any $S\geq 1$, 

    \begin{equation}
        \mathbb{P}\left(\hs_J(v)\leq \frac{v}{S}\right) \leq C \left( e^{- c v/S} +  e^{-S/64} \right).
    \end{equation}
\end{lemma}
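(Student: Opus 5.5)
We follow the local/global decomposition used for Lemma \ref{lem:Jordan2_lower_tail}, but now with the Jordan centrality. If $\hs_J(v)\le v/S$, then at most $v/S$ vertices have Jordan centrality strictly larger than $\phi_n(v)$. We may assume $S\ge 64$ and $v/S\ge 1$; otherwise the bound is trivial after enlarging $C$, since the left-hand side vanishes for $S>v$. Fix the threshold $\tfrac{Sn}{64v}$ and split:
\[
\p\Big(\hs_J(v)\le \tfrac{v}{S}\Big)\le \p\Big(\phi_n(v)\ge \tfrac{S}{64}\tfrac{n}{v}\Big)+\p\Big(\big|\big\{u\in[n]:\phi_n(u)>\tfrac{n}{64(v/S)}\big\}\big|\le \tfrac{v}{S}\Big).
\]

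\textbf{Local term.} For $v\ge 2$, use $\phi_n(v)\le \mathrm{Fr}_n(v)$ from \eqref{eq:jordan-upper}. Then \eqref{eq:proba_fringe-large} with $t=S/64$ gives
\[
\p\big(\phi_n(v)\ge \tfrac{S}{64}\tfrac{n}{v}\big)\le 2e^2 e^{-S/64}.
\]
This produces the term $e^{-S/64}$ in the statement. The case $v=1$ is trivial, because $\hs_J(1)\le 1/S<1$ is impossible for $S>1$.

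\textbf{Global term.} Apply the second inequality of Lemma \ref{claim:S over 64v} with $t=v/S$, which concerns $\phi_n$ directly. This requires $t\le n/16$, which holds since $S\ge 64$ gives $v/S\le n/64$. The lemma yields the bound $C\exp(-v/(8S))$, i.e.\ the term $e^{-cv/S}$ with $c=1/8$.

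\textbf{Main obstacle.} The one technical point is that Lemma \ref{claim:S over 64v} is stated for integer $t$ and needs $t\ge 1$. For non-integer $v/S$, replace $t$ by $\lfloor v/S\rfloor$. The event with the smaller threshold $\tfrac{n}{64\lfloor v/S\rfloor}\ge \tfrac{Sn}{64v}$ is monotone in the right direction: if at most $v/S$ vertices exceed the lower threshold $\tfrac{Sn}{64v}$, then at most $\lfloor v/S\rfloor$ exceed the higher one. So the same bound applies with $t=\lfloor v/S\rfloor$, at the cost of constants.

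Summing the two terms gives the claim, after enlarging $C$ to absorb the small cases $S<64$ and $v/S<1$.
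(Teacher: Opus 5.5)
Your proposal is correct and follows essentially the same route as the paper: it uses the same split at the threshold $\tfrac{Sn}{64v}$, bounds the local term via $\phi_n(v)\le \mathrm{Fr}_n(v)$ and \eqref{eq:proba_fringe-large}, and bounds the global term via the second inequality of Lemma \ref{claim:S over 64v} with $t=v/S$. The differences are cosmetic: you take $S\ge 64$ instead of the paper's $S\ge 32$, and you carry out the reduction to integer $v/S$ explicitly via $\lfloor v/S\rfloor$, which the paper simply assumes without loss of generality.
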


\begin{proof}
The statement is clear for $4v \leq S$, since the relevant probability equals $0$. Further, it suffices to consider the case when $v/S \in \N$, which we will assume for the rest of the proof. Lastly, the statement is clear for $S\in[1,32]$ by taking $C$ large enough.
    So let $n\in \N$, $v\in \{1,\ldots,n\}$ and $4v >S\ge 32$. First, note that if $\hs_J(v)\le v/S$ then no more than $v/S$ vertices have Jordan centrality exceeding $\phi_n(v)$. Hence

    \begin{align*}
        \p\left(\hs_J(v)\le \frac{1}{S}v\right) 
        &\leq \p\left(\big|\big\{u\in [n]: \phi_n(u) >\phi_n(v)\big\}\big|\le \frac{v}{S}\right) \\
        &\le \p\left(\left|\left\{u : \phi_n(u) > S \frac{n}{64v} \right\}\right|\le \frac{v}{S}\right) + \p \left( \phi_n(v) \geq S \frac{n}{64 v}  \right) .
    \end{align*}
    The first summand can be bounded using Lemma \ref{claim:S over 64v} for $t=v/S$, which applies as $v/S\le n/16$ because $S\ge 32$. For the second summand we use that $\phi_n(v)\le \mathrm{Fr}_n(v)$, so that
    \begin{equation}\nonumber
        \p\left(\phi_n(v) \ge S \frac{n}{64v}\right) \leq \p\left(\mathrm{Fr}_n(v)\ge \frac{S}{64} \frac{n}{v}\right) \le  C^\prime \exp\left(-\frac{S}{64} \right)
    \end{equation}
    with $C^\prime = 2 e^2$, by inequality \eqref{eq:proba_fringe-large}. Thus, we get that
    \begin{equation*}
        \p\Big(\hs_J(v)\le \frac{1}{S}v\Big) 
        \le C \exp \Big( - \frac{v}{8S} \Big)+ C^\prime \exp\Big(-\frac{S}{64} \Big). \qedhere
    \end{equation*}
\end{proof}

\begin{lemma}\label{lem:JordanLowerBoundLower}
    For the Jordan ordering $\hs_J$, there exist positive constants $C, c^\prime$ and $c$ such that for any $n\geq 1$, any vertex $v\in[n/2]$ and any $S\in[1,c^\prime v]$

    \begin{equation}\label{eq:Jordan lower bound lemm 3.4}
        \mathbb{P}\left(\hs_J(v)\leq \frac{v}{S}\right) \geq c e^{-CS}.
    \end{equation}
\end{lemma}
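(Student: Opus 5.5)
Mirroring the proof of Lemma~\ref{lem:Jordan2_lower_tail_lower_bound}, we split the event into a local event on $\phi_n(v)$ and a global event. For $\hs_J(v)\le v/S$ it suffices that at most $v/S$ vertices have Jordan centrality at least $\phi_n(v)$. For a constant $C^\star$ to be fixed, this gives
\[
\p\Big(\hs_J(v)\le \tfrac{v}{S}\Big)\ \ge\ \p\Big(\phi_n(v)\ge C^\star S\tfrac{n}{v}\Big)-\p\Big(\big|\big\{u\in[n]:\phi_n(u)\ge C^\star S\tfrac{n}{v}\big\}\big|>\tfrac{v}{S}\Big).
\]
The global term is controlled by $\phi_n(u)\le \mathrm{Fr}_n(u)$ and Corollary~\ref{coro:fringe tree counts large}. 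Take $t=v/(C^\star S)$ and choose $C^\star$ at least the constant $C$ of that corollary. Then $Ct\le v/S$, so the global probability is at most $\exp(1-(v/(C^\star S))^{1/4})$.

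For the local term, note that on $\{\mathrm{Fr}_n(v)\le n/2\}$ we have $\phi_n(v)=\mathrm{Fr}_n(v)$. Hence
\[
\p\Big(\phi_n(v)\ge C^\star S\tfrac{n}{v}\Big)\ \ge\ \p\Big(C^\star S\tfrac{n}{v}<\mathrm{Fr}_n(v)\le \tfrac n2\Big).
\]
Inequality~\eqref{eq:proba-parent-large-lower} with $t=C^\star S$ (requiring $C^\star S\le v/8$, and $v\le n/2$) bounds $\p(\mathrm{Fr}_n(v)>C^\star S n/v)$ from below by $\tfrac12 e^{-8C^\star S}$. We must subtract $\p(\mathrm{Fr}_n(v)>n/2)\le 2e^2e^{-v/2}$, which follows from \eqref{eq:proba_fringe-large}. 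This costs little because $S\le c'v$, so $v/2\gg 8C^\star S$ once $c'$ is small.

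Combining the two terms,
\[
\p\Big(\hs_J(v)\le \tfrac vS\Big)\ \ge\ \tfrac12 e^{-8C^\star S}-2e^2e^{-v/2}-e\cdot e^{-(v/(C^\star S))^{1/4}}.
\]
\textbf{Main obstacle.} The last error term need not be small relative to $e^{-8C^\star S}$ when $S$ is comparable to $v$: then $v/S$ is of constant order and $(v/(C^\star S))^{1/4}$ is far smaller than $S$. Two fixes are available.

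First, in the regime $S\le c'\sqrt v$ the error is $\exp(-(\sqrt v)^{1/4}/C)$, which is still not below $e^{-CS}$ for all such $S$, so the global bound itself must be improved. Since the statement allows any constants $C$, the cleaner route is a direct construction of the global event. We ask that the $\lfloor v/S\rfloor$ earliest vertices absorb most of the tree (this has probability at least $e^{-O(v/S)}$, which is harmless), while every other vertex apart from $v$ has small fringe tree.

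Second, and more concretely, I would try the following. Condition on the event that $v$ attaches to a vertex among the first few, and that during times $v,\dots,2v$ the fringe tree of $v$ grabs a constant fraction of new arrivals; this has probability $e^{-O(S)}$ when the target is fringe size $\asymp Sn/v$. On this event, the vertices after $v/S$ have fringe trees of size at most of order $n/(\text{index})$. By a Chernoff/negative-association argument as in Lemma~\ref{claim:S over 64v}, only $O(v/S)$ vertices exceed $Sn/v$, with failure probability $e^{-cv/S}$. This is at most $e^{-cS}$-negligible only when $S\lesssim\sqrt v$, and I would adjust $c'$ or the exponents to make this consistent.
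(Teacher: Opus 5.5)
Your first step is the paper's argument for small $S$. It uses the same subtractive split and the same inputs \eqref{eq:proba-parent-large-lower}, \eqref{eq:proba_fringe-large} and Corollary~\ref{coro:fringe tree counts large}. It closes for $S\le v^{1/10}$, because then $(v/(C^\star S))^{1/4}\ge S^{9/4}(C^\star)^{-1/4}$ dominates $8C^\star S$ (for bounded $S$, use $v\ge S/c'$).

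The gap is the whole range above that, up to $S=c'v$, and neither of your fixes covers it.

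\begin{itemize}
\item Your second fix relies on a global tail bound $e^{-cv/S}$ that you do not have. Lemma~\ref{claim:S over 64v} bounds the probability of too \emph{few} vertices with large centrality, not too many, and Corollary~\ref{coro:fringe tree counts large} only gives $e^{1-t^{1/4}}$.
\item Its local step is also wrong: capturing a constant fraction of the arrivals in $[v,2v]$ makes $\mathrm{Fr}_n(v)$ of order $n$, at cost $e^{-\Theta(v)}$, not $e^{-O(S)}$.
\item Even granting $e^{-cv/S}$, it is negligible against $e^{-CS}$ only for $S\ll\sqrt v$. For larger $S$ the global event involves only about $v/S$ vertices, so you cannot expect its failure probability to fall below $e^{-CS}$. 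Any bound of the form ``local minus global'' is then hopeless.
\item Your first fix is not an argument, and its claim that ``$e^{-O(v/S)}$ is harmless'' again needs $S\gtrsim\sqrt v$. Moreover, requiring every vertex other than $v$ to have a small fringe tree is impossible for the ancestors of $v$, whose fringe trees contain that of $v$.
\end{itemize}

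The missing idea is to replace the difference by a product in the regime $v^{1/10}<S\le c'v$, which is what the paper does.
\begin{itemize}
\item Fix $k\in[2RSn/v,n/2]$ and work on $\{\pa(v)=1,\ \mathrm{Fr}_n(v)=k\}$. There $\phi_n(v)=k$, the root is the only ancestor of $v$, and conditionally $T_n\setminus(T_n,1)_{v\downarrow}$ is a random recursive tree of size $n-k$.
\item Markov's inequality, together with $\sum_u\p(\mathrm{Fr}_n(u)\ge k)=O(n/k)$ from \eqref{eq:proba_fringe-large}, then shows: with conditional probability at least $1/2$, at most $Rn/k\le v/(2S)$ vertices outside the fringe tree of $v$ have fringe size at least $k$. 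This forces $\hs_J(v)\le v/S$.
\item Summing over $k$ gives $\p(\hs_J(v)\le v/S)\ge \frac{1}{2(v-1)}\p\big(\mathrm{Fr}_n(v)\in[2RSn/v,n/2]\big)$.
\item The price $1/(v-1)$ of forcing $\pa(v)=1$ is at least $e^{-10S}$ precisely because $S>v^{1/10}$.
\end{itemize}
The two regimes overlap and together cover $[1,c'v]$. Your sketch contains neither the conditioning on the parent of $v$ nor this conditional-independence and Markov step, so the large-$S$ range remains unproved.
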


\begin{proof}
    It suffices to consider the case where $v/S \in \N$, which we will assume for the rest of the proof. Moreover, we assume that $v\ge3$, as the range for $S$ can be made empty for $v\in\{1,2\}$ by choosing $c'$ small.
    We start with the case where $S \leq \sqrt[10]{v}$. 
    Let $M\ge 1$ be a constant such that for all $t>0$,
    \begin{equation}\label{corocoro}
        \p \left( \big| \big\{ v\in [n]  : \mathrm{Fr}_{n}(v)  > n/t \big\} \big| >  Mt \right) \leq \exp \left( 1-t^{1/4} \right).
    \end{equation}
    Such a constant exists by Corollary \ref{coro:fringe tree counts large}.
    The most likely way to underestimate the arrival time of $v$ is when its fringe tree is large. Similar to the proof of Lemma \ref{lem:Jordan2_lower_tail_lower_bound}, we bound
  
    \begin{align}
       \notag \mathbb{P}\left(\hs_J(v)\leq \frac{v}{S}\right) &\geq 
\p\left(\big|\big\{u\in [n]: \phi_n(u) \ge\phi_n(v)\big\}\big|\le \frac{v}{S} \right) \\
         \notag&\ge 
        \p\left( \Big\{\mathrm{Fr}_n(v)\in \left[MS\frac{n}v, \frac{n}{2}\right]\Big\} \cap \Big\{\big|\big\{u\in [n]: \phi_n(u) \ge MS\frac{n}{v}\big\}\big|\le v/S\Big\}  \right) \\
        &\ge \p\left(\mathrm{Fr}_n(v)\ge MS\frac{n}{v}\right) - \p\left(\mathrm{Fr}_n(v)\ge n/2\right) \label{eq:jordan-lower-lower1}\\&\hspace{3cm}- \p\left(\big|\big\{u\in [n]: \phi_n(u) \ge MS\frac{n}{v}\big\}\big|> v/S\right).\label{eq:jordan-lower-lower2}
        \end{align}
    To bound the probabilities in \eqref{eq:jordan-lower-lower1}, we employ \eqref{eq:proba-parent-large-lower} and \eqref{eq:proba_fringe-large} from Lemma \ref{lem:proba_large_fringe}, which imply that
    \begin{align*}
        \p\left(\mathrm{Fr}_n(v)\ge MS\frac{n}{v}\right) \geq 
        \frac{1}{2} \exp \left(-8MS \right), \qquad \text{ and } \qquad
        \p\left(\mathrm{Fr}_n(v)\ge n/2\right) \leq 2 e^2 \exp \left( - \frac{v}{2} \right) .
    \end{align*}
    Further, the remaining probability on \eqref{eq:jordan-lower-lower2} is bounded from above by \eqref{corocoro} (see also Corollary~\ref{coro:fringe tree counts large}), using additionally that $\phi_n(v)\le \mathrm{Fr}_n(v)$:
    \begin{multline*}
        \p\left(\left|\left\{u\in [n]: \phi_n(u) \ge MS\frac{n}{v}\right\}\right|> \frac{v}{S}\right)
        \\
        \leq \p\left(\left|\left\{u\in [n]: \mathrm{Fr}_n(u) \ge \frac{n}{v/(MS)}\right\}\right|> M \frac{v}{MS}\right) 
        \leq \exp\left(1 - \left( \frac{v}{MS} \right)^{1/4} \right) .
    \end{multline*}
    Combining the above inequalities, we get that 
    \begin{equation*}
    \begin{aligned}
        \mathbb{P}\left(\hs_J(v)\leq \frac{v}{S}\right) 
        &\ge\frac{1}{2} \exp \left( -8MS \right)  - 2 e^2 \exp\left( - \frac{v}{2} \right)- \exp\left(1 - \left( \frac{v}{MS} \right)^{1/4} \right).
        \end{aligned}
    \end{equation*}
    Since we assumed that $S\leq \sqrt[10]{v}$, we see that there exist positive constants $c^\prime, c, C$ so that
    \begin{equation*}
        \mathbb{P}\left(\hs_J(v)\leq \frac{v}{S}\right) \geq c e^{-CS}.
    \end{equation*}
    for any $1 \leq S \leq c^\prime v \wedge \sqrt[10]{v}$.

\smallskip 
    Next, we consider the case where $\tfrac{v}{2} \geq S> \sqrt[10]{v}$. We intersect with the event that $\pa(v)=1$ and also condition on the size of the fringe tree of $v$. We first prove for each $k\in [2 RSn/v, n/2]$, where $R$ is a large constant, the inclusion 
    \begin{equation}\label{eq:inclusion}
        \Big\{\hs_J(v)\le \frac{v}S\Big\}\, \supseteq\,\left\{ \mathrm{pa}(v) = 1, \mathrm{Fr}_n(v) = k \right\} \cap \left\{ \left| \left\{u \notin (T_n,1)_{v \downarrow}: \mathrm{Fr}_n(u) \geq k \right\} \right| \leq  \frac{Rn}{k} \right\}.
    \end{equation} We start from the right-hand side.
    If the fringe tree of $v$ has size equal to $k$, then all its children have fringe tree size strictly smaller than $k$, allowing to change the counted set in the last event to include all $u\in[n]$ without changing the event. Since for $k\le n/2$ we have $k=\mathrm{Fr}_n(v)=\phi_n(v)$,  we have 
    \begin{align*}
    &\left\{ \mathrm{Fr}_n(v)=k, \left| \left\{u \notin (T_n,1)_{v \downarrow}: \mathrm{Fr}_n(u) \geq k \right\} \right| \leq  \frac{Rn}{k} \right\}
    \\
    &
    \hspace{4cm}
    \subseteq
    \left\{  \phi_n(v)=k, \left| \left\{u \in[n], \mathrm{Fr}_n(u) \geq \phi_n(v) \right\} \right| \leq  \frac{Rn}{k} +1 \right\}\\
    &
    \hspace{4cm}
    \subseteq 
    \left\{  \phi_n(v)=k, \left| \left\{u \in[n]: \phi_n(u) \geq \phi_n(v) \right\} \right| \leq  \frac{Rn}{k} +1 \right\},
    \end{align*}
    where we used that $\phi_n(u)\le \mathrm{Fr}_n(u)$ for the last inclusion. The inclusion \eqref{eq:inclusion} follows since $Rn/k +1 \le v/S$ for $k\in [2RSn/v, n/2]$ and $S \leq \tfrac{v}{2}$. Since the events on the right-hand side in \eqref{eq:inclusion} are disjoint in $k$, we obtain that
    \begin{align}
    \notag
    \p\big(\hs_J(v)\le v/S\big) \ge \sum_{k= \lceil 2RSn/v\rceil}^{n/2}\bigg( &\p \Big( \left| \left\{u \notin (T_n,1)_{v \downarrow}: \mathrm{Fr}_n(u) \geq k \right\} \right| \leq \frac{Rn}{k}\, \Big|\,  \mathrm{pa}(v) = 1, \mathrm{Fr}_n(v) = k \Big) \\ 
    & \label{align:three terms}
    \hspace{15pt}\cdot \p\big( \mathrm{pa}(v) = 1\big)\cdot\p\big( \mathrm{Fr}_n(v) = k \big)\bigg),
    \end{align}
    using that the size of the fringe tree of $v$ and the parent of $v$ are independent. Conditioned on the event $\left\{ \mathrm{pa}(v) = 1, \mathrm{Fr}_n(v) = k \right\}$, the tree $T_n \setminus (T_n,1)_{v \downarrow}$ is distributed like a uniform attachment tree of size $n-k$. By Markov's inequality, 
    \begin{align*}
       \p \Big( \Big| \Big\{u \notin (T_n,1)_{v \downarrow}&: \mathrm{Fr}_n(u) \geq k \Big\} \Big| \leq \frac{Rn}{k}\, \Big|\,  \mathrm{pa}(v) = 1, \mathrm{Fr}_n(v) = k \Big) \\
        &=
        1-\p \Big( \left| \left\{u\in[n-k]: \mathrm{Fr}_{n-k}(u) \ge k \right\} \right| > \frac{Rn}{k}\Big) \\
        &
        \geq 1 - \frac{k}{Rn} \E \left[ \left| \left\{u\in[n-k]: \mathrm{Fr}_{n-k}(u) \ge k \right\} \right| \right]
        \\
        &= 1- \frac{k}{Rn}\sum_{u\in[n-k]}\p\Big(\mathrm{Fr}_{n-k}(u)\ge k\Big) \\
        &\ge 1- \frac{k}{Rn}\sum_{u\in[n]}\p\Big(\mathrm{Fr}_{n}(u)\ge k\Big).
    \end{align*}
    By \eqref{eq:proba_fringe-large} in Lemma \ref{lem:proba_large_fringe} there exists an absolute constant $C>0$ such that the last sum in the above display is at most $Cn/k$. Fixing $R=2C$ implies that the probability is at least $1/2$. Using that $\p(\pa(v)=1)=1/(v-1)$, and inserting the previous inequalities into \eqref{align:three terms}, we see that
    \[
    \p\big(\hs_J(v)\le v/S\big) \ge \sum_{k= \lceil 2RSn/v\rceil}^{n/2} \frac{1}{2} \frac{1}{v-1} \p \left( \mathrm{Fr}_n(v)=k \right) = \frac{1}{2(v-1)}\p\Big(\mathrm{Fr}_n(v)\in \big[2RSn/v, n/2\big] \Big).
    \]
    Using \eqref{eq:proba-parent-large-lower} and \eqref{eq:proba_fringe-large} in Lemma \ref{lem:proba_large_fringe}, it follows that there exist constants $c, c'>0$ such that for all $S\in[\sqrt[10]{v}, c'v]$,
    \[
    \p\big(\hs_J(v)\le v/S\big) \ge \frac{1}{2(v-1)}\left( \frac{1}{2} \exp\left( -16 RS \right) -2 e^2 \exp \left( - \frac{v}{2} \right) \right) \geq c e^{-CS}. \qedhere
    \]
\end{proof}

\subsection{Proof of Theorem \ref{thm:jordan}}
\begin{proof}
    Lemmas \ref{lem:JordanLowerBound} and \ref{lem:JordanUpperBound} establish the upper tail \eqref{jordan:overest_combined}. Lemma \ref{lem:JordanLowerBoundLower} shows the first inequality in \eqref{jordan:underest_combined}. Lemma \ref{lem:JordanLowerBoundUpper} shows the second inequality in \eqref{jordan:underest_combined}, since $v/S\ge S$ for $S \leq \sqrt{v}$. Thus,
    \begin{equation*}
        \p\Big(\hs_J(v)\le \frac{1}{S}v\Big)\leq C \left( e^{- c v/S} +  e^{-S/64} \right) \leq C \left( e^{- c S} +  e^{-S/64} \right) .\qedhere
    \end{equation*}
\end{proof}

\section{Lower bounds on the performance of  arbitrary estimators}\label{sec:lower bounds}

This section proves Proposition \ref{prop:lower_bound}, providing lower bounds on 
\[
\p\left(|\hs(v, T_n)-v|\ge Sv\right)
\]
for arbitrary label-invariant arrival-time estimators $\hs$. We include $T_n$ as an argument of $\hs$ to stress that $\hs(\,\cdot\, , T_n)$ is a random permutation whose distribution depends on the realization of the random tree $T_n$.

The proof rests on isolating, for each vertex $v$, a subset $I_v(T_n)$ of vertices that are indistinguishable from $v$ given the information contained in the tree. Every label-invariant estimator has to assign to half of these vertices an estimated arrival time that is at a distance of at least $|I_v(T_n)|/4$ from $v$, and with probability $1/4$, $v$ is among those vertices. Our lower bound will follow by bounding the size of the set $I_v(T_n)$ from below. Next, we formalize this reasoning, starting with a definition of $I_v$.



Let $(\rt_n)_{n\ge1}$ be a deterministic sequence of recursive trees (i.e., with labels increasing along every non-backtracking path started at the root) such that for all $n\ge 1$, $\rt_n$ is the subtree of $\rt_{n+1}$ induced on the vertices with labels in $[n]$.
Recall that $\pa(i)$ is the parent of vertex $i$, fix some $v \in \{2,\ldots,n\}$ and define  the set
\begin{equation}
I_v(\rt_n):=\big\{ i\in [v/2,v]\ : \ i\text{ is a leaf in }\rt_v, \ \mathrm{pa}(i) < v/2\big\}.\label{eq:Iv}
\end{equation}

\noindent In words, $I_v(\rt_n)$ is the set of all vertices in $\rt_n$ which arrived between time $v/2$ and $v$, that are still leaves at time $v$, and which are children of vertices that arrived before time $v/2$. The set $I_v(\rt_n)$ satisfies $I_v(\rt_n) = I_v(\rt_v)$ for all $n\geq v$, and  contains $v$ if and only if $\pa(v) < v/2$. The key observation is that if $I_v(\rt_n)=\mathcal{I}$, then any permutation of labels in $\mathcal{I}$ yields a relabeled tree $\rt_n^\tau$ that is also recursive (here we slightly abuse notation and extend $\tau$ to act as the identity on $\{1,\ldots,n\}\setminus\mathcal{I}$).
An illustration is given in Figure \ref{fig:permutation_tree} and the proof of this fact is given in the following lemma.

\begin{lemma}\label{lem:permutation_still_recursive}
    For any recursive tree $\rt$ of size at least $v$ and any permutation $\tau$ of $I_v(\rt)$, the tree $\rt^{\tau}$ is recursive. Moreover, $I_v(\rt)=I_v(\rt^{\tau})$.
\end{lemma}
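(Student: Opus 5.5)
The argument is a direct check of the recursive property along edges. We use that the relabeled tree $\rt^{\tau}$ has the same edge structure as $\rt$: the vertex formerly labeled $i$ now carries label $\tau(i)$, and $\tau$ acts as the identity outside $\mathcal{I}:=I_v(\rt)$. Since $\rt$ is recursive, it suffices to show that every edge of $\rt^\tau$ joins a smaller label (the parent) to a larger label (the child), with root label $1$. A tree with this property is recursive, because labels then increase along every path from the root.

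We split the edges $\{\pa(j),j\}$ of $\rt$ by the role of the endpoints with respect to $\mathcal{I}$. The key observations all follow from the definition \eqref{eq:Iv}.
\begin{enumerate}[itemsep=0pt]
\item No element of $\mathcal{I}$ is the parent of another element of $\mathcal{I}$. Indeed, parents of elements of $\mathcal{I}$ are $<v/2$, while elements of $\mathcal{I}$ are $\ge v/2$.
\item Any child $j$ of some $i\in\mathcal{I}$ satisfies $j>v$, since $i$ is a leaf in $\rt_v$. Hence $j\notin\mathcal{I}$, and $j>v\ge \tau(i)$.
\item The parent of $i\in\mathcal{I}$ is $p=\pa(i)<v/2$, so $p\notin\mathcal{I}$ and $p<v/2\le\tau(i)$.
\end{enumerate}
Edges with neither endpoint in $\mathcal{I}$ are unchanged. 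By these three facts, every edge keeps its orientation after relabeling. Vertex $1$ is not in $\mathcal{I}$ (for $v\ge 2$, the elements of $\mathcal{I}$ are $\ge v/2\ge1$ and have a parent, so they are not the root), so the root keeps label $1$. Hence $\rt^\tau$ is recursive.

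For the second claim, $I_v(\rt^\tau)=I_v(\rt)$, we must check two things. First, each new label $\tau(i)\in\mathcal{I}$ sits at a vertex that is a leaf in $\rt^\tau_v$. Second, its parent in $\rt^\tau$ has label $<v/2$, and no vertex outside the image acquires the defining property.

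Take the vertex formerly labeled $i$. In $\rt^\tau$ its label is $\tau(i)\in[v/2,v]$ and its parent label is $\pa(i)<v/2$, which is unchanged. Its children still have labels $>v$, so it is a leaf in $\rt^\tau_v$, the subtree induced on labels $\le v$. Hence $\tau(\mathcal{I})=\mathcal{I}\subseteq I_v(\rt^\tau)$.

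Conversely, suppose a label $j\notin\mathcal{I}$ belonged to $I_v(\rt^\tau)$. Then $j$ is untouched by $\tau$, and its parent label is in $[1,v/2)$. That parent is also untouched, because labels in $\mathcal{I}$ are $\ge v/2$. The children of $j$ with labels $\le v$ in $\rt^\tau$ are the images of its children in $\rt$. Such a child either lies outside $\mathcal{I}$ and is unchanged, or lies in $\mathcal{I}$, which is impossible by fact 1 since $j\ge v/2$ would then be a parent of an element of $\mathcal{I}$. So $j$ would already be in $I_v(\rt)$, a contradiction.

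The only delicate point is the bookkeeping of which endpoint labels can move, and facts 1–3 settle it. I expect the proof to be short, with the main care needed in the converse inclusion.
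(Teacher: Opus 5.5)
Your proof is correct and uses the same key fact as the paper: $\pa(i)<v/2\le\tau(i)\le v<c$ for every $i\in I_v(\rt)$ and every child $c$ of $i$. You check this edge by edge for a general $\tau$, whereas the paper reduces to transpositions and argues along root-to-leaf paths; your version is also slightly more complete, since the paper only argues $I_v(\rt)\subseteq I_v(\rt^\tau)$ and leaves the reverse inclusion implicit, while you prove it (there the fact actually used is that parents of elements of $I_v(\rt)$ have labels $<v/2$, not fact~1 as cited).
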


\begin{proof}
    Let $\rt$ be a recursive tree of size $n\ge v$. The proof of the lemma is trivial if $I_v(\rt)$ has only one element. Suppose $|I_v(\rt)|\ge2$ and let $i,j$ be two distinct vertices in $I_v(\rt)$. Since every permutation on $I_v(\rt)$ can be written as a product of transpositions on $I_v(\rt)$, it suffices to prove the claim for transpositions. Let $\tau=(i,j)$. Every simple path from the root to a leaf that does not pass through $i$ or $j$ remains increasing in $\rt^\tau$. Consider the two simple paths $\pi^\ssup{i}$ and $\pi^\ssup{j}$ from vertex $1$ passing through $i$ and $j$, respectively, and ending at a leaf of $\rt$. We have to show that the labels along these paths remain increasing if we swap $i$ and $j$ and that $i$ and $j$ belong to $I_v(\rt^{\tau})$. 

    Because $\rt$ is recursive, these paths are of the form $\pi^\ssup{i}=(1,i_1,\dots ,i_k,i,i_{k+1},\dots,i_K)$ and $\pi^\ssup{j}=(1,j_1,\dots ,j_\ell,j,j_{\ell+1},\dots,j_L)$ with increasing labels along both paths.
    By definition of $I_v(\rt)$, we have that $i_k < v/2 \leq i\leq v < i_{k+1}$ and $j_\ell< v/2 \leq j \leq v < j_{\ell+1}$.
    As a consequence, the two simple paths in $\rt^{\tau}$ starting at $1$, going through $i$ and $j$, respectively, and finishing at the same leaves become $(1,i_1,\dots ,i_k,j,i_{k+1},\dots,i_K)$ and $(1,j_1,\dots ,j_\ell,i,j_{\ell+1},\dots,j_L)$. Hence these paths remain increasing and both $i$ and $j$ belong to $I_v(\rt^{\tau})$. Thus $\rt^{\tau}$ is recursive and $I_v(\rt)=I_v(\rt^{\tau})$. 
    \qedhere

\end{proof}

\begin{figure}
		
		\centering
	
	\begin{tikzpicture}[scale=1.2
		]
		\vertex[draw=none] (T) at (-0.7,0.5) {$\rt$};
		
		\vertex[fill=black, label=above:{$1$}] (A) at (0,0) {};
		\vertex[fill=black, label=above:{$2$}] (B) at (-1,-1) {};
		\vertex[fill=black, label=above:{$3$}] (C) at (1,-1) {};
		\vertex[fill=white, label=above:{$6$}] (D) at (-2,-2) {};
		\vertex[fill=black, label=above right:{$4$}] (E) at (-1,-2) {};
		\vertex[fill=white, label=above right:{$7$}] (F) at (1,-2) {};
		\vertex[fill=black, label=above right:{$8$}] (G) at (-2,-3) {};
		\vertex[fill=black, label=above right:{$10$}] (H) at (-1,-3) {};
		\vertex[fill=black, label=above right:{$9$}] (J) at (1,-3) {};
        \vertex[fill=white, label=above:{$5$}] (K) at (2,-2) {};
		
		\path (A) edge (B);
		\path (A) edge (C);
		\path (D) edge (B);
		\path (E) edge (B);
		\path (F) edge (C);
		\path (G) edge (D);
		\path (H) edge (E);
		\path (J) edge (F);
        \path (C) edge (K);

		\vertex[draw=none] (T2) at (5.38+-0.7,0.5) {$\rt^\tau$};
		
		\vertex[fill=black, label=above:{$1$}] (A2) at (5.38+0,0) {};
		\vertex[fill=black, label=above:{$2$}] (B2) at (5.38+-1,-1) {};
		\vertex[fill=black, label=above:{$3$}] (C2) at (5.38+1,-1) {};
		\vertex[fill=white, label=above:{$7$}] (D2) at (5.38+-2,-2) {};
		\vertex[fill=black, label=above right:{$4$}] (E2) at (5.38+-1,-2) {};
		\vertex[fill=white, label=above right:{$5$}] (F2) at (5.38+1,-2) {};
		\vertex[fill=black, label=above right:{$8$}] (G2) at (5.38+-2,-3) {};
		\vertex[fill=black, label=above right:{$10$}] (H2) at (5.38+-1,-3) {};
		\vertex[fill=black, label=above right:{$9$}] (J2) at (5.38+1,-3) {};
        \vertex[fill=white, label=above:{$6$}] (K2) at (5.38+2,-2) {};
		
		\path (A2) edge (B2);
		\path (A2) edge (C2);
		\path (D2) edge (B2);
		\path (E2) edge (B2);
		\path (F2) edge (C2);
		\path (G2) edge (D2);
		\path (H2) edge (E2);
		\path (J2) edge (F2);
        \path (C2) edge (K2);
		
	\end{tikzpicture}
	
	\caption{The indistinguishable vertices $I_7(\rt)$ in a recursive tree. For a recursive tree $\rt$ of size $10$, we display in white $I_7(\rt)=\{5,6,7\}$ as well as $\rt^{\tau}$ for $\tau$ a permutation of $I_7(\rt)$. 
    }
    \label{fig:permutation_tree}
\end{figure}

Lemma~\ref{lem:permutation_still_recursive} shows that relabeling the vertices in $I_v(\rt_n)$ preserves recursivity of the tree. 
The next lemma formalizes the key consequence for estimators: within each set $I_v(\rt_n)$, the estimated labels by $\hs$ have the same distribution. Later on, we apply the following lemma for $I=I_v$ when $v$ is sufficiently large, and consider $I=\{v-1,v\}$ when $v$ is of constant order.  

\begin{lemma}\label{lem:IorderedRandom}
Fix some $v \geq 1$ and let $I$ be a function acting in the space of  recursive trees and returning a subset of vertices of $[v]$, such that for any recursive tree $\rt$ of size $n\geq v$,

    \begin{itemize}
        \item for any permutation $\tau$ of $I(\rt)$, the tree $\rt^{\tau}$ is recursive and $I(\rt^{\tau})=I(\rt)$;
        \item for $\rt_v$, the subtree of $\rt$ spanned by vertices in $[v]$, $I(\rt_v)=I(\rt)$. 
    \end{itemize}
    Then, for any label-invariant estimator $\hs$ and any set $\mathcal{I}$ in the image of $I$ with $i,v \in \mathcal{I}$,

    $$
     \hs\left(v,T_n\right)\mid I(T_n)=\mathcal{I} \quad 
     \overset{\mathcal{L}}{=} \quad 
     \hs\left(i,T_n\right)\mid  I(T_n)=\mathcal{I}.
     $$

\end{lemma}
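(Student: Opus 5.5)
We will build the transposition $\tau=(i,v)$ and show that conditioning on $\{I(T_n)=\mathcal{I}\}$ leaves the law of $T_n$ invariant under relabelling by $\tau$. Label-invariance \eqref{eq:def_label-invariant} then transfers this symmetry to the estimator. The only probabilistic input is that the uniform random recursive tree is uniform over recursive trees on $[n]$. Indeed, each recursive tree corresponds to exactly one parent sequence $(\pa(w))_{w=2}^n$, and each such sequence has probability $\prod_{w=2}^n 1/(w-1)=1/(n-1)!$.

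\emph{Step 1 (measure preservation).} Let $\mathcal{R}_\mathcal{I}$ be the set of recursive trees $\rt$ on $[n]$ with $I(\rt)=\mathcal{I}$. By the first hypothesis, for $\rt\in\mathcal{R}_\mathcal{I}$ the tree $\rt^\tau$ is recursive and satisfies $I(\rt^\tau)=I(\rt)=\mathcal{I}$, so $\rt\mapsto\rt^\tau$ maps $\mathcal{R}_\mathcal{I}$ into itself. Since $\tau$ is an involution, $(\rt^\tau)^\tau=\rt$, so this map is a bijection of $\mathcal{R}_\mathcal{I}$. Combining this with uniformity, we get
\[\p\big(T_n=\rt\mid I(T_n)=\mathcal{I}\big)=\p\big(T_n=\rt^\tau\mid I(T_n)=\mathcal{I}\big)\]
for every $\rt\in\mathcal{R}_\mathcal{I}$. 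In words, conditionally on $\{I(T_n)=\mathcal{I}\}$, the trees $T_n$ and $T_n^\tau$ have the same law.

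\emph{Step 2 (transfer to $\hs$).} For any Borel set $B$, we compute
\begin{align*}
\p\big(\hs(v,T_n)\in B\mid I(T_n)=\mathcal{I}\big)
&=\sum_{\rt\in\mathcal{R}_\mathcal{I}}\p\big(T_n=\rt\mid I(T_n)=\mathcal{I}\big)\,\p\big(\hs(v,\rt)\in B\big)\\
&=\sum_{\rt\in\mathcal{R}_\mathcal{I}}\p\big(T_n=\rt^\tau\mid I(T_n)=\mathcal{I}\big)\,\p\big(\hs(\tau(v),\rt^\tau)\in B\big).
\end{align*}
The second equality uses \eqref{eq:def_label-invariant} in the form $\hs(v,\rt)\overset{\mathcal{L}}{=}\hs(\tau(v),\rt^\tau)$, together with Step 1. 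The internal randomness of $\hs$ is independent of $T_n$, which justifies the factorisation in the first line. Now $\tau(v)=i$, and reindexing the sum by the bijection $\rt\mapsto\rt^\tau$ gives $\p(\hs(i,T_n)\in B\mid I(T_n)=\mathcal{I})$. This proves the claimed equality in law.

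The second hypothesis, $I(\rt_v)=I(\rt)$, is not needed for this identity. I expect it is used later to show that $\{I(T_n)=\mathcal{I}\}$ is determined by $T_v$, so that the conditional probability of $v\in\mathcal{I}$ can be computed there. The main thing to get right is the formalisation of label-invariance for randomised estimators, namely using it as an equality of laws for each fixed tree. It also matters that $\tau$ fixes every vertex outside $\mathcal{I}$. That ensures $\rt^\tau$ is again a labelled tree on $[n]$, and so lies in the support of $T_n$, which is exactly what the first hypothesis guarantees.
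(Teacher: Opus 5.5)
Your proof is correct and is essentially the paper's argument: the law of $T_n$ is invariant under recursivity-preserving relabellings, you apply this to the transposition $\tau=(i,v)$, and you transfer the symmetry to $\hs$ via label-invariance. The differences are minor. You derive the invariance directly from the uniformity of $T_n$ over recursive trees on $[n]$, whereas the paper cites \cite[Theorem 4]{CrXu21}. You also condition on $I(T_n)$ rather than $I(T_v)$, which is why you correctly observe that the second hypothesis is not needed here; the paper uses it only to pass from $I(T_v)$ to $I(T_n)$.
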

\begin{proof}
    Let $\hs$ be a label-invariant estimator. From \cite[Theorem 4]{CrXu21} we know that in the RRT model, for every recursive tree $\rt$ on $n$ vertices and every permutation $\gamma$ such that $\rt^{\gamma}$ is also recursive, one has $\p(T_n=t)=\p(T_n=t^{\gamma})$.
    So, for a fixed recursive tree $\rt$ of size at least $v$ and a permutation $\tau$ preserving recursivity on this tree

    $$\p\left( T_n=\rt  \right)=\p\left( T_n=\rt^{\tau^{-1}}  \right)=\p\left( T_n^{\tau}=\rt  \right).$$
    Hence, for every fixed set $\mathcal{I}\subseteq[v]$ and any permutation $\tau$ of $\mathcal{I}$, 
    \begin{equation}\label{eq:tree_distrib_equality2}
    T_n\mid I(T_v)=\mathcal I \ \overset{\mathcal L}{=} \ T_n^\tau\mid I(T_v)=\mathcal I,
    \end{equation}
    where we condition on $I(T_v)$ on both sides because $I(T_v)=I(T_v^{\tau})$.
    For a fixed tree $\rt$ and a permutation $\tau$ of $\mathcal{I}$, because $\hs$ is label-invariant (see \eqref{eq:def_label-invariant}),
    $$\hs\left(i,\rt\right)\quad 
    \overset{\mathcal{L}}{=} \quad \hs\left(\tau(i),\rt^{\tau}\right),$$
    where the equality would be deterministic if the label-invariant estimator $\hs$ were deterministic given a tree. Fix some non-empty subset $\mathcal{I}\subseteq [v]$ consistent with $I(\rt)=\mathcal{I}$ for some tree $\rt$, and $\tau$ a permutation of $\mathcal{I}$. Plugging the random  tree $T_n$ into the above equality, we get for $n\ge v$,

    $$\hs\left(i,T_n\right)\mid I(T_v)=\mathcal{I} \quad 
    \overset{\mathcal{L}}{=} \quad \hs\left(\tau(i),T^{\tau}_n\right)\mid I(T_v)
    =\mathcal{I}.$$
    Since we assumed that $v,i\in\mathcal{I}$, the transposition $(i,v)$ is a permutation of $\mathcal{I}$. Choosing $\tau=(i,v)$ yields for $n\ge v$, 

    $$\hs\left(i,T_n\right)\mid I(T_v)=\mathcal{I} \quad 
    \overset{\mathcal{L}}{=} \quad \hs\left(v,T_n^\tau\right)\mid I(T_v)=\mathcal{I}\quad 
    \overset{\mathcal{L}}{=} \quad \hs\left(v,T_n\right)\mid I(T_v)=\mathcal{I},$$
    where the second equality follows from \eqref{eq:tree_distrib_equality2}. For all $n\ge v$, we have $I(T_
    n)=I(T_v)$ which concludes the proof of the lemma. \qedhere
    

\end{proof}

Equipped with this technical lemma, we are now ready to prove Proposition \ref{prop:lower_bound}

\begin{proof}[Proof of Proposition \ref{prop:lower_bound}]
    Let $\hs$ be a label-invariant estimator of the arrival times in the random recursive tree $T_n$ of size $n\ge v$. Let $\delta = \tfrac{1}{56}$. We start by proving that for $v\in[3,n]$ and $v\geq 1/(2\delta) = 28$
    \[
    \p\left(|\hs(v, T_n)-v|\ge \delta v\right) \ge \frac{1}{112}-\frac{\delta}{4},
    \]
    which implies Proposition \eqref{prop:lower_bound}
for $v \geq 28.$    We condition on the set of indistinguishable vertices $I_v(T_n)$. That is,  
    \begin{equation}\label{eq:condition-1}
\p\left(|\hs(v, T_n)-v|\ge \delta v\right)
\ge\!\!
\sum_{\substack{\mathcal{I} \subseteq [n]:  \\ |\mathcal{I}|\ge 4\delta v ,\\ v \in \mathcal{I}}} \!\!
\p\left(|\hs(v,T_n)-v|\ge \frac{|I_v(T_n)|}{4}\ \Big|\ I_v(T_n) = \mathcal{I}\right) \p\big( I_v(T_n) = \mathcal{I}\big) .
    \end{equation}
    We first argue that the conditional probability is bounded from below by $1/4$. By the definition of $I_v$ in \eqref{eq:Iv} and Lemma \ref{lem:permutation_still_recursive}, $I_v$ satisfies the hypothesis of Lemma \ref{lem:IorderedRandom}. So the estimated arrival times $\hs(u, T_n)$ are identically distributed for $u\in I_v(T_n)$. Therefore, for any set $\mathcal{I}$ with $|\mathcal{I}|\geq 4 \delta v$,
\begin{align*}
    &\p\left(|\hs(v,T_n)-v|\ge \frac{|I_v(T_n)|}{4}\ \big|\ I_v(T_n) = \mathcal{I}\right)
    \\
    & \hspace{4cm} = \frac{1}{|\mathcal{I}|}\sum_{u\in \mathcal{I} } \p\left(|\hs(u,T_n)-v|\ge \frac{|I_v(T_n)|}{4} \ \big|\ I_v(T_n) = \mathcal{I}\right)
    \\
    & \hspace{4cm} = \frac{1}{|\mathcal{I}|}\E\Bigg[\sum_{u\in I_v(T_n)} \mathbbm{1}_{\left\{|\hs(u,T_n)-v|\ge \frac{|I_v(T_n)|}{4} \right\}} \ \Big|\ \ I_v(T_n) = \mathcal{I}  \Bigg] .
\end{align*}

Since $\hs(\, \cdot\, , T_n)$ is a bijection from $[n]$ to $[n]$, 
at least $|I_v(T_n)|/4$ of the elements in $I_v(T_n)$ must satisfy $|\hs(u,T_n)-v| \geq |I_v(T_n)|/4$ when $|I_v(T_n)| \geq 2$. Since $|I_v(T_n)| \geq 4\delta v \geq 2$, the sum within the conditional expectation is bounded from below by $|I_v(T_n)|/4 = |\mathcal{I}|/4$, and the entire expression is bounded from below by $1/4$. 
Substituting this bound into \eqref{eq:condition-1} we get that
    \begin{equation}\label{eq:proba_lower_bound_order}
        \p\left(|\hs(v,T_n)-v|\ge \delta v\right) \ \ge \!\!
        \sum_{\substack{\mathcal{I} \subset [n]:  \\ |\mathcal{I}|\ge 4\delta v ,\\ v \in \mathcal{I}}} \!\!
\tfrac{1}{4} \p\big( I_v(T_n) = \mathcal{I}\big)\
=\
\tfrac{1}{4}\p\big(|I_v(T_n)|\ge 4\delta v, v \in I_v(T_n)\big).
    \end{equation}




    
\noindent To conclude, we bound the probability on the right-hand side using a first-moment method. We define the set $I_v^\prime (T_{v-1})$ by

\begin{equation*}
I_v^\prime(T_{v-1}):=\big\{ i\in [v/2,v-1]\ : \ i\text{ is a leaf in }T_{v-1}, \ \mathrm{pa}(i) < v/2\big\}.
\end{equation*}

We compute the expected size of this set to be
\begin{align*}
    \E\big[ |I_v^\prime(T_{v-1})|   \big] &=   \sum_{k=\lceil \tfrac{v}{2}\rceil}^{v-1} \p\left( k\text{ is a leaf in }T_{v-1}, \ \mathrm{pa}(k)< v/2 \right)\\
    &=
    \sum_{k=\lceil \tfrac{v}{2}\rceil}^{v-1} \frac{\lceil \tfrac{v}{2}\rceil -1}{k-1}\prod_{j=k+1}^{v-1} \frac{j-2}{j-1}
    =
    \big( v-\lceil \tfrac{v}{2}\rceil \big) \frac{\lceil \tfrac{v}{2}\rceil-1}{v-2}
    \\
    &
    \geq
    \left( \frac{v}{2} - \frac{1}{2} \right) \frac{ \tfrac{v}{2}-1}{v-2} 
=
\frac{v}{4}- \frac{1}{4}
\geq 
\frac{v}{7}
    ,
\end{align*}
where the last inequality holds since $v \geq 3$.

\noindent By truncating the first moment, and using the deterministic bound $|I_v^\prime(T_{v-1})|\le v$, we see that
\begin{equation*}
\begin{aligned}
\frac{v}{7}\le \E\left[ |I_v^\prime(T_{v-1})| \right] &= \E\big[|I_v^\prime(T_{v-1})|\Ind{|I_v^\prime(T_{v-1})|< 4\delta v}\big]+\E\big[|I_v^\prime(T_{v-1})|\Ind{|I_v^\prime(T_{v-1})|\ge4\delta v}\big]\\
&\leq 4\delta v + v\cdot \p\left( |I_v^\prime(T_{v-1})|\geq 4\delta v \right).
\end{aligned}
\end{equation*} 
Rearranging gives that for $v\ge 3$,
$$\p\left( |I_v^\prime(T_{v-1})|\geq 4 \delta v \right)\, \geq  \,\frac{1}{7}-4\delta  .$$
The set $I_v^\prime (T_{v-1})$ and $\mathrm{pa}(v)$ are independent. Further, if $\mathrm{pa}(v) < v/2$, then $I_v(T_n) = I_v(T_v) = I_v^\prime(T_{v-1}) \cup \{v\}$, implying that
\begin{align*}
    &\p \left( |I_v(T_n)| \geq 4 \delta v, v \in I_v(T_n) \right) = \p \left( |I_v(T_n)| \geq 4 \delta v, \mathrm{pa}(v) < \frac{v}{2} \right)
    \\
    & \hspace{25mm}
    \geq
     \p \left( |I_v^\prime(T_{v-1})| \geq 4 \delta v, \mathrm{pa}(v) < \frac{v}{2} \right)
    \\
    & \hspace{25mm}
    =
    \p \left( |I_v^\prime(T_{v-1})| \geq 4 \delta v\right) \p \left( \mathrm{pa}(v) < \frac{v}{2} \right)
    \geq
    \left( \frac{1}{7} - 4\delta \right)  \frac{\tfrac{v}{2} -1}{v-1} \geq \frac{1}{28} - \delta .
\end{align*}
Plugging this result into \eqref{eq:proba_lower_bound_order}, yields
\begin{equation*}
\p\big(|\hs(v,T_n)-v|\ge \delta v\big)
\ge\frac{1}{112}-\frac{\delta}{4} .
\end{equation*}
Since $\delta =1/56$, this concludes the proof of the proposition for $v\geq 28$, as we required $v\ge 1/(2\delta)$ at the beginning of the proof.

\medskip

For any $v\in[3,28]$ and any recursive tree $\rt$ of size at least $v$, we define the set

$$\Tilde{I}_v(\rt_n):=\begin{cases}
    \{v-1,v\} & \text{ if }v-1 \text{ and }v \text{ are leaves in } \rt_v, \\
    \emptyset & \text{ otherwise.}
\end{cases}$$
It is direct to check that $\Tilde{I}_v$ satisfies the hypothesis of Lemma \ref{lem:IorderedRandom}. Hence 

$$\hs(v,T_n) \ \big| \  \Tilde{I}_v(T_n)\neq \emptyset \quad
\overset{\mathcal{L}}{=}
\quad \hs(v-1,T_n) \ \big| \  \Tilde{I}_v(T_n)\neq \emptyset,$$
and in particular $\p\big( \hs(v)=v \ \big| \ \Tilde{I}_v(T_n)\neq \emptyset\big)\leq 1/2$. So 

$$\p\big(|\hs(v,T_n)-v|\ge 1 \big)
\geq
\p\big(\Tilde{I}_v(T_n)\neq \emptyset\big)
\p\left(|\hs(v,T_n)-v|\ge 1 \ \big|\ \Tilde{I}_v(T_n)\neq \emptyset\right) \geq \frac{1}{2}\p\big(\Tilde{I}_v(T_n)\neq \emptyset\big).$$
Checking that $\p\big(\Tilde{I}_v(T_n)\neq \emptyset\big)=\tfrac{v-2}{v-1}\geq \tfrac{1}{3}$ and that $1\geq v/28$ yields

$$\p\big(|\hs(v,T_n)-v|\ge v/28 \big)
\geq \frac{1}{6},$$
concluding the proof of the proposition for $v\in [3,28]$.

\medskip

For $v\in\{1,2\}$, we use that vertices $1$ and $2$ are indistinguishable, hence $\hs(1,T_n)\overset{\mathcal{L}}{=}\hs(2,T_n)$ and so $\p\big(|\hs(v,T_n)-v|\ge 1\big)\geq 1/2$, proving the proposition for $v=1,2$.
\end{proof}

\appendix 

\section{Distribution of fringe-tree sizes}\label{sec:appendix}
In this appendix, we give the proofs of Lemmas \ref{lem:beta-bin-fringe}, \ref{lem:appendix_fringe_size}, \ref{lem:proba_large_fringe}, and \ref{lem:Fringe_fringe_parent_large}. (We give the proof of Lemma~\ref{lem:proba_large_fringe} before the proof of Lemma \ref{lem:Fringe_fringe_parent_large}, since the proof of Lemma \ref{lem:Fringe_fringe_parent_large} depends on Lemma \ref{lem:proba_large_fringe}).

\betabinfringe*

    \begin{proof}
    For the first item, we observe that the fringe trees witnessed after time $m$ are all disjoint.
        Their evolution after time $m$ is modeled by a P\'olya urn \cite{eggenberger1923statistik} with $m$ colors, starting at time $m$ with one ball of each color.
        At each time $n > m$, a ball is picked uniformly at random from the urn, and the picked ball is replaced by two balls of the same color as the picked ball.
        The number of balls of color $i\in[m]$ at time $n$ describes the witnessed fringe tree size of a vertex: indeed, each arriving vertex joins a fringe tree with probability proportional to the size of the tree. 

        The number of balls of each color at time $n\ge m$ is an exchangeable family of random variables, yielding point 1.
        The family follows a Dirichlet-multinomial distribution, implying that the witnessed fringe-tree sizes are negatively associated, proving the second item. 

        For the third item of Lemma \ref{lem:beta-bin-fringe}, we use the exchangeability, and observe that $\mathrm{Fr}_n^{>m}(m)=\mathrm{Fr}_n(m)$. The size of a single fringe tree evolves according to a beta-binomial random variable plus one. That is, 
        \begin{equation}\label{eq:pmf-fringe-tree}
            \p\left(\mathrm{Fr}_n(m)=k\right) 
= (m-1)\frac{(n-m)!}{(n-1)!}\frac{(n-k-1)!}{(n-m-k+1)!}, \quad k \in \{1,\ldots,n-m\}.
        \end{equation}
        We start with the upper bound on the lower tail, i.e., inequality \eqref{eq:fringe-ub}. When $m=1$, $\mathrm{Fr}_n(1)=n$ and the bound is trivial. So we can assume that $n\ge m\ge2$ for the rest of the proof of \eqref{eq:fringe-ub}.
Observe that for $k \in \{1,\ldots,n-m+1\}$,
\[
\frac{\p\left(\mathrm{Fr}_n(m)=k+1\right)}{\p\left(\mathrm{Fr}_n(m)=k\right)} = \frac{(n-k-2)!}{(n-m-k)!} \frac{(n-m-k+1)!}{(n-k-1)!} = \frac{n-m-k+1}{n-k-1}=1 - \frac{m-2}{n-k-1} \leq 1.
\]
So the function $k\mapsto \p(\mathrm{Fr}_n(v)=k)$ is non-increasing for $m\ge 2$. Hence, 
\[
\p\left(\mathrm{Fr}_n(m)\le \frac{n}{M m}\right) \le \frac{n}{M m}\p\left(\mathrm{Fr}_n( m)=1\right)=\frac{n}{M m}(m-1)\frac{(n-m)!}{(n-1)!}\frac{(n-2)!}{(n-m)!}=\frac{n}{Mm}\frac{m-1}{n-1}.
\]
The right-hand side is at most $2/M$, since $n\geq 2$.
We proceed to the lower bound on the lower tail, inequality \eqref{eq:fringe-lb}, assuming $n\ge m\ge 2$, $m\le n/M$, and $M\ge 4$. Set
\begin{equation}\label{eq:lower-tail-useful}
k=\left\lfloor\frac{n}{Mm}\right\rfloor,\qquad\text{and observe}\qquad m-2\leq \frac{n}{4} \le \frac{n}{2}\left(1-\frac{1}{Mm}\right)\le \frac{n-k}{2}.
\end{equation}
Since the function $j \mapsto \p(\mathrm{Fr}_n(m)=j)$ is non-increasing for $m\geq 2$, we have 
\[
\begin{aligned}
\p\big(\mathrm{Fr}_n(m)\le k\big) \ \ge \ k  \cdot \p\big(\mathrm{Fr}_n(m)=k\big) \ & = \ k \cdot \p\big(\mathrm{Fr}_n(m)=1\big) \prod_{j=1}^{k-1} \frac{\p\left(\mathrm{Fr}_n(m)=j+1\right)}{\p\left(\mathrm{Fr}_n(m)=j\right)} 
\\
&
= 
\left\lfloor\frac{n}{Mm}\right\rfloor\frac{m-1}{n-1}\prod_{j=1}^{k-1}\left(1-\frac{m-2}{n-j-1}\right)\\&\ge \frac{1}{4M}\prod_{j=1}^{k-1}\left(1-\frac{m-2}{n-j-1}\right),
\end{aligned}
\]
where we used the formula for $\tfrac{\p\left(\mathrm{Fr}_n(m)=j+1\right)}{\p\left(\mathrm{Fr}_n(m)=j\right)}$ proven above.
By \eqref{eq:lower-tail-useful}, each term $\tfrac{m-2}{n-j-1}$ in the above product product is at most $1/2$. We use that $1-x\ge \exp(-2x)$ for all $x\in[0, 1/2]$. This yields 
\[
\p\left(\mathrm{Fr}_n(v)\le k\right)\ge \frac{1}{4M}\exp\bigg(-2(m-2)\sum_{j=1}^{k-1}\frac{1}{n-j-1}\bigg) =  \frac{1}{4M}\exp\bigg(-2(m-2)\sum_{j=2}^k\frac{1}{n-j}\bigg).
\]
Recall $k=\lfloor n/(Mm)\rfloor$. We bound the sum in the exponential from above by $k/(n-k)$, which is at most $1/(Mm-1)$. We obtain for $M\ge 4$ and $m\ge 2$,
\[
\p\left(\mathrm{Fr}_n(m)\le \frac{n}{Mm}\right) \ge  \frac{1}{4M}\exp\bigg(-2(m-2)\frac{1}{Mm-1}\bigg)
\geq
\frac{1}{4M}\exp\bigg(-\frac{2(m-2)}{4m-1}\bigg)
\ge \frac{1}{4\sqrt{e} M}\ge \frac{1}{8M}.
\]
This finishes the proof.
\end{proof}



\appendixfringesize*

\begin{proof}
    We work on the event  $\cE=\{\pa(v)\text{ is a leaf in }T_{v-1}, \mathrm{pa}(v) \geq 2\}$, which has probability $1/2$ for $v\geq 3$. Indeed, using that $\pa(v)$ is uniformly distributed on $[1,v-1]$ and independent from $T_{v-1}$, we see that

    \begin{align*}
        \p\left( \cE \right)
        = &\ 
        \frac{1}{v-1}\sum_{i=2}^{v-1}\p\left( i \text{ is a leaf in }T_{v-1} \right) 
        =
        \frac{1}{v-1}\left[1+\sum_{i=2}^{v-2}\prod_{j=i+1}^{v-1}\frac{j-2}{j-1}
        \right]\\
        =& \
        \frac{1}{v-1}\left[1+\sum_{i=2}^{v-2}\frac{i-1}{v-2}
        \right]
        =
        \frac{1}{v-1}\sum_{i=2}^{v-1}\frac{i-1}{v-2}
        =
        \frac{1}{(v-1)(v-2)} \sum_{i=1}^{v-2} i
        =\frac{1}{2}.
    \end{align*}
    Since, on the event $\cE$, $\mathrm{Fr}_n^{>v}(\mathrm{pa}(v)) + \mathrm{Fr}_n^{>v}(v)$ behaves like the number of red balls in a P\'olya urn, started with $2$ red balls and $v-2$ many blue balls at time $v$, we get that
    \begin{align*}
        &\mathrm{Fr}_n^{>v}(\mathrm{pa}(v)) + \mathrm{Fr}_n^{>v}(v) - 2 \sim \mathrm{BetaBin}\left( n - v , 2, v-2 \right) , \text{ and }
        \\
        &
        \mathrm{Fr}_n^{>v}(v)  \sim \mathrm{Unif} \left\{ 1 , \ldots , \mathrm{Fr}_n^{>v}(\mathrm{pa}(v)) + \mathrm{Fr}_n^{>v}(v) - 1 \right\}.
    \end{align*}
    We record a useful claim about the Beta-Binomial distribution, which we will prove below.

    \begin{claim}\label{claim:c_B}
    There exists a positive constant $c_{\mathrm{B}} > 0$ so that for all $n \geq 3, v \in \{3,\ldots,\lfloor \frac{n}{4} \rfloor \}$, and $k \in \{0,\ldots,\lfloor \frac{n}{v} \rfloor\}$, 
    \begin{equation}\label{eq:claim:c_B}
        \p \left( \mathrm{BetaBin}(n-v,2,v-2) = k \right) \geq c_{\mathrm{B}} k \frac{v^2}{n^2} .
    \end{equation}
    \end{claim}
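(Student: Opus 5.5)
I will compute the probability mass function of $\mathrm{BetaBin}(N,\alpha,\beta)$ with $N=n-v$, $\alpha=2$, $\beta=v-2$ explicitly and bound it from below term by term. For integer parameters,
\[
\p\big(\mathrm{BetaBin}(N,2,v-2)=k\big)=\binom{N}{k}\frac{B(k+2,\,N-k+v-2)}{B(2,v-2)} .
\]
Using $B(2,v-2)^{-1}=(v-1)(v-2)$ and writing everything in factorials gives
\[
\p(\,\cdot\,=k)=(v-1)(v-2)\,(k+1)\,\frac{N!\,(N-k+v-3)!}{(N-k)!\,(N+v-1)!}.
\]
Equivalently, this is the Pólya-urn probability that exactly $k$ of the $N$ draws are red, starting from $2$ red and $v-2$ blue balls. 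I will factor the ratio as
\[
\frac{N!}{(N-k)!}\cdot\frac{(N-k+v-3)!}{(N+v-1)!}=\frac{1}{(N+v-1)(N+v-2)}\prod_{j=0}^{k-1}\frac{N-j}{N+v-3-j}.
\]
The prefactor $(v-1)(v-2)(k+1)/((N+v-1)(N+v-2))$ equals $(v-1)(v-2)(k+1)/((n-1)(n-2))$. Since $v\ge3$, we have $v-1\ge 2v/3$ and $v-2\ge v/3$. Together with $(n-1)(n-2)\le n^2$ and $k+1\ge k$, the prefactor is at least $\tfrac{2}{9}\,k\,v^2/n^2$. For $k=0$ the claim is trivial.

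It remains to bound the product $\prod_{j=0}^{k-1}\bigl(1-\tfrac{v-3}{N+v-3-j}\bigr)$ from below by an absolute constant. This is the main step, though it is routine. For $j\le k-1\le n/v$ and $v\le n/4$, the denominator satisfies $N+v-3-j\ge n-3-n/v\ge n/2$ once $n\ge 12$, since $v\ge3$ gives $n/v\le n/3$. Small $n$ force $v\le n/4<3$, an empty range. So each factor is at least $1-2(v-3)/n\ge 1-2v/n\ge 1/2$. Then $1-x\ge e^{-2x}$ on $[0,1/2]$ gives
\[
\prod\ge \exp\!\Big(-\sum_{j<k}\tfrac{4v}{n}\Big)\ge \exp\!\Big(-\tfrac{4vk}{n}\Big)\ge e^{-4},
\]
using $k\le n/v$. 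Combining, the claim follows with $c_{\mathrm B}=\tfrac{2}{9}e^{-4}$.

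The only care points are the bookkeeping of the factorial identity, so that the $(k+1)$ factor appears exactly, and the uniform lower bound on the denominators. The latter is where the hypotheses $v\le n/4$ and $k\le n/v$ enter.
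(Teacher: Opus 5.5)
Your proof is correct and follows essentially the paper's route. It uses the same Beta-function expansion with prefactor $\frac{(k+1)(v-1)(v-2)}{(n-1)(n-2)}$ and the same final estimate $\exp(-4kv/n)\ge e^{-4}$ on the remaining product; your uniform lower bound $n/2$ on the denominators $n-3-j$ simply lets you avoid the paper's case split between $v\ge k$ and $v<k$. One minor slip: from $j\le k-1\le n/v-1$ you get $n-3-j\ge n-2-n/v\ge \tfrac{2n}{3}-2\ge \tfrac{n}{2}$ exactly when $n\ge 12$, whereas the weaker bound $n-3-n/v$ you wrote only gives this for $n\ge 18$ (this is harmless, since for $v=3$ all factors equal $1$ and for $v\ge 4$ one has $n/v\le n/4$).
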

    On the event $\cE$ one has 
    \begin{equation*}
    	\mathrm{Fr}_n(\mathrm{pa}(v)) = \mathrm{Fr}_n^{>v}(\mathrm{pa}(v)) + \mathrm{Fr}_n^{>v}(v).
    \end{equation*}
	Using the law of total probability, we can thus calculate
	\begin{align}\label{eq:log dep}
		\notag \p \left( \mathrm{Fr}_n(v) \mathrm{Fr}_n(\mathrm{pa}(v)) \leq s \left(\frac{n}{v}\right)^2 \right)
		&\geq
		\p \left( \mathrm{Fr}_n(v) \mathrm{Fr}_n(\mathrm{pa}(v)) \leq s \left(\frac{n}{v}\right)^2 \ \Big|\  \cE \right) \p \left(\cE\right)
		\\
		& \notag
		=
		\p \left( \mathrm{Fr}_n^{>v}(v) \left(\mathrm{Fr}_n^{>v}(\mathrm{pa}(v)) + \mathrm{Fr}_n^{>v}(v)\right) \leq s \left(\frac{n}{v}\right)^2\ \Big|\ \cE \right) \frac{1}{2} \\
		& \notag
		\geq
		\sum_{k=2}^{n-v}
		\p \left( \mathrm{Fr}_n^{>v}(v) k \leq s \left(\frac{n}{v}\right)^2\ \Big|\ \cE , \mathrm{Fr}_n^{>v}(\mathrm{pa}(v)) + \mathrm{Fr}_n^{>v}(v) = k \right) 
		\\
		&
		  \hspace{2cm} \cdot
		\p \left( \mathrm{Fr}_n^{>v}(\mathrm{pa}(v)) + \mathrm{Fr}_n^{>v}(v) = k\ \big|\ \cE \right) \frac{1}{2}. 
	\end{align}
   The events $\left\{\mathrm{Fr}_n^{>v}(\mathrm{pa}(v)) + \mathrm{Fr}_n^{>v}(v) = k \right\}$ and $\cE$ are independent, so that Claim \ref{claim:c_B} implies that for all $2 \leq k \leq \frac{n}{v}$,
	\begin{align*}
		 \p \left( \mathrm{Fr}_n^{>v}(\mathrm{pa}(v)) + \mathrm{Fr}_n^{>v}(v) = k \ \big|\  \cE \right)
		& 
		=
		\p \left( \mathrm{Fr}_n^{>v}(\mathrm{pa}(v)) + \mathrm{Fr}_n^{>v}(v) = k  \right)
		\\
		& 
		=
		\p \left( \mathrm{BetaBin}(n-v,2,v-2) = k - 2  \right) \geq c_{\mathrm{B}} (k-2) \frac{v^2}{n^2} ,
	\end{align*}
    where we used \eqref{eq:claim:c_B} for the last inequality.
	Further, conditioned on $\left\{\mathrm{Fr}_n^{>v}(\mathrm{pa}(v)) + \mathrm{Fr}_n^{>v}(v) = k , \cE\right\}$, $\mathrm{Fr}_n^{>v}(v)$ is still uniformly distributed on $\{1,\ldots,k-1\}$. Thus, for all $k=1,\ldots,\lfloor s \left(\frac{n}{v}\right)^2 \rfloor$,
	\begin{align*}
		& \p \left( \mathrm{Fr}_n^{>v}(v) k \leq s \left(\frac{n}{v}\right)^2 \ \Big|\  \cE , \mathrm{Fr}_n^{>v}(\mathrm{pa}(v)) + \mathrm{Fr}_n^{>v}(v) = k \right) 
		\\
		&
		\hspace{3cm} = \p \left( \mathrm{Fr}_n^{>v}(v) \leq \frac{s}{k} \left(\frac{n}{v}\right)^2 \ \Big|\  \cE , \mathrm{Fr}_n^{>v}(\mathrm{pa}(v)) + \mathrm{Fr}_n^{>v}(v) = k \right) 
		\\
		&
		\hspace{3cm}
		=
		\frac{\lfloor \frac{s}{k} \left(\frac{n}{v}\right)^2 \rfloor}{k-1} \geq \frac{1}{2} \frac{s}{k^2} \left(\frac{n}{v}\right)^2 .
	\end{align*}
	Inserting the two previous inequalities into \eqref{eq:log dep}, we get that
	\begin{align}
		\p \left( \mathrm{Fr}_n(v) \mathrm{Fr}_n(\mathrm{pa}(v)) \leq s \left(\frac{n}{v}\right)^2 \right) &\geq \frac{1}{2} \sum_{k=3}^{ \lfloor s \left(\frac{n}{v}\right)^2 \wedge \frac{n}{v} \rfloor}  c_{\mathrm{B}} (k-2) \frac{v^2}{n^2}   \frac{1}{2} \frac{s}{k^2} \left(\frac{n}{v}\right)^2 \notag
		\\
		&=
		\frac{s c_{\mathrm{B}} }{4} \sum_{k=3}^{ \lfloor s \left(\frac{n}{v}\right)^2 \wedge \frac{n}{v} \rfloor} \frac{k-2}{k^2} 
		\geq
		c^\star s  \log \left(  s \left(\frac{n}{v}\right)^2 \wedge \frac{n}{v} \right),\label{eq:almost there}
	\end{align}
    where $c^\star > 0$ is a positive constant.
	Using that $s \geq \left(\frac{v}{n}\right)^{2-\eps}$, we see that 
	\begin{align*}
		\log \left(  s \left(\frac{n}{v}\right)^2 \wedge \frac{n}{v} \right) 
		&
		= 
		\mathbbm{1}_{\left(\frac{v}{n}\right)^{2-\eps} \leq s \leq \frac{v}{n}}
		\log \left(  s \left(\frac{n}{v}\right)^2 \right) 
		+
		\mathbbm{1}_{s > \frac{v}{n}}
		\log \left( \frac{n}{v} \right)
		\\
		&
		 \geq 
		\mathbbm{1}_{\left(\frac{v}{n}\right)^{2-\eps} \leq s \leq \frac{v}{n}}
		\log \left(  \left(\frac{n}{v}\right)^\eps \right) 
		+
		\mathbbm{1}_{s > \frac{v}{n}}
		\log \left( \frac{1}{s} \right)
		\\
		&
		\geq 
		\mathbbm{1}_{\left(\frac{v}{n}\right)^{2-\eps} \leq s \leq \frac{v}{n}} \frac{\eps}{2-\eps}
		\log \left( \frac{1}{s} \right) 
		+
		\mathbbm{1}_{s > \frac{v}{n}}
		\log \left( \frac{1}{s} \right)
		\geq
		\frac{\eps}{2} \log \left( \frac{1}{s} \right),
	\end{align*}
	and inserting this into \eqref{eq:almost there} finishes the proof of \eqref{eq:lem2.6 (1)}.

    To prove \eqref{eq:lem2.6 (2)}, it suffices to consider $s$ with $3 (v/n)^2 \leq s \leq v/n$. From \eqref{eq:almost there}, we see that
    \begin{equation*}
		\p \left( \mathrm{Fr}_n(v) \mathrm{Fr}_n(\mathrm{pa}(v)) \leq s \left(\frac{n}{v}\right)^2 \right) 
        \geq 
		\frac{s c_{\mathrm{B}} }{4} \sum_{k=3}^{ \lfloor s \left(\frac{n}{v}\right)^2 \wedge \frac{n}{v} \rfloor} \frac{k-2}{k^2} 
        \geq 
		\frac{s c_{\mathrm{B}} }{12} \sum_{k=3}^{ \lfloor s \left(\frac{n}{v}\right)^2 \rfloor} \frac{1}{k} \geq \frac{c_{\mathrm{B}} }{36} s .\qedhere
	\end{equation*}
    \end{proof}

     \begin{proof}[Proof of Claim \ref{claim:c_B}]
    	Writing $B(x,y)=\frac{(x-1)!(y-1)!}{(x+y-1)!}$ for the Beta function, we get that
    	\begin{equation}\nonumber
    		\begin{aligned}
    			\p \left( \mathrm{BetaBin}(n-v,2,v-2) = k \right) & =
    			\binom{n-v}{k}\frac{B(k+2, n-(k+2))}{B(2, v-2)} \\
    			&=
    			\frac{(n-v)!}{k!(n-v-k)!} \frac{(k+1)!(n-k-3)!}{(n-1)!} \frac{(v-1)!}{(v-3)!1!}
    			\\
    			&
    			=
    			\frac{(k+1)(v-1)(v-2)}{(n-1)(n-2)} \frac{(n-v)!}{(n-v-k)!} \frac{(n-3-k)!}{(n-3)!}.
    		\end{aligned}
    	\end{equation}
    	For the first term in the above product, we have, since $v\geq 3$, that
    	\begin{equation*}
    		\frac{(k+1)(v-1)(v-2)}{(n-1)(n-2)} \geq  \frac{k(v-1)(v-2)}{n^2} \geq \frac{k v^2}{6 n^2}.
    	\end{equation*}
        To bound the remaining terms, assume first that $v \geq k$.
    	Using that $\frac{2v}{n}\leq \frac{1}{2}$ and the elementary inequality $1-x\geq \exp(-2x)$ for $x \in \left[ 0, \frac{1}{2} \right]$, we get that
    	\begin{multline*}
    		\frac{(n-v)!}{(n-v-k)!} \frac{(n-3-k)!}{(n-3)!} = \prod_{j=0}^{k-1} \frac{n-v-j}{n-3-j} \geq \prod_{j=0}^{k-1} \frac{n-v-k}{n} \geq \left( \frac{n-2v}{n} \right)^k \\ = \left( 1 - \frac{2v}{n} \right)^k \geq \exp \left( - \frac{4kv}{n} \right) \geq \exp\left( -4 \right),
    	\end{multline*}
    	where we used that $k \leq \frac{n}{v}$ for the last inequality.
    	Similarly, for $v < k$ we get that
    	\begin{multline*}
    		\frac{(n-v)!}{(n-v-k)!} \frac{(n-3-k)!}{(n-3)!} 
            =
            \frac{(n-v)!}{(n-3)!} \frac{(n-3-k)!}{(n-v-k)!} 
            = 
            \prod_{j=0}^{v-4} \frac{n-3-k-j}{n-3-j} 
            \\
    		\geq
    		\prod_{j=0}^{v-4} \frac{n-v-k}{n}  \geq \left( \frac{n-2k}{n} \right)^v
    		= \left( 1 - \frac{2k}{n} \right)^v \geq \exp \left( - \frac{4kv}{n} \right) \geq \exp\left( -4 \right).
    	\end{multline*}
    	Combining the three previous inequalities, we get that
    	\begin{equation*}
    		\p \left( \mathrm{BetaBin}(n-v,2,v-2) = k \right) \geq \frac{e^{-4}}{6} k \frac{v^2}{n^2} ,
    	\end{equation*}
    	proving the claim.
    \end{proof}
 
\fringetreedistrib*

\begin{proof}
    Fix some $n\geq3$, $v\geq 2$ and $1\leq k \leq n-v+1$. Then, we use that, for $v\geq 2$,
    \begin{equation*}
        \p\left(\mathrm{Fr}_n(v)=k\right) 
        = (v-1) \frac{(n-v)!}{(n-1)!}\frac{(n-k-1)!}{(n-v-k+1)!} = \frac{v-1}{n-1}\prod_{j=0}^{k-2}\frac{n-v-j}{n-2-j}\leq \frac{v-1}{n-1}\left( \frac{n-v}{n-2}\right)^{k-1},
    \end{equation*}
    as the factors are decreasing in $j$.
    Using that $\tfrac{v-1}{n-1}\leq 2v/n$, that $(\tfrac{n}{n-2})^{k-1}\leq (\tfrac{n}{n-2})^{n-2} \leq e^2$, that $\log(1-v/n)\leq -v/n$, and that $\exp(-(k-1)v/n)\leq \exp(1-kv/n)$, we get
    \begin{align}
        &\notag \frac{v-1}{n-1}\le 2v/n, \qquad \text{ and }
        \\
        &\label{eq:k}
        \left( \frac{n-v}{n-2}\right)^{k-1}=\left( 1-\frac{v-2}{n-2}\right)^{k-1} \leq \exp\left(-(k-1)\frac{v-2}{n-2}\right) \leq \exp\bigg(2-k\frac{v}{n}\bigg),
    \end{align}
    where the last inequality holds since $1 \leq k \leq n-v+1$ and $v \geq 2$: indeed, since the two functions $k \mapsto \tfrac{-(k-1)(v-1)}{n-1}$ and $k \mapsto 2-\tfrac{kv}{n}$ are linear in $k$, it suffices to prove the inequality for $k=1$ and for $k=n-v+1$. The statement for $k=1$ is clear, since $v\leq n$. For $k=n-v+1$, the statement is equivalent to 
    \begin{equation}\label{eq:h}
        (n-v+1) \frac{v}{n}- (n-v)\frac{v-2}{n-2}-2 = \frac{-3nv+2v^{2}-2v+4n}{n\left(n-2\right)} \leq 0.
    \end{equation}
    The function $h(v) \coloneqq -3nv+2v^{2}-2v+4n$ is convex, so that the maximum in the domain $[2,n]$ is attained at one of the boundary values. One readily checks that $h(2),h(n) \leq 0$, so that $h(v)\leq 0$ for all $v\in [2,n]$, implying \eqref{eq:h} and thus also \eqref{eq:k}.
    Because \eqref{eq:proba_fringe-k} is also trivially true for $k>n-v+1$, these bounds yield \eqref{eq:proba_fringe-k}. \smallskip 
    
    For the proof of \eqref{eq:proba_fringe-large}, it suffices to consider $t > 2$, and thus also for $v\geq 3$, since the probability of the event on the left-hand side of \eqref{eq:proba_fringe-large} equals zero for $v=2, t>2$. Further, the statement is clear when $t\tfrac{n}{v} > n-v+1$. Using the previously established bound on $\p(\mathrm{Fr}_n(v)=k)$, we obtain for $v\ge 3$,
    \begin{multline}
        \p\left(\mathrm{Fr}_n(v)\geq t\frac{n}{v} \right) 
        \leq 
        \frac{v-1}{n-1}\sum_{k\geq t\tfrac{n}{v} }\bigg(\frac{n-v}{n-2}\bigg)^{k-1} 
        \le \frac{v-1}{n-1}\frac{n-2}{v-2}\bigg(\frac{n-v}{n-2}\bigg)^{t(n/v)-1}
        \\
        \le \frac{v-1}{v-2} \bigg(\frac{n-v}{n-2}\bigg)^{t(n/v)-1}
        \le 2 \bigg(\frac{n-v}{n-2}\bigg)^{t(n/v)-1}
        \le 2e^2\exp\left(-t\right).
    \end{multline}
    In the last inequality, we used that $(\tfrac{n-v}{n-2})^{t(n/v)-1} \leq \exp(2-t\tfrac{n}{v}\tfrac{v}{n})$ for $t\tfrac{n}{v} \leq n-v+1$, as proven in inequality \eqref{eq:k}.
    This proves \eqref{eq:proba_fringe-large} for $v\ge 3$.
    
    \smallskip 
   We next prove \eqref{eq:proba-parent-large-lower}. We use the de-Finetti representation of the Beta-Binomial distribution and see that
    \begin{align}
       \p \left( \mathrm{Fr}_n(v) > t \frac{n}{v} \right) 
       & \notag
       =
       \int_{0}^{1} \p \left( \mathrm{Bin}(n-v,p) + 1 > t \frac{n}{v} \right)(v-1)(1-p)^{v-2} \md p
       \\
       & \label{eq:definetti}
       \geq
       \int_{\frac{4t}{v}}^{1} \p \left( \mathrm{Bin}\left( \Big\lceil\frac{n}{2}\Big\rceil ,p\right)  \geq t \frac{n}{v} \right)(v-1)(1-p)^{v-2} \md p .
    \end{align}
    First, observe that for a Binomial random variable $X_p$ with parameters $\lceil \frac{n}{2} \rceil$ and $p \geq \frac{4t}{v}$, one has
    \begin{align*}
        \mathrm{Var}(X_p) \leq \E \left[ X_p \right] \quad \text{ and } \quad \E \left[ X_p \right] = \Big\lceil \frac{n}{2} \Big\rceil p \geq \frac{n}{2} \frac{4 t}{v} = 2t \frac{n}{v} \geq 4,
    \end{align*}
    where the last inequality holds since $v \leq \tfrac{n}{2}, t \geq 1$.
    Combining these facts with Cantelli's inequality, we get that
    \begin{multline*}
    \p \left( \mathrm{Bin}\left( \Big\lceil\frac{n}{2}\Big\rceil ,p\right) \geq t \frac{n}{v} \right)
    \geq
        \p \left( X_p \geq \frac{\E\left[X_p\right]}{2} \right) \geq 1 - \frac{\mathrm{Var}(X_p)}{\mathrm{Var}(X_p) + \E\left[X_p\right]^2/4 }
        \\
        =
        \frac{\E\left[ X_p \right]^2/4}{\mathrm{Var}(X_p) + \E\left[ X_p \right]^2/4}
        \geq 
        \frac{\E\left[ X_p \right]^2/4}{\E\left[ X_p \right] + \E\left[ X_p \right]^2/4}
        =
        \frac{\E\left[ X_p \right]/4}{1+\E\left[ X_p \right]/4}
        \geq
        \frac{1}{2} .
    \end{multline*}
    Inserting this bound into \eqref{eq:definetti} yields that
    \begin{align*}
       \p \left( \mathrm{Fr}_n(v) > t \frac{n}{v} \right) 
       &\geq
       \int_{\frac{4t}{v}}^{1} \p \left( \mathrm{Bin}\left( \Big\lceil\frac{n}{2}\Big\rceil ,p\right)  \geq t \frac{n}{v} \right)(v-1)(1-p)^{v-2} \md p  \\
       &\geq
       \frac{1}{2}
       \int_{\frac{4t}{v}}^{1}(v-1)(1-p)^{v-2} \md p \\
       &=
       \frac{1}{2} \left( 1- \frac{4t}{v} \right)^{v-1}
       \geq \frac{1}{2} \exp\left( - \frac{8t}{v} \right)^{v-1}
       \geq \frac{1}{2} \exp\left( - 8t \right),
    \end{align*}
    where we used the elementary inequality $1-x \geq e^{-2x}$ for all $x \in \left[0,\frac{1}{2} \right]$ in the last line. This finishes the proof of \eqref{eq:proba-parent-large-lower}.

\smallskip 
    For the proof of inequality \eqref{eq:proba-parent-large2}, we start by decomposing on the event $\{\mathrm{Fr}_n(v)\geq t n/(2v)\}$ and its complement, i.e.,
    \begin{align*}
        \p\left( \mathrm{Fr}_n(\pa(v))\geq t \frac{n}{v}  \right)
        \leq
        \p\left( \mathrm{Fr}_n(v)\geq \frac{t}{2} \frac{n}{v}  \right)
        +
        \p\left( \mathrm{Fr}_n(\pa(v))\geq t \frac{n}{v},  \ \mathrm{Fr}_n(v)\leq t \frac{n}{2v} \right).
    \end{align*}
    For the first summand in the above sum, we use inequality \eqref{eq:proba_fringe-large} with $t/2$. Using further that $\pa(v)$ is uniformly distributed on $[v-1]$ on the second summand, we obtain
    \begin{equation}
        \label{eq:split law of total prob}\p\left( \mathrm{Fr}_n(\pa(v))\geq t \frac{n}{v}  \right)
    \leq
    2e^{-t/2 + 2}
    +
    \frac{1}{v-1}\sum_{i=1}^{v-1} \p\left( \mathrm{Fr}_n(i)\geq t \frac{n}{v}, \  \mathrm{Fr}_n(v)\leq t \frac{n}{2v} \ \Big| \ \pa(v)=i \right).
    \end{equation}
    If $\pa(v)=i$ and $\mathrm{Fr}_n(v)\leq t \frac{n}{2v}$, then the subtree $(T,1)_{i\downarrow}\setminus(T,1)_{v\downarrow}$ must have at least $t \frac{n}{2v}$ elements for $\mathrm{Fr}_n(i)$ to exceed $t \frac{n}{v}$. So,

    \begin{align*}
        \p\left( \mathrm{Fr}_n(i)\geq t \frac{n}{v} , \mathrm{Fr}_n(v)\leq t \frac{n}{2v} \ \Big| \ \pa(v)=i \right)
        & \leq
        \p\left( \big|(T,1)_{i\downarrow}\setminus (T,1)_{v\downarrow}|\geq t \frac{n}{2v}  \ \Big| \ \pa(v)=i\right).
    \end{align*}
    One can couple a RRT and a RRT conditioned on $\{\pa(v)=i\}$  by grafting $v$ and its subtree at $i$. Doing so, it is direct that $|(T,1)_{i\downarrow}\setminus (T,1)_{v\downarrow}| $ conditioned on $ \pa(v)=i$ is stochastically dominated by $\mathrm{Fr}_n(i)$. So,

    \begin{multline*}
        \p\left( \mathrm{Fr}_n(i)\geq t \frac{n}{v}, \mathrm{Fr}_n(v)\leq t \frac{n}{2v} \ \Big| \  \pa(v)=i \right)
        \\
    \leq
    \p\left( |(T,1)_{i\downarrow}\setminus (T,1)_{v\downarrow}| \geq  t \frac{n}{2v} \ \Big| \ \pa(v)=i \right)
    \leq
    \p\left( \mathrm{Fr}_n(i)\geq t \frac{n}{2v}   \right).
    \end{multline*}
    Inserting this inequality into \eqref{eq:split law of total prob} and using \eqref{eq:proba_fringe-large}, we get
    \begin{align*}
        \p\left( \mathrm{Fr}_n(\pa(v))\geq t \frac{n}{v}  \right) &
        \leq
    2e^{-t/2 + 2}
    +
    \frac{1}{v-1}\sum_{i=1}^{v-1} \p\left( \mathrm{Fr}_n(i)\geq t \frac{n}{2v}  \right)
    \\
    &
    =
    2e^{-t/2 + 2}
    +
    \frac{1}{v-1}\sum_{i=1}^{v-1} \p\left( \mathrm{Fr}_n(i)\geq i \frac{t}{2v} \frac{n}{i}  \right)
    \\
    & 
        \leq
        2e^{-t/2+2}
        +
        \frac{1}{v-1} \sum_{i=1}^{v-1} 2\exp\left(-i \frac{t}{2v}+2\right)
        \\
    & 
        \leq
        2e^{-t/2+2}
        +
        \frac{2 e^2}{v-1} \sum_{i=1}^{\infty} \exp\left(-i \frac{t}{2v}\right)
        \\
        &
        =
        2e^{-t/2+2}
        +
        \frac{2 e^2}{v-1} \frac{1}{\exp(t/(2v))-1}
        \leq 
        2e^{-t/2+2}
        +
        \frac{2 e^2}{v-1} \frac{2v}{t},
    \end{align*}
    where we used the elementary inequality $\exp(t/(2v))-1 \geq t/(2v)$ for the last inequality.
    The bound \eqref{eq:proba-parent-large2} follows. Inequality \eqref{eq:proba-parent-large} follows from an application of \eqref{eq:proba-parent-large2} with $t=\frac{v}{2}$.
\end{proof}

\Fringefringeparentlarge*

\begin{proof}
    We start by treating the cases $v=1,2$. Using that $\phi_n^\ssup{2}(v)\le \mathrm{Fr}_n(\pa(v))\mathrm{Fr}_n(v)$ at $v=2$ yields

    $$\p\Big(\phi_n^\ssup{2}(v) \ge S\left(\frac{n}{v}\right)^2 \Big) \le
    \p\Big( \mathrm{Fr}_n(1)\mathrm{Fr}_n(2) \geq S\frac{n^2}{4}\Big)
    \overset{ \eqref{eq:proba_fringe-large}}{\leq}
    2e^2 e^{-S/2}
    \leq
    \frac{C}{S}$$
    for some positive constant $C$. For $v=1$ it suffices to use that $\phi_n^\ssup{2}(1)$ and $\phi_n^\ssup{2}(2)$ are identically distributed, leading to 

    $$\p\Big(\phi_n^\ssup{2}(1) \ge S\left(\frac{n}{1}\right)^2 \Big)
    =
    \p\Big(\phi_n^\ssup{2}(2) \ge 4S\left(\frac{n}{2}\right)^2 \Big)
    \leq \frac{C}{4S},$$
    concluding the proof of the lemma for $v=1,2$. For $v\in [3,n]$, we write for any $u\in [v-1]$
    \begin{equation*}
        \mathrm{Fr}_n^{\neq v}(u) \coloneqq \mathrm{Fr}_n(u) - \mathrm{Fr}_n(v) \mathbbm{1}_{v \in (T_n,1)_{u\downarrow}}
    \end{equation*}
    for the number of vertices in the fringe tree of $u$ that are not contained in the fringe tree of $v$. In particular, for $\mathrm{pa}(v)$ one has that
    \begin{equation*}
        \mathrm{Fr}_n(\mathrm{pa}(v)) = \mathrm{Fr}_n^{\neq v}(\mathrm{pa}(v)) + \mathrm{Fr}_n(v).
    \end{equation*}
    This directly implies that 
    \begin{equation*}
        \phi_n^\ssup{2}(v)\le \mathrm{Fr}_n(\pa(v))\mathrm{Fr}_n(v) = \mathrm{Fr}_n^{\neq v}(\mathrm{pa}(v)) \mathrm{Fr}_n(v) + \mathrm{Fr}_n(v)^2 .
    \end{equation*}
    Thus we get that 
    \begin{align}\label{eq:Two terms S/2}
        \p \left( \phi_n^{\ssup{2}}(v) \geq S \left( \frac{n}{v} \right)^2 \right) 
        \leq 
        \p \left( \mathrm{Fr}_n^{\neq v}(\mathrm{pa}(v)) \mathrm{Fr}_n(v) \geq \frac{S}{2} \left( \frac{n}{v} \right)^2 \right) + \p \left( \mathrm{Fr}_n(v)^2 \geq \frac{S}{2} \left( \frac{n}{v} \right)^2 \right).
    \end{align}
    By \eqref{eq:proba_fringe-large}, there exists a constant $C^\prime < \infty$ so that $\E\left[ \mathrm{Fr}_n(v)^2 \right] \leq C^\prime (n/v)^2$ for all $n\in \N, v \in [n]$. Markov's inequality thus implies that
    \begin{equation}\label{eq:Two terms S/2 term 1}
        \p \left( \mathrm{Fr}_n(v)^2 \geq \frac{S}{2} \left( \frac{n}{v} \right)^2 \right) \leq
        \frac{\E \left[ \mathrm{Fr}_n(v)^2 \right]}{\frac{S}{2} \left( \frac{n}{v} \right)^2} \leq \frac{2 C^\prime}{S} .
    \end{equation}
    For the first term in \eqref{eq:Two terms S/2}, we use the law of total probability to get that.
    \begin{align}\label{parent switch}
        & \p \left( \mathrm{Fr}_n^{\neq v}(\mathrm{pa}(v)) \mathrm{Fr}_n(v) \geq \frac{S}{2} \left( \frac{n}{v} \right)^2 \right) = \frac{1}{v-1} \sum_{u=1}^{v-1}  \p \left( \mathrm{Fr}_n^{\neq v}(u) \mathrm{Fr}_n(v) \geq \frac{S}{2} \left( \frac{n}{v} \right)^2\ \Big|\ \mathrm{pa}(v)=u \right). 
    \end{align}
    For fixed $u\in [v-1]$, the random variable $\mathrm{Fr}_n^{\neq v}(u) \mathrm{Fr}_n(v)$ is independent of $\mathrm{pa}(v)$. Indeed, both $\mathrm{Fr}_n(v)$ and $\mathrm{Fr}_n^{\neq v}(u)$ are determined by $\left( \mathrm{pa}(w) ; w \in [n] \setminus \{v\} \right)$. Further, the random variables $\mathrm{Fr}_n^{\neq v}(u) $ and $\mathrm{Fr}_n(v)$ are negatively associated. Indeed, conditioned on $\mathrm{Fr}_n(v)=k$, the tree $T_n \setminus (T_n,1)_{v \downarrow}$ is just distributed like a uniform attachment tree of size $n-k$. Thus, we see that
    \begin{align*}
        & \p \left( \mathrm{Fr}_n^{\neq v}(u) \mathrm{Fr}_n(v) \geq \frac{S}{2} \left( \frac{n}{v} \right)^2\ \Big|\ \mathrm{pa}(v)=u \right) 
        \\
        &
        = \p \left( \mathrm{Fr}_n^{\neq v}(u) \mathrm{Fr}_n(v) \geq \frac{S}{2} \left( \frac{n}{v} \right)^2 \right)
         \\
        &
        \leq
        \p \left( \mathrm{Fr}_n^{\neq v}(u) \geq \frac{S}{2} \frac{n}{v} \right) + \sum_{\ell=0}^{\infty} \p \left(  \mathrm{Fr}_n(v) \geq 2^{\ell} \frac{n}{v}, \mathrm{Fr}_n^{\neq v}(u) \geq \frac{S}{4} 2^{-\ell}  \frac{n}{v} \right)
         \\
        &
        \leq
        \p \left( \mathrm{Fr}_n^{\neq v}(u) \geq \frac{S}{2} \frac{n}{v} \right) + \sum_{\ell=0}^{\infty} \p \left(  \mathrm{Fr}_n(v) \geq 2^{\ell} \frac{n}{v}\right) \p \left( \mathrm{Fr}_n^{\neq v}(u) \geq \frac{S}{4} 2^{-\ell}  \frac{n}{v} \right)
         \\
        &
        \overset{ \eqref{eq:proba_fringe-large}}{\leq}
        \p \left( \mathrm{Fr}_n^{\neq v}(u) \geq \frac{S}{2} \frac{n}{v} \right) + \sum_{\ell=0}^{\infty} 2 e^2 \exp\left(-2^\ell\right) \p \left( \mathrm{Fr}_n^{\neq v}(u) \geq \frac{S}{4} 2^{-\ell}  \frac{n}{v} \right).
    \end{align*}
    Inserting this into \eqref{parent switch} and using again that $\mathrm{Fr}_n^{\neq v}(u)$ and $\mathrm{pa}(v)$ are independent, we get that
    \begin{align*}
        & \p \left( \mathrm{Fr}_n^{\neq v}(\mathrm{pa}(v)) \mathrm{Fr}_n(v) \geq \frac{S}{2} \left( \frac{n}{v} \right)^2 \right) 
        \\
        &
        \leq
        \frac{1}{v-1} \sum_{u=1}^{v-1} \left( \p \left( \mathrm{Fr}_n^{\neq v}(u) \geq \frac{S}{2} \frac{n}{v} \right) + \sum_{\ell=0}^{\infty} 2 e^2 \exp\left(-2^\ell\right) \p \left( \mathrm{Fr}_n^{\neq v}(u) \geq \frac{S}{4} 2^{-\ell}  \frac{n}{v} \right)  \right)
        \\
        &
        =
        \frac{1}{v-1} \sum_{u=1}^{v-1}  \p \left( \mathrm{Fr}_n^{\neq v}(u) \geq \frac{S}{2} \frac{n}{v} \ \Big|\  \mathrm{pa}(v)=u \right)
        \\
        &
        \hspace{2cm}
        + 
        \sum_{\ell=0}^{\infty} 2 e^2 \exp\left(-2^\ell\right) \frac{1}{v-1} \sum_{u=1}^{v-1}  \p \left( \mathrm{Fr}_n^{\neq v}(u) \geq \frac{S}{4} 2^{-\ell}  \frac{n}{v} \ \Big|\  \mathrm{pa}(v)=u \right)  
        \\
        & 
        =
        \p \left( \mathrm{Fr}_n^{\neq v}(\mathrm{pa}(v)) \geq \frac{S}{2} \frac{n}{v} \right)
        +
        \sum_{\ell=0}^{\infty} 2 e^2 \exp\left(-2^\ell\right)  \p \left( \mathrm{Fr}_n^{\neq v}(\mathrm{pa}(v)) \geq \frac{S}{4} 2^{-\ell}  \frac{n}{v} \right)
        \\
        & 
        \leq
        \p \left( \mathrm{Fr}_n(\mathrm{pa}(v)) \geq \frac{S}{2} \frac{n}{v} \right)
        +
        \sum_{\ell=0}^{\infty} 2 e^2 \exp\left(-2^\ell\right)  \p \left( \mathrm{Fr}_n(\mathrm{pa}(v)) \geq \frac{S}{4} 2^{-\ell}  \frac{n}{v} \right) .
    \end{align*}
    By Lemma \ref{lem:proba_large_fringe}, there exists a constant $C<\infty$ so that $\p \left( \mathrm{Fr}_n(\mathrm{pa}(v)) \geq tn/v \right) \leq C/t$ for all $t>0$. Further, let $C^\star>0$ be a constant so that $\exp \left(-x\right) \leq C^\star x^{-2}$ for all $x\geq 1$. Then we get that
    \begin{align*}
        \p \left( \mathrm{Fr}_n^{\neq v}(\mathrm{pa}(v)) \mathrm{Fr}_n(v) \geq \frac{S}{2} \left( \frac{n}{v} \right)^2 \right) 
        &\leq
        \frac{C}{S/2}
        +
        \sum_{\ell=0}^{\infty} 2 e^2 \exp\left(-2^\ell\right)  \frac{C}{\frac{S}{4} 2^{-\ell}}
        \\
        &\leq
        \frac{2C}{S}
        +
        \sum_{\ell=0}^{\infty} 2 e^2 C^\star 2^{-2\ell}  \frac{4C}{S 2^{-\ell}} \\
        &=
        \frac{2C}{S}
        +
        \frac{8 e^2 C^\star C}{S}
        \sum_{\ell=0}^{\infty} 2^{-\ell}
        =
        \frac{2C}{S}
        +
        \frac{16 e^2 C^\star C}{S}.
    \end{align*}
    Inserting this inequality and inequality \eqref{eq:Two terms S/2 term 1} into \eqref{eq:Two terms S/2} shows that
    \begin{equation*}
        \p \left( \phi_n^{\ssup{2}}(v) \geq S \left( \frac{n}{v} \right)^2 \right)  \leq \frac{2C^\prime}{S} + \frac{2C}{S}
        +
        \frac{16 e^2 C^\star C}{S},
    \end{equation*}
    finishing the proof.
\end{proof}

\noindent\textbf{Acknowledgements.\ }This material is partly based upon work supported by the National Science Foundation under Grant No.\ DMS-1928930, while SB and JJ were in residence at the Simons Laufer Mathematical Sciences Institute in Berkeley, California, during the spring semester of 2025. JJ additionally thanks Magdalen College, Oxford, for a Senior Demyship and appreciates his baby for patiently waiting until the night after the manuscript was completed to make an appearance. We thank Miklós Rácz for helpful comments.

{\small 
\bibliographystyle{abbrv}
\bibliography{references}




}

\end{document}